\documentclass[11pt]{article}

\usepackage[margin=1in]{geometry}
\usepackage{amsmath,amssymb,amsthm,mathtools}
\usepackage{microtype}
\usepackage{enumitem}
\usepackage{xspace}
\usepackage{aliascnt}
\usepackage{hyperref}
\usepackage[dvipsnames]{xcolor}
\usepackage[capitalize,nameinlink]{cleveref}
\hypersetup{
	colorlinks=true,
	pdfpagemode=UseNone,
    citecolor=OliveGreen,
    linkcolor=NavyBlue,
    urlcolor=Magenta,
	pdfstartview=FitW
}

\theoremstyle{plain}
\newtheorem{theorem}{Theorem}[section]
\newaliascnt{lemma}{theorem}
\newtheorem{lemma}[lemma]{Lemma}
\aliascntresetthe{lemma}
\newaliascnt{proposition}{theorem}
\newtheorem{proposition}[proposition]{Proposition}
\aliascntresetthe{proposition}
\newaliascnt{corollary}{theorem}
\newtheorem{corollary}[corollary]{Corollary}
\aliascntresetthe{corollary}

\theoremstyle{definition}
\newaliascnt{definition}{theorem}
\newtheorem{definition}[definition]{Definition}
\aliascntresetthe{definition}
\newaliascnt{example}{theorem}
\newtheorem{example}[example]{Example}
\aliascntresetthe{example}

\theoremstyle{remark}
\newaliascnt{remark}{theorem}
\newtheorem{remark}[remark]{Remark}
\aliascntresetthe{remark}
\newaliascnt{note}{theorem}

\aliascntresetthe{note}

\crefname{theorem}{theorem}{theorems}
\Crefname{theorem}{Theorem}{Theorems}
\crefname{lemma}{lemma}{lemmas}
\Crefname{lemma}{Lemma}{Lemmas}
\crefname{proposition}{proposition}{propositions}
\Crefname{proposition}{Proposition}{Propositions}
\crefname{corollary}{corollary}{corollaries}
\Crefname{corollary}{Corollary}{Corollaries}
\crefname{definition}{definition}{definitions}
\Crefname{definition}{Definition}{Definitions}
\crefname{example}{example}{examples}
\Crefname{example}{Example}{Examples}
\crefname{exercise}{exercise}{exercises}
\Crefname{exercise}{Exercise}{Exercises}
\crefname{remark}{remark}{remarks}
\Crefname{remark}{Remark}{Remarks}
\crefname{note}{note}{notes}
\Crefname{note}{Note}{Notes}

\def\*#1{\mathbf{#1}}
\def\+#1{\mathcal{#1}}
\def\-#1{\mathrm{#1}}
\newcommand{\bb}{\mathbb}

\def\!#1{\mathsf{#1}}
\def\@#1{\mathscr{#1}}

\newcommand{\tp}[1]{\left(#1\right)}
\newcommand{\stp}[1]{\left[#1\right]}
\newcommand{\set}[1]{\left\{#1\right\}}
\newcommand{\abs}[1]{\left|#1\right|}
\newcommand{\norm}[1]{\left\|#1\right\|}
\newcommand{\ceil}[1]{\left\lceil#1\right\rceil}

\newcommand{\ang}[1]{\left\langle#1\right\rangle}
\newcommand{\inner}[2]{\left\langle#1,\,#2\right\rangle}

\newcommand{\cmid}{\,:\,}

\newcommand{\wt}{\widetilde}

\newcommand{\mean}[1]{\!{mean}\tp{#1}}
\newcommand{\var}[1]{\!{var}\tp{#1}}
\newcommand{\cov}[1]{\!{cov}\tp{#1}}

\newcommand{\dd}{\,\-d}
\newcommand{\eps}{\varepsilon}
\newcommand{\defeq}{:=}

\DeclareMathOperator{\conv}{conv}
\DeclareMathOperator{\diag}{diag}
\DeclareMathOperator{\dom}{dom}
\DeclareMathOperator{\supp}{supp}

\def\Emery{\'Emery\xspace}

\def\Follmer{F\"ollmer\xspace}

\def\Ito{It\^o\xspace}

\def\Poincare{Poincar\'e\xspace}

\def\Schrodinger{Schr\"odinger\xspace}

\title{High-Dimensional Ultra-Log-Concave Distributions}
\author{Zongchen Chen\thanks{School of Computer Science, Georgia Institute of Technology, Atlanta, GA, USA. \href{mailto:chenzongchen@gatech.edu}{chenzongchen@gatech.edu}} 
\and Sihan Wang\thanks{Zhiyuan College, Shanghai Jiao Tong University, Shanghai, China. \href{mailto:wangsihan_leo@sjtu.edu.cn}{wangsihan\_leo@sjtu.edu.cn}}}
\date{\today}

\begin{document}
\pagenumbering{gobble}

\maketitle

\begin{abstract}
    Ultra-log-concave distributions are ubiquitous in probability, combinatorics, and statistical mechanics and have been studied extensively. In this paper, we introduce a quantitative high-dimensional extension of this notion, called $\delta$-ultra-log-concavity, for probability measures on $\mathbb{N}^d$ with downward closed support. When \(\delta=1\), this notion coincides with the class studied by Gurvits (2009) via strongly log-concave generating functions, and with the class defined by Anari, Oveis Gharan, and Vinzant (2021) via completely log-concave generating functions; in one dimension, it reduces to classical ultra-log-concavity. We establish several functional inequalities, including a weighted Poincar\'e inequality, a discrete Brascamp--Lieb inequality, and a weighted Wu-type modified log-Sobolev inequality. Our approach combines integrated Bakry--\'Emery calculus for a canonical birth-death chain with Poisson stochastic localization, which arises as the time reversal of coordinatewise binomial thinning. We further establish concentration of measure, maximum-entropy principles, and several closure properties for ultra-log-concave measures, and develop applications to queueing models, polymatroids, antiferromagnetic Potts models, and hardcore models. Finally, a lattice scaling limit of the discrete theory yields Poincar\'e and Brascamp--Lieb inequalities for Laguerre diffusions.
\end{abstract}

\newpage

\tableofcontents

\newpage
\pagenumbering{arabic}

\section{Introduction}
\label{sec:introduction}

Sampling from a high-dimensional distribution specified by local constraints and unnormalized weights is a central task in probability, theoretical computer science, and statistical physics. Among the most practical sampling methods are local Markov chains, which modify only a small part of the current configuration at each step. While the locality makes such chains easy to implement, analyzing their global convergence is often challenging.

A standard approach to proving rapid mixing, i.e., polynomial-time convergence of a Markov chain, is to establish a functional inequality, such as a \Poincare or modified log-Sobolev inequality. In continuous spaces, Bakry--\Emery theory provides a powerful local-to-global paradigm and has found broad applications in log-concave sampling \cite{Che26}. If a probability measure on \(\bb R^d\) has density proportional to \(e^{-V}\) and satisfies \(\nabla^2 V \succeq \rho \*I\), then the Bakry--\Emery criterion yields \Poincare and logarithmic Sobolev inequalities with constant \(\rho\) \cite{BE85,BGL14}. Retaining the full Hessian of the potential \(V\), rather than only its uniform lower bound, leads to the sharper Brascamp--Lieb variance inequality \cite{BL76}. These inequalities turn local curvature information into global control of fluctuations and entropy decay, and they underlie quantitative convergence guarantees for Langevin-type dynamics and the associated sampling algorithms.

The local-to-global paradigm has also been remarkably successful on discrete fixed-size spaces. For a homogeneous distribution on \(\binom{[d]}{k}\), the natural Markov chain associated with it is the down-up walk, which, in each step, removes a random element and then adds back a random feasible element.
Strong log-concavity (equivalently, the Lorentzian property) of the generating polynomial implies sharp functional inequalities and rapid mixing of the down-up walk \cite{ALOVii,BH20,CGM21}. This theory is closely connected to high-dimensional expanders, where global expansion is controlled through local links \cite{Opp18,KO20,AL20}. In particular, after conditioning on \(k-2\) selected elements, one obtains a rank-two link that captures the relevant local second-order information, playing a role analogous to the Hessian in the continuous theory.

This framework extends naturally from fixed-size subsets to integer-valued points on the fixed-size simplex \(\sum_{i=1}^d x_i=k\) \cite{ALOVii,BH20}.  
For a finitely supported inhomogeneous distribution, one can sometimes reduce to the homogeneous setting by introducing a slack coordinate \(x_0 \defeq k - \sum_{i=1}^d x_i\), corresponding to a homogenization of the generating polynomial; see, e.g., \cite{ALGV24,RSJ19,ALOVV21,CGZZ24}. Such reductions are effective when the support is finite and yield Markov chain algorithms for sampling. However, they generally do not directly establish rapid mixing of the canonical coordinatewise birth-death process associated with the measure.

Distributions with unbounded support on \(\bb N^d\), however, arise naturally in applications such as queueing theory and population dynamics. This raises the question of what the appropriate notion of log-concavity should be in this setting. The canonical local dynamics is a coordinatewise birth-death chain, in which each transition adds or removes one particle from a single coordinate. One could truncate the state space and homogenize the resulting finite distribution, but this approach still does not directly analyze the birth-death chain and becomes cumbersome even in one dimension. More importantly, such a black-box reduction obscures the local curvature matrix---the discrete analogue of the Hessian---that should directly govern the behavior of the original birth-death process.

Meanwhile, in the one-dimensional case, the discrete notion of log-concavity is well understood. A probability measure \(\mu\) on \(\bb N\) is \emph{log-concave} if
\[
\mu\tp{n}^2 \ge \mu\tp{n - 1} \mu\tp{n + 1}.
\]
For example, geometric distributions and uniform distributions on an interval \(\set{0, 1, \dots, L}\) are log-concave. The stronger notion of \emph{ultra-log-concavity} (ULC) asks that the factorially reweighted sequence
\[
a\tp{n} \defeq n! \mu\tp{n}
\]
is log-concave. Poisson and binomial distributions are basic examples of ULC measures.

Log-concave and ultra-log-concave sequences arise throughout probability, combinatorics, and statistical mechanics, notably in the enumeration of matchings and the analysis of monomer--dimer systems, the study of independent-set sequences of matroids, and rank statistics of determinantal measures and exclusion processes \cite{HL72,Lig97,BBL09,ALGV24}. For a classical survey of log-concavity in combinatorics, see \cite{Sta89,Bre89}. The factorial weight in the definition of ultra-log-concavity is structural rather than cosmetic: up to normalization, \(a\) is the density of \(\mu\) relative to the Poisson distribution with mean \(1\). From this perspective, ULC can be viewed as a discrete, Poisson-relative analogue of strong log-concavity in continuous space, where the corresponding notion could be interpreted in terms of log-concavity relative to a Gaussian reference measure.

This Poisson geometry also singles out a natural reversible local dynamics. Particles die independently, giving death rate \(n\) at state \(n\), while reversibility determines the birth rate
\[
b\tp{n} \defeq \frac{\tp{n + 1} \mu\tp{n + 1}}{\mu\tp{n}} = \frac{a\tp{n + 1}}{a\tp{n}}.
\]
In this formulation, ULC is precisely the monotonicity of the birth rates. A constant birth rate yields a Poisson stationary distribution and recovers the classical \(M/M/\infty\) queue. More broadly, one-dimensional ULC has long been connected to \Poincare and modified log-Sobolev inequalities, Poisson concentration, and Poisson maximum-entropy principles \cite{DJ13,Joh17,Wu00,ALS25,LRS25,AMM21,Joh07}.

This leads to our guiding question: \emph{Is there a high-dimensional notion of ultra-log-concavity on \(\bb N^d\) that is local, compatible with Poisson geometry, and strong enough to control the natural birth-death dynamics?} Recent work on downward closed subsets of the hypercube identifies the appropriate local-matrix condition in the binary setting \cite{CCCYZ25,GJMPPS26}. We extend this condition from \(\set{0, 1}^d\) to general downward closed subsets of \(\bb N^d\), leading to a quantitative notion of \(\delta\)-ULC. This notion retains a concrete local curvature matrix and reduces to classical ULC when \(d = 1\) and \(\delta = 1\). We show that local curvature lower bounds imply a weighted \Poincare inequality, a discrete Brascamp--Lieb inequality, and a weighted Wu-type modified log-Sobolev inequality. We further characterize \(1\)-ULC in terms of spectral independence and log-concavity of the probability-generating function, and derive Poisson domination, concentration inequalities, maximum-entropy principles, and several closure properties for this class. Applications include queueing models, discrete polymatroids, antiferromagnetic Potts models, and hardcore models. Finally, we relate \(\delta\)-ULC distributions on lattice points to the \emph{Laguerre diffusion} on \(\bb R^d\) through a scaling limit.

\subsection{High-Dimensional Ultra-Log-Concave Distributions}

Let \(\mu\) be a probability measure on \(\bb N^d\) with downward closed support: if \(\mu\tp{\*n}>0\) and \(\*m\le \*n\) coordinatewise, then \(\mu\tp{\*m}>0\). Set
\[
a\tp{\*n} \defeq \*n!\mu\tp{\*n},
\]
where \(\*n! \defeq\prod_{i=1}^d n_i!\), and define the subset of active coordinates, i.e., those that can be increased without leaving the support, by
\[
\+I_{\*n} \defeq \set{i \in \stp{d} \cmid \*n + \*e_i \in \supp\tp{\mu}}.
\]
For \(\*n\in\supp\tp{\mu}\), define matrices indexed by the active coordinates \(\+I_{\*n}\) by
\[
\*C_{\*n} \defeq \tp{1 - \frac{a\tp{\*n} a\tp{\*n + \*e_i + \*e_j}}{a\tp{\*n + \*e_i} a\tp{\*n + \*e_j}}}_{i,j\in\+I_{\*n}},\qquad \*D_{\*n} \defeq \diag \tp{\frac{a\tp{\*n}}{a\tp{\*n + \*e_i}}}_{i\in\+I_{\*n}}.
\]
For \(\delta > 0\), we say that \(\mu\) is \emph{\(\delta\)-ultra-log-concave}, or \(\delta\)-ULC, if
\begin{equation}
    \*C_{\*n} + \tp{1 - \delta} \*D_{\*n} \succeq \*O, \quad \forall \*n\in\supp\tp{\mu}.
    \label{eq:intro-ulc-condition}
\end{equation}
We call \(1\)-ULC measures simply ULC. 
Note that \(\delta > 1\) is permissible.

The measure \(\mu\) is associated with a natural birth-death process. Its reversible generator is
\[
\+L f\tp{\*n} \defeq \sum_{i=1}^d b_i\tp{\*n} \nabla_i^+f\tp{\*n} + \sum_{i=1}^d n_i \nabla_i^-f\tp{\*n}, \qquad b_i\tp{\*n} \defeq \frac{\tp{n_i + 1} \mu\tp{\*n + \*e_i}}{\mu\tp{\*n}} = \frac{a\tp{\*n + \*e_i}}{a\tp{\*n}},
\]
where \(\nabla_i^+ f\tp{\*n} = f\tp{\*n + \*e_i} - f\tp{\*n}\) and \(\nabla_i^- f\tp{\*n} = f\tp{\*n - \*e_i} - f\tp{\*n}\), with unavailable moves assigned rate zero. The entry \(\tp{\*C_{\*n}}_{ij}\) measures how a birth in coordinate \(i\) changes the neighboring birth ratio in coordinate \(j\), while \(\*D_{\*n}\) fixes the scale set by the Poisson reference dynamics. Consequently,
\[
\*K_{\*n} \defeq \*C_{\*n} + \*D_{\*n} \succeq \delta \*D_{\*n}
\]
is the discrete analogue of a Hessian lower bound for the negative Poisson-relative potential.

Several consistency properties support this interpretation. When \(d = 1\), the ULC condition is precisely the log-concavity of \(a\tp{n} = n! \mu\tp{n}\). On a downward closed subset of \(\set{0,1}^d\), it reduces to the local matrix condition of \cite{CCCYZ25,GJMPPS26}. Moreover, ULC is preserved under several natural operations, including convolution (\Cref{thm:ulc-convolution-closure}), marginalization (\Cref{prop:ulc-marginalization-closure}), coordinatewise binomial thinning (\Cref{prop:ulc-thinning-closure}), and particlewise stochastic projection (\Cref{thm:ulc-stochastic-projection-closure}).

Most importantly, \Cref{thm:ulc-generating-function-characterization} shows that \(\mu\) is ULC if and only if its probability-generating function
\[
g_\mu\tp{\boldsymbol{\theta}} \defeq \sum_{\*n\in\bb N^d} \mu\tp{\*n} \boldsymbol{\theta}^{\*n}, \qquad \boldsymbol{\theta}^{\*n} \defeq \theta_1^{n_1} \cdots \theta_d^{n_d}
\]
is strongly log-concave, and also if and only if it is completely log-concave. In addition, \(\mu\) admits an equivalent characterization in terms of \(1\)-\emph{spectral independence} under every tilted weighted pinning. This extends \cite[Proposition~3.8]{CCCYZ25}, which establishes the corresponding equivalence for distributions on \(\set{0, 1}^d\) with downward closed support. It also yields, in the present probabilistic setting, a local curvature characterization of the strongly log-concave generating functions introduced by Gurvits \cite{Gur09a} and the completely log-concave generating functions introduced by Anari, Oveis Gharan, and Vinzant \cite{AOGV21}. Thus, the same condition admits three complementary descriptions: locally through birth ratios and curvature matrices, globally through covariances of tilted pinnings, and analytically through the probability-generating function.

\subsection{Functional Inequalities}
\label{subsec:func-ineq}

We first recall some basics of functional inequalities. For a reversible Markov generator \(\+L\) with stationary measure \(\mu\), the associated Dirichlet form is \(\+E\tp{f, g} \defeq -\*E_{\mu}\stp{f \+L g}\), where \(\*E_{\mu}\) denotes expectation under \(\mu\). A \Poincare inequality with constant \(\gamma\) is
\[
\gamma \, \*{Var}_{\mu}\stp{f} \le \+E\tp{f, f}, \quad \forall f,
\]
where \(\*{Var}_{\mu}\stp{f} \defeq \*E_{\mu}\stp{\tp{f - \*E_{\mu}\stp{f}}^2}\). A modified log-Sobolev inequality with constant \(\rho\) is
\[
\rho \, \*{Ent}_{\mu}\stp{f} \le \+E\tp{\log f, f}, \quad \forall f > 0,
\]
where \(\*{Ent}_{\mu}\stp{f} \defeq \*E_{\mu}\stp{f \log f} - \*E_{\mu}\stp{f} \log \*E_{\mu}\stp{f}\).

We establish several functional inequalities and structural properties for high-dimensional ULC measures.
Our first result converts the local curvature lower bound into a global spectral-gap estimate.

\begin{theorem}[Weighted \Poincare inequality; informal version of \Cref{thm:ulc-poincare-inequality}]
    Every \(\delta\)-ULC probability measure \(\mu\) satisfies
    \begin{equation}
        \delta\,\*{Var}_\mu\stp{f} \le \*E_{\*n\sim\mu}\stp{\sum_{i=1}^d b_i\tp{\*n} \tp{\nabla_i^+ f\tp{\*n}}^2}, \quad \forall f
        \label{eq:intro-weighted-poincare}
    \end{equation}
    Equivalently, the canonical birth-death generator \(\+L\) has spectral gap at least \(\delta\); the spectral gap is the smallest nonzero eigenvalue of \(-\+L\) in \(L^2\tp{\mu}\).
    \label{thm:intro-weighted-poincare}
\end{theorem}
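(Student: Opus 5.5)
I will run an integrated Bakry--\Emery (spectral-gap) argument for the birth-death generator \(\+L\), with the matrix condition \(\*K_{\*n}\succeq \delta \*D_{\*n}\) supplying the curvature. It suffices to show that for every \(f\) with \(\*E_\mu f=0\) in the domain (finitely supported first, then by approximation),
\[
\delta\, \+E(f,f) \le \*E_\mu\stp{(\+L f)^2},
\]
equivalently \(\Gamma_2\)-type integrated inequality \(\delta\,\*E_\mu\Gamma(f)\le \*E_\mu \Gamma_2(f)\). Applying this to eigenfunctions, or integrating along the semigroup \(P_t\), gives \(\delta\, \*{Var}_\mu f\le \+E(f,f)\). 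By reversibility the Dirichlet form is \(\+E(f,f)=\*E_\mu\sum_i b_i(\*n)(\nabla_i^+f(\*n))^2\), which is the right-hand side of \eqref{eq:intro-weighted-poincare}.

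\textbf{Gradient representation.} I will write \(g_i(\*n)\defeq \nabla_i^+ f(\*n)\) for \(\*n\in\supp\mu\) and \(i\in\+I_{\*n}\). Using reversibility, \(\*E_\mu[(\+Lf)^2]= \*E_\mu[\sum_i b_i g_i\cdot(-\+L f)]\) after summation by parts. I will then commute \(\nabla_i^+\) with \(\+L\), computing \(\nabla_i^+\+L f-\+L\nabla_i^+ f\). The death part \(\sum_j n_j\nabla_j^-\) has the commutator \(\nabla_i^+\) applied to \(n_i\)-coefficients, which yields the Poisson term \(-g_i\): this is exactly where the Poisson reference gives curvature \(\*D\). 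The birth part contributes \(\sum_j (b_j(\*n+\*e_i)-b_j(\*n))\nabla_j^+ f(\*n+\*e_i)\)-type terms. Written in terms of \(a\), these differences are
\[
b_j(\*n+\*e_i)-b_j(\*n)=-\frac{a(\*n+\*e_j)}{a(\*n)}\,(\*C_{\*n})_{ij}\cdot\frac{a(\*n+\*e_i)}{a(\*n+\*e_i)}\cdots,
\]
so after reweighting by \(\mu(\*n)\) and \(b_i\), the resulting quadratic form at each site \(\*n\) is \(\sum_{i,j}h_i h_j(\*K_{\*n})_{ij}\). Here \(h_i=\frac{a(\*n+\*e_i)}{a(\*n)}g_i\)-type rescaled gradients. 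This should give
\[
\*E_\mu[(\+Lf)^2]\ \ge\ \*E_{\*n\sim\mu}\Big[ h(\*n)^\top \*K_{\*n} h(\*n)\Big] + (\text{nonnegative } \Gamma(g)\text{ terms}),
\]
while \(\+E(f,f)=\*E_\mu[h^\top \*D_{\*n} h]\) with the same normalization. The hypothesis \(\*K_{\*n}\succeq\delta \*D_{\*n}\) then closes the argument.

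\textbf{Main obstacle.} The hard step is the bookkeeping that makes the cross terms assemble into exactly \(\*C_{\*n}\) at a single base point. Birth terms naturally live at \(\*n+\*e_i\) while \(\*C_{\*n}\) involves the square \(\*n,\*n+\*e_i,\*n+\*e_j,\*n+\*e_i+\*e_j\). The boundary of the downward closed support must also be handled so that indices outside \(\+I_{\*n}\) drop out consistently. To make this work I will first organize \(\*E_\mu[(\+Lf)^2]\) as a sum over unit squares of the lattice, using the discrete identity \(\nabla_j^+\nabla_i^+ f=\nabla_i^+\nabla_j^+ f\). Downward closedness is what guarantees that whenever \(\*n+\*e_i+\*e_j\in\supp\mu\), all four corners are present. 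The Hessian-type terms \(\sum b_ib_j(\nabla_i^+\nabla_j^+f)^2\)-like quantities are the nonnegative remainder, and I will drop them.

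\textbf{Fallback.} If the direct computation is unwieldy, I will instead use the paper's Poisson stochastic localization: decompose \(\*{Var}_\mu f\) along the time reversal of binomial thinning. The \(\delta\)-ULC condition then controls the covariance growth of the tilted pinnings, and the tilted pinnings are again \(\delta\)-ULC.
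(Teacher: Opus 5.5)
Your plan is essentially the paper's proof. The paper establishes the exact identity $\mathbf{E}_\mu[(\mathcal{L}f)^2]=\mathbf{E}_{\mathbf{n}\sim\mu}\bigl[\sum_{i,j}\frac{a(\mathbf{n}+\mathbf{e}_i+\mathbf{e}_j)}{a(\mathbf{n})}(\nabla_i^+\nabla_j^+f(\mathbf{n}))^2+h^\top\mathbf{K}_{\mathbf{n}}h\bigr]$ with exactly your $h_i=\frac{a(\mathbf{n}+\mathbf{e}_i)}{a(\mathbf{n})}\nabla_i^+f(\mathbf{n})$ (\Cref{thm:integrated-gamma-2-form}, proved by shifting the four birth/death products rather than by a commutator), drops the Hessian term, and closes with $\mathbf{K}_{\mathbf{n}}\succeq\delta\mathbf{D}_{\mathbf{n}}$ and the finite-state integrated Bakry--\'Emery criterion. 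Three remarks on your version:

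\begin{itemize}
\item \emph{Commutator route.} If you keep it, commute $\nabla_i^+$ with the shifted generator that has birth rates $b_j(\cdot+\mathbf{e}_i)$ and the same death rates. That generator is reversible for $b_i\mu$. With $\mathcal{L}$ itself, the term $-\mathbf{E}_\mu[b_i\,\nabla_i^+f\,\mathcal{L}\nabla_i^+f]$ is not a Dirichlet form until you recombine it with the $\nabla_i^+\nabla_j^+f$ parts of your cross terms. Done this way, no unit-square reorganization is needed, because $b_i(\mathbf{n})(b_j(\mathbf{n}+\mathbf{e}_i)-b_j(\mathbf{n}))$ already sits at the base point $\mathbf{n}$.
\item \emph{Infinite support.} The paper truncates $\mu$ itself to coordinate boxes rather than approximating $f$. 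The truncations stay $\delta$-ULC because truncation only passes to principal submatrices and raises some diagonal entries.
\item \emph{Fallback.} Your claim that the tilted pinnings $(1-t)*\mu^{\mathbf{x}}$ stay $\delta$-ULC fails for $\delta>1$: the tilt replaces $\mathbf{D}_{\mathbf{n}}$ by $\mathbf{D}_{\mathbf{n}}/(1-t)$. This is why the trickle-down proof in \Cref{sec:trickle-down} uses $\delta$-ULC only as a terminal condition.
\end{itemize}
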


The constant \(\delta\) in \eqref{eq:intro-weighted-poincare} is sharp for exact \(\delta\)-ULC distributions, namely, those for which all the inequalities in \eqref{eq:intro-ulc-condition} hold with equality. Examples include products of negative binomial distributions when \(\delta < 1\), Poisson distributions when \(\delta = 1\), and binomial distributions when \(\delta > 1\) (\Cref{ex:ulc-exact-1d,ex:ulc-product-measure}).

The weighted \Poincare inequality is dimension-free and is expressed intrinsically in terms of the birth rates of the measure. It also yields the covariance bound
\[
\cov{\mu} \preceq \frac{1}{\delta} \diag\tp{\mean{\mu}}, 
\]
whenever the relevant moments are finite (\Cref{cor:ulc-cov-bound}). Here, \(\mean{\mu}\) and \(\cov{\mu}\) denote the mean vector and covariance matrix, respectively, and \(\diag\tp{\mean{\mu}}\) is the diagonal matrix of coordinate means. From the sampling perspective, \(\delta\)-ULC therefore implies \((1 / \delta)\)-spectral independence of the measure with respect to Poisson stochastic localization.

The weighted inequality also immediately implies an unweighted \Poincare inequality by uniformly bounding the birth rates:
\[ 
\delta \, \*{Var}_\mu\stp{f} \le \sup_{\*n \in \supp\tp{\mu}, \, i \in \stp{d}} b_i\tp{\*n} \, \*E_{\mu}\stp{\norm{\nabla^+ f}_2^2}, \quad \forall f,
\]
whenever the supremum is finite. In one dimension, the monotonicity of the birth rates for ULC distributions gives a particularly simple bound:
\[  
\*{Var}_\mu\stp{f} \le \frac{\mu\tp{1}}{\mu\tp{0}} \*E_{\mu}\stp{\tp{\nabla^+ f}^2}, \quad \forall f.
\]
This recovers \cite[Theorem~1.5]{Joh17} specialized to ULC measures (\Cref{ex:one-dim-ulc-poincare}).

The preceding results use the \(\delta\)-ULC condition, or equivalently the uniform curvature lower bound \(\*K_{\*n} \succeq \delta\*D_{\*n}\). Retaining the full curvature matrix \(\*K_{\*n}\) leads to a sharper functional inequality.

\begin{theorem}[Discrete Brascamp--Lieb inequality; informal version of \Cref{thm:discrete-brascamp-lieb}]
Suppose that \(\*K_{\*n}\) is positive definite for every \(\*n \in \supp\tp{\mu}\). Then
\[
\*{Var}_\mu\stp{f} \le \*E_{\*n \sim \mu}\stp{\tp{\nabla^+ f\tp{\*n}}_{\+I_{\*n}}^{\top} \*K_{\*n}^{-1} \tp{\nabla^+ f\tp{\*n}}_{\+I_{\*n}}}, \quad \forall f.
\]
\label{thm:intro-brascamp-lieb}
\end{theorem}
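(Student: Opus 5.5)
We prove the discrete Brascamp--Lieb inequality with an integrated Bakry--\Emery argument along the semigroup \(P_t = e^{t\+L}\), keeping the full matrix \(\*K_{\*n}\) where the weighted \Poincare inequality (\Cref{thm:intro-weighted-poincare}) only uses the bound \(\*K_{\*n}\succeq\delta\*D_{\*n}\). We may assume \(f\) finitely supported, or bounded with finite right-hand side, and pass to the limit at the end. The starting point is the representation
\[
\*{Var}_\mu\stp{f} = 2\int_0^\infty \+E\tp{P_t f, P_t f}\dd t,
\]
together with \(\+E\tp{g,g} = \*E_\mu\stp{\sum_i b_i\tp{\nabla_i^+ g}^2}\), which follows from reversibility (\(\mu\tp{\*n}b_i\tp{\*n} = \mu\tp{\*n+\*e_i}\tp{n_i+1}\)). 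The quantity to control is the birth gradient \(\nabla^+ P_t f\).

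The key step is a commutation or intertwining relation for the discrete gradient. Write \(u_i\tp{\*n}\defeq \nabla_i^+ g\tp{\*n}\) for \(i\in\+I_{\*n}\). We would compute \(\nabla_i^+ \+L g\) directly. The death terms \(n_j\nabla_j^-\) shifted by \(\*e_i\) produce \(\+L\)-type terms in \(u\) plus a \(-u_i\) term, because \(n_i\) becomes \(n_i+1\). The birth terms produce the difference of birth rates \(b_j\tp{\*n+\*e_i}-b_j\tp{\*n}\), multiplied by \(u_j\) shifted by \(\*e_i\). Since \(b_j\tp{\*n+\*e_i}/b_j\tp{\*n} = a\tp{\*n}a\tp{\*n+\*e_i+\*e_j}/\tp{a\tp{\*n+\*e_i}a\tp{\*n+\*e_j}} = 1-\tp{\*C_{\*n}}_{ij}\), this coefficient is exactly \(-b_j\tp{\*n}\tp{\*C_{\*n}}_{ij}\). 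The expected outcome is a vector-valued identity
\[
\nabla^+\+L g = \widetilde{\+L}\,\nabla^+ g - \*B_{\*n}\*K_{\*n}\,\nabla^+ g
\]
up to bookkeeping, where \(\widetilde{\+L}\) is a matrix-valued generator acting on vector fields weighted by \(b\), and \(\*B_{\*n}=\diag\tp{b_i\tp{\*n}}=\*D_{\*n}^{-1}\). The \(\*D_{\*n}\) part of \(\*K_{\*n}\) accounts for the death-rate shift. This is a discrete analogue of \(\nabla L = (L-\nabla^2V)\nabla\).

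Rather than exponentiating this identity, we would follow the Helffer / \(L^2\) duality route, which is what yields \(\*K^{-1}\). First write \(f-\*E_\mu f = -\+L h\), so that \(\*{Var}_\mu\stp{f} = \+E\tp{f,h} = \*E_\mu\stp{\sum_i b_i \nabla_i^+ f\,\nabla_i^+ h}\). Then derive the ``\(\Gamma_2\)'' inequality
\[
\*E_\mu\stp{\tp{\+L h}^2} \ge \*E_\mu\stp{\tp{\nabla^+ h}^\top \*B\*K\*B\,\nabla^+ h}
\]
by integrating the commutation identity against \(\nabla^+ h\); the \(\widetilde{\+L}\) part contributes a nonnegative Dirichlet form on vector fields. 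Cauchy--Schwarz in the metric \(\*B\*K\*B\) then gives
\[
\*{Var}_\mu\stp{f}^2\le \*E\stp{(\nabla^+ f)^\top\*K^{-1}\nabla^+ f}\,\*E\stp{(\nabla^+h)^\top \*B\*K\*B\nabla^+ h}\le \*E\stp{(\nabla^+ f)^\top\*K^{-1}\nabla^+ f}\,\*{Var}_\mu\stp{f},
\]
using \(\*E\stp{(\+Lh)^2}=\*{Var}_\mu\stp{f}\). This is the claimed bound.

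The main obstacle is proving the \(\Gamma_2\) inequality with exactly the matrix \(\*B\*K\*B\) in the discrete setting. There the chain rule fails, the shifted terms \(u_j\tp{\*n+\*e_i}\) sit at different sites, and the boundary of the downward closed support must be handled, since active sets \(\+I_{\*n}\) vary with \(\*n\). I would first try to symmetrize the cross terms using reversibility, pairing \(\*n\) with \(\*n+\*e_i\), to exhibit the remainder as a sum of squares. If that fails, the fallback is the Poisson stochastic localization route the paper advertises. Under time-reversed binomial thinning the covariance of the localized measure is controlled by \(\*K^{-1}\), and the variance decomposition along the localization is then bounded step by step.
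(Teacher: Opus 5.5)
Your argument is the paper's proof of \Cref{thm:discrete-brascamp-lieb}. You solve $-\mathcal{L}h = f - \mathbf{E}_\mu[f]$, write $\mathbf{Var}_\mu[f] = \mathbf{E}_\mu[\mathbf{u}^\top\mathbf{v}]$ with $\mathbf{u} = \nabla^+ f$ and $\mathbf{v} = \mathbf{B}\nabla^+ h$ on the active coordinates, apply Cauchy--Schwarz in the $\mathbf{K}$-metric, and use $\mathbf{E}_\mu[\mathbf{v}^\top\mathbf{K}\mathbf{v}] \le \mathbf{E}_\mu[(\mathcal{L}h)^2] = \mathbf{Var}_\mu[f]$. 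Set $b_{ij}(\mathbf{n}) := a(\mathbf{n}+\mathbf{e}_i+\mathbf{e}_j)/a(\mathbf{n})$. Then $\mathbf{B}\mathbf{K}\mathbf{B} = (b_ib_j - b_{ij})_{i,j} + \operatorname{diag}(b_i)$, which is the first-order part of the discrete Bochner formula (\Cref{thm:integrated-gamma-2-form}). So the step you flag as the main obstacle is exactly that formula with the nonnegative Hessian term dropped. The paper proves it by expanding $\mathcal{L}f\,\mathcal{L}g$ into birth--birth, death--death and birth--death products and shifting summation indices.

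Your commutation route also proves it, after two fixes.
\begin{enumerate}
\item The zeroth-order term is $-\mathbf{K}\mathbf{B}\nabla^+h$, not $-\mathbf{B}\mathbf{K}\nabla^+h$. Pairing the latter with $\mathbf{B}\nabla^+h$ would give the non-symmetric $\mathbf{B}^2\mathbf{K}$.
\item The shifted terms $u_j(\mathbf{n}+\mathbf{e}_i)$ disappear if the $i$-th component of $\widetilde{\mathcal{L}}$ is not $\mathcal{L}$ itself. Take it to be the linear-death generator with birth rates $b_j(\cdot+\mathbf{e}_i)$ on $\{\mathbf{n} : \mathbf{n}+\mathbf{e}_i \in \operatorname{supp}(\mu)\}$.
\end{enumerate}
Using $\nabla_i^+\nabla_j^+ = \nabla_j^+\nabla_i^+$ and writing $u_i = \nabla_i^+ h$, one gets
\[
\nabla_i^+\mathcal{L}h(\mathbf{n}) = \sum_{j} b_j(\mathbf{n}+\mathbf{e}_i)\,\nabla_j^+u_i(\mathbf{n}) + \sum_{j} n_j\,\nabla_j^-u_i(\mathbf{n}) - \sum_{j}(\mathbf{K}_{\mathbf{n}}\mathbf{B}_{\mathbf{n}})_{ij}\,u_j(\mathbf{n}).
\]
Since $b_i(\mathbf{n})\,b_j(\mathbf{n}+\mathbf{e}_i) = b_{ij}(\mathbf{n})$, this twisted generator is reversible with respect to $b_i\mu$. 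That is exactly the weight in $\mathbf{E}_\mu[(\mathcal{L}h)^2] = -\mathbf{E}_\mu[\sum_i b_i\,\nabla_i^+h\,\nabla_i^+\mathcal{L}h]$. Integrating therefore gives
\[
\mathbf{E}_\mu[(\mathcal{L}h)^2] = \mathbf{E}_\mu\Bigl[\sum_{i,j} b_{ij}(\nabla_i^+\nabla_j^+h)^2\Bigr] + \mathbf{E}_\mu[(\nabla^+h)^\top\mathbf{B}\mathbf{K}\mathbf{B}\,\nabla^+h],
\]
which is the paper's identity. Inactive directions carry zero rates, so the varying active sets need no separate treatment. No symmetrization and no localization fallback is needed.

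For infinite support, your ``pass to the limit'' should be done as in the paper, because no spectral gap is assumed and $(-\mathcal{L})^{-1}$ need not be bounded. Run the finite argument on the box truncations $S_N$. Their local matrices are principal submatrices of $\mathbf{K}_{\mathbf{n}}$ plus a nonnegative diagonal, since only $\mathbf{n}+2\mathbf{e}_i$ can leave an $\ell_\infty$-box, so they remain positive definite. For large $N$ they coincide with $\mathbf{K}_{\mathbf{n}}$ wherever $\nabla^+f \neq 0$.
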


This is an exact discrete-space counterpart of the continuous Brascamp--Lieb inequality: local directions with larger curvature make a smaller contribution to the variance. Matrix domination by \(\delta \*D_{\*n}\) recovers the preceding weighted \Poincare inequality for \(\delta\)-ULC measures, while applying the theorem to linear functions gives a matrix covariance bound (\Cref{cor:discrete-brascamp-lieb-cov-bound}).

At the entropy level, we prove a stronger, Wu-type modified log-Sobolev inequality under the ULC condition itself (i.e., \(\delta = 1\)). This distinction is essential: when \(\delta < 1\), the additional diagonal term appearing in the \(\delta\)-ULC condition is not preserved under arbitrary exponential tilts, whereas ULC is.

\begin{theorem}[Weighted Wu-type modified log-Sobolev inequality; informal version of \Cref{thm:ulc-modified-log-sobolev-inequality}]
Every ULC probability measure \(\mu\) with finite first moments satisfies the weighted Wu-type modified log-Sobolev inequality with constant \(1\):
\begin{equation}
    \*{Ent}_{\mu}\stp{f} \le \*E_{\*n \sim \mu}\stp{\sum_{i=1}^d b_i\tp{\*n} \Psi\tp{f\tp{\*n},\nabla_i^+f\tp{\*n}}} \le \+E\tp{\log f,f}, \quad \forall f > 0,
    \label{eq:intro-weighted-wu-inequality}
\end{equation}
where
\[
\Psi\tp{u,v} \defeq \tp{u + v} \log\tp{u + v} - u \log u - v \tp{\log u + 1},
\]
and \(\+E\) denotes the Dirichlet form associated with \(\+L\). In particular, \(\+L\) satisfies the modified log-Sobolev inequality with constant \(1\).
\label{thm:intro-wu-mlsi}
\end{theorem}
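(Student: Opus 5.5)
We will combine Poisson stochastic localization with an integrated Bakry--\Emery argument, as announced in the abstract. The second inequality in \eqref{eq:intro-weighted-wu-inequality} is elementary: by reversibility,
\[
\+E\tp{\log f,f}=\*E_{\*n\sim\mu}\stp{\sum_i b_i\tp{\*n}\nabla_i^+\log f\tp{\*n}\,\nabla_i^+ f\tp{\*n}},
\]
and for $u>0$, $u+v>0$ one checks pointwise that $\Psi\tp{u,v}\le v\tp{\log\tp{u+v}-\log u}$. This is just $\tp{u+v}\log\tp{\tp{u+v}/u}-v\le v\log\tp{\tp{u+v}/u}$, i.e.\ $u\log\tp{1+v/u}\le v$. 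So the work is the first inequality.

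\textbf{Step 1: the semigroup.} Let $\tp{P_t}$ be the birth-death semigroup with generator $\+L$, and set $f_t=P_tf$. Then
\[
\*{Ent}_\mu\stp{f}=\int_0^\infty \+E\tp{\log f_t,f_t}\dd t,
\]
using ergodicity; the finite first moments are needed for this and to justify differentiation. Define $\Gamma_\Psi\tp{f}\tp{\*n}\defeq\sum_i b_i\tp{\*n}\Psi\tp{f\tp{\*n},\nabla_i^+f\tp{\*n}}$ and $H\tp{t}\defeq\*E_\mu\stp{\Gamma_\Psi\tp{f_t}}$. Wu's argument is that it suffices to prove two things:
\begin{enumerate}
\item the $\Gamma_2$-type decay $H'\tp{t}\le -\+E\tp{\log f_t,f_t}+\tp{\text{terms}}$, or more cleanly $H\tp{t}\ge\int_t^\infty \+E\tp{\log f_s,f_s}\dd s$;
\item $H\tp{t}\to0$ as $t\to\infty$.
\end{enumerate}
Integrating then gives $\*{Ent}_\mu\stp{f}\le H\tp{0}$.

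\textbf{Step 2: a localization alternative.} A cleaner route, and the one we would try first, is Poisson stochastic localization, the time reversal of coordinatewise binomial thinning. Thin each coordinate of $\*n\sim\mu$ with retention probability $e^{-t}$. The conditional law of $\*n$ given the thinned configuration $\*m$ is the measure $\mu$ pinned at $\*m$ and tilted by $\tp{1-e^{-t}}^{\*n-\*m}$ (with the factorial reweighting). The entropy then decomposes as the entropy of the $\*m$-marginal plus the expected conditional entropy. The key facts are:
\begin{enumerate}
\item ULC is preserved under tilts and pinnings ($\delta=1$ is exactly where tilts are allowed), and under thinning (\Cref{prop:ulc-thinning-closure}).
\item Through \Cref{cor:ulc-cov-bound}, tilted pinnings satisfy $\cov{}\preceq\diag\tp{\mean{}}$.
\end{enumerate}
This covariance bound controls the rate at which entropy is transferred between the marginal and the conditional parts. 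Tracking the Doob martingale $\*E\stp{f\mid \*m_t}$ with jumps of size $\nabla_i^+$ at intensity given by the birth rates, the quadratic variation is in $\Psi$-form. Integrating over $t$ against $e^{-t}$ weights should reproduce exactly $\*E_\mu\stp{\sum_i b_i\Psi\tp{f,\nabla_i^+f}}$, with constant $1$ coming from the spectral independence bound $1/\delta=1$.

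\textbf{Main obstacle.} The hard step is the entropic (rather than variance) curvature estimate: showing that $t\mapsto$ the $\Psi$-energy of the localized martingale is controlled by the Poisson covariance bound. For the \Poincare case, linearity makes the covariance bound suffice directly. For entropy we need that the $\Psi$-Dirichlet form contracts along the localization. This requires a discrete Bakry--\Emery computation in which the off-diagonal terms of $\*C_{\*n}$ appear as a quadratic form in the vector $\tp{\nabla_i^+\log f}_i$, or in a convexity-adapted variant suited to $\Psi$, weighted by $f$. Positivity of this form follows from $\*C_{\*n}\succeq\*O$ only after an exponential tilt by $f$, which is why $\delta=1$ is essential. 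We would carry out this calculation by applying the ULC condition to the tilted measure $f\mu/\*E_\mu f$ locally. Failing this, we would bound the cross terms via the joint convexity of $\Psi$ along the lines of Wu's original Poisson proof.
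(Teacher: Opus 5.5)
Your treatment of the second inequality is correct and is exactly the paper's (via $\Psi\tp{u,v}\le v\tp{\log\tp{u+v}-\log u}$ and reversibility). The ingredients you list in Step~2 are also the ones the paper uses: ULC is closed under weighted pinning and exponential tilting, hence $\cov{\boldsymbol{\theta} * \mu^{\*x}} \preceq \diag\tp{\mean{\boldsymbol{\theta} * \mu^{\*x}}}$ for all $\*x$ and $\boldsymbol{\theta}$ by \Cref{cor:ulc-cov-bound}.

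The gap is that you never supply the step that turns these covariance bounds into an entropy estimate, and that step is the whole content of the first inequality.

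Step~1 is circular. At $t=0$ the target $H\tp{t}\ge\int_t^\infty\+E\tp{\log f_s,f_s}\dd s$ is the theorem itself. The decay $H'\tp{t}\le-\+E\tp{\log f_t,f_t}$ is an entropic $\Gamma_2$-type inequality for which you give no mechanism. A general ULC measure has no commutation relation $\nabla_i^+ P_t = e^{-t} P_t \nabla_i^+$ as on Poisson space, and the integrated Bakry--\Emery calculus only controls variance.

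In Step~2, your proposed fix---apply the ULC condition locally to $f\mu/\*E_\mu\stp{f}$---fails, because multiplying by an arbitrary positive $f$ destroys ULC; only exponential tilts preserve it. Your description of the dynamics is also inaccurate:
\begin{itemize}
\item At intermediate times the Doob martingale $\*E_{\nu_t}\stp{f}$ jumps by $\*{Cov}_{\*n\sim\nu_t}\stp{f\tp{\*n},n_i}/\tp{\mean{\nu_t}_i-X_{t,i}}$ at rate $\tp{\mean{\nu_t}_i-X_{t,i}}/\tp{1-t}$. It does not jump by $\nabla_i^+ f$ at rate $b_i$.
\item Integrating the entropy dissipation over the whole time range returns $\*{Ent}_\mu\stp{f}$ itself, not the Wu functional.
\end{itemize}

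The missing idea is entropic stability, obtained from convex duality and information projection. Fix $\rho \defeq \rho_t^{\*x} = \tp{1-t} * \mu^{\*x}$ with mean $\*m_0$, and define:
\begin{itemize}
\item $H\tp{\*m}$, the relative entropy to $\rho$ of the exponential tilt of $\rho$ with mean $\*m$. This is the convex conjugate of the log-Laplace transform, so $\nabla^2 H\tp{\*m}$ is the inverse covariance of that tilt.
\item $G\tp{\*m} \defeq \sum_i \Psi\tp{m_{0,i}, m_i - m_{0,i}}$, for which $\nabla^2 G\tp{\*m} = \diag\tp{\*m}^{-1}$.
\end{itemize}
The covariance bound for every tilt makes $H - G$ convex. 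Both functions vanish to first order at $\*m_0$, so $G \le H$.

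Information projection then transfers this bound from exponential tilts to the $g$-tilt $\wt{\rho} \propto g\rho$ with $g \defeq f\tp{\*x + \cdot}$. Its mean shift is $\*{Cov}_{\*k\sim\rho}\stp{g\tp{\*k},\*k}/\*E_\rho\stp{g}$, and $\*E_\rho\stp{g}\,\-{KL}\tp{\wt{\rho}\|\rho} = \*{Ent}_\rho\stp{g}$. So the dissipation term in \Cref{thm:phi-entropy-evolution} is bounded by the conditional entropy. This yields $\partial_t \*E\stp{\*{Ent}_{\nu_t}\stp{f}} \ge -\frac{1}{1-t}\*E\stp{\*{Ent}_{\nu_t}\stp{f}}$, and integrating gives $\*E\stp{\*{Ent}_{\nu_{1-\eps}}\stp{f}} \ge \eps\,\*{Ent}_\mu\stp{f}$.

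The Wu functional enters only as the terminal derivative. The first-order posterior expansion gives $\*E\stp{\*{Ent}_{\nu_{1-\eps}}\stp{f}} = \eps\,\*E_{\*n\sim\mu}\stp{\sum_i b_i\tp{\*n}\Psi\tp{f\tp{\*n},\nabla_i^+ f\tp{\*n}}} + o\tp{\eps}$. Comparing the two as $\eps \to 0^+$ gives the first inequality with constant $1$.
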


The constant \(1\) in \eqref{eq:intro-weighted-wu-inequality} is sharp for product Poisson measures. 

The inequality \eqref{eq:intro-weighted-wu-inequality} is of Wu type: its integrand contains the Bregman divergence term \(\Psi\tp{f, \nabla_i^+f}\) associated with the entropy function \(u \mapsto u \log u\). This is stronger than the usual modified log-Sobolev inequality involving \(\+E\tp{\log f, f}\), which in turn governs entropy contraction along the semigroup generated by \(\+L\). Inequalities of this form were introduced by Wu \cite{Wu00} for Poisson point processes.

As with the weighted \Poincare inequality, the weighted Wu-type inequality yields an unweighted bound by uniformly controlling the birth rates. In one dimension, monotonicity of the birth rates for ULC measures gives the following result:
\[  
\*{Ent}_\mu\stp{f} \le \frac{\mu\tp{1}}{\mu\tp{0}} \*E_{\mu}\stp{\Psi\tp{f, \nabla^+ f}}, \quad \forall f \ge 0.
\]
This recovers \cite[Theorem~1.3]{Joh17} specialized to ULC measures (\Cref{ex:one-dim-ulc-mlsi}).

The weighted Wu-type modified log-Sobolev inequality with constant \(1\) also yields multivariate analogues of several classical Poisson extremality and concentration results for ULC measures on \(\bb N^d\). Write \(\*m \defeq \mean{\mu}\) and \(\pi_{\*m} \defeq \bigotimes_{i = 1}^d \-{Pois}\tp{m_i}\).

\begin{theorem}[Informal version of \Cref{thm:poisson-laplace-transform-domination,thm:poisson-orthant-concentration,thm:poisson-maximum-entropy}]
If \(\mu\) satisfies the weighted Wu-type modified log-Sobolev inequality with constant \(1\), i.e., \eqref{eq:intro-weighted-wu-inequality} holds, and has finite positive mean \(\*m\), then the following statements hold.
\begin{enumerate}
    \item \textbf{Poisson domination:} For every \(\boldsymbol{\lambda}\in\bb R^d\),
    \begin{equation}
        \*E_{\*X \sim \mu}\stp{e^{\inner{\boldsymbol{\lambda}}{\*X}}} \le \exp\tp{\sum_{i = 1}^d m_i \tp{e^{\lambda_i} - 1}} = \*E_{\*Z \sim \pi_{\*m}}\stp{e^{\inner{\boldsymbol{\lambda}}{\*Z}}}.
    \label{eq:intro-mgf-comparison}
    \end{equation}

    \item \textbf{Concentration of measure:} For \(\*t \in \bb R^d\) satisfying \(t_i > -m_i\) for every \(i \in \stp{d}\), define the orthant event
    \[
    \+O_{\*t} \defeq \bigcap_{\substack{i \in \stp{d} \\ t_i > 0}} \set{X_i - m_i \ge t_i} \cap \bigcap_{\substack{i \in \stp{d} \\ t_i < 0}} \set{X_i - m_i \le t_i}.
    \]
    Then
    \begin{equation}
        \*{Pr}_{\mu}\stp{\+O_{\*t}} \le \exp\tp{-\sum_{i = 1}^d \frac{t_i^2}{2 m_i} h\tp{\frac{t_i}{m_i}}},
        \label{eq:intro-concentration}
    \end{equation}
    where
    \[  
    h\tp{x} \defeq 2 \frac{\tp{1 + x} \log\tp{1 + x} - x}{x^2}, \quad x \ge -1.
    \]

    \item \textbf{Maximum-entropy principle:} For a distribution \(\nu\) on \(\bb N^d\), define its Shannon entropy as \(H\tp{\nu} \defeq -\sum_{\*n \in \supp\tp{\nu}} \nu\tp{\*n} \log \nu\tp{\*n}\). Then
    \begin{equation}
        H\tp{\pi_{\*m}} - H\tp{\mu} \ge \-{KL}\tp{\mu \| \pi_{\*m}} \ge 0.
        \label{eq:intro-maximum-entropy}
    \end{equation}
    Here, \(\-{KL}\tp{\mu \| \pi_{\*m}}\) denotes the relative entropy of \(\mu\) with respect to \(\pi_{\*m}\). Consequently, among all ULC measures with mean \(\*m\), the product Poisson measure \(\pi_{\*m}\) is the unique entropy maximizer.
\end{enumerate}
\label{thm:intro-concentration}
\end{theorem}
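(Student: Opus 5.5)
We will derive all three statements from the weighted Wu-type inequality \eqref{eq:intro-weighted-wu-inequality} through a Herbst-type argument, followed by a Chernoff bound and a relative-entropy computation.

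\textbf{Poisson domination.} Fix \(\boldsymbol{\lambda}\in\bb R^d\). For \(s \in \stp{0,1}\), let \(f_s\tp{\*n} \defeq e^{s\inner{\boldsymbol{\lambda}}{\*n}}\) and \(Z\tp{s} \defeq \*E_\mu\stp{f_s}\). First we assume \(Z\) is finite; this is automatic for \(\boldsymbol{\lambda}\le 0\), and otherwise we will truncate \(f\) and pass to a monotone limit. With \(f = f_s\) we have \(\nabla_i^+ f = f\tp{e^{s\lambda_i}-1}\). Since \(\Psi\tp{u,uc} = u\stp{\tp{1+c}\log\tp{1+c} - c}\), the right-hand side of \eqref{eq:intro-weighted-wu-inequality} equals
\[
\sum_i \phi\tp{s\lambda_i}\,\*E_\mu\stp{b_i f_s}, \qquad \phi\tp{t} \defeq t e^t - e^t + 1 \ge 0 .
\]
The key identity is detailed balance: \(\*E_\mu\stp{b_i\tp{\*n} g\tp{\*n}} = \*E_\mu\stp{n_i\, g\tp{\*n - \*e_i}}\). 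This gives \(\*E_\mu\stp{b_i f_s} = e^{-s\lambda_i}\*E_\mu\stp{X_i f_s}\). The left-hand side is \(\*{Ent}_\mu\stp{f_s} = sZ'\tp{s} - Z\log Z\), and \(Z'\tp{s} = \sum_i \lambda_i \*E_\mu\stp{X_i f_s}\).

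Write \(w_i\tp{s} \defeq \*E_\mu\stp{X_i f_s}/Z\tp{s}\) for the tilted means. The inequality becomes
\[
\sum_i \stp{s\lambda_i - \phi\tp{s\lambda_i} e^{-s\lambda_i}} w_i \le \log Z .
\]
Because \(s\lambda - \phi\tp{s\lambda}e^{-s\lambda} = 1 - e^{-s\lambda}\), this reads \(\sum_i \tp{1-e^{-s\lambda_i}} w_i\tp{s} \le \log Z\tp{s}\).

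A coordinatewise Herbst argument does not close directly, since \(w_i\) is not the derivative of \(\log Z\) along a single coordinate. Instead we argue by comparison. Set \(G\tp{s} \defeq \log Z\tp{s} - \sum_i m_i\tp{e^{s\lambda_i}-1}\); we want \(G \le 0\). We have \(G\tp{0} = 0\) and
\[
G'\tp{s} = \sum_i \lambda_i \tp{w_i - m_i e^{s\lambda_i}} .
\]
Rather than work with \(G\) directly, we will apply the inequality for general \(\boldsymbol{\lambda}\) and exploit the freedom to rescale each coordinate. Concretely, we will consider \(F\tp{\boldsymbol{\lambda}} \defeq \log \*E_\mu e^{\inner{\boldsymbol{\lambda}}{\*X}}\), whose gradient is exactly \(w\). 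The inequality is then the first-order PDE inequality
\[
\sum_i \tp{1-e^{-\lambda_i}}\partial_i F \le F, \qquad F\tp{0} = 0, \qquad \nabla F\tp{0} = \*m .
\]
The Poisson function \(F_0 = \sum_i m_i\tp{e^{\lambda_i}-1}\) satisfies this with equality. Now set \(u_i \defeq e^{\lambda_i}-1\), so that \(\tp{1-e^{-\lambda_i}}\partial_{\lambda_i} = u_i\partial_{u_i}\). In these coordinates the inequality is \(\tp{u\cdot\nabla_u - 1}F \le 0\), i.e. \(\frac{d}{dr}\tp{F\tp{ru}/r} \le 0\) along rays. Since \(F\tp{ru}/r \to \inner{\*m}{u}\) as \(r\to 0^+\), we obtain \(F\tp{u} \le \inner{\*m}{u} = F_0\). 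This proves \eqref{eq:intro-mgf-comparison}.

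\textbf{Concentration of measure.} We apply Chernoff's bound on the orthant: for \(\lambda_i\) with the sign of \(t_i\), and \(\lambda_i = 0\) when \(t_i = 0\),
\[
\*{Pr}\stp{\+O_{\*t}} \le e^{-\inner{\boldsymbol{\lambda}}{\*m+\*t}}\,\*E e^{\inner{\boldsymbol{\lambda}}{\*X}} .
\]
We bound the moment generating function by \eqref{eq:intro-mgf-comparison}. The exponent then separates by coordinate, and choosing \(\lambda_i = \log\tp{1+t_i/m_i}\) gives exactly \(\tfrac{t_i^2}{2m_i}h\tp{t_i/m_i}\). The case \(t_i \le -m_i\) is excluded by hypothesis.

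\textbf{Maximum-entropy principle.} Write
\[
H\tp{\pi_{\*m}} - H\tp{\mu} - \-{KL}\tp{\mu\|\pi_{\*m}} = \*E_\mu\stp{\log\pi_{\*m}} - \*E_{\pi_{\*m}}\stp{\log\pi_{\*m}} .
\]
Since \(-\log \pi_{\*m}\tp{\*n} = \sum_i \tp{m_i - n_i\log m_i + \log n_i!}\) and the means agree, this reduces to showing \(\*E_\mu\stp{\log n_i!} \le \*E_{\pi}\stp{\log n_i!}\) for each \(i\). This concerns only the marginal of \(\mu\) in coordinate \(i\). The function \(n\mapsto\log n!\) is convex with increments \(\log(n+1)\), and the marginal is Poisson-dominated in the convex order. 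Indeed, MGF domination together with equal means gives \(\*E\phi\tp{X_i} \le \*E\phi\tp{Z_i}\) for convex \(\phi\) via the standard representation of convex functions as mixtures of \(e^{\lambda x}\) and affine pieces. The main obstacle is this step: justifying that convex-order domination follows from MGF domination, which is false in general. We will instead rely on the marginal being ULC (by \Cref{prop:ulc-marginalization-closure}) and the known one-dimensional Johnson result that a ULC law is dominated in the convex order by the Poisson law of the same mean. Finally, uniqueness follows because equality forces \(\-{KL}\tp{\mu\|\pi_{\*m}} = 0\).
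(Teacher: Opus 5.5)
Your arguments for Poisson domination and for the orthant bound are essentially the paper's. The ray argument in the coordinates $u_i = e^{\lambda_i}-1$ is the paper's Herbst argument along $\lambda_i(s) = \log\bigl(1+e^{s}(e^{\lambda_i}-1)\bigr)$ with $r = e^{s}$, and your Chernoff step is identical. The genuine gap is in the maximum-entropy part. The hypothesis is only that $\mu$ satisfies the weighted Wu inequality with constant $1$ (the class $\mathrm{WLSI}(1)$ of \Cref{thm:poisson-maximum-entropy}); it is not assumed that $\mu$ is ULC. Your fallback needs the $i$-th marginal of $\mu$ to be ULC before it can invoke one-dimensional convex-order domination by the Poisson law. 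You get this from \Cref{prop:ulc-marginalization-closure}, i.e.\ from ultra-log-concavity of $\mu$ itself. A measure that merely satisfies the functional inequality need not have ULC marginals. So your route proves $H(\pi_{\mathbf m}) - H(\mu) \ge \mathrm{KL}(\mu\,\|\,\pi_{\mathbf m})$ only for ULC $\mu$. That is enough for the final uniqueness sentence, but not for the inequality as stated.

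The missing idea is that you do not need the full convex order, only the single function $n \mapsto \log n!$. This function is an affine term plus a positive mixture of the exponentials $e^{-sn}$, which is exactly what your Part~1 controls. Frullani's integral gives
\[
\log n! = \int_0^{\infty} \frac{e^{-s}}{s}\Bigl(n - \frac{1-e^{-sn}}{1-e^{-s}}\Bigr)\,ds,
\]
and the integrand is nonnegative because $\frac{1-e^{-sn}}{1-e^{-s}} = \sum_{k<n} e^{-sk} \le n$. Take expectations by Tonelli; finiteness comes from the exponential moments supplied by Part~1, which you also need to make $H(\mu)$ and $\mathbf{E}[\log X_i!]$ finite before subtracting. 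Using $\mathbf{E}[X_i] = \mathbf{E}[Z_i] = m_i$ for $Z_i \sim \mathrm{Pois}(m_i)$, this gives
\[
\mathbf{E}[\log Z_i!] - \mathbf{E}[\log X_i!] = \int_0^{\infty} \frac{e^{-s}}{s(1-e^{-s})}\bigl(\mathbf{E}e^{-sZ_i} - \mathbf{E}e^{-sX_i}\bigr)\,ds \ge 0,
\]
where the sign follows from Part~1 with $\boldsymbol{\lambda} = -s\mathbf{e}_i$. This is the paper's argument, and it works on all of $\mathrm{WLSI}(1)$.

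A smaller point on Part~1: you need the truncation for every $\boldsymbol{\lambda}$, not only when the moment-generating function might be infinite, and it must be chosen so the Herbst inequality survives. The admissible class $\mathcal{A}$ requires $\log f \in L^2(\mu)$, which for $f = e^{\langle \boldsymbol{\lambda}, \cdot\rangle}$ needs second moments. Your limit $F(r\mathbf{u})/r \to \langle \mathbf{m}, \mathbf{u}\rangle$ also needs $F$ finite near the origin. The paper truncates in the exponent, $f_N(\mathbf{n}) = \exp\bigl(\sum_i \lambda_i \min\{n_i, N\}\bigr)$. Detailed balance then turns the right-hand side into $\sum_i (\lambda_i - 1 + e^{-\lambda_i})\,\mathbf{E}[X_i f_N \mathbf{1}\{X_i \le N\}]$. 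Since $\lambda - 1 + e^{-\lambda} \ge 0$, this is at most $\sum_i (\lambda_i - 1 + e^{-\lambda_i})\,\mathbf{E}[\min\{X_i, N\} f_N]$. Hence your first-order inequality holds for the truncated cumulant function, and Fatou removes the truncation at the end. A generic truncation of $f$, such as capping it or multiplying by an indicator, does not preserve the exponential-linear form on which this computation rests.
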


The moment-generating-function comparison \eqref{eq:intro-mgf-comparison} follows from a Herbst-type argument. Applying generic Chernoff bounds to this comparison yields the Poisson-type concentration inequalities in \eqref{eq:intro-concentration}. The comparison \eqref{eq:intro-mgf-comparison} also leads to the maximum-entropy principle of product Poisson measures \eqref{eq:intro-maximum-entropy}.

\subsection{Examples and Applications}

Here, we present some examples of \(\delta\)-ULC measures and applications to these models.
The common feature of the following examples is that a model-specific local matrix calculation can be separated from the general functional-inequality machinery. Once the local curvature matrix is controlled, the global conclusions follow from the general theory in \Cref{subsec:func-ineq}.

\paragraph{Two interacting queues.}

We consider a simple interacting variant of the classical \(M/M/\infty\) queue; see \Cref{ex:two-queue-model} for the detailed calculation. The \(M/M/\infty\) model is a birth-death process describing a queue with infinitely many servers, so that each customer begins service immediately upon arrival. In this model, the arrival rate is constant, while the departure rate is proportional to the current queue length \cite{Asm03}. The stationary measure of the \(M/M/\infty\) queue is Poisson. 

We consider two infinite-server queues, with the service rate normalized so that a queue of length \(n\) has total departure rate \(n\). The arrival rates depend on the relative queue lengths: customers join the longer queue at rate \(b_{\-{long}}\), reflecting a preference for popularity, and the shorter queue at rate \(b_{\-{short}}\), reflecting a preference for balance. When the two queues have equal length, we assign birth rate \(b_{\-{long}}\) to both. 
The stationary distribution is
\[
\mu\tp{n_1, n_2} \propto \frac{b_{\-{long}}^{\max\set{n_1, n_2}} b_{\-{short}}^{\min\set{n_1, n_2}}}{n_1! n_2!}.
\]

We show that, when \(\abs{b_{\-{long}} - b_{\-{short}}} < 1\), this distribution is \(\delta\)-ULC with \(\delta \defeq 1 - \abs{b_{\-{long}} - b_{\-{short}}}\). Consequently, the corresponding birth-death process described above has spectral gap at least \(\delta\). In the special case \(b_{\-{long}} = b_{\-{short}} = b\), the interaction disappears: the system reduces to two independent \(M/M/\infty\) queues with birth rate \(b\), and its stationary distribution is the product Poisson measure \(\-{Pois}\tp{b} \otimes \-{Pois}\tp{b}\), which is ULC.

\paragraph{Discrete polymatroids.}

Discrete polymatroids generalize matroids from subset configurations in \(\set{0, 1}^d\) to vectors in \(\bb N^d\). They play a central role in combinatorics and discrete convex analysis \cite{HH02,BHKKL25}. Concretely, let \(r:2^{\stp{d}}\to\bb N\) be a normalized, monotone, and submodular rank function. The associated discrete polymatroid consists of the integer points:
\[
P_r \defeq \set{\*n \in \bb N^d \cmid \forall S \subseteq \stp{d}, \, \sum_{i \in S} n_i \le r\tp{S}}.
\]
Thus, discrete polymatroids allow arbitrary integral multiplicities subject to submodular rank constraints; see \Cref{subsec:polymatroid} for the detailed treatment.

For activities \(\boldsymbol{\lambda}\in\bb R_{>0}^d\), consider the product-Poisson measure conditioned on \(P_r\):
\[
\mu_{r,\boldsymbol{\lambda}}\tp{\*n} \propto \frac{\boldsymbol{\lambda}^{\*n}}{\*n!}, \quad \*n \in P_r.
\]
We prove that \(\mu_{r,\boldsymbol{\lambda}}\) is ULC (\Cref{thm:polymatroid-independent-vectors-ulc}). Consequently, its birth-death chain satisfies the modified log-Sobolev inequality with constant \(1\). Given a membership oracle for \(P_r\), a lazy uniformization of the birth-death chain can be implemented efficiently and, for fixed \(\boldsymbol{\lambda}\), has mixing time
\[
O_{\boldsymbol{\lambda}}\tp{\tp{d + R}\tp{\log R + \log\log\tp{d + R} + \log\tp{1 / \eps}}},
\]
where \(R \defeq r\tp{\stp{d}}\) is the rank of the polymatroid (\Cref{cor:polymatroid-birth-death-mixing}). This extends the local-walk perspective developed for matroids from set-valued independent sets to integer-valued independent vectors \cite{ALOVii,CGM21,CGZZ24}.

\paragraph{Antiferromagnetic Potts models.}

The \(q\)-state antiferromagnetic Potts measure on a graph \(G = \tp{V, E}\) with interaction parameter \(0 < B < 1\) is defined by
\[
\mu_{G, B}\tp{\sigma} \propto B^{m_G\tp{\sigma}}, \quad \sigma \in \stp{q}^V,
\]
where
\[
m_G\tp{\sigma} \defeq \abs{\set{vw \in E \cmid \sigma_v = \sigma_w}}
\]
is the number of monochromatic edges in \(\sigma\); see \Cref{subsec:q-potts} for a detailed treatment.

Fix \(q\ge3\). Let \(G\) have maximum degree \(\Delta\), and set \(\lambda^{\star} \defeq -\lambda_{\min}\tp{\*A_G}\). Under the downward closed encoding in \eqref{eq:downward-closed-potts-encoding}, the embedded Potts measure is \(\delta\)-ULC whenever
\[
\lambda^{\star} \tp{1 - B} B^{1 - \Delta} < 1, \qquad \delta \defeq 1 - \lambda^{\star} \tp{1 - B} B^{1 - \Delta}.
\]
Comparison with the local birth-death chain gives a spectral gap of at least \(\delta B^\Delta / \tp{q \abs{V}}\) for the heat-bath Glauber dynamics (\Cref{thm:antiferromagnetic-potts-ulc} and \Cref{cor:q-potts-rapid-mixing}). For random \(\Delta\)-regular graphs and fixed \(q\), the resulting polynomial mixing regime includes, for large \(\Delta\), parameters beyond the tree-uniqueness threshold (\Cref{cor:q-potts-random-regular-mixing}); see \cite{GGY18,BBR23,BBBR23} for relevant tree thresholds.

\paragraph{Hardcore model.}

The hardcore model on a graph \(G = \tp{V, E}\) with fugacity \(\lambda > 0\) is defined by
\[
\mu_{G, \lambda}\tp{I} \propto \lambda^{\abs{I}}, \quad I \in \+I\tp{G},
\]
where \(\+I\tp{G}\) is the collection of all independent sets of \(G\). If \(G\) is \(\Delta\)-regular and
\[
\lambda \le \frac{1}{-\lambda_{\min}\tp{\*A_G} - 1},
\]
then the Brascamp--Lieb inequality gives the explicit variance bound on the size of the random independent set \(I \sim \mu_{G, \lambda}\):
\[
\frac{1}{\abs{V}} \*{Var}\stp{\abs{I}} \le \frac{\lambda}{1+\tp{\Delta+1}\lambda};
\]
see \Cref{thm:hardcore-cardinality-variance} and \Cref{subsec:variance-hardcore} for the full analysis. For recent complementary work on variances in the hardcore model, see \cite{DST25,ZX26}.

\subsection{Our Approach}

Our proof has two main components. First, the integrated Bakry--\Emery calculus yields the weighted \Poincare and Brascamp--Lieb inequalities (\Cref{thm:intro-weighted-poincare,thm:intro-brascamp-lieb}). Second, Poisson stochastic localization yields the weighted Wu-type modified log-Sobolev inequality (\Cref{thm:intro-wu-mlsi}). Poisson domination, concentration of measure, and the maximum-entropy principle in \Cref{thm:intro-concentration} then follow from this modified log-Sobolev inequality. We also highlight that a scaling limit transfers the results for linear-death chains to the Laguerre diffusion in continuous space.

\paragraph{Integrated Bakry--\Emery calculus.}

The Bakry--\Emery method originates in the continuous diffusion setting \cite{BE85}, but its classical pointwise curvature criterion is often too restrictive for discrete jump processes. We instead use the integrated Bakry--\Emery approach, which is well suited to proving sharp \Poincare inequalities for birth-death chains; see, e.g., \cite{KKO13,Joh17,GJMPPS26}.

For the canonical birth-death generator with linear death rates, we derive an exact integrated discrete Bochner identity (\Cref{thm:integrated-gamma-2-form}). After integration against the stationary law, the first-order contribution is governed by \(\*K_{\*n}\), while the remaining second-difference term is nonnegative. Discarding that term and using \(\*K_{\*n} \succeq \delta \*D_{\*n}\) proves the weighted \Poincare inequality. This is an integrated argument: it exploits cancellations under \(\mu\) and does not assert a pointwise \(\-{CD}\tp{\delta, \infty}\) condition as in the continuous Bakry--\Emery theory. To obtain the Brascamp--Lieb inequality, we instead retain the full matrix, solve a Poisson equation, and use a matrix Cauchy--Schwarz inequality. The method extends the discrete Bochner program of \cite{BCDPP06,CDPP09,KKO13,Joh17,GJMPPS26} beyond uniform curvature bounds.

\paragraph{Poisson stochastic localization.}

Stochastic localization provides a general mechanism for establishing functional inequalities by interpolating between a target measure and progressively simpler conditional laws. Within the localization-scheme framework, this approach has proved particularly effective in the analysis of Glauber dynamics for Ising models and the proximal sampler for strongly log-concave measures \cite{CE25}. On the Boolean hypercube \(\set{0, 1}^d\), negative-fields localization introduces noise through Bernoulli thinning, whereby each \(1\) is independently erased to \(0\); the resulting noising--denoising Markov chain is precisely the field dynamics \cite{CFYZ24}.

We extend this mechanism from the hypercube to the count space \(\bb N^d\) by replacing coordinatewise Bernoulli thinning with binomial thinning. The time reversal of this noising process, equivalently its denoising process, is exactly the Poisson--\Follmer process. This noising--denoising formulation has already appeared in the context of generative modeling \cite{SBP26}. Conditional on the thinned observation \(\*X_t=\*x\), the posterior law is
\[
\nu_{t,\*x}\tp{\*n} \propto \mu\tp{\*n} \prod_{i=1}^d\binom{n_i}{x_i} t^{x_i}\tp{1-t}^{n_i-x_i}, \quad \*n\ge\*x.
\]
These posterior measures form a linear-tilt localization process and satisfy exact dissipation identities for conditional \(\Phi\)-entropies (\Cref{thm:poisson-linear-tilt,thm:phi-entropy-evolution}). The reverse-time dynamics is a denoising birth process, while the accelerated infinitesimal noising--denoising chain converges to the birth-death generator \(\+L\) (\Cref{thm:associated-limiting-process}). ULC is preserved under the tilts and weighted pinnings arising along the flow, and the resulting covariance bounds yield approximate entropy conservation and, ultimately, the weighted Wu-type inequality (\Cref{prop:sufficient-condition-approximate-conservation}). This construction connects stochastic localization \cite{Eld13,EAM22,CE25,AKV24,STZ25} with Poisson--\Follmer processes and binomial-thinning flows \cite{KL18,ALS25,LRS25,SBP26}.

\paragraph{Laguerre diffusion.}

Finally, a lattice scaling of the linear-death chain gives the Laguerre diffusion. Under this scaling, the generators, Dirichlet forms, and local curvature matrices converge to their continuous Gamma-reference counterparts. Passing the discrete functional inequalities to the limit produces the Laguerre \Poincare and Brascamp--Lieb inequalities in \Cref{thm:laguerre-functional-inequalities}.

\subsection{Related Work}
\label{subsec:related-work}

\paragraph{Ultra-log-concave measures.}

The classical theory of ULC sequences, including their connections to convolution, negative dependence, and combinatorial probability, was developed in \cite{Walk76,Lig97,Pem00}; see also the survey \cite{SW14}. \Poincare inequalities for discrete log-concave and ULC measures were studied in \cite{DJ13,Joh17}, while Wu's modified log-Sobolev inequality on Poisson space initiated the corresponding entropy theory \cite{Wu00}. More recent work establishes and sharpens Wu-type modified log-Sobolev inequalities, Poisson-type concentration bounds, and stability results for one-dimensional ULC measures \cite{ALS25,LRS25,AMM21}. The Poisson maximum-entropy property for sums of Bernoulli random variables and related ULC classes was studied in \cite{Joh07}. One-dimensional ULC measures also satisfy several additional properties, including a monotone entropic law of thin numbers \cite{Yu09}, concavity of entropy under thinning \cite{YJ09}, and domination by the Poisson law in convex order \cite{Yu10}. In \Cref{sec:concentration-maximum-entropy,sec:ulc-closure-properties}, we establish multivariate analogues of several of these results for high-dimensional ULC measures.

\paragraph{Log-concave polynomials.}

Log-concave and Lorentzian polynomials provide a powerful framework for studying homogeneous discrete measures, matroids, and polymatroids \cite{ALOVii,BH20,CGM21}. Gurvits introduced strongly log-concave entire functions with nonnegative coefficients and developed multivariate Newton-like inequalities for this class \cite{Gur09a}. Anari, Oveis Gharan, and Vinzant introduced completely log-concave polynomials and applied them to obtain a deterministic approximation algorithm for counting matroid bases \cite{AOGV21}. In the homogeneous setting, both notions are equivalent to the Lorentzian property \cite{BH20}. More recently, \cite{CCCYZ25} established a local matrix characterization---corresponding to ULC in our terminology---of strong log-concavity for probability measures supported on finite downward closed set systems. We extend this characterization to \(\bb N^d\) and prove that ULC is equivalent to both strong log-concavity and complete log-concavity of the probability-generating function (\Cref{thm:ulc-generating-function-characterization}).

\paragraph{Integrated Bakry--\Emery criterion.}

The integrated Bakry--\Emery criterion provides a useful characterization of the \Poincare inequality and a flexible route to sharp spectral-gap estimates. A general framework based on this criterion was developed in \cite{BCDPP06} and applied to interacting particle systems. It was subsequently extended to infinite-volume systems by \cite{KKO13}, with applications to Glauber dynamics for point processes. More recently, \cite{GJMPPS26} adapted the argument of \cite{KKO13} to probability measures supported on finite downward closed set systems, yielding a short proof of the \Poincare criterion of \cite{CCCYZ25}. Our local matrix condition extends this criterion from binary to count-valued configurations and, more importantly, endows the local matrix with a curvature interpretation. Through the point-process lift developed in \Cref{sec:point-processes}, the resulting criterion also recovers the earlier criterion for Glauber dynamics of point processes in \cite{KKO13}.

Separately, \cite{Joh17} used an argument similar to \cite{KKO13} for one-dimensional birth-death chains with unit birth rates and introduced the notion of \(c\)-log-concavity. The integrated Bakry--\Emery criterion has also recently been used to establish rapid mixing in several other settings, including spin systems \cite{LOG25,GZ26}, the SK model \cite{Wan26}, and colorings \cite{CL26}.

\paragraph{Localization and binomial thinning flows.}

Stochastic localization, introduced by Eldan \cite{Eld13}, has become a central tool for converting local covariance or entropy estimates into global functional inequalities \cite{EAM22,CE25,AKV24}. Negative-fields localization and the associated field dynamics have likewise proved effective for analyzing Markov-chain mixing on the hypercube \cite{CFYZ24,AKJVP22,CE25,CCYZ25,CCCYZ25}. Our process is instead adapted to the Poisson reference measure and unbounded count spaces. It is related to Poisson--F\"ollmer processes, Poisson transport, and binomial thinning interpolations \cite{KL18,ALS25,LRS25,SBP26}; \Cref{sec:poisson-bridge} makes the Poisson \Schrodinger-bridge connection explicit,\footnote{Upon completing the manuscript, we noticed that \cite[Section~2.4]{SBP26} contains the same observations. We include the explicit computation for the \Schrodinger bridge in \Cref{sec:poisson-bridge} for completeness.} while \Cref{sec:trickle-down} gives an alternative localization proof of the weighted \Poincare inequality via the trickle-down theorem \cite{AKV24}.

\subsection{Organization}
\label{subsec:organization}

\Cref{sec:preliminaries} collects notation, background on Markov chains and Markov processes, and preliminaries on functional inequalities. \Cref{sec:bakry-emery-birth-death} develops the integrated Bakry--\Emery calculus, introduces high-dimensional ultra-log-concavity, proves the weighted \Poincare and discrete Brascamp--Lieb inequalities, and presents an application to the hardcore model. \Cref{sec:poisson-stochastic-localization} constructs Poisson stochastic localization, identifies its limiting birth-death process, proves the weighted Wu-type modified log-Sobolev inequality, and relates ULC measures to strongly log-concave generating functions. \Cref{sec:concentration-maximum-entropy} derives the Poisson comparison, concentration, and maximum-entropy principle. \Cref{sec:applications} establishes ultra-log-concavity for polymatroids and Potts models and derives the resulting algorithmic consequences. \Cref{sec:laguerre} identifies Laguerre dynamics as the scaling limit of birth-death chains and extends the discrete results on \(\bb N^d\) to \(\bb R_{> 0}^d\).

\section{Preliminaries}
\label{sec:preliminaries}

\subsection{Notation}
\label{subsec:notations}

We write \(\bb N \defeq \set{0, 1, \dots}\), \(\bb N_{> 0} \defeq \set{1, 2, \dots}\), and \(\stp{d} \defeq \set{1, \dots, d}\). Vectors are written in bold. We denote by \(\*e_i\) the \(i\)-th standard basis vector and by \(\*0\) and \(\*1\) the all-zero and all-one vectors, with dimensions determined by context. Vector inequalities are interpreted coordinatewise. For \(\*n \in \bb N^d\) and \(\boldsymbol{\theta} \in \bb R^d\), we write
\[
\*n! \defeq \prod_{i = 1}^d n_i!, \qquad \boldsymbol{\theta}^{\*n} \defeq \prod_{i = 1}^d \theta_i^{n_i}.
\]

For finite index sets \(I\) and \(J\), the spaces \(\bb{R}^I\) and \(\bb{N}^I\) consist of vectors indexed by \(I\), while \(\bb{R}^{I \times J}\) consists of matrices with rows indexed by \(I\) and columns indexed by \(J\). We write \(\*1_I \in \bb{R}^I\) for the corresponding all-ones vector, \(\*I_I\) for the identity matrix indexed by \(I\), and \(\*1_{I \times J}\) for the all-ones matrix indexed by \(I \times J\); integer subscripts such as \(\*1_{d \times m}\) specify matrix dimensions. The sets \(\bb{R}_{> 0}^d\) and \(\bb{R}_{\ge 0}^d\) denote the positive and nonnegative orthants, respectively.

For \(S \subseteq \stp{d}\) and a vector \(\*x\) indexed by \(\stp{d}\), we write \(\*x_S\) for its restriction to \(S\). Vectors on disjoint subsets of \(\stp{d}\) are identified with their concatenation in the original coordinate order.

For a function \(f\) on a subset of \(\bb Z^d\), its forward and backward differences are
\[
\nabla_i^+ f\tp{\*n} \defeq f\tp{\*n + \*e_i} - f\tp{\*n}, \qquad \nabla_i^- f\tp{\*n} \defeq f\tp{\*n - \*e_i} - f\tp{\*n},
\]
whenever the corresponding neighboring states lie in the domain. If a neighbor lies outside the state space, the corresponding difference is used only in a term whose transition rate is zero, and that term is interpreted as zero. A set \(S \subseteq \bb N^d\) is downward closed if \(\*n \in S\) and \(\*0 \le \*m \le \*n\) imply \(\*m \in S\). A set \(S \subseteq \bb Z^d\) is upward closed if \(\*n \in S\) and \(\*m \ge \*n\) imply \(\*m \in S\). For a function \(f: S \to \bb R\) and a probability measure \(\mu\) on a countable set \(S\), we write
\[
\supp\tp{f} \defeq \set{x \in S \cmid f\tp{x} \ne 0}, \qquad \supp\tp{\mu} \defeq \set{x \in S \cmid \mu\tp{x} > 0},
\]
and
\[
\+C_{\-c}\tp{S} \defeq \set{f: S \to \bb R \cmid \supp\tp{f} \text{ is finite}}.
\]
We use \(\*1_A\) for the indicator of an event or set \(A\), \(\delta_x\) for the Dirac measure at \(x\), and \(\+P\tp{S}\) for the set of probability measures on \(S\).

For an open set \(U \subseteq \bb{R}^d\) and \(k \in \bb{N}\), \(\+C^k\tp{U}\) denotes the space of \(k\)-times continuously differentiable functions on \(U\), while \(\+C_{\-c}^{\infty}\tp{U}\) denotes the space of smooth compactly supported functions. For a smooth function \(f\), we use \(\partial_i f\), \(\nabla f\), \(\nabla^2 f\), and \(\Delta f \defeq \sum_{i = 1}^d \partial_{ii} f\) for its partial derivatives, gradient, Hessian, and Laplacian, respectively.

For a probability measure \(\mu\), expectation, probability, variance, and covariance are denoted by \(\*E_{\mu}\), \(\*{Pr}_{\mu}\), \(\*{Var}_{\mu}\), and \(\*{Cov}_{\mu}\), respectively. When random variables and their laws are clear, the subscripts are omitted. For a random vector \(\*X \sim \mu\), we write
\[
\mean{\mu} \defeq \*E\stp{\*X}, \qquad \cov{\mu} \defeq \*{Cov}\stp{\*X},
\]
and use \(\var{\mu}\) in one dimension. For a convex function \(\Phi\), the \(\Phi\)-entropy of \(f\) is
\[
\*{Ent}_{\mu}^{\Phi}\stp{f} \defeq \*E_{\mu}\stp{\Phi\tp{f}} - \Phi\tp{\*E_{\mu}\stp{f}}.
\]
The choices \(\Phi\tp{x} = x^2\) and \(\Phi\tp{x} = x \log x\) give \(\*{Var}_{\mu}\stp{f}\) and, for nonnegative \(f\), \(\*{Ent}_{\mu}\stp{f}\), respectively, with the convention \(0 \log 0 \defeq 0\).

For probability measures \(\mu\) and \(\nu\) on a countable set \(S\), we write \(\mu \ll \nu\) when \(\mu\) is absolutely continuous with respect to \(\nu\), and define
\[
\-{TV}\tp{\mu, \nu} \defeq \frac{1}{2} \sum_{x \in S} \abs{\mu\tp{x} - \nu\tp{x}}, \qquad \-{KL}\tp{\mu \| \nu} \defeq \sum_{x \in \supp\tp{\mu}} \mu\tp{x} \log \frac{\mu\tp{x}}{\nu\tp{x}},
\]
and
\[
\chi^2\tp{\mu \| \nu} \defeq \sum_{x \in \supp\tp{\nu}} \frac{\tp{\mu\tp{x} - \nu\tp{x}}^2}{\nu\tp{x}}.
\]
The relative entropy and \(\chi^2\)-divergence are set to \(+\infty\) when \(\mu \not\ll \nu\). We use the analogous Radon--Nikodym definitions on general measurable spaces. The notation \(\mu_n \Rightarrow \mu\) denotes weak convergence.

For measures \(\mu \ll \nu\), we write \(\dd \mu / \dd \nu\) for the Radon--Nikodym derivative. In integrals, \(\nu\tp{\-d x}\) denotes integration with respect to \(\nu\). We write \(p\tp{x} \propto w\tp{x}\) when \(p\) is the normalization of a nonnegative weight \(w\), assuming that the normalizing constant is finite and positive.

For \(\mu \in \+P\tp{\bb N^d}\) and \(\boldsymbol{\theta} \in \bb R_{> 0}^d\) such that the normalizing constant below is finite, the exponential tilt of \(\mu\) by \(\boldsymbol{\theta}\) is
\[
\tp{\boldsymbol{\theta} * \mu}\tp{\*n} \defeq \frac{\boldsymbol{\theta}^{\*n} \mu\tp{\*n}}{\*E_{\*k \sim \mu}\stp{\boldsymbol{\theta}^{\*k}}}, \quad \*n \in \bb N^d.
\]
For \(\theta > 0\), we abbreviate \(\theta * \mu \defeq \tp{\theta \*1} * \mu\), and exponentiation of vectors is coordinatewise: \(e^{\boldsymbol{\theta}} \defeq \tp{e^{\theta_1}, \dots, e^{\theta_d}}\). Thus \(e^{\boldsymbol{\theta}} * \mu\) is the exponential tilt with natural parameter \(\boldsymbol{\theta}\).

For \(\mu, \nu \in \+P\tp{\bb N^d}\), their convolution is denoted by \(\mu * \nu\) and is defined by
\[
\tp{\mu * \nu}\tp{\*n} \defeq \sum_{\*0 \le \*k \le \*n} \mu\tp{\*k} \nu\tp{\*n - \*k}, \quad \*n \in \bb N^d.
\]
For sets \(A, B \subseteq \bb N^d\), their Minkowski sum is \(A + B \defeq \set{\*a + \*b \cmid \*a \in A, \, \*b \in B}\).

We use \(\inner{\cdot}{\cdot}\) and \(\norm{\cdot}_p\) for the Euclidean inner product and \(\ell_p\)-norms, and \(\inner{\*A}{\*B}_{\-{HS}} \defeq \operatorname{Tr}\tp{\*A^{\top} \*B}\) for the Hilbert--Schmidt inner product, where \(\*A^{\top}\) denotes the transpose of \(\*A\). The symbols \(\*I\), \(\*O\), and \(\diag\tp{\*v}\) denote the identity matrix, zero matrix, and diagonal matrix with diagonal \(\*v\), respectively. For symmetric matrices, \(\*A \preceq \*B\) and \(\*A \prec \*B\) mean that \(\*B - \*A\) is positive semidefinite and positive definite, respectively; the reverse orders are denoted by \(\succeq\) and \(\succ\). We write \(\lambda_{\min}\tp{\*A}\) and \(\lambda_{\max}\tp{\*A}\) for the smallest and largest eigenvalues of a symmetric matrix \(\*A\), and \(\*A^{\dagger}\) for its Moore--Penrose pseudoinverse. The symbols \(\otimes\) and \(\bigotimes\) denote products of measures or Kronecker products of matrices, according to context. We write \(L^p\tp{\mu}\) for the usual function spaces and \(\inner{f}{g}_{L^2\tp{\mu}} \defeq \*E_{\mu}\stp{fg}\).

We use \(\-{Law}\tp{X}\) for the law of a random variable \(X\). The distributions \(\-{Bin}\tp{n, p}\) and \(\-{Pois}\tp{\lambda}\) use their standard parameterizations. For \(r > 0\) and \(p \in \tp{0, 1}\), the negative binomial distribution \(\-{NB}\tp{r, p}\) has mass function
\[
\-{NB}\tp{r, p; k} \defeq \frac{\Gamma\tp{r + k}}{\Gamma\tp{r} k!} p^r \tp{1 - p}^k, \quad k \in \bb N.
\]
Here, \(\Gamma\) denotes Euler's gamma function. The distribution \(\-{Gamma}\tp{\alpha, \beta}\) uses the shape-rate parameterization. When a final argument is included, as in \(\-{Bin}\tp{n, p; k}\), it denotes the corresponding probability mass at \(k\).

For \(S \subseteq \stp{d}\) and \(\*X \sim \mu \in \+P\tp{\bb N^d}\), we write \(\mu_S \defeq \-{Law}\tp{\*X_S}\). For \(\*x \in \bb N^S\) satisfying \(\*{Pr}\stp{\*X_S = \*x} > 0\), we write \(\mu^{S \gets \*x} \defeq \-{Law}\tp{\*X \mid \*X_S = \*x}\) for the conditional law.

We use standard asymptotic notation. In \(O_{\xi}\tp{r}\), the implicit constant may depend on \(\xi\), while \(o_n\tp{1}\) denotes a quantity tending to zero as \(n \to \infty\); analogous conventions are used for other parameters and limiting variables.

For a metric space \(\Omega\) and \(T > 0\), let \(\+D\tp{\stp{0, T}, \Omega}\) denote the Skorokhod space of c\`adl\`ag paths from \(\stp{0, T}\) to \(\Omega\). For a path \(\*p\), \(\*p_{\stp{0, T}}\) denotes its full trajectory and \(\*p_t\) its value at time \(t\). If \(R\) is a Markov law on this path space, then \(R_t\) denotes its time-\(t\) marginal, while \(R_{s \to t}\) and \(R_{t \to s}\) denote its forward and backward transition kernels for \(0 \le s \le t \le T\).

\subsection{Discrete-Time Markov Chains and Continuous-Time Markov Processes}
\label{subsec:markov-chains-basics}

\paragraph{Discrete time.}

Let \(\Omega\) be a finite state space. A discrete-time Markov chain is characterized by a transition kernel \(P \in \bb R^{\Omega \times \Omega}\) satisfying \(P\tp{x, y} \ge 0\) and \(\sum_{y \in \Omega} P\tp{x, y} = 1\) for every \(x \in \Omega\). We identify the chain with \(P\) and view the kernel as the operator
\[
P f\tp{x} \defeq \sum_{y \in \Omega} P\tp{x, y} f\tp{y}.
\]
A probability measure \(\mu\) on \(\Omega\) is stationary for \(P\) if \(\mu P = \mu\). The kernel \(P\) is reversible with respect to \(\mu\) if
\[
\mu\tp{x} P\tp{x, y} = \mu\tp{y} P\tp{y, x}, \quad \forall x, y \in \Omega.
\]
Equivalently, \(P\) is self-adjoint on \(L^2\tp{\mu}\).

\begin{definition}
    The Markov chain \(P\) is irreducible if, for all \(x, y \in \Omega\), there exists \(t \in \bb N\) such that \(P^t\tp{x, y} > 0\). It is aperiodic if, for every \(x \in \Omega\), \(\gcd\set{t \in \bb N_{> 0} \cmid P^t\tp{x, x} > 0} = 1\). It is ergodic if it is both irreducible and aperiodic.
\end{definition}

\begin{theorem}[Fundamental Theorem of Markov Chains; see, e.g., \cite{LPW17}]
    Let \(P\) be an ergodic Markov chain on a finite state space \(\Omega\). Then \(P\) has a unique stationary distribution \(\mu\), and for every initial distribution \(\nu\) on \(\Omega\), \(\nu P^t \to \mu\) pointwise as \(t \to \infty\).
\end{theorem}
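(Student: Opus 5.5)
The plan is the classical argument through the Perron--Frobenius structure of ergodic stochastic matrices. First, \emph{existence}: \(P\) is stochastic, so \(P\*1 = \*1\). Hence \(1\) is an eigenvalue of \(P\), and therefore of \(P^{\top}\), so there is a nonzero row vector \(v\) with \(vP = v\). To obtain a nonnegative solution I would use the Markov--Kakutani / Krylov--Bogoliubov averaging. The Cesàro averages \(\frac1T\sum_{t<T}\nu P^t\) lie in the compact simplex \(\+P(\Omega)\), so some subsequence converges to a limit \(\mu\). This limit is stationary, because \(\frac1T(\nu P^T-\nu)\to 0\).

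Second, \emph{positivity and uniqueness} via a uniform Doeblin-type minorization. Irreducibility plus aperiodicity gives a single \(r\) with \(P^r(x,y)>0\) for all \(x,y\); this is the step I expect to be the main obstacle. I would prove it as follows.
\begin{itemize}
\item Fix \(x\). The set \(T(x)=\set{t\ge1 \cmid P^t(x,x)>0}\) is closed under addition and has gcd \(1\).
\item By the elementary number-theoretic lemma (a finite subset already has gcd \(1\), and Bézout combinations give all large integers), \(T(x)\) contains every \(t\ge t_0(x)\).
\item For any \(y\), irreducibility gives some \(s(x,y)\) with \(P^{s(x,y)}(x,y)>0\). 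Then \(P^{t+s(x,y)}(x,y)>0\) for all \(t\ge t_0(x)\).
\item Finiteness of \(\Omega\) lets us take \(r=\max_{x,y}\tp{t_0(x)+s(x,y)}\).
\end{itemize}

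Third, \emph{convergence}. Set \(\alpha \defeq \min_{x,y} P^r(x,y)>0\). Then each row satisfies \(P^r(x,\cdot)\ge \alpha\abs{\Omega}\,u\), where \(u\) is the uniform distribution, and \(\alpha\abs{\Omega}\le 1\). I would use coupling, or equivalently the Dobrushin contraction coefficient, to show
\[
\-{TV}\tp{\nu P^r,\nu' P^r}\le (1-\alpha\abs{\Omega})\,\-{TV}\tp{\nu,\nu'} .
\]
Iterating this bound and using that \(P\) does not increase total variation, any two initial laws merge geometrically. Taking \(\nu'=\mu\) stationary gives \(\nu P^t\to\mu\) in total variation, hence pointwise. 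Uniqueness follows at once: if \(\mu'\) is also stationary, then \(\mu'=\mu' P^t\to \mu\), so \(\mu'=\mu\).

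Everything except the number-theoretic lemma in the second step is routine. That lemma I would prove directly from Bézout's identity together with closure of \(T(x)\) under addition.
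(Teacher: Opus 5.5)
Your argument is correct. The paper does not prove this statement; it only cites \cite{LPW17}. Your route is essentially the standard one found there:
\begin{itemize}
\item positivity of $P^r$ from irreducibility, aperiodicity, and the additive-semigroup lemma;
\item a Doeblin minorization giving geometric contraction in total variation;
\item uniqueness read off from convergence.
\end{itemize}

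The differences are minor. The textbook builds the stationary law from irreducibility alone, via expected visits during an excursion, and proves uniqueness via harmonic functions. So its existence and uniqueness need no aperiodicity. It then minorizes $P^r(x,\cdot) \ge \delta\,\pi$ by that stationary law itself. You instead minorize by the uniform law and obtain existence by Ces\`aro compactness; your uniqueness uses aperiodicity too, which is harmless here since ergodicity is assumed.

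Because your minorization needs no stationary law in advance, you can drop both the Ces\`aro step and the unused eigenvector remark. Your estimate makes $\nu \mapsto \nu P^r$ a strict contraction of the complete metric space $(\mathcal{P}(\Omega), \mathrm{TV})$, so it has a unique fixed point $\mu$. Since $(\mu P)P^r = (\mu P^r)P = \mu P$, the measure $\mu P$ is also a fixed point, so $\mu P = \mu$. Thus existence, uniqueness, and convergence all follow from the single contraction estimate.
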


We measure the resulting convergence in total variation.

\begin{definition}
    Let \(P\) be an ergodic Markov chain on a finite state space \(\Omega\) with stationary distribution \(\mu\). For \(\eps \in \tp{0, 1}\), the \(\eps\)-mixing time of \(P\) is
    \[
    T_{\-{mix}}\tp{\eps; P} \defeq \max_{x \in \Omega} \min\set{t \in \bb N \cmid \-{TV}\tp{P^t\tp{x, \cdot}, \mu} \le \eps}.
    \]
\end{definition}

\paragraph{Continuous time.}

Let \(\tp{X_t}_{t \ge 0}\) be a time-homogeneous Markov process on a measurable state space \(\Omega\). Its transition semigroup is
\[
P_t f\tp{x} \defeq \*E\stp{f\tp{X_t} \mid X_0 = x}, \qquad P_{s + t} = P_s P_t.
\]
On a function space where this semigroup is strongly continuous, its generator is
\[
\+L f \defeq \lim_{t \to 0^+} \frac{P_t f - f}{t}
\]
on the set of functions for which the limit exists. A probability measure \(\mu\) is stationary if \(\*E_{\mu}\stp{P_t f} = \*E_{\mu}\stp{f}\) for every \(t \ge 0\), and the process is reversible with respect to \(\mu\) if every \(P_t\) is self-adjoint on \(L^2\tp{\mu}\). In this case, the generator \(\+L\) is also self-adjoint on its domain.

On a countable state space, a pure-jump process is specified by jump rates \(q\tp{x, y} \ge 0\) for \(x \ne y\) and has generator
\[
\+L f\tp{x} = \sum_{y \ne x} q\tp{x, y} \tp{f\tp{y} - f\tp{x}}.
\]
We assume that the total rate \(\sum_{y \ne x} q\tp{x, y}\) is finite at every state and that the process is non-explosive. Reversibility is then equivalent to the detailed balance condition:
\[
\mu\tp{x} q\tp{x, y} = \mu\tp{y} q\tp{y, x}, \quad \forall x, y \in \Omega.
\]
The process is irreducible if, for every \(x, y \in \Omega\), there exists \(t > 0\) such that \(\*{Pr}\stp{X_t = y \mid X_0 = x} > 0\). On a finite state space, irreducibility alone ensures convergence to the unique stationary distribution; no aperiodicity assumption is needed.

A time-inhomogeneous Markov process is described by operators
\[
P_{s, t} f\tp{x} \defeq \*E\stp{f\tp{X_t} \mid X_s = x}, \qquad 0 \le s \le t,
\]
satisfying \(P_{s, u} P_{u, t} = P_{s, t}\) for \(s \le u \le t\). Its time-dependent generator is
\[
\+L_t f \defeq \lim_{h \to 0^+} \frac{P_{t, t + h} f - f}{h}.
\]
This limit is understood on the domain where it exists.
For a time-inhomogeneous pure-jump process with rates \(q_t\tp{x, y}\),
\[
\+L_t f\tp{x} = \sum_{y \ne x} q_t\tp{x, y} \tp{f\tp{y} - f\tp{x}}.
\]

Under the usual well-posedness assumptions, the solution of the SDE
\[
\dd \*X_t = \*b\tp{t, \*X_t} \dd t + \boldsymbol{\sigma}\tp{t, \*X_t} \dd \*B_t
\]
is a time-inhomogeneous Markov process on \(\bb R^d\). Its generator acts on smooth functions as
\[
\+L_t f\tp{\*x} = \inner{\*b\tp{t, \*x}}{\nabla f\tp{\*x}} + \frac{1}{2} \inner{\boldsymbol{\sigma}\tp{t, \*x} \boldsymbol{\sigma}\tp{t, \*x}^{\top}}{\nabla^2 f\tp{\*x}}_{\-{HS}}.
\]

\subsection{Functional Inequalities and Mixing Times}
\label{subsec:functional-inequalities-mixing}

Let \(\+L\) be the generator of a reversible continuous-time Markov process with stationary measure \(\mu\). For a reversible discrete-time kernel \(P\), we set \(\+L_P \defeq P - I\). In either case, the associated Dirichlet form is
\[
\+E\tp{f, g} \defeq -\inner{f}{\+L g}_{L^2\tp{\mu}}.
\]
We write \(\+E_P\) for the form associated with \(\+L_P\). The form is initially defined on \(\+C_{\-c}\tp{\Omega}\), and we write
\[
\+D\tp{\+E} \defeq \set{f \in L^2\tp{\mu} \cmid \+E\tp{f, f} < \infty}
\]
for its energy domain after closure. Reversibility makes the Dirichlet form symmetric and nonnegative.

\begin{definition}
    Let \(\+L\) be a reversible Markov generator with stationary measure \(\mu\) and Dirichlet form \(\+E\).
    \begin{itemize}
        \item We say that \(\+L\) satisfies a \Poincare inequality with constant \(\gamma > 0\) if
        \[
        \gamma \, \*{Var}_{\mu}\stp{f} \le \+E\tp{f, f}, \quad \forall f \in \+D\tp{\+E}.
        \]
        The optimal constant \(\gamma\) is called the spectral gap of \(\+L\); for an irreducible finite-state chain, it is the smallest nonzero eigenvalue of \(-\+L\).
        \item We say that \(\+L\) satisfies a modified log-Sobolev inequality with constant \(\rho > 0\) if
        \[
        \rho \, \*{Ent}_{\mu}\stp{f} \le \+E\tp{\log f, f}, \quad \forall f \in \+A,
        \]
        where \(\+A \defeq \set{f \in \+D\tp{\+E} \cmid f > 0, \, \log f \in \+D\tp{\+E}}\).
    \end{itemize}
\end{definition}

For a discrete-time kernel \(P\), these definitions are applied to \(\+L_P\) and \(\+E_P\). The modified log-Sobolev inequality extends to nonnegative functions by approximation whenever both sides are well defined.

For a continuous-time semigroup \(\tp{P_t}_{t \ge 0}\), the \Poincare inequality implies variance contraction, while the modified log-Sobolev inequality implies entropy contraction. For every admissible \(f\), with \(f\) nonnegative in the entropy bound,
\[
\*{Var}_{\mu}\stp{P_t f} \le e^{-2 \gamma t} \*{Var}_{\mu}\stp{f}, \qquad \*{Ent}_{\mu}\stp{P_t f} \le e^{-\rho t} \*{Ent}_{\mu}\stp{f}.
\]
In finite discrete time, these constants yield the following mixing bounds.

\begin{theorem}[Functional Inequalities and Mixing Time; see, e.g., \cite{LPW17,BT03}]
    Let \(P\) be an ergodic reversible Markov chain on a finite state space \(\Omega\) with at least two states and stationary distribution \(\mu\), and suppose that \(P\) is positive semidefinite on \(L^2\tp{\mu}\). Set \(\mu_{\min} \defeq \min_{x \in \Omega} \mu\tp{x}\). Then, for every \(\eps \in \tp{0, 1 / 2}\), the following statements hold:
    \begin{itemize}
        \item If \(P\) satisfies a \Poincare inequality with constant \(\gamma\), then
        \[
        T_{\-{mix}}\tp{\eps; P} \le \ceil{\frac{1}{\gamma} \tp{\log\frac{1}{2 \eps} + \frac{1}{2} \log \frac{1}{\mu_{\min}}}}.
        \]
        \item If \(P\) satisfies a modified log-Sobolev inequality with constant \(\rho\), then
        \[
        T_{\-{mix}}\tp{\eps; P} \le \ceil{\frac{1}{\rho} \tp{\log\frac{1}{2 \eps^2} + \log\log \frac{1}{\mu_{\min}}}}.
        \]
    \end{itemize}
    \label{thm:functional-inequality-mixing}
\end{theorem}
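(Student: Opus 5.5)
The plan has two parts. Both bullets follow the standard route: contract a suitable divergence between \(P^t\tp{x,\cdot}\) and \(\mu\) geometrically, bound the initial divergence from the point mass \(\delta_x\) by a function of \(\mu_{\min}\), and convert to total variation. Fix \(x\in\Omega\) and let \(h_t \defeq P^t\tp{x,\cdot}/\mu\) be the density of \(\delta_x P^t\) relative to \(\mu\). By reversibility, \(h_t = P^t h_0\) with \(h_0 = \*1_{\set{x}}/\mu\tp{x}\), and \(\*E_\mu\stp{h_t}=1\).

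For the \Poincare bullet, I will use the spectral theorem for the self-adjoint operator \(P\) on \(L^2\tp{\mu}\). The \Poincare inequality for \(\+L_P = P-I\) says that every eigenvalue of \(P\) on the orthogonal complement of constants is at most \(1-\gamma\). Positive semidefiniteness says they are also at least \(0\). Hence \(\norm{P^t g}_{L^2\tp{\mu}} \le \tp{1-\gamma}^t\norm{g}_{L^2\tp{\mu}}\) for mean-zero \(g\). Applying this to \(g = h_0 - 1\), and using \(1-\gamma\le e^{-\gamma}\), gives
\[
\chi^2\tp{\delta_xP^t \| \mu} = \*{Var}_\mu\stp{h_t} \le e^{-2\gamma t}\,\*{Var}_\mu\stp{h_0} \le \frac{e^{-2\gamma t}}{\mu_{\min}}.
\]
By Cauchy--Schwarz, \(\-{TV} \le \tfrac12\sqrt{\chi^2}\), so the distance is at most \(\tfrac12 e^{-\gamma t}\mu_{\min}^{-1/2}\). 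This is at most \(\eps\) exactly when \(t \ge \gamma^{-1}\tp{\log\frac{1}{2\eps} + \frac12\log\frac{1}{\mu_{\min}}}\), which matches the stated bound after taking the ceiling.

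For the modified log-Sobolev bullet, the template is the same. The needed one-step contraction is
\[
\-{KL}\tp{\nu P\|\mu} \le \tp{1-\rho}\,\-{KL}\tp{\nu\|\mu},
\]
equivalently \(\*{Ent}_\mu\stp{Pf}\le\tp{1-\rho}\*{Ent}_\mu\stp{f}\). Iterating gives \(\-{KL}\tp{\delta_xP^t\|\mu}\le e^{-\rho t}\log\frac{1}{\mu_{\min}}\), since \(\-{KL}\tp{\delta_x\|\mu} = \log\frac{1}{\mu\tp{x}}\). Pinsker's inequality \(\-{TV}\le\sqrt{\-{KL}/2}\) then shows the distance is at most \(\eps\) once \(e^{-\rho t}\log\frac1{\mu_{\min}}\le 2\eps^2\). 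This is exactly \(t\ge \rho^{-1}\tp{\log\frac{1}{2\eps^2}+\log\log\frac{1}{\mu_{\min}}}\).

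The main obstacle is this one-step entropy contraction. In discrete time it does not follow from the modified log-Sobolev inequality as directly as the variance contraction follows from the \Poincare inequality, and positive semidefiniteness is exactly where it enters. My first attempt would bound \(\*{Ent}_\mu\stp{f}-\*{Ent}_\mu\stp{Pf}\) from below by \(\rho\,\*{Ent}_\mu\stp{f}\) through \(\+E_P\tp{f,\log f}\), proceeding as follows.
\begin{enumerate}
\item Rewrite \(\*E_\mu\stp{Pf\log Pf} = \*E_\mu\stp{f\,P\log Pf}\) by reversibility.
\item Apply concavity of \(\log\), in the form \(P\log Pf\le\log P^2 f\), to get \(\*E_\mu\stp{Pf\log Pf} \le \*E_\mu\stp{f\log P^2f}\).
\item Compare \(P^2\) with \(P\) using \(0\preceq P\preceq I\), for instance via \(\+E_{P^2}\ge\+E_P\), to relate the resulting deficit to \(\+E_P\tp{f,\log f}\).
\end{enumerate}
If this direct route does not close, I would instead cite the standard discrete-time entropy-decay statement from \cite{BT03} for positive semidefinite reversible kernels, since the theorem is quoted from that literature. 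The rest of the argument is routine bookkeeping of the constants.
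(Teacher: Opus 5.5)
The paper only quotes this result, so the comparison is with the standard arguments. Your \Poincare bullet is the standard proof and is correct: the eigenvalues of \(P\) on mean-zero functions lie in \([0,1-\gamma]\), and the \(\chi^2\)/Cauchy--Schwarz bookkeeping gives exactly the stated bound.

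The gap is in the modified log-Sobolev bullet, at the step you flagged yourself. The one-step contraction \(\*{Ent}_\mu\stp{Pf}\le(1-\rho)\,\*{Ent}_\mu\stp{f}\) is false for ergodic, reversible, positive semidefinite kernels. So neither your steps~1--3 nor a citation can supply it.

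Take \(\Omega=\set{1,2}\) with \(\mu\) uniform, and \(P=\Pi\) with \(\Pi f\equiv\*E_\mu\stp{f}\). This kernel is an orthogonal projection, hence positive semidefinite, and it is ergodic and reversible. Here \(\+E_P\tp{\log f,f}=\*{Cov}_\mu\stp{f,\log f}\). For \(f=(1+s,1-s)\), a power-series comparison gives
\[
\*{Cov}_\mu\stp{f,\log f}=\sum_{n\ge 2\text{ even}}\frac{s^n}{n-1}\;\ge\;\sum_{n\ge 2\text{ even}}\frac{2s^n}{n(n-1)}=2\,\*{Ent}_\mu\stp{f},
\]
and the ratio tends to \(2\) as \(s\to0\). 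By homogeneity, \(\rho=2\). Your inequality would then read \(\*{Ent}_\mu\stp{Pf}\le-\*{Ent}_\mu\stp{f}\), which is absurd.

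The failure is not an artifact of \(\rho>1\). For \(P_a=(1-a)I+a\Pi\) one has \(\rho=2a\), yet \(\*{Ent}_\mu\stp{P_af}/\*{Ent}_\mu\stp{f}\to(1-a)^2>1-2a\) as \(f\to1\). Your step~2 shows where the argument loses: it yields the deficit \(\-{KL}\tp{\nu\|\nu P^2}\). For \(P=\Pi\) this equals \(\*{Ent}_\mu\stp{f}\), only half of \(\rho\,\*{Ent}_\mu\stp{f}\).

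Your final bookkeeping does not actually need a factor \(1-\rho\). It only uses \(\-{KL}\tp{\delta_xP^t\|\mu}\le e^{-\rho t}\log\frac{1}{\mu_{\min}}\), i.e., decay by \(e^{-\rho}\) per step. This is consistent with the examples above, since \((1-a)^2\le e^{-2a}\).

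However, the entropy decay that follows directly from the modified log-Sobolev inequality concerns the continuous-time semigroup \(e^{s(P-I)}\). It is obtained by differentiating \(s\mapsto\*{Ent}_\mu\stp{e^{s(P-I)}f}\). Transferring it to the discrete powers \(P^t\) is a separate comparison, and this is exactly where positive semidefiniteness must enter. Without it the bullet fails badly: the nearly periodic chain \(\eps I+(1-\eps)S\) on two uniform points, with \(S\) the swap, has \(\rho=4(1-\eps)\) but mixes in time of order \(1/\eps\). Your proposal contains no argument for this transfer, so as written the second bullet is not established.
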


\begin{remark}
    For a reversible kernel, lazification---replacing \(P\) with \(\bar P \defeq \tp{I + P} / 2\)---ensures positive semidefiniteness.
\end{remark}

\section{Bakry--\Emery Theory for Birth-Death Chains}
\label{sec:bakry-emery-birth-death}

In this section, we develop an integrated Bakry--\Emery calculus for a class of reversible birth-death chains on \(\bb N^d\). The main goal is to derive functional inequalities from curvature-type identities that remain valid after integration against the stationary measure.

The section is organized as follows. In \Cref{subsec:bakry-emery-preliminaries}, we recall the standard Bakry--\Emery calculus. In \Cref{subsec:birth-death-generators}, we introduce the birth-death generator, its Dirichlet form, and the associated \Poincare inequality. In \Cref{subsec:integrated-bakry-emery}, we compute the integrated \(\Gamma_2\) form. The resulting Bochner formula motivates the high-dimensional ultra-log-concavity condition introduced in \Cref{subsec:ultra-log-concavity}. We then exploit the full local curvature matrix to derive a discrete Brascamp--Lieb inequality in \Cref{subsec:discrete-brascamp-lieb}. Finally, in \Cref{subsec:variance-hardcore}, we illustrate the Brascamp--Lieb inequality through a direct application to the hardcore model.

The analysis in \Cref{subsec:birth-death-generators,subsec:integrated-bakry-emery,subsec:ultra-log-concavity} is inspired by \cite{Joh17}, which studies another canonical class of birth-death chains with unit birth rates, whereas we focus on chains with linear death rates.

\subsection{Bakry--\Emery Calculus}
\label{subsec:bakry-emery-preliminaries}

The Bakry--\Emery calculus associates first- and second-order bilinear operators with a Markov generator \cite{BE85,BGL14}. Let \(\+L\) be a Markov generator, and consider functions for which the expressions below are well defined.

\begin{definition}[Carr\'e du Champ Operators]
    The carr\'e du champ and iterated carr\'e du champ operators associated with \(\+L\) are
    \[
    \Gamma\tp{f, g} \defeq \frac{1}{2} \stp{\+L\tp{fg} - f \+L g - g \+L f}, \qquad \Gamma_2\tp{f, g} \defeq \frac{1}{2} \stp{\+L\Gamma\tp{f, g} - \Gamma\tp{f, \+L g} - \Gamma\tp{g, \+L f}}.
    \]
    \label{def:carre-du-champ-operators}
\end{definition}

Suppose that \(\+L\) is reversible with respect to a stationary measure \(\mu\). Stationarity and integration by parts give
\begin{equation}
    \*E_{\mu}\stp{\Gamma\tp{f, g}} = -\*E_{\mu}\stp{f \+L g} = \+E\tp{f, g}, \qquad \*E_{\mu}\stp{\Gamma_2\tp{f, g}} = \*E_{\mu}\stp{\+L f \, \+L g} = \*E_{\mu}\stp{f \+L^2 g}.
    \label{eq:integrated-carre-du-champ-identities}
\end{equation}
For \(\delta > 0\), the pointwise Bakry--\Emery curvature condition \(\-{CD}\tp{\delta, \infty}\) is
\[
\Gamma_2\tp{f, f} \ge \delta \, \Gamma\tp{f, f}
\]
for every admissible \(f\). Integrating this condition yields the integrated Bakry--\Emery inequality. On a finite state space, the integrated inequality is equivalent to the \Poincare inequality.

\begin{theorem}[Integrated Bakry--\Emery Criterion]
    \label{thm:integrated-bakry-emery-criterion}
    Let \(\+L\) be the generator of an irreducible continuous-time Markov chain on a finite state space \(\Omega\), reversible with respect to \(\mu\). For \(\delta > 0\), the following statements are equivalent:
    \begin{enumerate}
        \item The generator \(\+L\) satisfies the \Poincare inequality with constant \(\delta\).
        \item For every \(f: \Omega \to \bb R\),
        \[
        \*E_{\mu}\stp{\Gamma_2\tp{f, f}} \ge \delta \, \*E_{\mu}\stp{\Gamma\tp{f, f}}.
        \]
    \end{enumerate}
    Consequently, the pointwise condition \(\-{CD}\tp{\delta, \infty}\) implies the \Poincare inequality with constant \(\delta\).
    \label{thm:integrated-bakry-emery}
\end{theorem}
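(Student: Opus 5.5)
The plan is to use the spectral decomposition of the self-adjoint operator \(-\+L\) on the finite-dimensional space \(L^2\tp{\mu}\), together with the integrated identities \eqref{eq:integrated-carre-du-champ-identities}. Since \(\Omega\) is finite and \(\+L\) is reversible, \(-\+L\) is a symmetric nonnegative operator on \(L^2\tp{\mu}\). Choose an orthonormal eigenbasis \(\phi_0 = \*1, \phi_1, \dots, \phi_{N}\) with eigenvalues \(0 = \lambda_0 < \lambda_1 \le \dots \le \lambda_N\). The strict inequality \(\lambda_1 > 0\) uses irreducibility: the kernel of \(\+L\) consists of functions with \(\+E\tp{f,f} = \frac12\sum_{x,y}\mu\tp{x}q\tp{x,y}\tp{f\tp{y}-f\tp{x}}^2 = 0\), and these are constant on an irreducible chain. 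Writing \(f = \sum_k c_k\phi_k\), we get
\[
\*{Var}_\mu\stp{f} = \sum_{k\ge1} c_k^2,\qquad \*E_\mu\stp{\Gamma\tp{f,f}} = \+E\tp{f,f}=\sum_{k\ge1}\lambda_k c_k^2,\qquad \*E_\mu\stp{\Gamma_2\tp{f,f}} = \*E_\mu\stp{\tp{\+L f}^2} = \sum_{k\ge1}\lambda_k^2 c_k^2 .
\]
The last identity follows from \eqref{eq:integrated-carre-du-champ-identities}, which holds because \(\*E_\mu\stp{\+L h}=0\) for every \(h\) by stationarity, together with self-adjointness.

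Both statements are then equivalent to \(\lambda_1 \ge \delta\). If \(\lambda_1\ge\delta\), then \(\sum_{k\ge1} \lambda_k c_k^2 \ge \delta\sum_{k\ge1} c_k^2\), which is the \Poincare inequality. Also \(\lambda_k^2 \ge \delta\lambda_k\) for every \(k\ge1\), which gives the integrated Bakry--\Emery inequality. Conversely, testing either inequality on \(f=\phi_1\) gives \(\lambda_1\ge\delta\) in the first case and \(\lambda_1^2\ge\delta\lambda_1\) in the second; since \(\lambda_1>0\), the latter again yields \(\lambda_1\ge\delta\).

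An alternative route avoids eigenvectors and works directly with the semigroup. Set \(F\tp{t}=\*{Var}_\mu\stp{P_tf}\). Then \(F' = -2\+E\tp{P_tf,P_tf}\) and \(\frac{\dd}{\dd t}\+E\tp{P_tf,P_tf} = -2\*E_\mu\stp{\tp{\+L P_t f}^2}\). Statement 2 then gives exponential decay of \(\+E\) at rate \(2\delta\). Integrating from \(0\) to \(\infty\), using \(P_tf\to\*E_\mu f\) by irreducibility, yields \(\*{Var}_\mu\stp{f} = 2\int_0^\infty \+E\tp{P_tf,P_tf}\dd t \le \+E\tp{f,f}/\delta\). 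I will present the spectral proof and mention this one as an aside.

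For the final claim, integrate \(\-{CD}\tp{\delta,\infty}\) against \(\mu\) to obtain statement 2, and then apply the equivalence. There is no real obstacle here. The only point needing care is verifying \(\*E_\mu\stp{\Gamma_2\tp{f,f}}=\*E_\mu\stp{\tp{\+L f}^2}\): the term \(\*E_\mu\stp{\+L\Gamma\tp{f,f}}\) vanishes by stationarity, and \(\*E_\mu\stp{\Gamma\tp{f,\+L f}} = -\*E_\mu\stp{f\+L^2 f} = -\*E_\mu\stp{\tp{\+L f}^2}\) by self-adjointness.
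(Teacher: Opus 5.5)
Your proposal is correct and is essentially the paper's proof: expand in an orthonormal eigenbasis of \(-\mathcal{L}\) on \(L^2(\mu)\), and use the integrated identities to reduce both the Poincar\'e inequality and \(\mathbf{E}_\mu[\Gamma_2(f,f)] \ge \delta\,\mathbf{E}_\mu[\Gamma(f,f)]\) to the condition \(\lambda_k \ge \delta\) for all \(k \ge 1\), with irreducibility supplying \(\lambda_1 > 0\) so that \(\lambda_k^2 \ge \delta\lambda_k\) can be divided by \(\lambda_k\). The semigroup argument you mention as an aside is a valid alternative proof of the direction \(2 \Rightarrow 1\) only, and is not needed.
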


\begin{proof}[Proof of \Cref{thm:integrated-bakry-emery}]
    Let \(0 = \lambda_0 < \lambda_1 \le \cdots \le \lambda_{\abs{\Omega} - 1}\) be the eigenvalues of \(-\+L\), counted with multiplicity. Expanding a mean-zero function \(f\) in an orthonormal eigenbasis and using \eqref{eq:integrated-carre-du-champ-identities} shows that the \Poincare inequality is equivalent to \(\lambda_k \ge \delta\) for every \(k \ge 1\). The integrated Bakry--\Emery inequality is equivalent to \(\lambda_k^2 \ge \delta \lambda_k\) for every \(k \ge 1\), which is the same condition.
\end{proof}

For a diffusion generator, the defining feature of the calculus is the chain rule
\[
\+L \phi\tp{f} = \phi'\tp{f} \+L f + \phi''\tp{f} \, \Gamma\tp{f, f}.
\]
Under this condition, \(\-{CD}\tp{\delta,\infty}\) implies not only the \Poincare inequality, but also the stronger log-Sobolev inequality
\[
\delta \*{Ent}_{\mu}\stp{f^2} \le 2 \+E\tp{f, f}, \quad \forall f.
\]
For example, the Langevin generator \(\+L = \Delta - \nabla V \cdot \nabla\) is a diffusion generator, with
\[
\Gamma\tp{f, g} = \inner{\nabla f}{\nabla g}, \qquad \Gamma_2\tp{f, g} = \inner{\nabla^2 f}{\nabla^2 g}_{\-{HS}} + \nabla f^{\top} \nabla^2 V \nabla g.
\]
Consequently, \(\delta\)-strong log-concavity, i.e., \(\nabla^2 V \succeq \delta \*I\), is equivalent to \(\-{CD}\tp{\delta, \infty}\), and hence implies both the \Poincare and log-Sobolev inequalities with constant \(\delta\). 

Pure-jump generators generally do not satisfy the diffusion chain rule. Nevertheless, the operators introduced in \Cref{def:carre-du-champ-operators}, the identities in \eqref{eq:integrated-carre-du-champ-identities}, and the criterion in \Cref{thm:integrated-bakry-emery-criterion} remain valid under the hypotheses above. This makes the integrated criterion particularly well suited to the birth-death chains studied below.

\subsection{Birth-Death Generators, Dirichlet Forms, and \Poincare Inequalities}
\label{subsec:birth-death-generators}

Birth-death chains are continuous-time Markov chains on \(\bb N^d\) whose transitions move only to neighboring states. Such a chain is specified by birth rates \(\*b: \bb N^d \to \bb R_{\ge 0}^d\) and death rates \(\*d: \bb N^d \to \bb R_{\ge 0}^d\). Its generator is
\[
\+L f\tp{\*n} \defeq \sum_{i = 1}^d b_i\tp{\*n} \nabla_i^+ f\tp{\*n} + \sum_{i = 1}^d d_i\tp{\*n} \nabla_i^- f\tp{\*n}.
\]

We focus on the birth-death chain with linear death rates \(d_i\tp{\*n} = n_i\). Let \(\mu\) be a probability measure on \(\bb N^d\) with downward closed support. We choose the birth rates so that the chain is reversible with respect to \(\mu\). Equivalently, the detailed balance condition gives
\[
b_i\tp{\*n} = \frac{\tp{n_i + 1} \mu\tp{\*n + \*e_i}}{\mu\tp{\*n}}.
\]
We assume the convention that \(\mu\tp{\*n + \*e_i} = 0\) whenever \(\*n + \*e_i \notin \supp\tp{\mu}\). The downward closed assumption ensures that the death transitions remain inside the support.

Thus, the generator is
\begin{equation}
    \+L f\tp{\*n} \defeq \sum_{i = 1}^d \frac{\tp{n_i + 1} \mu\tp{\*n + \*e_i}}{\mu\tp{\*n}} \nabla_i^+ f\tp{\*n} + \sum_{i = 1}^d n_i \nabla_i^- f\tp{\*n}.
    \label{eq:birth-death-generator}
\end{equation}
For \(f, g \in \+C_{\-c}\tp{\supp\tp{\mu}}\), the carr\'e du champ operator is
\begin{equation}
    \begin{aligned}
        \Gamma\tp{f, g} &\defeq \frac{1}{2} \stp{\+L\tp{fg} - f \+L g - g \+L f} \\
        &= \frac{1}{2} \sum_{i = 1}^d \frac{\tp{n_i + 1} \mu\tp{\*n + \*e_i}}{\mu\tp{\*n}} \nabla_i^+ f\tp{\*n} \nabla_i^+ g\tp{\*n} + \frac{1}{2} \sum_{i = 1}^d n_i \nabla_i^- f\tp{\*n} \nabla_i^- g\tp{\*n}.
    \end{aligned}
    \label{eq:birth-death-carre-du-champ}
\end{equation}
Since the generator \(\+L\) is reversible with respect to \(\mu\), the associated Dirichlet form is
\begin{equation}
    \+E\tp{f, g} \defeq \*E_{\mu}\stp{\Gamma\tp{f, g}} = \*E_{\*n \sim \mu}\stp{\sum_{i = 1}^d \frac{\tp{n_i + 1} \mu\tp{\*n + \*e_i}}{\mu\tp{\*n}} \nabla_i^+ f\tp{\*n} \nabla_i^+ g\tp{\*n}}, \quad f, g \in \+D\tp{\+E},
    \label{eq:birth-death-dirichlet-form}
\end{equation}
where \(\+D\tp{\+E}\) is the energy domain introduced in \Cref{subsec:functional-inequalities-mixing}. The corresponding \Poincare inequality takes the form
\begin{equation}
    \gamma \, \*{Var}_{\mu}\stp{f} \le \*E_{\*n \sim \mu}\stp{\sum_{i = 1}^d \frac{\tp{n_i + 1} \mu\tp{\*n + \*e_i}}{\mu\tp{\*n}} \tp{\nabla_i^+ f\tp{\*n}}^2}, \quad \forall f \in \+D\tp{\+E}.
    \label{eq:birth-death-poincare-inequality}
\end{equation}
We refer to \eqref{eq:birth-death-poincare-inequality} as the weighted \Poincare inequality for \(\mu\) (equivalently, for the birth-death generator \(\+L\)) with constant \(\gamma\). The largest admissible constant \(\gamma\) is the spectral gap of \(\+L\).

\subsection{Integrated Bakry--\Emery Criterion and Integrated \texorpdfstring{\(\Gamma_2\)}{Gamma 2} Form}
\label{subsec:integrated-bakry-emery}

We now compute the integrated \(\Gamma_2\) form for the birth-death generator \eqref{eq:birth-death-generator}. By \Cref{thm:integrated-bakry-emery-criterion}, a bound of the form
\[
\*E_{\mu}\stp{\Gamma_2\tp{f, f}} \ge \delta \, \*E_{\mu}\stp{\Gamma\tp{f, f}}, \quad \forall f \in \+C_{\-c}\tp{\supp\tp{\mu}},
\]
yields a \Poincare inequality with constant \(\delta\) when \(\mu\) is finitely supported. For infinite support, we apply the criterion to finite restrictions and then pass to the limit; see the proof of \Cref{thm:ulc-poincare-inequality}. The integrated formulation is weaker than the pointwise curvature condition and allows cancellations under the stationary measure, which are essential in the discrete setting.

For the birth-death generator \(\+L\), the integrated carr\'e du champ is exactly the Dirichlet form in \eqref{eq:birth-death-dirichlet-form}:
\[
\*E_{\mu}\stp{\Gamma\tp{f, f}} = \+E\tp{f, f} = \*E_{\*n \sim \mu}\stp{\sum_{i = 1}^d \frac{\tp{n_i + 1} \mu\tp{\*n + \*e_i}}{\mu\tp{\*n}} \tp{\nabla_i^+ f\tp{\*n}}^2}.
\]
It remains to compute the integrated \(\Gamma_2\) form. The following theorem expresses \(\*E_{\mu}\stp{\Gamma_2\tp{f, g}}\) explicitly in terms of ratios between neighboring masses of the stationary measure \(\mu\). The remainder of this subsection establishes this identity, while its consequences for functional inequalities are developed in \Cref{subsec:ultra-log-concavity} and \Cref{subsec:discrete-brascamp-lieb}.

\begin{theorem}[Discrete Bochner Formula]
    Consider the birth-death generator \(\+L\) defined in \eqref{eq:birth-death-generator}, and set \(a\tp{\*n} \defeq \*n! \mu\tp{\*n}\) for \(\*n \in \bb N^d\). Then the identity below holds for every \(f, g \in \+C_{\-c}\tp{\supp\tp{\mu}}\):
    \[
    \begin{aligned}
        \*E_{\mu}\stp{\Gamma_2\tp{f, g}} =& \*E_{\*n \sim \mu}\stp{\sum_{i, j \in \stp{d}} \frac{a\tp{\*n + \*e_i + \*e_j}}{a\tp{\*n}} \nabla_i^+ \nabla_j^+ f\tp{\*n} \nabla_i^+ \nabla_j^+ g\tp{\*n}} \\
        &+ \*E_{\*n \sim \mu}\stp{\sum_{i, j \in \stp{d}} \tp{\frac{a\tp{\*n + \*e_i} a\tp{\*n + \*e_j}}{a\tp{\*n}^2} - \frac{a\tp{\*n + \*e_i + \*e_j}}{a\tp{\*n}}} \nabla_i^+ f\tp{\*n} \nabla_j^+ g\tp{\*n}} \\
        &+ \*E_{\*n \sim \mu}\stp{\sum_{i \in \stp{d}} \frac{a\tp{\*n + \*e_i}}{a\tp{\*n}} \nabla_i^+ f\tp{\*n} \nabla_i^+ g\tp{\*n}}.
    \end{aligned}
    \]
    \label{thm:integrated-gamma-2-form}
\end{theorem}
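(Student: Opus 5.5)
Since $f,g$ have finite support, every sum below is finite. The plan is to start from the identity $\mathbf{E}_{\mu}[\Gamma_2(f,g)]=\mathbf{E}_{\mu}[\mathcal{L}f\,\mathcal{L}g]$ from \eqref{eq:integrated-carre-du-champ-identities}. We then rewrite $\mathcal{L}$ in a form where only forward differences appear, using reversibility.

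The key tool is a discrete integration by parts that moves death terms onto births. For any function $h$ and any $\*n$ with $n_i\ge1$, detailed balance reads $\mu(\*n)\,n_i=\mu(\*n-\*e_i)\,b_i(\*n-\*e_i)$. Hence
\[
\mathbf{E}_{\mu}\bigl[n_i\,h(\*n)\bigr]=\mathbf{E}_{\*n\sim\mu}\bigl[b_i(\*n)\,h(\*n+\*e_i)\bigr].
\]
With $b_i=a(\*n+\*e_i)/a(\*n)$ this means every factor $n_i$ can be traded for a shift together with a ratio of $a$'s.

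To carry this out, write $\mathcal{L}g=B g+D g$, where $Bg=\sum_j b_j\nabla_j^+g$ and $Dg(\*n)=-\sum_j n_j\nabla_j^+g(\*n-\*e_j)$. Expand $\mathbf{E}[\mathcal{L}f\,\mathcal{L}g]$ and apply the shift identity to each factor $n_i$ coming from $Df$, and likewise for $Dg$.

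An alternative route may be cleaner: use $\mathbf{E}[\mathcal{L}f\,\mathcal{L}g]=-\mathbf{E}[\Gamma(f,\mathcal{L}g)]\cdot 2$ halved, i.e. $\mathcal{E}(f,\mathcal{L}g)$ with a sign. This equals $-\mathbf{E}\bigl[\sum_i b_i\nabla_i^+f\,\nabla_i^+(\mathcal{L}g)\bigr]$. Then compute the commutator $\nabla_i^+\mathcal{L}g(\*n)$ directly. It contains
\[
\sum_j b_j(\*n+\*e_i)\nabla_j^+g(\*n+\*e_i)-b_j(\*n)\nabla_j^+g(\*n),
\]
plus death terms. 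These split as $\sum_j b_j(\*n+\*e_i)\nabla_i^+\nabla_j^+g+\sum_j\bigl(b_j(\*n+\*e_i)-b_j(\*n)\bigr)\nabla_j^+g$. The death part gives $-\nabla_i^+g(\*n)$ from the $j=i$ coordinate change, plus $\sum_j n_j\nabla_i^+\nabla_j^- g$. Multiplying by $b_i(\*n)\nabla_i^+f(\*n)$ and integrating, the terms should produce the three lines of the statement:
\begin{itemize}
\item $b_i(\*n)b_j(\*n+\*e_i)=a(\*n+\*e_i+\*e_j)/a(\*n)$ gives the mixed second-difference weight.
\item $b_i(\*n)\bigl(b_j(\*n)-b_j(\*n+\*e_i)\bigr)$ gives the curvature weight $\frac{a(\*n+\*e_i)a(\*n+\*e_j)}{a(\*n)^2}-\frac{a(\*n+\*e_i+\*e_j)}{a(\*n)}$.
\item The $-\nabla_i^+g$ term gives the last line $\sum_i b_i\nabla_i^+f\nabla_i^+g$.
\end{itemize}

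The remaining death piece, $\mathbf{E}\bigl[b_i\nabla_i^+f\sum_j n_j\nabla_i^+\nabla_j^-g\bigr]$, must be converted by the shift identity in $n_j$. Using $\nabla_j^-h(\*n)=-\nabla_j^+h(\*n-\*e_j)$, it becomes $-\mathbf{E}\bigl[b_j(\*n)\,b_i(\*n+\*e_j)\,\nabla_i^+f(\*n+\*e_j)\,\nabla_i^+\nabla_j^+g(\*n)\bigr]$. Combined with the first-term contribution $b_ib_j(\*n+\*e_i)\nabla_i^+f(\*n)\nabla_i^+\nabla_j^+g$, after the sign flip from $-\mathcal{E}$, this yields $\frac{a(\*n+\*e_i+\*e_j)}{a(\*n)}\nabla_i^+\nabla_j^+g\,\bigl(\nabla_i^+f(\*n+\*e_j)-\nabla_i^+f(\*n)\bigr)$. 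That is exactly $\nabla_j^+\nabla_i^+f$, which gives the first line.

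The main obstacle is bookkeeping: signs, and boundary conventions where $\*n+\*e_i\notin\operatorname{supp}(\mu)$ or $n_j=0$. Downward closedness and the rate-zero convention should make every boundary term vanish, because $a(\*n+\*e_i+\*e_j)=0$ whenever either neighbor is absent, except when $\*n+\*e_i+\*e_j$ is outside but both $\*n+\*e_i$ and $\*n+\*e_j$ are inside. That case contributes correctly to the curvature line, and I would check it carefully.
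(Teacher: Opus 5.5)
Your proposal is correct. The route you actually carry out is a genuinely different piece of bookkeeping from the paper's, although your first sketch (expanding $\mathcal{L}f\,\mathcal{L}g$ and shifting every factor $n_i$) is exactly what the paper does.

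\textbf{The paper's route.} It splits $\mathcal{L}f\,\mathcal{L}g$ into birth--birth, death--death and two birth--death products, and shifts each death factor by detailed balance:
\begin{itemize}
\item the off-diagonal death--death terms are shifted by $\mathbf{e}_i+\mathbf{e}_j$;
\item the diagonal death--death terms are shifted after splitting $n_i^2=n_i(n_i-1)+n_i$;
\item the mixed birth--death terms are shifted by a single $\mathbf{e}_j$ or $\mathbf{e}_i$.
\end{itemize}
It then assembles the Hessian line with $U'V'-U'V-UV'=(U'-U)(V'-V)-UV$.

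\textbf{Your route.} You write $\mathbf{E}_\mu[\mathcal{L}f\,\mathcal{L}g]=-\mathbf{E}_\mu[\Gamma(f,\mathcal{L}g)]=-\mathcal{E}(f,\mathcal{L}g)$, which is valid because $\mathcal{L}g$ is finitely supported; your phrase ``$\cdot 2$ halved'' garbles this but lands on the right formula. You then compute the commutator $\nabla_i^+\mathcal{L}g$ pointwise. Only one detailed-balance shift is then needed, for $\sum_j n_j\nabla_i^+\nabla_j^-g$, using $\nabla_i^+\nabla_j^-g(\mathbf{n}+\mathbf{e}_j)=-\nabla_i^+\nabla_j^+g(\mathbf{n})$.

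Each line of the identity then has a transparent source:
\begin{itemize}
\item the Hessian line comes from the shifted birth differences combined with that single integration by parts;
\item the curvature line comes from the variation $b_j(\mathbf{n}+\mathbf{e}_i)-b_j(\mathbf{n})$ of the birth rates;
\item the diagonal line comes from the commutator of $\nabla_i^+$ with the linear death rates. In the paper it instead appears through the extra $n_i$ in $n_i^2=n_i(n_i-1)+n_i$.
\end{itemize}

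I checked your signs after the overall minus: the $r_{ij}$ terms combine into $r_{ij}\,\nabla_j^+\nabla_i^+f\,\nabla_i^+\nabla_j^+g$, the rate-variation term into $(b_ib_j-r_{ij})\nabla_i^+f\,\nabla_j^+g$, and the $-\nabla_i^+g$ term into $b_i\nabla_i^+f\,\nabla_i^+g$, as claimed.

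\textbf{Comparison.} The paper's expansion keeps the $f\leftrightarrow g$ symmetry visible throughout. Yours is shorter and mirrors the classical Bochner commutator computation.

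Your boundary analysis also goes through. If $\mathbf{n}+\mathbf{e}_i,\mathbf{n}+\mathbf{e}_j\in\operatorname{supp}(\mu)$ but $\mathbf{n}+\mathbf{e}_i+\mathbf{e}_j\notin\operatorname{supp}(\mu)$, then $b_j(\mathbf{n}+\mathbf{e}_i)=b_i(\mathbf{n}+\mathbf{e}_j)=0$. Every undefined difference is multiplied by a vanishing rate, so the pair contributes only $b_i(\mathbf{n})b_j(\mathbf{n})$ to the curvature line, as required.
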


\begin{remark}
    The identity in \Cref{thm:integrated-gamma-2-form} is a discrete analogue of the Bochner formula for the Langevin generator \(\+L = \Delta - \nabla V \cdot \nabla\) on \(\bb R^d\):
    \[
    \Gamma_2\tp{f, g} = \inner{\nabla^2 f}{\nabla^2 g}_{\-{HS}} + \nabla f^{\top} \nabla^2 V \nabla g.
    \]
    The first term is a second-order energy, which is non-negative when \(f = g\). The second term is a first-order form determined by the local curvature of the potential \(V = -\log \mu\). Likewise, the first line of the identity in \Cref{thm:integrated-gamma-2-form} is a weighted discrete Hessian energy. The remaining two lines define a first-order form whose coefficients compare neighboring ratios of \(a\tp{\*n} = \*n! \mu\tp{\*n}\), thereby serving as a discrete analogue of the curvature matrix \(\nabla^2 V\).
\end{remark}

\begin{proof}[Proof of \Cref{thm:integrated-gamma-2-form}]
    Since \(f, g \in \+C_{\-c}\tp{\supp\tp{\mu}}\) and the birth-death graph is locally finite, \(\+L f\), \(\+L g\), and every summand appearing below are finitely supported. Hence all expectations are finite, and every change of variables is justified directly.

    For \(\*n \in \supp\tp{\mu}\), define the ratios
    \[
    r_i\tp{\*n} \defeq \frac{a\tp{\*n + \*e_i}}{a\tp{\*n}}, \qquad r_{ij}\tp{\*n} \defeq \frac{a\tp{\*n + \*e_i + \*e_j}}{a\tp{\*n}}.
    \]
    These quantities are extended by zero outside the support. Reversibility of \(\+L\) and the definition of \(\Gamma_2\) imply
    \[
    \*E_{\mu}\stp{\Gamma_2\tp{f, g}} = -\frac{1}{2} \*E_{\mu}\stp{\Gamma\tp{f, \+L g} + \Gamma\tp{g, \+L f}} = \*E_{\mu}\stp{\+L f \, \+L g}.
    \]
    Then the required discrete Bochner formula is
    \[
    \begin{aligned}
        \*E_{\mu}\stp{\+L f \, \+L g}
        =& \*E_{\*n \sim \mu}\stp{\sum_{i, j \in \stp{d}} r_{ij}\tp{\*n} \nabla_i^+ \nabla_j^+ f\tp{\*n} \nabla_i^+ \nabla_j^+ g\tp{\*n}} + \*E_{\*n \sim \mu}\stp{\sum_{i \in \stp{d}} r_i\tp{\*n} \nabla_i^+ f\tp{\*n} \nabla_i^+ g\tp{\*n}} \\
        &+ \*E_{\*n \sim \mu}\stp{\sum_{i, j \in \stp{d}} \tp{r_i\tp{\*n} r_j\tp{\*n} - r_{ij}\tp{\*n}} \nabla_i^+ f\tp{\*n} \nabla_j^+ g\tp{\*n}}.
    \end{aligned}
    \]
    
    To verify this identity, write
    \[
    u_i\tp{\*n} \defeq \nabla_i^+ f\tp{\*n}, \qquad v_j\tp{\*n} \defeq \nabla_j^+ g\tp{\*n}.
    \]
    Note that \(r_i\tp{\*n}\) is exactly the birth rate \(b_i\tp{\*n}\) in \eqref{eq:birth-death-generator}. The generator can then be written entirely in terms of forward differences as
    \[
    \+L f\tp{\*n} = \sum_{i \in \stp{d}} r_i\tp{\*n} u_i\tp{\*n} - \sum_{\substack{i \in \stp{d} \\ n_i > 0}} n_i u_i\tp{\*n - \*e_i},
    \]
    and analogously for \(g\). In the sums below, a predecessor term is understood to vanish when the corresponding coordinate is zero. We use the following identities:
    \[
    \mu\tp{\*n} n_i = \mu\tp{\*n - \*e_i} r_i\tp{\*n - \*e_i}, \qquad r_i\tp{\*n} r_j\tp{\*n + \*e_i} = r_{ij}\tp{\*n} = r_j\tp{\*n} r_i\tp{\*n + \*e_j}.
    \]

    Expand the product \(\+L f \, \+L g\) into four terms:
    \[
    \begin{aligned}
        \+L f\tp{\*n} \+L g\tp{\*n} =& \underbrace{\sum_{i, j \in \stp{d}} r_i\tp{\*n} r_j\tp{\*n} u_i\tp{\*n} v_j\tp{\*n}}_{\text{birth--birth}} + \underbrace{\sum_{i, j \in \stp{d}} n_i n_j u_i\tp{\*n - \*e_i} v_j\tp{\*n - \*e_j}}_{\text{death--death}} \\
        &- \underbrace{\sum_{i, j \in \stp{d}} r_i\tp{\*n} n_j u_i\tp{\*n} v_j\tp{\*n - \*e_j} - \sum_{i, j \in \stp{d}} n_i r_j\tp{\*n} u_i\tp{\*n - \*e_i} v_j\tp{\*n}}_{\text{birth--death}}.
    \end{aligned}
    \]
    The birth--birth product in \(\*E_{\mu}\stp{\+L f \, \+L g}\) is
    \[
    \*E_{\*n \sim \mu}\stp{\sum_{i, j \in \stp{d}} r_i\tp{\*n} r_j\tp{\*n} u_i\tp{\*n} v_j\tp{\*n}}.
    \]
    For the death--death product, first consider \(i \ne j\). Shifting \(\*n\) to \(\*n + \*e_i + \*e_j\) yields
    \[
    \begin{aligned}
        \*E_{\*n \sim \mu}\stp{\sum_{\substack{i, j \in \stp{d} \\ i \ne j}} n_i n_j u_i\tp{\*n - \*e_i} v_j\tp{\*n - \*e_j}} = \*E_{\*n \sim \mu}\stp{\sum_{\substack{i, j \in \stp{d} \\ i \ne j}} r_{ij}\tp{\*n} u_i\tp{\*n + \*e_j} v_j\tp{\*n + \*e_i}}.
    \end{aligned}
    \]
    On the diagonal, the decomposition \(n_i^2 = n_i \tp{n_i - 1} + n_i\), followed by shifts by \(2\*e_i\) and \(\*e_i\), respectively, gives
    \[
    \begin{aligned}
        \*E_{\*n \sim \mu}\stp{\sum_{i \in \stp{d}} n_i^2 u_i\tp{\*n - \*e_i} v_i\tp{\*n - \*e_i}} =& \, \*E_{\*n \sim \mu}\stp{\sum_{i \in \stp{d}} r_{ii}\tp{\*n} u_i\tp{\*n + \*e_i} v_i\tp{\*n + \*e_i}} \\
        &+ \*E_{\*n \sim \mu}\stp{\sum_{i \in \stp{d}} r_i\tp{\*n} u_i\tp{\*n} v_i\tp{\*n}}.
    \end{aligned}
    \]
    For the first birth--death product, shifting \(\*n\) to \(\*n + \*e_j\) and using detailed balance gives
    \[
    \*E_{\*n \sim \mu}\stp{\sum_{i, j \in \stp{d}} r_i\tp{\*n} n_j u_i\tp{\*n} v_j\tp{\*n - \*e_j}} = \*E_{\*n \sim \mu}\stp{\sum_{i, j \in \stp{d}} r_{ij}\tp{\*n} u_i\tp{\*n + \*e_j} v_j\tp{\*n}}.
    \]
    Similarly, shifting the second birth--death product by \(\*e_i\) gives
    \[
    \*E_{\*n \sim \mu}\stp{\sum_{i, j \in \stp{d}} n_i r_j\tp{\*n} u_i\tp{\*n - \*e_i} v_j\tp{\*n}} = \*E_{\*n \sim \mu}\stp{\sum_{i, j \in \stp{d}} r_{ij}\tp{\*n} u_i\tp{\*n} v_j\tp{\*n + \*e_i}}.
    \]
    Combining these four products gives
    \[
    \begin{aligned}
        \*E_{\mu}\stp{\+L f \, \+L g}
        =& \*E_{\*n \sim \mu}\stp{\sum_{i, j \in \stp{d}} r_{ij}\tp{\*n} \tp{u_i\tp{\*n + \*e_j} v_j\tp{\*n + \*e_i} - u_i\tp{\*n + \*e_j} v_j\tp{\*n} - u_i\tp{\*n} v_j\tp{\*n + \*e_i}}} \\
        &+ \*E_{\*n \sim \mu}\stp{\sum_{i, j \in \stp{d}} r_i\tp{\*n} r_j\tp{\*n} u_i\tp{\*n} v_j\tp{\*n}} + \*E_{\*n \sim \mu}\stp{\sum_{i \in \stp{d}} r_i\tp{\*n} u_i\tp{\*n} v_i\tp{\*n}}.
    \end{aligned}
    \]
    Finally,
    \[
    \begin{aligned}
        u_i\tp{\*n + \*e_j} - u_i\tp{\*n} &= \nabla_i^+ \nabla_j^+ f\tp{\*n}, \\
        v_j\tp{\*n + \*e_i} - v_j\tp{\*n} &= \nabla_i^+ \nabla_j^+ g\tp{\*n},
    \end{aligned}
    \]
    and the elementary identity \(U'V' - U'V - UV' = \tp{U' - U}\tp{V' - V} - UV\) transforms the first line into
    \[
    \*E_{\mu}\stp{\sum_{i, j \in \stp{d}} r_{ij}\tp{\*n} \tp{\nabla_i^+ \nabla_j^+ f\tp{\*n} \nabla_i^+ \nabla_j^+ g\tp{\*n} - u_i\tp{\*n} v_j\tp{\*n}}}.
    \]
    Substituting the definitions of \(r_i\), \(r_{ij}\), \(u_i\), \(v_j\) yields the claimed formula.
\end{proof}

\subsection{Ultra-Log-Concave Measures and Weighted \Poincare Inequalities}
\label{subsec:ultra-log-concavity}

The discrete Bochner formula in \Cref{thm:integrated-gamma-2-form} motivates a natural condition on the stationary measure \(\mu\) under which the integrated Bakry--\Emery criterion holds. Indeed, discarding the non-negative discrete Hessian term gives
\[
\begin{aligned}
    \*E_{\mu}\stp{\Gamma_2\tp{f, f}} \ge& \, \*E_{\*n \sim \mu}\stp{\sum_{i, j \in \stp{d}} \tp{\frac{a\tp{\*n + \*e_i} a\tp{\*n + \*e_j}}{a\tp{\*n}^2} - \frac{a\tp{\*n + \*e_i + \*e_j}}{a\tp{\*n}}} \nabla_i^+ f\tp{\*n} \nabla_j^+ f\tp{\*n}} \\
    &+ \*E_{\*n \sim \mu}\stp{\sum_{i \in \stp{d}} \frac{a\tp{\*n + \*e_i}}{a\tp{\*n}} \tp{\nabla_i^+ f\tp{\*n}}^2}.
\end{aligned}
\]
On the other hand, by \eqref{eq:birth-death-dirichlet-form}, the integrated carr\'e du champ is
\[
\*E_{\mu}\stp{\Gamma\tp{f, f}} = \*E_{\*n \sim \mu}\stp{\sum_{i = 1}^d \frac{\tp{n_i + 1} \mu\tp{\*n + \*e_i}}{\mu\tp{\*n}} \tp{\nabla_i^+ f\tp{\*n}}^2} = \*E_{\*n \sim \mu}\stp{\sum_{i \in \stp{d}} \frac{a\tp{\*n + \*e_i}}{a\tp{\*n}} \tp{\nabla_i^+ f\tp{\*n}}^2}.
\]
Consequently, a sufficient condition for the integrated Bakry--\Emery criterion
\[
\*E_{\mu}\stp{\Gamma_2\tp{f, f}} \ge \delta \, \*E_{\mu}\stp{\Gamma\tp{f, f}}, \quad \forall f \in \+C_{\-c}\tp{\supp\tp{\mu}}
\]
is that the following matrix inequality holds for every \(\*n \in \supp\tp{\mu}\):
\[
\tp{\frac{a\tp{\*n + \*e_i} a\tp{\*n + \*e_j}}{a\tp{\*n}^2} - \frac{a\tp{\*n + \*e_i + \*e_j}}{a\tp{\*n}}}_{i, j \in \stp{d}} + \tp{1 - \delta} \diag \tp{\frac{a\tp{\*n + \*e_i}}{a\tp{\*n}}}_{i \in \stp{d}} \succeq \*O.
\]
Restricting to the active coordinates and applying a diagonal congruence transformation yields an equivalent condition, which motivates the following high-dimensional notion of ultra-log-concavity.

\begin{definition}[Ultra-Log-Concave Measures]
    Let \(\mu\) be a probability measure on \(\bb N^d\) with downward closed support, and define \(a\tp{\*n} \defeq \*n! \mu\tp{\*n}\) for \(\*n \in \bb N^d\). We say that \(\mu\) is ultra-log-concave with parameter \(\delta > 0\), or \(\delta\)-ULC, if the following inequality holds for every \(\*n \in \supp\tp{\mu}\):
    \begin{equation}
        \tp{1 - \frac{a\tp{\*n} a\tp{\*n + \*e_i + \*e_j}}{a\tp{\*n + \*e_i} a\tp{\*n + \*e_j}}}_{i, j \in \+I_{\*n}} + \tp{1 - \delta} \diag \tp{\frac{a\tp{\*n}}{a\tp{\*n + \*e_i}}}_{i \in \+I_{\*n}} \succeq \*O.
        \label{eq:ulc-condition}
    \end{equation}
    Here, \(\+I_{\*n} \defeq \set{i \in \stp{d} \cmid \*n + \*e_i \in \supp\tp{\mu}}\) denotes the set of coordinates along which a birth transition is possible from \(\*n\). In particular, we say \(\mu\) is ultra-log-concave, or ULC, if it is \(1\)-ULC.
\end{definition}

\begin{theorem}
    Every \(\delta\)-ULC probability measure \(\mu\) on \(\bb N^d\) satisfies the weighted \Poincare inequality with constant \(\delta\):
    \[
    \delta \, \*{Var}_{\mu}\stp{f} \le \*E_{\*n \sim \mu}\stp{\sum_{i = 1}^d \frac{\tp{n_i + 1} \mu\tp{\*n + \*e_i}}{\mu\tp{\*n}} \tp{\nabla_i^+ f\tp{\*n}}^2}, \quad \forall f \in \+D\tp{\+E}.
    \]
    Equivalently, the birth-death generator \(\+L\) defined in \eqref{eq:birth-death-generator} has spectral gap at least \(\delta\).
    \label{thm:ulc-poincare-inequality}
\end{theorem}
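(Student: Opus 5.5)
The plan is to combine the discrete Bochner formula (\Cref{thm:integrated-gamma-2-form}) with the integrated Bakry--\Emery criterion (\Cref{thm:integrated-bakry-emery-criterion}). Finite support is handled first, and the general case follows by truncation.

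\textbf{Step 1 (pointwise matrix inequality).} Fix \(\*n\in\supp\tp{\mu}\) and write \(u_i=\nabla_i^+ f\tp{\*n}\). Terms with \(i\notin\+I_{\*n}\) vanish because \(a\tp{\*n+\*e_i}=0\), and then \(a\tp{\*n+\*e_i+\*e_j}=0\) as well by downward closedness. So the first-order part of \(\*E_\mu\stp{\Gamma_2\tp{f,f}}-\delta\,\*E_\mu\stp{\Gamma\tp{f,f}}\) at \(\*n\) is the quadratic form in \(\*u_{\+I_{\*n}}\) of the matrix
\[
\*M_{\*n}\defeq\tp{r_ir_j-r_{ij}}_{i,j\in\+I_{\*n}}+\tp{1-\delta}\diag\tp{r_i}_{i\in\+I_{\*n}}.
\]
Conjugating by \(\*R\defeq\diag\tp{r_i}\), with \(r_i=a\tp{\*n+\*e_i}/a\tp{\*n}>0\) on \(\+I_{\*n}\), gives
\[
\*R^{-1}\*M_{\*n}\*R^{-1}=\*C_{\*n}+\tp{1-\delta}\*D_{\*n}.
\]
This is exactly the matrix in \eqref{eq:ulc-condition}, so \(\*M_{\*n}\succeq\*O\). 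The discrete Hessian term in \Cref{thm:integrated-gamma-2-form} is nonnegative for \(f=g\), since its weights \(r_{ij}\ge0\). Hence
\[
\*E_\mu\stp{\Gamma_2\tp{f,f}}\ge\delta\,\*E_\mu\stp{\Gamma\tp{f,f}}
\]
for all \(f\in\+C_{\-c}\tp{\supp\tp{\mu}}\).

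\textbf{Step 2 (finite support).} If \(\supp\tp{\mu}\) is finite, the chain is irreducible on the support: every state reaches \(\*0\) by deaths and \(\*0\) reaches every state by births, using downward closedness. \Cref{thm:integrated-bakry-emery-criterion} then yields the Poincar\'e inequality with constant \(\delta\) for all \(f\).

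\textbf{Step 3 (infinite support).} This is the main obstacle, because the spectral argument needs finiteness. The plan is to approximate \(\mu\) by its restrictions \(\mu_N\defeq\mu\tp{\cdot\mid\set{\*n:\norm{\*n}_\infty\le N}}\).

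First, the support of \(\mu_N\) is downward closed. Its ratios \(a\) agree with those of \(\mu\) up to a constant, and \(\+I^{\tp{N}}_{\*n}\subseteq\+I_{\*n}\). The matrix for \(\mu_N\) at \(\*n\) is therefore a principal submatrix of the one for \(\mu\), and a principal submatrix of a PSD matrix is PSD. So \(\mu_N\) is \(\delta\)-ULC.

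Second, the birth rates of \(\mu_N\) are those of \(\mu\) with some set to zero. For fixed \(f\in\+C_{\-c}\), Step 2 applied to \(\mu_N\) gives
\[
\delta\,\*{Var}_{\mu_N}\stp{f}\le\*E_{\mu_N}\stp{\textstyle\sum_i b^{\tp{N}}_i\tp{\nabla_i^+f}^2}\le\frac{1}{\mu\tp{B_N}}\*E_\mu\stp{\textstyle\sum_i b_i\tp{\nabla_i^+f}^2},
\]
where \(B_N\defeq\set{\*n:\norm{\*n}_\infty\le N}\). Letting \(N\to\infty\), the left side tends to \(\delta\,\*{Var}_\mu\stp{f}\) and \(\mu\tp{B_N}\to1\), which proves the inequality on \(\+C_{\-c}\tp{\supp\tp{\mu}}\).

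Finally, extend to \(\+D\tp{\+E}\) by density. \(\+C_{\-c}\) is a form core by the definition of \(\+D\tp{\+E}\) as the closure. Take \(f_k\to f\) in the form norm; then \(\*{Var}_\mu\stp{f_k}\to\*{Var}_\mu\stp{f}\) by \(L^2\) convergence, and \(\+E\tp{f_k,f_k}\to\+E\tp{f,f}\).

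The equivalence with the spectral gap statement is then the definition of the optimal Poincar\'e constant.
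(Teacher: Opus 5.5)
Your argument follows the paper's proof: drop the Hessian term in the Bochner formula, use diagonal congruence to reach the \(\delta\)-ULC matrix, apply the finite-state integrated Bakry--\Emery criterion, and truncate to coordinate boxes. Two details in Step~3 need repair, but neither changes the strategy.

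First, the \(\delta\)-ULC matrix of \(\mu_N\) at \(\*n\) is not literally a principal submatrix of that of \(\mu\). The reason is that \(a_N\) is proportional to \(a\) only inside the box and vanishes outside it. Suppose \(i\) is active for \(\mu_N\) and \(n_i = N - 1\). Then \(\*n + 2\*e_i\) leaves the box, and the \((i,i)\) entry loses the term \(-a\tp{\*n} a\tp{\*n + 2\*e_i} / a\tp{\*n + \*e_i}^2\). Off-diagonal entries are unchanged, since \(\*n + \*e_i + \*e_j\) stays in the box for distinct active \(i, j\). The correct statement, as in the paper, is ``principal submatrix plus a nonnegative diagonal matrix.'' That matrix is still positive semidefinite, so your conclusion that \(\mu_N\) is \(\delta\)-ULC survives.

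Second, your final density step claims that \(\+C_{\-c}\tp{\supp\tp{\mu}}\) is a form core for \(\+D\tp{\+E}\) ``by definition.'' With \(\+D\tp{\+E} = \set{f \in L^2\tp{\mu} \cmid \+E\tp{f, f} < \infty}\), this is not automatic. The paper proves a core property only in \Cref{thm:limiting-process-functional-inequalities}, and it uses finite first moments there, which are not assumed here. The detour is also unnecessary, for three reasons:
\begin{itemize}
\item Step~2 already gives the finite-state inequality for every function on \(B_N\).
\item Your bound \(\*E_{\mu_N}\stp{\sum_i b_i^{\tp{N}} \tp{\nabla_i^+ f}^2} \le \+E\tp{f, f} / \mu\tp{B_N}\) holds for every \(f\), not just compactly supported ones.
\item \(\*{Var}_{\mu_N}\stp{f} \to \*{Var}_{\mu}\stp{f}\) for every \(f \in L^2\tp{\mu}\) by dominated convergence.
\end{itemize}
So apply the truncated inequality directly to \(f|_{B_N}\) for arbitrary \(f \in \+D\tp{\+E}\), as the paper does. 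This finishes the proof without approximating by compactly supported functions.
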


\begin{remark}
    For measures supported on subsets of the hypercube \(\set{0, 1}^d\), the \(\delta\)-ULC condition and the resulting \Poincare inequality have previously appeared in \cite[Theorem~1.9]{CCCYZ25} and \cite[Theorem~1.2]{GJMPPS26}. An infinite-dimensional counterpart for point processes, whose configuration space may be viewed as an infinite-dimensional hypercube, appeared earlier in \cite[Theorem~3.4]{KKO13}. For a more detailed discussion of the connection between Glauber dynamics for point processes and birth-death chains for probability measures on \(\bb N^d\), see \Cref{sec:point-processes}.
\end{remark}

\begin{proof}[Proof of \Cref{thm:ulc-poincare-inequality}]
    For completeness, we make the passage to infinite support explicit. For \(N \in \bb N\), let
    \[
    S_N \defeq \supp\tp{\mu} \cap \set{\*n \in \bb N^d \cmid \norm{\*n}_{\infty} \le N}, \qquad \mu_N\tp{\*n} \defeq \frac{\mu\tp{\*n}}{\mu\tp{S_N}}, \quad \*n \in S_N.
    \]
    The set \(S_N\) is finite and downward closed. The ULC matrix for \(\mu_N\) is obtained from the ULC matrix for \(\mu\) by restricting to the active coordinates in \(S_N\) and, when a second step exits the box, adding a non-negative diagonal matrix. Hence \(\mu_N\) is also \(\delta\)-ULC. The finite-state integrated Bakry--\Emery criterion therefore gives
    \[
    \delta \, \*{Var}_{\mu_N}\stp{f} \le \+E_N\tp{f, f}
    \]
    for every function on \(S_N\). If \(f \in \+D\tp{\+E}\), then \(\*{Var}_{\mu_N}\stp{f} \to \*{Var}_{\mu}\stp{f}\) and \(\+E_N\tp{f, f} \to \+E\tp{f, f}\) as \(N \to \infty\). Passing to the limit proves the claimed inequality on \(\+D\tp{\+E}\).
\end{proof}

\begin{example}[Exact \(\delta\)-ULC Measures on \(\bb N\)]
    In one dimension, the \(\delta\)-ULC condition becomes
    \[
    1 - \frac{a\tp{n} a\tp{n + 2}}{a \tp{n + 1}^2} + \tp{1 - \delta} \frac{a\tp{n}}{a\tp{n + 1}} \ge 0, \quad \forall n \cmid n + 1 \in \supp\tp{\mu}.
    \]
    If equality holds for every \(n\) such that \(n + 1 \in \supp\tp{\mu}\), then a direct computation yields the following classification:
    \begin{itemize}
        \item If \(\delta \in \tp{0, 1}\), then \(\mu\) is a negative binomial distribution \(\-{NB}\tp{r, \delta}\) for some \(r > 0\).
        \item If \(\delta = 1\), then \(\mu\) is a Poisson distribution \(\-{Pois}\tp{\lambda}\) for some \(\lambda > 0\).
        \item If \(\delta > 1\), then \(\mu\) is a binomial distribution \(\-{Bin}\tp{N, 1 - 1 / \delta}\) for some \(N \in \bb N_{> 0}\).
    \end{itemize}
    Moreover, \(\delta\) is the optimal constant in the weighted \Poincare inequality for \(\mu\); equivalently, the birth-death generator \(\+L\) defined in \eqref{eq:birth-death-generator} has spectral gap \(\delta\). The centered identity function \(f\tp{n} \defeq n - \mean{\mu}\) is an eigenfunction of \(-\+L\) with eigenvalue \(\delta\).
    \label{ex:ulc-exact-1d}
\end{example}

\begin{example}[ULC Measures on \(\bb N\)]
    In one dimension, the ULC condition becomes
    \[
    1 - \frac{a\tp{n} a\tp{n + 2}}{a \tp{n + 1}^2} \ge 0, \quad \forall n \cmid n + 1 \in \supp\tp{\mu}.
    \]
    Equivalently, the sequence \(a\tp{n} = n! \mu\tp{n}\) is log-concave. This is precisely the classical notion of an ultra-log-concave measure, an important class of discrete probability measures; see \cite[Section~1]{AMM21} for further discussion. By \Cref{thm:ulc-poincare-inequality}, every ULC measure on \(\bb N\) satisfies the weighted \Poincare inequality
    \[
    \*{Var}_{\mu}\stp{f} \le \*E_{n \sim \mu}\stp{\frac{\tp{n + 1} \mu\tp{n + 1}}{\mu\tp{n}} \tp{\nabla^+ f\tp{n}}^2}, \quad \forall f \in \+D\tp{\+E}.
    \]
    Moreover, ultra-log-concavity of \(\mu\) implies that the ratios
    \[
    \frac{\tp{n + 1} \mu\tp{n + 1}}{\mu\tp{n}} = \frac{a\tp{n + 1}}{a\tp{n}}, \quad n \in \supp\tp{\mu},
    \]
    are non-increasing in \(n\). In particular, they are bounded above by \(\mu\tp{1} / \mu\tp{0}\). Consequently, the weighted inequality above yields the classical unweighted discrete \Poincare inequality
    \[
    \*{Var}_{\mu}\stp{f} \le \frac{\mu\tp{1}}{\mu\tp{0}} \*E_{\mu}\stp{\tp{\nabla^+ f}^2}, \quad \forall f \in \+D\tp{\+E}.
    \]
    This recovers \cite[Theorem~1.5]{Joh17}. The constant \(\mu\tp{1}/\mu\tp{0}\), however, is not optimal for the class of ULC measures: it can be improved to \(\mean{\mu}\); see \cite[Corollary~2.4]{DJ13}.
    \label{ex:one-dim-ulc-poincare}
\end{example}

\begin{example}[Product Measures on \(\bb N^d\)]
    Let \(\mu \defeq \bigotimes_{i = 1}^d \mu_i\) be a product measure on \(\bb N^d\). Then \(\mu\) is \(\delta\)-ULC if and only if each factor \(\mu_i\) is \(\delta\)-ULC. The spectral gap of the birth-death generator \(\+L\) defined in \eqref{eq:birth-death-generator} is exactly the minimum of the spectral gaps of the one-dimensional generators.
    \label{ex:ulc-product-measure}
\end{example}

\begin{example}[A Two-Queue Model]
    Fix \(b_{\-{long}}, b_{\-{short}} > 0\) and consider two queues with \(M/M/\infty\)-type service: at state \(\*n \in \bb N^2\), customers depart queue \(i\) at rate \(n_i\). Arrivals join the longer queue at rate \(b_{\-{long}}\) and the shorter queue at rate \(b_{\-{short}}\); when the queues are tied, each receives arrivals at rate \(b_{\-{long}}\). The queue-length vector therefore evolves as a birth-death chain on \(\bb N^2\), reversible with respect to
    \[
    \mu\tp{\*n} \propto \frac{1}{\*n!} b_{\-{long}}^{\max\set{n_1, n_2}} b_{\-{short}}^{\min\set{n_1, n_2}}, \quad \*n \in \bb N^2.
    \]
    Indeed, the ratio \(\tp{n_i + 1} \mu\tp{\*n + \*e_i} / \mu\tp{\*n}\) equals \(b_{\-{long}}\) when \(n_i \ge n_{3 - i}\) and \(b_{\-{short}}\) otherwise, exactly reproducing the prescribed birth rates.

    Write \(\Delta b \defeq b_{\-{long}} - b_{\-{short}}\), and let \(\*M_{\delta}\tp{\*n}\) denote the matrix on the left-hand side of \eqref{eq:ulc-condition}. For \(n_1 \ge n_2\), a direct computation yields
    \[
    \*M_{\delta}\tp{\*n} = \begin{cases}
        \tp{1 - \delta} \diag\tp{b_{\-{long}}^{-1}, b_{\-{short}}^{-1}}, & n_1 \ge n_2 + 2, \\
        \diag\tp{b_{\-{long}}^{-1} \tp{1 - \delta}, b_{\-{short}}^{-1} \tp{1 - \delta - \Delta b}}, & n_1 = n_2 + 1, \\
        b_{\-{long}}^{-1} \begin{pmatrix} 1 - \delta & \Delta b \\ \Delta b & 1 - \delta \end{pmatrix}, & n_1 = n_2.
    \end{cases}
    \]
    For \(n_2 > n_1\), the matrix is obtained by interchanging the two coordinates. These matrices are positive semidefinite if and only if \(\delta \le 1 - \abs{\Delta b}\). Hence a positive ULC parameter exists precisely when \(\abs{\Delta b} < 1\). In that case, \(\mu\) is \(\tp{1 - \abs{\Delta b}}\)-ULC, and the queueing process has spectral gap at least \(1 - \abs{\Delta b}\).
    \label{ex:two-queue-model}
\end{example}

\subsection{Local Curvature Matrix and Discrete Brascamp--Lieb Inequalities}
\label{subsec:discrete-brascamp-lieb}

For \(\*n \in \supp\tp{\mu}\), let \(\+I_{\*n} \defeq \set{i \in \stp{d} \cmid \*n + \*e_i \in \supp\tp{\mu}}\) be the set of active birth coordinates. Define the local curvature matrix
\begin{equation}
\*K_{\*n} \defeq \tp{1 - \frac{a\tp{\*n} a\tp{\*n + \*e_i + \*e_j}}{a\tp{\*n + \*e_i} a\tp{\*n + \*e_j}}}_{i, j \in \+I_{\*n}} + \diag \tp{\frac{a\tp{\*n}}{a\tp{\*n + \*e_i}}}_{i \in \+I_{\*n}},
\label{eq:local-curvature-matrix}
\end{equation}
where \(a\tp{\*n} \defeq \*n! \mu\tp{\*n}\). This matrix is the normalized first-order component of the discrete Bochner formula in \Cref{thm:integrated-gamma-2-form}.

Moreover, the \(\delta\)-ULC condition \eqref{eq:ulc-condition} is equivalent to the pointwise matrix inequality
\begin{equation}
    \*K_{\*n} \succeq \delta \, \diag \tp{\frac{a\tp{\*n}}{a\tp{\*n + \*e_i}}}_{i \in \+I_{\*n}}, \quad \forall \*n \in \supp\tp{\mu}.
    \label{eq:ulc-curvature-lower-bound}
\end{equation}
Thus, \Cref{thm:ulc-poincare-inequality} is consistent with the Bakry--\Emery principle that curvature lower bounds imply functional inequalities.

This interpretation naturally leads to a discrete analogue of the classical Brascamp--Lieb inequality. Indeed, if \(\mu\) is a probability measure on \(\bb R^d\) with density proportional to \(e^{-V}\), where \(V\) is twice continuously differentiable and strictly convex, then
\[
\*{Var}_{\mu}\stp{f} \le \*E_{\mu}\stp{\nabla f^{\top} \tp{\nabla^2 V}^{-1} \nabla f}.
\]
Rather than replacing the Hessian by a uniform lower bound (equivalently, assuming strong log-concavity), the Brascamp--Lieb inequality retains the full pointwise curvature. Motivated by this observation, we establish a discrete counterpart that depends on the entire local curvature matrix \(\*K_{\*n}\), rather than only the lower bound \eqref{eq:ulc-curvature-lower-bound}. While existing Bochner-type arguments for discrete Markov chains typically exploit only uniform curvature bounds to derive \Poincare inequalities \cite{KKO13,Joh17,GJMPPS26}, the following theorem shows that preserving the full state-dependent curvature yields a discrete Brascamp--Lieb inequality.

\begin{theorem}[Discrete Brascamp--Lieb Inequality]
    Let \(\mu\) be a probability measure on \(\bb N^d\) with downward closed support, and let \(\*K_{\*n}\) be the local curvature matrix defined in \eqref{eq:local-curvature-matrix}. Suppose that \(\*K_{\*n} \succ \*O\) for every \(\*n \in \supp\tp{\mu}\). Then
    \[
    \*{Var}_{\mu}\stp{f} \le \*E_{\mu}\stp{\tp{\nabla^+ f}_{\+I}^{\top} \*K^{-1} \tp{\nabla^+ f}_{\+I}}, \quad \forall f \in \+C_{\-c}\tp{\supp\tp{\mu}},
    \]
    where \(\tp{\cdot}_{\+I_{\*n}}\) denotes the restriction to the coordinates in \(\+I_{\*n} \defeq \set{i \in \stp{d} \cmid \*n + \*e_i \in \supp\tp{\mu}}\). The integrand on the right-hand side is evaluated using the corresponding local quantities; that is, \(\*K = \*K_{\*n}\), \(\+I = \+I_{\*n}\), and \(\nabla^+ f = \nabla^+ f\tp{\*n}\).
    \label{thm:discrete-brascamp-lieb}
\end{theorem}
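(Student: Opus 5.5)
The plan is the classical Hörmander/$L^2$ argument: solve a Poisson equation for the centered function, then combine the discrete Bochner formula of \Cref{thm:integrated-gamma-2-form} with a pointwise matrix Cauchy--Schwarz inequality. I will first treat finitely supported $\mu$ and then pass to general supports by the truncation used in the proof of \Cref{thm:ulc-poincare-inequality}. For the truncation step, note that restricting to $S_N$ only removes active coordinates and adds nonnegative diagonal terms to $\*K_{\*n}$. Removing a coordinate replaces $\*K^{-1}$ by the inverse of a principal submatrix, and the resulting quadratic form is dominated by the full one on vectors extended by zero. Increasing the matrix decreases its inverse. So for fixed $f\in\+C_{\-c}$, the right-hand side for $\mu_N$ is at most the right-hand side for $\mu$, up to the normalization $\mu(S_N)\to1$, and the variances converge.

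\textbf{Finite support.} Now assume $\supp(\mu)$ is finite. It is connected through $\*0$, since the support is downward closed, so the chain is irreducible. Given $f$ with $\*E_\mu f=0$, choose $g$ with $-\+L g=f$; this is possible because $-\+L$ is invertible on mean-zero functions. Then
\[
\*{Var}_\mu[f]=\*E_\mu[f\cdot(-\+L g)]=\+E(f,g)=\*E_{\*n\sim\mu}\Bigl[\sum_{i\in\+I_{\*n}} r_i(\*n)\,\nabla_i^+f(\*n)\,\nabla_i^+g(\*n)\Bigr],
\]
where $r_i=a(\*n+\*e_i)/a(\*n)$. Also $\*{Var}_\mu[f]=\*E_\mu[(\+L g)^2]=\*E_\mu[\Gamma_2(g,g)]$. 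By \Cref{thm:integrated-gamma-2-form}, after dropping the nonnegative Hessian term, this is at least
\[
\*E\bigl[(\nabla^+g)_{\+I}^\top \*M_{\*n}(\nabla^+g)_{\+I}\bigr],\qquad \*M_{\*n}\defeq \*R\,\*K_{\*n}\,\*R,\quad \*R\defeq \diag(r_i)_{i\in\+I_{\*n}}.
\]
This step uses the diagonal congruence identity $(r_ir_j-r_{ij})+r_i\mathbf 1_{i=j}=r_ir_j(\*K_{\*n})_{ij}$. Coordinates outside $\+I_{\*n}$ contribute nothing, because there the $r$ and $r_{ij}$ factors vanish.

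\textbf{Cauchy--Schwarz.} Write $u=\*R(\nabla^+f)_{\+I}$ and $v=(\nabla^+g)_{\+I}$. Pointwise,
\[
u^\top v\le \bigl(u^\top \*K^{-1}u\bigr)^{1/2}\bigl(v^\top \*K v\bigr)^{1/2}.
\]
This is where I expect the main obstacle: the weights do not match directly. Bochner produces $\*R\*K\*R$, while the target has $\*K^{-1}$ with no weight. I will fix this by applying Cauchy--Schwarz to $u$ and $\*R v$ instead:
\[
u^\top v\cdot\text{(no extra weight)}\quad\text{replaced by}\quad (\nabla^+f)^\top \*R\, v=(\nabla^+f)^\top(\*R v),
\]
giving the pointwise bound
\[
\bigl((\nabla^+f)^\top\*K^{-1}\nabla^+f\bigr)^{1/2}\bigl(v^\top\*R\*K\*R v\bigr)^{1/2}.
\]
The second factor is exactly the Bochner quadratic form. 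So the weights are consistent: $\+E(f,g)=\*E[(\nabla^+f)^\top \*R v]$, the Bochner form is $\*E[v^\top\*R\*K\*R v]$, and no extra weight appears.

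\textbf{Conclusion.} Taking expectations and applying Cauchy--Schwarz once more in $L^2(\mu)$ gives
\[
\*{Var}_\mu[f]\le \bigl(\*E[(\nabla^+f)^\top\*K^{-1}\nabla^+f]\bigr)^{1/2}\bigl(\*{Var}_\mu[f]\bigr)^{1/2}.
\]
Dividing by $\*{Var}_\mu[f]^{1/2}$ proves the claim when $\mu$ has finite support; the general case then follows by the truncation step above.
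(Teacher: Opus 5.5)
Your proposal is correct and is essentially the paper's proof: you solve $-\mathcal{L} g = f$, pair $\nabla^+ f$ with $\mathbf{R}\nabla^+ g$ (the paper's $v_i = r_i \nabla_i^+ g$) in a $\mathbf{K}$-weighted Cauchy--Schwarz inequality, and bound $\mathbf{E}_\mu[v^\top \mathbf{K} v]$ by $\mathbf{E}_\mu[\Gamma_2(g,g)] = \mathbf{Var}_\mu[f]$ via the discrete Bochner formula; your first pairing attempt is unnecessary, but the corrected pairing is exactly right. The only difference is the truncation step. You compare the local quadratic forms monotonically, which is valid even with nonzero entries in the removed coordinates, since $x^\top A^{-1} x = \min_y (x,y)^\top \mathbf{K}^{-1}(x,y)$ for a principal submatrix $A$ of $\mathbf{K}$. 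The paper instead observes that, for large $N$, the truncated local matrices coincide with $\mathbf{K}_{\mathbf{n}}$ wherever $\nabla^+ f \neq 0$.
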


\begin{remark}
    The inequality extends by continuity to the completion of \(\+C_{\-c}\tp{\supp\tp{\mu}}\) with respect to the norm
    \[
    \norm{f}_{\-{BL}}^2 \defeq \norm{f}_{L^2\tp{\mu}}^2 + \*E_{\mu}\stp{\tp{\nabla^+ f}_{\+I}^{\top} \*K^{-1} \tp{\nabla^+ f}_{\+I}}.
    \]
\end{remark}

\begin{remark}
    If \(\mu\) is \(\delta\)-ULC, then \eqref{eq:ulc-curvature-lower-bound} implies
    \[
    \*K_{\*n}^{-1} \preceq \frac{1}{\delta} \diag \tp{\frac{a\tp{\*n + \*e_i}}{a\tp{\*n}}}_{i \in \+I_{\*n}}, \quad \forall \*n \in \supp\tp{\mu}.
    \]
    Hence \Cref{thm:discrete-brascamp-lieb} recovers the weighted \Poincare inequality in \Cref{thm:ulc-poincare-inequality}.
\end{remark}

\begin{proof}[Proof of \Cref{thm:discrete-brascamp-lieb}]
    First suppose that \(\mu\) is finitely supported and replace \(f\) by \(f - \*E_{\mu}\stp{f}\).
    
    Let \(g \defeq \tp{-\+L}^{\dagger} f\), where \(\+L\) is the birth-death generator defined in \eqref{eq:birth-death-generator}, and \(\tp{-\+L}^{\dagger}\) is its Moore--Penrose pseudoinverse. Then \(g\) is the unique solution to the Poisson equation \(-\+L g = f\) with \(\*E_{\mu}\stp{g} = 0\).

    By reversibility of \(\+L\) and \eqref{eq:birth-death-dirichlet-form},
    \[
    \*{Var}_{\mu}\stp{f} = \*E_{\mu}\stp{f^2} = \*E_{\mu}\stp{f \, \tp{-\+L} g} = \*E_{\mu}\stp{\Gamma\tp{f, g}} = \*E_{\*n \sim \mu}\stp{\sum_{i \in \stp{d}} \frac{a\tp{\*n + \*e_i}}{a\tp{\*n}} \nabla_i^+ f\tp{\*n} \nabla_i^+ g\tp{\*n}}.
    \]
    For \(i \in \+I_{\*n}\), write \(u_i\tp{\*n} \defeq \nabla_i^+ f\tp{\*n}\) and \(v_i\tp{\*n} \defeq \frac{a\tp{\*n + \*e_i}}{a\tp{\*n}} \nabla_i^+ g\tp{\*n}\). Then the above expression becomes
    \[
    \*{Var}_{\mu}\stp{f} = \*E_{\*n \sim \mu}\stp{\sum_{i \in \+I_{\*n}} u_i\tp{\*n} v_i\tp{\*n}} = \*E_{\mu}\stp{\*u^{\top} \*v}.
    \]
    Since \(\*K_{\*n} \succ \*O\) for every \(\*n \in \supp\tp{\mu}\), the Cauchy--Schwarz inequality gives
    \begin{equation}
        \begin{aligned}
            \*{Var}_{\mu}\stp{f} &= \*E_{\mu}\stp{\tp{\*K^{-1 / 2} \*u}^{\top} \tp{\*K^{1 / 2} \*v}} \\
            &\le \*E_{\mu}\stp{\norm{\*K^{-1 / 2} \*u}_2^2}^{1 / 2} \*E_{\mu}\stp{\norm{\*K^{1 / 2} \*v}_2^2}^{1 / 2} \\
            &= \*E_{\mu}\stp{\*u^{\top} \*K^{-1} \*u}^{1 / 2} \*E_{\mu}\stp{\*v^{\top} \*K \*v}^{1 / 2}.
        \end{aligned}
        \label{eq:discrete-brascamp-lieb-cauchy-schwarz}
    \end{equation}

    On the other hand,
    \[
    \*{Var}_{\mu}\stp{f} = \*E_{\mu}\stp{f^2} = \*E_{\mu}\stp{\tp{\+L g}^2} = \*E_{\mu}\stp{\Gamma_2\tp{g, g}}.
    \]
    By the discrete Bochner formula in \Cref{thm:integrated-gamma-2-form},
    \begin{align*}
        \*{Var}_{\mu}\stp{f} &= \, \*E_{\*n \sim \mu}\stp{\sum_{i, j \in \stp{d}} \frac{a\tp{\*n + \*e_i + \*e_j}}{a\tp{\*n}} \tp{\nabla_i^+ \nabla_j^+ g\tp{\*n}}^2} + \*E_{\*n \sim \mu}\stp{\sum_{i \in \stp{d}} \frac{a\tp{\*n + \*e_i}}{a\tp{\*n}} \tp{\nabla_i^+ g\tp{\*n}}^2} \\
        &\phantom{={}} + \*E_{\*n \sim \mu}\stp{\sum_{i, j \in \stp{d}} \tp{\frac{a\tp{\*n + \*e_i} a\tp{\*n + \*e_j}}{a\tp{\*n}^2} - \frac{a\tp{\*n + \*e_i + \*e_j}}{a\tp{\*n}}} \nabla_i^+ g\tp{\*n} \nabla_j^+ g\tp{\*n}} \\
        &\ge \*E_{\*n \sim \mu}\stp{\sum_{i, j \in \+I_{\*n}} \tp{1 - \frac{a\tp{\*n} a\tp{\*n + \*e_i + \*e_j}}{a\tp{\*n + \*e_i} a\tp{\*n + \*e_j}}} v_i\tp{\*n} v_j\tp{\*n}} + \*E_{\*n \sim \mu}\stp{\sum_{i \in \+I_{\*n}} \frac{a\tp{\*n}}{a\tp{\*n + \*e_i}} v_i\tp{\*n}^2} \\
        &= \*E_{\mu}\stp{\*v^{\top} \*K \*v}.
    \end{align*}
    Together with \eqref{eq:discrete-brascamp-lieb-cauchy-schwarz}, this gives
    \[
    \*{Var}_{\mu}\stp{f} \le \*E_{\mu}\stp{\*u^{\top} \*K^{-1} \*u} = \*E_{\mu}\stp{\tp{\nabla^+ f}_{\+I}^{\top} \*K^{-1} \tp{\nabla^+ f}_{\+I}}.
    \]

    To pass to an arbitrary downward closed support, fix \(f \in \+C_{\-c}\tp{\supp\tp{\mu}}\) and use the coordinate-box restrictions \(S_N\) and conditional measures \(\mu_N\) introduced in the proof of \Cref{thm:ulc-poincare-inequality}. The local curvature matrix for \(\mu_N\) is obtained by restricting \(\*K_{\*n}\) to the active coordinates in \(S_N\) and adding non-negative diagonal entries when a second step exits the box; it is therefore positive definite. Apply the finite-state inequality to \(f|_{S_N}\). For all sufficiently large \(N\), every state at which \(\nabla^+ f\) is nonzero lies away from the boundary of the box, so its local curvature matrix agrees with \(\*K_{\*n}\). Letting \(N \to \infty\) proves the inequality for \(f \in \+C_{\-c}\tp{\supp\tp{\mu}}\).
\end{proof}

\begin{corollary}
    In the same setting as \Cref{thm:discrete-brascamp-lieb}, we have
    \[
    \cov{\mu} \preceq \*E_{\mu}\stp{P_{\+I}^{\top}\*K^{-1} P_{\+I}},
    \]
    where \(P_{\+I_{\*n}}\) is the projection onto the active coordinates \(\+I_{\*n} \defeq \set{i \in \stp{d} \cmid \*n + \*e_i \in \supp\tp{\mu}}\). In other words, \(P_{\+I_{\*n}}^{\top}\*K_{\*n}^{-1} P_{\+I_{\*n}}\) is simply \(\*K_{\*n}^{-1}\) zero-padded to \(\bb{R}^{d \times d}\).
    \label{cor:discrete-brascamp-lieb-cov-bound}
\end{corollary}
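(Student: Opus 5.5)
The plan is to apply \Cref{thm:discrete-brascamp-lieb} to linear test functions. Fix \(\boldsymbol{\xi} \in \bb R^d\) and set \(f(\*n) \defeq \inner{\boldsymbol{\xi}}{\*n}\). Then \(\*{Var}_{\mu}\stp{f} = \boldsymbol{\xi}^{\top} \cov{\mu} \boldsymbol{\xi}\). The forward differences are constant, \(\nabla_i^+ f(\*n) = \xi_i\), so \((\nabla^+ f(\*n))_{\+I_{\*n}} = P_{\+I_{\*n}} \boldsymbol{\xi}\). The right-hand side of the theorem therefore equals
\[
\boldsymbol{\xi}^{\top} \*E_{\mu}\stp{P_{\+I}^{\top}\*K^{-1} P_{\+I}} \boldsymbol{\xi}.
\]
Since \(\boldsymbol{\xi}\) is arbitrary, this is exactly the claimed Loewner inequality.

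The main obstacle is that \(f\) is not in \(\+C_{\-c}\tp{\supp\tp{\mu}}\) when the support is infinite, so the theorem does not apply to \(f\) directly. I would handle this with the same truncation used in the proofs of \Cref{thm:ulc-poincare-inequality,thm:discrete-brascamp-lieb}.

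Apply the finite-support case of \Cref{thm:discrete-brascamp-lieb} to \(\mu_N\), the conditioning of \(\mu\) on the box \(S_N\), with \(f\) restricted to \(S_N\). This gives
\[
\boldsymbol{\xi}^{\top}\cov{\mu_N}\boldsymbol{\xi} \le \*E_{\mu_N}\stp{(P_{\+I^N}\boldsymbol{\xi})^{\top} (\*K^{N})^{-1} P_{\+I^N}\boldsymbol{\xi}}.
\]
Here \(\+I^N_{\*n} \subseteq \+I_{\*n}\), and \(\*K^N_{\*n}\) is a principal submatrix of \(\*K_{\*n}\) plus a nonnegative diagonal matrix. I then need the pointwise comparison
\[
P_{\+I^N}^{\top}(\*K^N)^{-1}P_{\+I^N} \preceq P_{\+I}^{\top}\*K^{-1}P_{\+I}.
\]
It follows from two standard facts. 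Adding a PSD diagonal matrix decreases the inverse in Loewner order. The inverse of a principal submatrix, zero-padded, is dominated by the full inverse; this is the Schur-complement fact \((\*K^{-1})_{JJ} \succeq (\*K_{JJ})^{-1}\).

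Hence
\[
\boldsymbol{\xi}^{\top}\cov{\mu_N}\boldsymbol{\xi} \le \mu(S_N)^{-1}\,\*E_{\mu}\stp{\*1_{S_N}\,\boldsymbol{\xi}^{\top}P_{\+I}^{\top}\*K^{-1}P_{\+I}\boldsymbol{\xi}}.
\]
The integrand on the right is nonnegative, so monotone convergence lets me pass to the limit on that side. If the right-hand side of the corollary is infinite there is nothing to prove.

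For the left side, when \(\mu\) has finite second moments, \(\cov{\mu_N} \to \cov{\mu}\) by dominated convergence. If second moments are infinite, I would show they are forced to be finite whenever the right-hand side is finite. The argument is Fatou: \(\*E_{\mu_N}[(\inner{\boldsymbol{\xi}}{\*n} - c_N)^2]\) stays bounded, and one controls the means \(c_N\), for instance by applying the same bound to \(\boldsymbol{\xi}\) ranging over the basis vectors and using Chebyshev to keep \(c_N\) bounded. Alternatively, I would simply state the corollary under finite second moments, as the paper does for \Cref{cor:ulc-cov-bound}.
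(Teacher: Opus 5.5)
Your proof is correct and follows the paper's argument: apply \Cref{thm:discrete-brascamp-lieb} to $\ell_{\mathbf v}(\mathbf n)=\mathbf v^{\top}\mathbf n$ and read off the quadratic form. The paper, however, applies the theorem to $\ell_{\mathbf v}$ directly, without addressing that $\ell_{\mathbf v}\notin\mathcal C_{\mathrm c}(\supp\mu)$ when the support is infinite. Your box-conditioning step supplies the justification the paper omits: you keep the test function linear, compare $P_{\mathcal I^N}^{\top}(\mathbf K^N)^{-1}P_{\mathcal I^N}\preceq P_{\mathcal I}^{\top}\mathbf K^{-1}P_{\mathcal I}$ pointwise, and use monotone convergence.

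One small fix is needed in that comparison. The block inequality $(\mathbf K^{-1})_{JJ}\succeq(\mathbf K_{JJ})^{-1}$ only handles vectors supported on $J=\mathcal I^N_{\mathbf n}$. But $P_{\mathcal I}\boldsymbol\xi$ also has entries at coordinates with $n_i=N$, which are active for $\mu$ but not for $\mu_N$. The zero-padded comparison you need follows instead from $\mathbf u^{\top}\mathbf K^{-1}\mathbf u=\sup_{\mathbf z}\bigl(2\mathbf u^{\top}\mathbf z-\mathbf z^{\top}\mathbf K\mathbf z\bigr)$ by restricting $\mathbf z$ to $J$, as in the paper's hardcore computation.
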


\begin{proof}[Proof of \Cref{cor:discrete-brascamp-lieb-cov-bound}]
    For any \(\*v \ne \*0\), consider the linear test function \(\ell_{\*v}\tp{\*n} \defeq \*v^{\top} \*n\). \Cref{thm:discrete-brascamp-lieb} gives
    \[
    \*v^{\top} \cov{\mu} \*v = \*{Var}_{\mu}\stp{\ell_{\*v}} \le \*E_{\mu}\stp{\tp{\nabla^+ \ell_{\*v}}_{\+I}^{\top} \*K^{-1} \tp{\nabla^+ \ell_{\*v}}_{\+I}} = \*E_{\mu}\stp{\*v_{\+I}^{\top} \*K^{-1} \*v_{\+I}} = \*v^{\top} \*E_{\mu}\stp{P_{\+I}^{\top}\*K^{-1} P_{\+I}} \*v.
    \]
\end{proof}

The discrete Brascamp--Lieb inequality in \Cref{thm:discrete-brascamp-lieb} is associated with the birth-death chain having linear death rates, whose generator is given by \eqref{eq:birth-death-generator}. Another canonical choice is the birth-death chain with unit birth rates:
\begin{equation}
    \+L f\tp{\*n} \defeq \sum_{i = 1}^d \nabla_i^+ f\tp{\*n} + \sum_{i = 1}^d \frac{\mu\tp{\*n - \*e_i}}{\mu\tp{\*n}} \nabla_i^- f\tp{\*n},
    \label{eq:unit-birth-generator}
\end{equation}
where \(\mu\) is assumed to be supported on an upward closed subset of \(\bb Z^d\). This generator was studied in \cite{Joh17} via the integrated Bakry--\Emery criterion. Although \cite{Joh17} restricts the state space to \(\bb N^d\), the argument extends readily to birth-death chains on \(\bb Z^d\).

Set
\[
\+E_+\tp{f, f} \defeq \*E_{\mu}\stp{\norm{\nabla^+ f}_2^2}, \qquad \+D\tp{\+E_+} \defeq \set{f \in L^2\tp{\mu} \cmid \+E_+\tp{f, f} < \infty}.
\]
It was shown in \cite[Theorem~1.5, Proposition~9.1]{Joh17} that the \Poincare inequality
\[
c \, \*{Var}_{\mu}\stp{f} \le \*E_{\mu}\stp{\norm{\nabla^+ f}_2^2}, \quad \forall f \in \+D\tp{\+E_+}
\]
holds under a \(c\)-log-concavity condition:
\begin{equation}
    \tp{1 - \frac{\mu\tp{\*n} \mu\tp{\*n - \*e_i - \*e_j}}{\mu\tp{\*n - \*e_i} \mu\tp{\*n - \*e_j}}}_{i, j \in \+I_{\*n}} \succeq c\, \diag\tp{\frac{\mu\tp{\*n}}{\mu\tp{\*n - \*e_i}}}_{i \in \+I_{\*n}}, \quad \forall \*n \in \supp\tp{\mu},
    \label{eq:c-log-concave}
\end{equation}
where \(\+I_{\*n} \defeq \set{i \in \stp{d} \cmid \*n - \*e_i \in \supp\tp{\mu}}\). As in the linear-death setting, one can strengthen this result to a discrete Brascamp--Lieb inequality by retaining the full local curvature matrix.

\begin{theorem}[Discrete Brascamp--Lieb Inequality, Unit Birth Rate Chain]
    Let \(\mu\) be a probability measure on \(\bb Z^d\) with upward closed support. For each \(\*n \in \supp\tp{\mu}\), define the local curvature matrix
    \[
    \*K_{\*n} \defeq \tp{1 - \frac{\mu\tp{\*n} \mu\tp{\*n - \*e_i - \*e_j}}{\mu\tp{\*n - \*e_i} \mu\tp{\*n - \*e_j}}}_{i, j \in \+I_{\*n}},
    \]
    where \(\+I_{\*n} \defeq \set{i \in \stp{d} \cmid \*n - \*e_i \in \supp\tp{\mu}}\). Suppose that \(\*K_{\*n} \succ \*O\) for every \(\*n \in \supp\tp{\mu}\). Then
    \[
    \*{Var}_{\mu}\stp{f} \le \*E_{\mu}\stp{\tp{\nabla^- f}_{\+I}^{\top} \*K^{-1} \tp{\nabla^- f}_{\+I}}, \quad \forall f \in \+C_{\-c}\tp{\supp\tp{\mu}},
    \]
    where \(\tp{\cdot}_{\+I_{\*n}}\) denotes the restriction to the coordinates in \(\+I_{\*n} \defeq \set{i \in \stp{d} \cmid \*n - \*e_i \in \supp\tp{\mu}}\). The integrand on the right-hand side is evaluated using the corresponding local quantities; that is, \(\*K = \*K_{\*n}\), \(\+I = \+I_{\*n}\), and \(\nabla^- f = \nabla^- f\tp{\*n}\).
    \label{thm:discrete-brascamp-lieb-unit-birth}
\end{theorem}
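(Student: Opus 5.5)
The argument mirrors the proof of \Cref{thm:discrete-brascamp-lieb}: we combine the Poisson equation, a Cauchy--Schwarz step weighted by \(\*K_{\*n}\), and a Bochner formula for the unit-birth generator \eqref{eq:unit-birth-generator}.

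\textbf{Step 1 (Bochner formula).} We first derive the analogue of \Cref{thm:integrated-gamma-2-form} for \eqref{eq:unit-birth-generator}. Put \(s_i\tp{\*n} \defeq \mu\tp{\*n - \*e_i}/\mu\tp{\*n}\) and \(s_{ij}\tp{\*n} \defeq \mu\tp{\*n - \*e_i - \*e_j}/\mu\tp{\*n}\), so that \(s_i\tp{\*n} s_j\tp{\*n - \*e_i} = s_{ij}\tp{\*n}\). Write \(u_i \defeq \nabla_i^- f\), \(v_j \defeq \nabla_j^- g\). Detailed balance reads \(\mu\tp{\*n} = \mu\tp{\*n+\*e_i} s_i\tp{\*n+\*e_i}\), and it gives \(\+L f\tp{\*n} = \sum_i s_i u_i\tp{\*n} - \sum_i u_i\tp{\*n+\*e_i}\). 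We expand \(\*E_\mu\stp{\+L f\,\+L g}\) into death--death, birth--birth and cross terms, shifting indices exactly as in the proof of \Cref{thm:integrated-gamma-2-form}, with the roles of birth and death exchanged. Here the birth rate is constant, so no diagonal splitting \(n_i^2 = n_i(n_i-1)+n_i\) is needed; this is why no diagonal matrix \(\*D_{\*n}\) appears in \(\*K_{\*n}\). We expect
\[
\*E_\mu\stp{\+L f\,\+L g} = \*E_{\*n\sim\mu}\stp{\sum_{i,j} s_{ij}\,\nabla_i^-\nabla_j^- f\,\nabla_i^-\nabla_j^- g} + \*E_{\*n\sim\mu}\stp{\sum_{i,j}\tp{s_i s_j - s_{ij}} u_i v_j}.
\]
After the congruence \(w_i \defeq s_i u_i\), the second term is \(\*E_\mu\stp{\*w^\top \*K \*w'}\). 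The sums run over \(i \in \+I_{\*n}\) automatically, since \(s_i\) vanishes otherwise; this is where upward closedness is used. For \(i,j \in \+I_{\*n}\) with \(\*n - \*e_i - \*e_j \notin \supp\tp{\mu}\), the ratio \(s_{ij}\) is zero and the formula is consistent with the definition of \(\*K_{\*n}\). This identity is cited from \cite{Joh17}, but I would reverify it, since the shift bookkeeping is the main obstacle.

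\textbf{Step 2 (finite-support case).} Assume for now the finite-support case; the reduction is Step 3. Center \(f\) and set \(g \defeq (-\+L)^\dagger f\), so that \(\*E_\mu\stp{g} = 0\). Then
\[
\*{Var}_\mu\stp{f} = \*E_\mu\stp{\Gamma\tp{f,g}} = \*E_\mu\stp{\textstyle\sum_i s_i \nabla_i^- f\,\nabla_i^- g} = \*E_\mu\stp{\*u^\top \*v},
\]
where we redefine \(\*u \defeq (\nabla_i^- f)_{i\in\+I}\) and \(\*v \defeq (s_i \nabla_i^- g)_{i\in\+I}\). This uses the Dirichlet form \(\*E_\mu\stp{\sum_i \nabla_i^+ f\,\nabla_i^+ g}\), rewritten via the shift \(\*n \mapsto \*n - \*e_i\) as a backward-difference form with weights \(s_i\). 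The Cauchy--Schwarz step \eqref{eq:discrete-brascamp-lieb-cauchy-schwarz} gives \(\*{Var}_\mu\stp{f} \le \*E\stp{\*u^\top\*K^{-1}\*u}^{1/2}\,\*E\stp{\*v^\top\*K\*v}^{1/2}\). Step 1, with the nonnegative Hessian term dropped, gives \(\*{Var}_\mu\stp{f} = \*E\stp{(\+L g)^2} \ge \*E\stp{\*v^\top\*K\*v}\). Combining the two bounds and dividing yields the claim.

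\textbf{Step 3 (reduction to finite support).} Upward closed supports are infinite, so the finite-support case in Step 2 must be justified by truncation. We restrict \(\mu\) to boxes \(\set{\norm{\*n}_\infty \le N} \cap \supp\tp{\mu}\), or alternatively work directly with \(f \in \+C_{\-c}\) and solve the Poisson equation in \(L^2\). A box truncation is not upward closed, so I would instead adapt the argument: cut off at the top of the box. Removing birth moves at the top boundary shrinks the Dirichlet form, while the curvature matrix at boundary states changes only through removed second steps. I expect this boundary handling to be the delicate point.

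The cleanest fallback is a density argument in \(L^2\tp{\mu}\). Use the spectral gap, which follows from \(\*K \succ \*O\) once a Poincaré inequality is known, to define \(g\). Then approximate \(g\) by compactly supported functions, checking that \(\*E\stp{(\+L g)^2}\) and the Bochner identity pass to the limit.
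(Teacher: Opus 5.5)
Steps~1 and~2 are correct and follow the route the paper intends. The paper gives no separate proof of this theorem, only the remark that one argues as in the linear-death setting. Your Bochner identity for \eqref{eq:unit-birth-generator} is right: no \(\*D_{\*n}\) appears because the diagonal birth--birth term \(\*E_{\mu}\stp{\nabla_i^+ f \, \nabla_i^+ g}\) can be shifted by \(2\*e_i\) in one step.

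The genuine gap is Step~3, and it is the main difficulty rather than a technicality. Every nonempty upward closed subset of \(\bb Z^d\) is infinite, so the finite-support case of Step~2 never applies as stated. The box truncation from the proof of \Cref{thm:discrete-brascamp-lieb} does not transfer verbatim either. Upward closedness enters Step~1 exactly through that \(2\*e_i\)-shift, which needs \(\*n + 2\*e_i\) to be a state. Capping a box from above kills births on the top face, so your identity acquires a boundary term there. The sign of that term decides whether the truncated curvature stays positive definite, and you do not compute it.

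Your fallback does not close the gap. \(\*K_{\*n} \succ \*O\) carries no uniform curvature bound, so no spectral gap is available to define \(\tp{-\+L}^{-1} f\); invoking one ``once a Poincar\'e inequality is known'' is circular. Even with a resolvent in its place, you would need \(\+C_{\-c}\tp{\supp\tp{\mu}}\) to be a core for \(\+L\) to transfer the Bochner inequality to the solution. That is not automatic, since the death rates \(s_i\) may be unbounded.

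The box route does work once the boundary term is computed. Take \(B \defeq \supp\tp{\mu} \cap \set{\*n \cmid -\*M \le \*n \le \*N}\).

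The lower cut preserves upward closedness. Mirroring the linear-death case, it only passes to a principal submatrix of \(\*K_{\*n}\) and raises diagonal entries where \(\*n - 2\*e_i\) leaves the box.

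For the upper cut, the chain on \(B\) has birth rates \(\beta_i\tp{\*n} \defeq \*1_{\set{\*n + \*e_i \in B}}\) and the same death rates \(s_i\). Redoing your expansion:
\begin{itemize}
\item The cross terms and the off-diagonal birth--birth terms are unchanged. In a product box, \(\*n + \*e_i, \*n + \*e_j \in B\) with \(i \ne j\) forces \(\*n + \*e_i + \*e_j \in B\).
\item The diagonal birth--birth term, shifted by \(\*e_i\) instead of \(2\*e_i\), equals its untruncated counterpart plus \(\*E_{\mu_B}\stp{\sum_i \tp{1 - \beta_i} s_i \nabla_i^- f \, \nabla_i^- g}\).
\end{itemize}
So after the congruence \(w_i = s_i \nabla_i^- f\), the upper cut turns the curvature into \(\*K_{\*n} + \diag\tp{\tp{1 - \beta_i\tp{\*n}} / s_i\tp{\*n}}\), a nonnegative diagonal perturbation. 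The Dirichlet form keeps the form \(\*E_{\mu_B}\stp{\sum_i s_i \nabla_i^- f \, \nabla_i^- g}\).

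The chain on \(B\) is irreducible, since every state climbs coordinatewise to the corner \(\*N\). Hence Step~2 applies on \(B\). The truncated curvature dominates a principal submatrix of \(\*K_{\*n}\), so the identity \(\*x^{\top} \*K^{-1} \*x = \sup_{\*z} \tp{2 \*x^{\top} \*z - \*z^{\top} \*K \*z}\) shows the truncated integrand is pointwise at most \(\tp{\nabla^- f}_{\+I}^{\top} \*K^{-1} \tp{\nabla^- f}_{\+I}\). Therefore \(\*{Var}_{\mu_B}\stp{f} \le \mu\tp{B}^{-1} \*E_{\mu}\stp{\tp{\nabla^- f}_{\+I}^{\top} \*K^{-1} \tp{\nabla^- f}_{\+I}}\). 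Letting the box exhaust \(\bb Z^d\) proves the theorem for \(f \in \+C_{\-c}\tp{\supp\tp{\mu}}\).
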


\begin{remark}
    The inequality extends by continuity to the completion of \(\+C_{\-c}\tp{\supp\tp{\mu}}\) with respect to the norm
    \[
    \norm{f}_{\-{BL}, -}^2 \defeq \norm{f}_{L^2\tp{\mu}}^2 + \*E_{\mu}\stp{\tp{\nabla^- f}_{\+I}^{\top} \*K^{-1} \tp{\nabla^- f}_{\+I}}.
    \]
\end{remark}

\begin{remark}
    Under the \(c\)-log-concavity condition,
    \[
    \*K_{\*n}^{-1} \preceq \frac{1}{c} \, \diag\tp{\frac{\mu\tp{\*n - \*e_i}}{\mu\tp{\*n}}}_{i \in \+I_{\*n}}, \quad \forall \*n \in \supp\tp{\mu}.
    \]
    Consequently, \Cref{thm:discrete-brascamp-lieb-unit-birth} recovers the \Poincare inequality proved in \cite{Joh17}. Indeed,
    \[
    \*{Var}_{\mu}\stp{f} \le \frac{1}{c} \*E_{\mu}\stp{\sum_{i \in \+I_{\*n}} \frac{\mu\tp{\*n - \*e_i}}{\mu\tp{\*n}} \tp{\nabla_i^- f\tp{\*n}}^2} = \frac{1}{c} \*E_{\mu}\stp{\norm{\nabla^+ f}_2^2}, \quad \forall f \in \+C_{\-c}\tp{\supp\tp{\mu}}.
    \]
\end{remark}

\begin{corollary}
    In the same setting as \Cref{thm:discrete-brascamp-lieb-unit-birth}, we have
    \[
    \cov{\mu} \preceq \*E_{\mu}\stp{P_{\+I}^{\top}\*K^{-1} P_{\+I}},
    \]
    where \(P_{\+I_{\*n}}\) is the projection onto the active coordinates \(\+I_{\*n} \defeq \set{i \in \stp{d} \cmid \*n - \*e_i \in \supp\tp{\mu}}\). In other words, \(P_{\+I_{\*n}}^{\top}\*K_{\*n}^{-1} P_{\+I_{\*n}}\) is simply \(\*K_{\*n}^{-1}\) zero-padded to \(\bb{R}^{d \times d}\).
\end{corollary}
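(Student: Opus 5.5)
The plan is to mirror the proof of \Cref{cor:discrete-brascamp-lieb-cov-bound} exactly, with \Cref{thm:discrete-brascamp-lieb-unit-birth} in place of \Cref{thm:discrete-brascamp-lieb}. For \(\*v \in \bb R^d\), take the linear test function \(\ell_{\*v}\tp{\*n} \defeq \*v^{\top} \*n\). For every \(i \in \+I_{\*n}\), its backward difference is constant:
\[
\nabla_i^- \ell_{\*v}\tp{\*n} = \ell_{\*v}\tp{\*n - \*e_i} - \ell_{\*v}\tp{\*n} = -v_i .
\]
Hence \(\tp{\nabla^- \ell_{\*v}}_{\+I_{\*n}} = -\*v_{\+I_{\*n}}\). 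The sign disappears in the quadratic form, so the integrand becomes \(\*v_{\+I}^{\top} \*K^{-1} \*v_{\+I} = \*v^{\top} P_{\+I}^{\top} \*K^{-1} P_{\+I} \*v\). Taking expectations and noting \(\*{Var}_{\mu}\stp{\ell_{\*v}} = \*v^{\top} \cov{\mu} \*v\) then gives the matrix inequality, since \(\*v\) is arbitrary.

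The only genuine issue, which I expect to be the main obstacle, is that \(\ell_{\*v}\) is not in \(\+C_{\-c}\tp{\supp\tp{\mu}}\). The support is upward closed in \(\bb Z^d\) and therefore infinite, so the theorem cannot be applied directly.

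I would handle this using the remark after \Cref{thm:discrete-brascamp-lieb-unit-birth}: the inequality extends to the completion of \(\+C_{\-c}\) in \(\norm{\cdot}_{\-{BL}, -}\). Assume \(\cov{\mu}\) is finite; otherwise interpret the statement with both sides possibly infinite. Also assume \(\*E_{\mu}\stp{P_{\+I}^{\top} \*K^{-1} P_{\+I}}\) is finite, since otherwise there is nothing to prove. To show \(\ell_{\*v}\) lies in this completion, I would approximate it by truncations \(f_N \defeq \chi_N \ell_{\*v}\), where \(\chi_N\) is a cutoff equal to \(1\) on the box \(\norm{\*n}_{\infty} \le N\) and decaying linearly to \(0\) on a scale of \(N\).

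On the box, \(\nabla^- f_N = \nabla^- \ell_{\*v}\). Off the box, the backward differences are \(O\tp{\abs{\*v}}\) plus a term of order \(\abs{\ell_{\*v}}/N = O\tp{\abs{\*v}}\). So \(\tp{\nabla^- f_N}^{\top} \*K^{-1} \nabla^- f_N\) is dominated by a constant times the integrable quantity \(\operatorname{Tr} \*K^{-1}\), which is controlled by the finite matrix expectation. Dominated convergence then gives
\[
\*E_{\mu}\stp{\tp{\nabla^- f_N}^{\top} \*K^{-1} \nabla^- f_N} \to \*E_{\mu}\stp{\*v_{\+I}^{\top} \*K^{-1} \*v_{\+I}}.
\]
Finite second moments give \(\*{Var}_{\mu}\stp{f_N} \to \*{Var}_{\mu}\stp{\ell_{\*v}}\).

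Alternatively, one can skip the completion: apply the theorem to each \(f_N\) and pass to the limit on both sides, which is all the argument needs.
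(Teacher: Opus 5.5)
Your proposal is correct and follows the paper's route: as in the proof of \Cref{cor:discrete-brascamp-lieb-cov-bound}, you apply \Cref{thm:discrete-brascamp-lieb-unit-birth} to the linear test function \(\ell_{\*v}\), whose backward differences on \(\+I_{\*n}\) all equal \(-v_i\), so the sign drops out of the quadratic form. Your cutoff approximation \(f_N = \chi_N \ell_{\*v}\) justifies using a test function outside \(\+C_{\-c}\tp{\supp\tp{\mu}}\), which the paper leaves implicit, and the separate assumption that \(\cov{\mu}\) is finite is unnecessary: Fatou's lemma applied to \(\*{Var}_{\mu}\stp{f_N} = \frac{1}{2}\*E_{\mu \otimes \mu}\stp{\tp{f_N\tp{\*X} - f_N\tp{\*Y}}^2}\) shows that a finite right-hand side already forces finite second moments and yields the bound directly.
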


\subsection{Variance Bounds in the Hardcore Model}
\label{subsec:variance-hardcore}

Let \(G = \tp{V, E}\) be a finite graph, and let \(\+I\tp{G}\) denote the collection of its independent sets. For a fugacity \(\lambda > 0\), the hardcore measure on \(G\) is defined by
\[
\mu_{G, \lambda}\tp{I} \propto \lambda^{\abs{I}}, \quad I \in \+I\tp{G}.
\]
We identify each independent set with its indicator vector in \(\set{0, 1}^V\), so that \(\mu_{G, \lambda}\) is supported on a downward closed subset of \(\bb N^V\). Throughout this subsection, let \(I \sim \mu_{G, \lambda}\).

The partition function (equivalently, the independence polynomial evaluated at \(\lambda\)), occupancy fraction, and variance fraction are defined by
\[
Z_G\tp{\lambda} \defeq \sum_{I \in \+I\tp{G}} \lambda^{\abs{I}}, \qquad \alpha_G\tp{\lambda} \defeq \frac{1}{\abs{V}} \*E\stp{\abs{I}}, \qquad V_G\tp{\lambda} \defeq \frac{1}{\abs{V}} \*{Var}\stp{\abs{I}}.
\]
The partition function and occupancy fraction, particularly their extremal behavior, have been studied extensively in combinatorics \cite{Kah01,GT04,Zha10,CR11,DJPR17,SSSZ19,DK25}, whereas comparatively less is known about the variance fraction \cite{DST25,ZX26}. The discrete Brascamp--Lieb inequality in \Cref{thm:discrete-brascamp-lieb} yields the following upper bound.

\begin{theorem}
    Let \(G\) be \(\Delta\)-regular, let \(\*A_G\) be its adjacency matrix, and set \(\lambda^{\star} \defeq -\lambda_{\min}\tp{\*A_G}\). Suppose that
    \[
    1 + \lambda^{-1} \ge \lambda^{\star}.
    \]
    Then
    \[
    V_G\tp{\lambda} \le \frac{\lambda}{1 + \tp{\Delta + 1} \lambda}.
    \]
    \label{thm:hardcore-cardinality-variance}
\end{theorem}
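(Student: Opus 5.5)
We plan to apply \Cref{cor:discrete-brascamp-lieb-cov-bound}, or equivalently \Cref{thm:discrete-brascamp-lieb} with the linear test function \(f\tp{\*n} \defeq \*1^{\top}\*n = \abs{I}\). We first compute the local curvature matrix of the hardcore measure explicitly, then control the resulting quadratic form using the spectral hypothesis and \(\Delta\)-regularity.

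\emph{Local curvature.} For indicator vectors we have \(\*n! = 1\), so \(a = \mu_{G,\lambda}\). Fix an independent set \(I\). The active set \(\+I_{\*n}\) is the set \(F = F\tp{I}\) of free vertices, namely those \(v \notin I\) with no neighbour in \(I\). For \(u, v \in F\) we have \(a\tp{\*n+\*e_v}/a\tp{\*n} = \lambda\). Moreover, \(a\tp{\*n+\*e_u+\*e_v}/a\tp{\*n}\) equals \(\lambda^2\) if \(u \ne v\) are non-adjacent, and equals \(0\) if \(uv \in E\) or \(u = v\) (the latter because \(2\*e_v\) leaves \(\set{0,1}^V\)). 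Hence
\[
\*K_{\*n} = \tp{1+\lambda^{-1}}\*I_F + \*A_{G[F]}.
\]
Since \(\*A_{G[F]}\) is a principal submatrix of \(\*A_G\), Cauchy interlacing gives \(\lambda_{\min}\tp{\*A_{G[F]}} \ge -\lambda^\star\). Therefore \(\*K_{\*n} \succeq \tp{1+\lambda^{-1}-\lambda^\star}\*I_F \succeq \*O\). In the boundary case \(1+\lambda^{-1} = \lambda^\star\), we will either prove the bound for slightly smaller \(\lambda\) and let \(\lambda\) increase to the boundary value, using continuity of both sides in \(\lambda\), or work with pseudoinverses. Either way, \Cref{thm:discrete-brascamp-lieb} yields
\[
\*{Var}\stp{\abs{I}} \le \*E\stp{\*1_F^{\top}\tp{c\*I_F + \*A_{G[F]}}^{-1}\*1_F}, \qquad c \defeq 1+\lambda^{-1}.
\]

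\emph{Reaching the constant \(1/\tp{c+\Delta}\).} The target equals \(\abs{V}\lambda/\tp{1+\tp{\Delta+1}\lambda} = \abs{V}/\tp{c+\Delta}\), and \(c+\Delta\) is exactly the eigenvalue of \(c\*I + \*A_G\) on \(\*1_V\) for a \(\Delta\)-regular graph. This suggests that the argument should not bound each \(\*K_{\*n}^{-1}\) by a uniform multiple of the identity, since that only gives \(\abs{F}/\tp{c-\lambda^\star}\). Instead, we will return to the Poisson-equation proof of \Cref{thm:discrete-brascamp-lieb}. There, with \(u = \*1_F\) and \(v = \lambda\nabla^+ g\), we have \(\*{Var}\stp{\abs{I}} = \*E\stp{\*u^{\top}\*v} \ge \*E\stp{\*v^{\top}\*K\*v}\). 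We will zero-pad \(v\) to \(\bb R^V\), so that \(\*v^{\top}\*K\*v = \*v^{\top}\tp{c\*I + \*A_G}\*v\). Note that \(c\*I + \*A_G \succeq \*O\) by the hypothesis, and its restriction to the all-ones direction is \(c+\Delta\). We will then combine \(\*E\stp{\*u^{\top}\*v}\) with a global averaging identity to exploit regularity. Two identities are relevant: \(\*{Pr}\stp{v \in F} = \*{Pr}\stp{v\in I}/\lambda\), and \(\*E\stp{\sum_v v_v} = \lambda\,\*E\stp{\nabla^+_{\*1} g}\), which relates back to \(\*{Var}\stp{\abs{I}}\) through summation by parts.

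\emph{Main obstacle.} The hard step is this last one. A pointwise bound on \(\*1_F^{\top}\*K_F^{-1}\*1_F\) loses the degree information: a principal submatrix inverse is not dominated by the inverse of the full matrix in the needed direction. We will therefore try to decompose \(\*v\) into its component along \(\*1_V\) and an orthogonal remainder. The \(\*1_V\) component contributes curvature \(c+\Delta\). The remainder contributes at least \(c-\lambda^\star \ge 0\), and we hope to discard it or absorb it using the Cauchy--Schwarz step. If this decomposition fails to close, the fallback is a sharper estimate of \(\*E\stp{\*1_F^{\top}\*K_F^{-1}\*1_F}\) via \(\*E\abs{F} = \abs{V}\alpha_G\tp{\lambda}/\lambda\) together with a Schur-complement comparison with \(c\*I + \*A_G\).
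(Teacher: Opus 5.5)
Your argument is sound and closes more easily than you expect. However, the obstacle you name is not real: the paper's proof is exactly the pointwise bound you dismiss. Let \(\*K\succ\*O\) be indexed by \(V\), let \(\*K_F\) be a principal submatrix, and let \(\*b\in\bb R^V\). Then
\[
\*b_F^{\top}\*K_F^{-1}\*b_F=\sup_{\*z\in\bb R^F}\tp{2\*b_F^{\top}\*z-\*z^{\top}\*K_F\*z}\le\sup_{\*z\in\bb R^V}\tp{2\*b^{\top}\*z-\*z^{\top}\*K\*z}=\*b^{\top}\*K^{-1}\*b,
\]
so the zero-padded inverse of a principal submatrix \emph{is} dominated by the full inverse. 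The point is to test with \(\*b=\*1_V\), not with the zero-padded \(\*1_F\). Since \(\*K_I=c\*I_{V_I}+\*A_{G[V_I]}\) is a principal submatrix of \(\*K_{\varnothing}=c\*I_V+\*A_G\), this gives \(\*1_{V_I}^{\top}\*K_I^{-1}\*1_{V_I}\le\*1_V^{\top}\*K_{\varnothing}^{-1}\*1_V=\abs{V}/\tp{c+\Delta}\) at every state, and \Cref{thm:discrete-brascamp-lieb} finishes. The paper does this for \(c>\lambda^{\star}\) and reaches \(c=\lambda^{\star}\) by letting \(\lambda\) increase to the boundary value.

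Your own route also closes, in two lines. Set \(\*M\defeq c\*I_V+\*A_G\succeq\*O\), zero-pad \(\*v\), and write \(\*v=\alpha\*1_V+\*w\) with \(\*w\perp\*1_V\). Because \(\*1_V\) is an eigenvector of \(\*M\), you get \(\*v^{\top}\*M\*v=\tp{c+\Delta}\abs{V}\alpha^2+\*w^{\top}\*M\*w\ge\tp{c+\Delta}\tp{\*1_V^{\top}\*v}^2/\abs{V}\). Since \(\*u^{\top}\*v=\*1_V^{\top}\*v\), the two identities of the Poisson-equation argument together with Jensen give
\[
\*{Var}\stp{\abs{I}}\ge\*E\stp{\*v^{\top}\*M\*v}\ge\frac{c+\Delta}{\abs{V}}\*E\stp{\tp{\*1_V^{\top}\*v}^2}\ge\frac{c+\Delta}{\abs{V}}\tp{\*E\stp{\*1_V^{\top}\*v}}^2=\frac{c+\Delta}{\abs{V}}\*{Var}\stp{\abs{I}}^2,
\]
which is the claim. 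The auxiliary identities you list (the formula for \(\*{Pr}\stp{v\in F}\), summation by parts) are not needed, since \(\*E\stp{\*1_V^{\top}\*v}\) is already the variance.

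Compared with the paper, you reopen the proof of \Cref{thm:discrete-brascamp-lieb} and do the Cauchy--Schwarz step in the single global form \(\*M\) rather than in the state-dependent forms \(\*K_I\); in substance it is the same estimate. Your version uses only \(\*M\succeq\*O\) and \(\*{Var}\stp{\abs{I}}\ge\*E\stp{\*v^{\top}\*K\*v}\), which needs no positivity. So it handles the boundary case \(c=\lambda^{\star}\) directly, without approximating in \(\lambda\). The paper's version keeps the theorem as a black box, at the cost of requiring \(\*K_I\succ\*O\) and hence the limiting argument.
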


\begin{remark}
    A related variance bound follows from the spectral-independence consequences of \(\delta\)-ULC. In particular, for \(\delta \in \tp{0, 1}\), the stronger low-fugacity condition
    \[
    1 + \tp{1 - \delta} \lambda^{-1} > \lambda^{\star}
    \]
    implies that \(\mu_{G, \lambda}\) is \(\delta\)-ULC; see \cite[Section~5.2]{CCCYZ25}. By \Cref{cor:ulc-cov-bound}, it is therefore \(1 / \delta\)-spectrally independent:
    \[
    \cov{\mu_{G, \lambda}} \preceq \frac{1}{\delta} \diag\tp{\mean{\mu_{G, \lambda}}},
    \]
    and hence
    \[
    V_G\tp{\lambda} \le \frac{1}{\delta} \alpha_G\tp{\lambda}.
    \]
    At the boundary \(1 + \lambda^{-1} = \lambda^{\star}\), no positive \(\delta\) satisfies the stronger condition, so this estimate degenerates. In contrast, the bound in \Cref{thm:hardcore-cardinality-variance} remains finite at the boundary because it retains the full local curvature matrix and evaluates its inverse in the direction \(\*1_V\).
\end{remark}

\begin{proof}[Proof of \Cref{thm:hardcore-cardinality-variance}]
    For \(I \in \+I\tp{G}\), let \(N_G\tp{I}\) be the open neighborhood of \(I\), and define the set of unblocked vertices by
    \[
    V_I \defeq V \setminus \tp{I \cup N_G\tp{I}}.
    \]
    These vertices are precisely the active birth coordinates at \(I\). Substituting the hardcore weights into \eqref{eq:local-curvature-matrix} gives
    \begin{equation}
        \*K_I = \*A_{G\stp{V_I}} + \tp{1 + \lambda^{-1}} \*I_{V_I},
        \label{eq:hardcore-local-curvature}
    \end{equation}
    where \(\*A_{G\stp{V_I}}\) is the adjacency matrix of the subgraph induced by \(V_I\).

    Set \(s \defeq 1 + \lambda^{-1}\), and first suppose that \(s > \lambda^{\star}\). The curvature matrix at the empty independent set is
    \[
    \*K_{\varnothing} = \*A_G + s \*I_V \succ \*O.
    \]
    By \eqref{eq:hardcore-local-curvature}, each \(\*K_I\) is a principal submatrix of \(\*K_{\varnothing}\) and is therefore positive definite. Applying \Cref{thm:discrete-brascamp-lieb} to the cardinality function \(f\tp{I} \defeq \abs{I}\), for which \(\nabla_v^+ f\tp{I} = 1\) whenever \(v \in V_I\), gives
    \[
    \*{Var}\stp{\abs{I}} \le \*E\stp{\*1_{V_I}^{\top} \*K_I^{-1} \*1_{V_I}},
    \]
    where \(\*1_{V_I}\) denotes the all-ones vector indexed by \(V_I\).

    Fix \(I \in \+I\tp{G}\), and extend vectors indexed by \(V_I\) by zero outside \(V_I\). The variational representation of an inverse quadratic form gives
    \[
    \*1_{V_I}^{\top} \*K_I^{-1} \*1_{V_I} = \sup_{\*z \in \bb{R}^{V_I}} \tp{2 \*1_{V_I}^{\top} \*z - \*z^{\top} \*K_I \*z} \le \sup_{\*z \in \bb{R}^V} \tp{2 \*1_V^{\top} \*z - \*z^{\top} \*K_{\varnothing} \*z} = \*1_V^{\top} \*K_{\varnothing}^{-1} \*1_V.
    \]
    Since \(G\) is \(\Delta\)-regular,
    \[
    \*K_{\varnothing} \*1_V = \tp{\Delta + s} \*1_V,
    \]
    so
    \[
    \*1_V^{\top} \*K_{\varnothing}^{-1} \*1_V = \frac{\abs{V}}{\Delta + s} = \frac{\abs{V} \lambda}{1 + \tp{\Delta + 1} \lambda}.
    \]
    Combining the preceding inequalities and dividing by \(\abs{V}\) proves the claim when \(s > \lambda^{\star}\).

    If \(s = \lambda^{\star}\), choose \(\lambda_k \to \lambda^{-}\). Then \(1 + \lambda_k^{-1} > \lambda^{\star}\), so the strict case applies to each \(\lambda_k\). The corresponding hardcore measures converge to \(\mu_{G, \lambda}\), and the state space \(\+I\tp{G}\) is finite. Passing to the limit proves the boundary case.
\end{proof}

\section{Poisson Stochastic Localization}
\label{sec:poisson-stochastic-localization}

In this section, we study Poisson stochastic localization, a generalization of the negative-fields localization introduced in \cite{CE25}. Our analysis combines the localization-scheme framework of \cite{CE25} with the limiting-process perspective developed in \cite{CCCYZ25}. 

The section is organized as follows. In \Cref{subsec:localization-schemes-preliminaries}, we introduce the localization-scheme framework. In \Cref{subsec:noising-denoising,subsec:localization-process}, we construct Poisson stochastic localization from binomial thinning noise. In \Cref{subsec:associated-limiting-process}, we analyze the limiting process and its functional inequalities. In \Cref{subsec:approximate-conservation-variance-entropy}, we identify spectral independence as a sufficient condition for approximate conservation of variance and entropy. In \Cref{subsec:ulc-mlsi}, we prove a Wu-type modified log-Sobolev inequality for ultra-log-concave measures. Finally, in \Cref{subsec:ulc-log-concave-generating-functions}, we connect ultra-log-concavity with log-concavity of the probability-generating function.

The construction of the noising and denoising processes in \Cref{subsec:noising-denoising} has already appeared in the context of discrete generative modeling \cite{SBP26}. The denoising process itself is precisely the Poisson--\Follmer process, which has also found applications to log-concavity and functional inequalities \cite{KL18,ALS25,LRS25}.

\subsection{The Localization-Scheme Framework}
\label{subsec:localization-schemes-preliminaries}

Localization schemes turn a target measure into a martingale of progressively more informative conditional laws \cite{CE25}. This allows one to study functional inequalities for the target through the evolution of conditional variance and entropy.

\begin{definition}[Localization Processes and Schemes]
    Let \(\mu\) be a probability measure on a finite or countable state space \(\Omega\), and let \(T > 0\). A localization process for \(\mu\) is an \(\tp{\+F_t}_{t \in \stp{0, T}}\)-adapted \(\+P\tp{\Omega}\)-valued process \(\tp{\nu_t}_{t \in \stp{0, T}}\) such that \(\nu_0 = \mu\), \(\nu_T = \delta_X\) for some \(X \sim \mu\), and \(\tp{\*E_{\nu_t}\stp{f}}_{t \in \stp{0, T}}\) is an \(\tp{\+F_t}_{t \in \stp{0, T}}\)-martingale for every bounded \(f: \Omega \to \bb R\). A localization scheme is a rule assigning a localization process to each target measure in a prescribed class.
    \label{def:localization-scheme}
\end{definition}

We use an information-theoretic construction of such processes, equivalent to the measure-valued formulation above; see also \cite{EAM22}. Let \(\tp{Y_u}_{u \ge 0}\) be a time-homogeneous Markov process on \(\Omega\), initialized with \(Y_0 \sim \mu\). We call \(Y\) the noising process and choose it so that its transition semigroup is tractable and independent of \(\mu\). Fix a continuously differentiable, strictly decreasing map \(\tau\), defined for \(0 < t \le T\) and taking values in \(\bb R_{\ge 0}\), such that
\[
\tau\tp{T} = 0, \qquad \lim_{t \to 0^+} \tau\tp{t} = +\infty,
\]
and define the denoising process by \(X_t \defeq Y_{\tau\tp{t}}\). Thus \(X_T = Y_0\) has the target law \(\mu\), while small \(t\) corresponds to a heavily noised observation. Standard stochastic localization admits analogous descriptions using Ornstein--Uhlenbeck noise, a time-reversed Brownian bridge, or additive Brownian noise \cite{STZ25}.

Let \(\+F_t\) be the natural filtration of the denoising process for \(0 < t \le T\). The corresponding localization process is the posterior law
\begin{equation}
    \nu_t \defeq \-{Law}\tp{X_T \mid \+F_t} = \-{Law}\tp{X_T \mid X_t}, \qquad t \in \tp{0, T},
    \label{eq:localization-posterior-law}
\end{equation}
where the second equality follows from the Markov property. For every bounded \(f\), the tower property shows that \(\*E_{\nu_t}\stp{f} = \*E\stp{f\tp{X_T} \mid \+F_t}\) is a martingale. Hence \(\*E\stp{\*E_{\nu_t}\stp{f}} = \*E_{\mu}\stp{f}\), and \(\nu_T = \delta_{X_T}\). If the noising process is asymptotically uninformative, namely \(\+F_{0+} \defeq \bigcap_{t > 0} \+F_t\) is trivial, then \(\nu_t\) converges weakly to \(\mu\) almost surely as \(t \to 0^+\), and we set \(\nu_0 \defeq \mu\).

We next describe the Markov chains associated with a localization scheme. Write \(\mu_t \defeq \-{Law}\tp{X_t}\). For \(0 < s \le t \le T\), define the noising and denoising kernels by
\[
Q_{t \to s}^{\-n}\tp{x, y} \defeq \*{Pr}\stp{X_s = y \mid X_t = x}, \qquad Q_{s \to t}^{\-d}\tp{y, x} \defeq \*{Pr}\stp{X_t = x \mid X_s = y}.
\]
Bayes' rule gives the adjoint relation
\[
\mu_t\tp{x} Q_{t \to s}^{\-n}\tp{x, y} = \mu_s\tp{y} Q_{s \to t}^{\-d}\tp{y, x}.
\]
For \(t \in \tp{0, T}\), the associated noising-denoising kernel on the target space is
\[
Q_{T \leftrightarrow t} \defeq Q_{T \to t}^{\-n} Q_{t \to T}^{\-d}.
\]
A transition of \(Q_{T \leftrightarrow t}\) first noises a sample from \(\mu\) to time \(t\) and then resamples from the posterior. Conditional on \(X_t\), the original and resampled targets are independent with law \(\nu_t\). The adjoint relation shows that \(Q_{T \leftrightarrow t}\) is reversible with respect to \(\mu\) and positive semidefinite on \(L^2\tp{\mu}\).

The Dirichlet form of the associated kernel has a direct posterior representation. For every \(f \in L^2\tp{\mu}\),
\begin{equation}
    \+E_{Q_{T \leftrightarrow t}}\tp{f, f} = \*E\stp{\*{Var}_{\nu_t}\stp{f}},
    \label{eq:localization-associated-dirichlet-form}
\end{equation}
while, for positive \(f\),
\begin{equation}
    \+E_{Q_{T \leftrightarrow t}}\tp{\log f, f} = \*E\stp{\*{Cov}_{\nu_t}\stp{\log f, f}} \ge \*E\stp{\*{Ent}_{\nu_t}\stp{f}}.
    \label{eq:localization-associated-entropy-form}
\end{equation}
The inequality follows from \(\*E_{\nu_t}\stp{\log f} \le \log \*E_{\nu_t}\stp{f}\). Consequently, if the bounds
\[
\*E\stp{\*{Var}_{\nu_t}\stp{f}} \ge \gamma_t \, \*{Var}_{\mu}\stp{f}, \qquad \*E\stp{\*{Ent}_{\nu_t}\stp{f}} \ge \rho_t \, \*{Ent}_{\mu}\stp{f}
\]
hold uniformly in \(f\), with \(f\) nonnegative in the entropy bound, then \(Q_{T \leftrightarrow t}\) satisfies a \Poincare inequality with constant \(\gamma_t\) and a modified log-Sobolev inequality with constant \(\rho_t\). Such bounds are referred to as approximate conservation of variance and entropy.

In applications, approximate conservation of variance and entropy is typically proved by applying \Ito calculus to the conditional variance \(\*{Var}_{\nu_t}\stp{f}\) and entropy \(\*{Ent}_{\nu_t}\stp{f}\). Spectral independence and entropic stability then provide the key estimates needed to close the resulting differential inequalities.

\subsection{Noising and Denoising Processes}
\label{subsec:noising-denoising}

In this subsection, we directly define the noising process underlying Poisson stochastic localization and establish some basic properties of the noising and denoising processes. We study the resulting localization process in \Cref{subsec:localization-process}. An intrinsic construction of Poisson stochastic localization via Poisson bridges is given in \Cref{sec:poisson-bridge}.

For Poisson stochastic localization, the target distribution \(\mu\) is a probability measure on \(\bb N^d\). We choose the noising process \(\tp{\*Y_u}_{u \ge 0}\) to be the coordinatewise binomial thinning process initialized with \(\*Y_0 \sim \mu\). This time-homogeneous Markov process has generator
\begin{equation}
    \+L^{\-n} f\tp{\*n} \defeq \sum_{i = 1}^d n_i \nabla_i^- f\tp{\*n}.
    \label{eq:noising-generator}
\end{equation}

\begin{proposition}
    The transition semigroup \(\tp{P_u^{\-n}}_{u \ge 0}\) of the noising process is given by
    \[
    P_u^{\-n}\tp{\*n, \*n'} \defeq \*{Pr}\stp{\*Y_u = \*n' \mid \*Y_0 = \*n} = \prod_{i = 1}^d \-{Bin}\tp{n_i, e^{-u}; n_i'}, \quad \forall \*n, \*n' \in \bb N^d, \, u \ge 0.
    \]
    \label{prop:noising-transition-semigroup}
\end{proposition}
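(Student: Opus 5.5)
The plan is to exploit the product structure of the generator \eqref{eq:noising-generator}. Write \(\+L^{\-n} = \sum_{i=1}^d \+L_i\), where \(\+L_i f\tp{\*n} \defeq n_i \nabla_i^- f\tp{\*n}\) acts only on the \(i\)-th coordinate. These operators commute, so the semigroup factorizes into a tensor product of one-dimensional semigroups. Equivalently, at the level of processes, the coordinates of \(\*Y\) evolve independently, each as a pure-death chain on \(\bb N\) with death rate \(n\) at state \(n\). It therefore suffices to prove the one-dimensional claim: the pure-death chain with linear rate started at \(n\) has time-\(u\) law \(\-{Bin}\tp{n, e^{-u}}\).

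For the one-dimensional claim I will use the particle picture. Linear death rate \(n\) at state \(n\) means each of the \(n\) particles carries an independent \(\-{Exp}\tp{1}\) clock and dies when its clock rings. By memorylessness, the superposition of \(k\) such clocks gives total death rate \(k\) whenever \(k\) particles remain, which matches the generator. A particle survives to time \(u\) with probability \(e^{-u}\), independently of the others. Hence the number of survivors is \(\-{Bin}\tp{n, e^{-u}}\).

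To avoid relying on the particle construction, I can also verify the formula analytically. Set \(p_u\tp{n'} \defeq \-{Bin}\tp{n, e^{-u}; n'}\), which satisfies \(p_0 = \delta_n\). The claim is that \(p_u\) solves the forward equation
\[
\partial_u p_u\tp{n'} = \tp{n'+1} p_u\tp{n'+1} - n' p_u\tp{n'}.
\]
This is a short computation with \(q \defeq e^{-u}\) and \(\partial_u q = -q\). Differentiating \(\binom{n}{n'} q^{n'} \tp{1-q}^{n-n'}\) gives \(-n' p_u\tp{n'} + \binom{n}{n'}\tp{n-n'} q^{n'+1}\tp{1-q}^{n-n'-1}\). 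The second term equals \(\tp{n'+1} p_u\tp{n'+1}\) by the identity \(\binom{n}{n'}\tp{n-n'} = \binom{n}{n'+1}\tp{n'+1}\).

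Uniqueness is not an issue. Starting from \(n\), the chain lives on the finite set \(\set{0,\dots,n}\), so the Kolmogorov system is a finite linear ODE. Equivalently, the generator restricted to \(\set{\*n' \le \*n}\) is a finite matrix and \(P_u^{\-n} = e^{u \+L^{\-n}}\). This also shows the process is non-explosive.

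Finally, the product formula follows by one of two routes. One is independence of the coordinate processes. The other is to check directly that the product of the one-dimensional solutions satisfies the \(d\)-dimensional forward equation, since \(\partial_u\) of a product is the sum of the coordinatewise terms. No step is a genuine obstacle. The only point requiring care is the binomial-coefficient identity in the derivative computation and the boundary cases \(n' = n\) and \(n' = 0\), where the same formula holds with the convention \(\binom{n}{n+1} = 0\).
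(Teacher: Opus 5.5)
Your proposal is correct and matches the paper's proof. Like the paper, you use independence of the coordinates and couple each coordinate to \(n_i\) independently dying rate-one particles, each of which survives to time \(u\) with probability \(e^{-u}\). Your additional forward-equation check, using \(\binom{n}{n'}(n-n') = \binom{n}{n'+1}(n'+1)\) and uniqueness for the finite Kolmogorov system on \(\{\mathbf{n}' \le \mathbf{n}\}\), is valid; the paper does not include it.
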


\begin{proof}[Proof of \Cref{prop:noising-transition-semigroup}]
    Since the coordinates evolve independently, it suffices to show that, for each \(i \in \stp{d}\),
    \[
    \*{Pr}\stp{Y_{v + u, i} = n_i' \mid Y_{v, i} = n_i} = \-{Bin}\tp{n_i, e^{-u}; n_i'}, \quad n_i, n_i' \in \bb N, \quad u, v \ge 0.
    \]
    Couple \(\tp{Y_{u, i}}_{u \ge 0}\) to a system of independently dying particles. Conditional on \(Y_{v, i} = n_i\), let each of the \(n_i\) particles die independently at rate \(1\). Each particle survives from time \(v\) to time \(v + u\) with probability \(e^{-u}\), and the linear death dynamics of \(Y_{u, i}\) coincide with those of the number of surviving particles. Hence \(Y_{v + u, i}\) has distribution \(\-{Bin}\tp{n_i, e^{-u}}\) conditional on \(Y_{v, i} = n_i\).
\end{proof}

Set \(\tau\tp{t} \defeq -\log t\) for \(t \in \left(0, 1\right]\). Since literal time reversal changes the path convention at jump times, we work with a c\`adl\`ag version of the denoising process. Define \(\*X_t \defeq \*Y_{\tau\tp{t}-}\) for \(t \in \tp{0, 1}\) and set \(\*X_1 \defeq \*Y_0\). For every deterministic \(t \in \tp{0, 1}\), \(\*Y_{\tau\tp{t}-} = \*Y_{\tau\tp{t}}\) almost surely. Thus, this c\`adl\`ag modification has the same transition laws as the literal time reversal. The following proposition is therefore an immediate consequence of \Cref{prop:noising-transition-semigroup}.

\begin{proposition}
    For \(0 < s \le t \le 1\), the denoising process \(\tp{\*X_t}_{t \in \left(0, 1\right]}\) satisfies
    \[
    \*{Pr}\stp{\*X_s = \*n' \mid \*X_t = \*n} = \prod_{i = 1}^d \-{Bin}\tp{n_i, \frac{s}{t}; n_i'}, \quad \forall \*n, \*n' \in \bb N^d.
    \]
    \label{prop:noising-transition-semigroup-denoising}
\end{proposition}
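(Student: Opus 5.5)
This follows directly from \Cref{prop:noising-transition-semigroup} once the denoising process is expressed as a time reversal of the noising process. By construction \(\*X_t = \*Y_{\tau\tp{t}-}\) with \(\tau\tp{t} = -\log t\), and for deterministic \(t \in \tp{0,1}\) we have \(\*Y_{\tau\tp{t}-} = \*Y_{\tau\tp{t}}\) almost surely. The statement concerns only two fixed times \(s \le t\), so we may replace \(\*X_s, \*X_t\) by \(\*Y_{\tau\tp{s}}, \*Y_{\tau\tp{t}}\). At \(t = 1\) the identity \(\*X_1 = \*Y_0 = \*Y_{\tau\tp{1}}\) holds by definition.

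Since \(s \le t\) gives \(\tau\tp{s} \ge \tau\tp{t}\), the event \(\set{\*X_s = \*n'}\) refers to a later time of the noising process than \(\set{\*X_t = \*n}\). The conditional probability is therefore a forward transition of \(\*Y\):
\[
\*{Pr}\stp{\*X_s = \*n' \mid \*X_t = \*n} = \*{Pr}\stp{\*Y_{\tau\tp{s}} = \*n' \mid \*Y_{\tau\tp{t}} = \*n}.
\]
The noising process is a time-homogeneous Markov process, so this equals \(P^{\-n}_{u}\tp{\*n, \*n'}\) with \(u \defeq \tau\tp{s} - \tau\tp{t} = \log\tp{t/s} \ge 0\). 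This holds whenever \(\*{Pr}\stp{\*Y_{\tau\tp{t}} = \*n} > 0\), which is the only case in which the conditional probability is defined.

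Applying \Cref{prop:noising-transition-semigroup} with \(e^{-u} = s/t\) yields the product of \(\-{Bin}\tp{n_i, s/t; n_i'}\) factors. The only point needing care is the Markov property used in the last step, namely that conditioning on the single value \(\*Y_{\tau\tp{t}}\) suffices. This is standard and is exactly what the proof of \Cref{prop:noising-transition-semigroup} establishes: there the independent particle deaths are started from an arbitrary time \(v\). I therefore expect no real obstacle; the argument is just the bookkeeping of the reversed time direction.
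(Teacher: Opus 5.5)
Your proposal is correct and follows the paper's reasoning. The paper also observes that the c\`adl\`ag version \(\*X_t = \*Y_{\tau\tp{t}-}\) coincides almost surely with \(\*Y_{\tau\tp{t}}\) at each fixed time, so the statement follows immediately from \Cref{prop:noising-transition-semigroup} with \(u = \log\tp{t/s}\), i.e., \(e^{-u} = s/t\); your handling of the reversed time direction and of the endpoint \(t = 1\) supplies the bookkeeping the paper leaves implicit.
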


The noising process is a death process, so its time reversal is a birth process. The next proposition identifies the time-dependent generator of this denoising process.

\begin{proposition}
    The denoising process \(\tp{\*X_t}_{t \in \left(0, 1\right]}\) is a time-inhomogeneous Markov process with generator
    \begin{equation}
        \+L_t^{\-d} f\tp{\*n} \defeq \sum_{i = 1}^d q_{t, i}\tp{\*n} \nabla_i^+ f\tp{\*n}, \quad t \in \tp{0, 1},
        \label{eq:denoising-generator}
    \end{equation}
    where
    \[
    q_{t, i}\tp{\*n} \defeq \frac{\tp{n_i + 1} \mu_t\tp{\*n + \*e_i}}{t \mu_t\tp{\*n}}, \qquad \mu_t \defeq \-{Law}\tp{\*X_t}.
    \]
    \label{prop:denoising-generator}
\end{proposition}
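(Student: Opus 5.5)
The plan is to compute the generator of the time-reversed process directly from the forward (noising) transition kernels and Bayes' rule. By \Cref{prop:noising-transition-semigroup-denoising}, the backward kernel is explicit: for \(0 < s \le t \le 1\), \(\*{Pr}\stp{\*X_s = \*n' \mid \*X_t = \*n} = \prod_i \-{Bin}\tp{n_i, s/t; n_i'}\). Markovianity of \(\tp{\*X_t}\) in forward time follows because it is a time change (with the càdlàg modification) of the time reversal of the Markov process \(\tp{\*Y_u}\); the reversal of a Markov process is Markov with respect to its own filtration. Concretely, I would verify that the joint law of \(\tp{\*X_{t_1}, \dots, \*X_{t_k}}\) for \(t_1 < \cdots < t_k\) factorizes as \(\mu_{t_1}\tp{\*n_1}\prod_j Q^{\-d}_{t_j \to t_{j+1}}\tp{\*n_j, \*n_{j+1}}\). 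Here the forward kernel is
\[
Q^{\-d}_{s \to t}\tp{\*m, \*n} = \frac{\mu_t\tp{\*n}\, \*{Pr}\stp{\*X_s = \*m \mid \*X_t = \*n}}{\mu_s\tp{\*m}},
\]
which is the adjoint relation stated in \Cref{subsec:localization-schemes-preliminaries}. The factorization follows by writing the joint law in the reverse order using the Markov property of \(\*Y\) and then regrouping.

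Next I would compute \(\lim_{h \to 0^+} h^{-1}\tp{Q^{\-d}_{t \to t+h} f\tp{\*n} - f\tp{\*n}}\). Fix \(\*n\) with \(\mu_t\tp{\*n} > 0\). For a target state \(\*n' \ge \*n\), the backward binomial probability \(\prod_i \-{Bin}\tp{n_i', t/\tp{t+h}; n_i}\) is the relevant factor; for \(\*n' \not\ge \*n\) it vanishes. Set \(p = t/\tp{t+h} = 1 - h/t + O\tp{h^2}\). Then:
\begin{itemize}
\item For \(\*n' = \*n + \*e_i\), the factor equals \(\tp{n_i+1}\tp{1-p}p^{\abs{\*n}} = \tp{n_i+1} h/t + O\tp{h^2}\).
\item For \(\*n' = \*n\), the factor tends to \(1\).
\item For \(\abs{\*n' - \*n}_1 = k \ge 2\), the factor is \(O\tp{h^k}\) times a combinatorial coefficient.
\end{itemize}
Multiplying by \(\mu_{t+h}\tp{\*n'}/\mu_t\tp{\*n}\) and using continuity of \(s \mapsto \mu_s\tp{\*m}\) then gives the jump rate \(\tp{n_i+1}\mu_t\tp{\*n+\*e_i}/\tp{t\mu_t\tp{\*n}} = q_{t,i}\tp{\*n}\) to \(\*n + \*e_i\). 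This continuity holds because \(\mu_s\tp{\*m} = \sum_{\*k \ge \*m} \mu\tp{\*k}\prod_i \-{Bin}\tp{k_i, s; m_i}\), which is a power series in \(s\) converging for \(s \le 1\). The diagonal term is fixed by conservation of mass, so the generator takes the form \eqref{eq:denoising-generator}.

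The main obstacle is controlling the multi-jump remainder uniformly, since \(\mu\) has infinite support. I must show that
\[
h^{-1} \sum_{\abs{\*n'-\*n}_1 \ge 2} \frac{\mu_{t+h}\tp{\*n'}}{\mu_t\tp{\*n}} \prod_i \-{Bin}\tp{n_i', p; n_i} \to 0.
\]
This sum equals \(\*{Pr}\stp{\abs{\*X_{t+h} - \*X_t}_1 \ge 2,\ \*X_t = \*n}/\mu_t\tp{\*n}\). Coupling through \(\*X_1 \sim \mu\), each of the \(\abs{\*X_1}\) particles independently has survival time \(U\) uniform on \(\tp{0,1}\) (since \(e^{-\tau} = t\)), and \(\*X_t\) counts particles with \(U < t\). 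The event requires at least two particles with \(U \in \stp{t, t+h}\). Conditioning on \(\*X_t = \*n\) forces \(n\) particles below \(t\), and I would bound the probability by \(\*E\stp{\binom{\abs{\*X_1}-\abs{\*n}}{2} h^2 \tp{\cdots}}\) by a union bound. Finiteness then needs a second moment, which may fail. Instead I would apply dominated convergence directly to \(h^{-1}\*{Pr}\stp{\ge 2 \text{ arrivals in } \stp{t,t+h} \mid \*X_1}\): this quantity tends to \(0\) pointwise and is bounded by \(h^{-1}\*{Pr}\stp{\ge 1 \text{ arrival}} \le \abs{\*X_1}/\tp{1-t}\) after conditioning on \(\*X_t = \*n\). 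That bound still requires a first moment. If the first moment is unavailable, I would instead define the generator on \(\+C_{\-c}\) via the pointwise limit and note that \(\mu_t\) has finite mean \(t\,\*E\abs{\*X_1}\); alternatively, I would use the identity \(\sum_{\*n'} \tp{\cdots} = 1\) together with the explicit one-jump limits to identify the remainder, provided the total rate \(\sum_i q_{t,i}\tp{\*n}\) is finite. That total rate equals \(\*E\stp{\abs{\*X_1}-\abs{\*n} \mid \*X_t = \*n}/\tp{t^{-1}\cdots}\), which is finite by the power-series convergence of \(\mu_s\) for \(s < 1\) when \(t < 1\).
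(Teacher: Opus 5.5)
Your proposal follows the paper's proof: Bayes' rule with the explicit backward binomial kernel gives the one-jump probability $\frac{\mu_{t+h}(\mathbf{n}+\mathbf{e}_i)}{\mu_t(\mathbf{n})}(n_i+1)\bigl(\tfrac{t}{t+h}\bigr)^{\|\mathbf{n}\|_1}\tfrac{h}{t+h}$. The remaining ingredients are that downward jumps are impossible and that two or more births in $[t,t+h]$ have probability $o(h)$; the paper simply asserts the latter, and you justify it, along with Markovianity and continuity of $s\mapsto\mu_s$.

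Your hedging about first moments is unnecessary. Conditional on $\mathbf{X}_t=\mathbf{n}$ with $t<1$, the law of $\mathbf{X}_1-\mathbf{n}$ is proportional to $\mu(\mathbf{n}+\mathbf{k})\prod_i\binom{n_i+k_i}{n_i}(1-t)^{\|\mathbf{k}\|_1}$, which has geometric tails. Your dominating function $\|\mathbf{X}_1-\mathbf{n}\|_1/(1-t)$ is therefore conditionally integrable with no moment assumption on $\mu$. You should also drop the remark that $\mu_t$ has finite mean $t\,\mathbf{E}\|\mathbf{X}_1\|_1$, since that mean is infinite whenever $\mu$'s is.
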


\begin{proof}[Proof of \Cref{prop:denoising-generator}]
    Fix \(t \in \tp{0, 1}\), \(\*n \in \supp\tp{\mu_t}\), and \(i \in \stp{d}\). For \(h > 0\) such that \(t + h \le 1\), Bayes' rule and \Cref{prop:noising-transition-semigroup} give
    \[
    \begin{aligned}
        \*{Pr}\stp{\*X_{t + h} = \*n + \*e_i \mid \*X_t = \*n}
        &= \frac{\mu_{t + h}\tp{\*n + \*e_i}}{\mu_t\tp{\*n}} \*{Pr}\stp{\*X_t = \*n \mid \*X_{t + h} = \*n + \*e_i} \\
        &= \frac{\mu_{t + h}\tp{\*n + \*e_i}}{\mu_t\tp{\*n}} \tp{n_i + 1} \tp{\frac{t}{t + h}}^{\sum_{j = 1}^d n_j} \frac{h}{t + h}.
    \end{aligned}
    \]
    Therefore,
    \[
    \lim_{h \to 0^+} \frac{1}{h} \*{Pr}\stp{\*X_{t + h} = \*n + \*e_i \mid \*X_t = \*n} = \frac{\tp{n_i + 1} \mu_t\tp{\*n + \*e_i}}{t \mu_t\tp{\*n}} = q_{t, i}\tp{\*n}.
    \]
    The denoising process cannot jump downward, and the probability of two or more births during \(\stp{t, t + h}\) is \(o\tp{h}\). Hence its only off-diagonal jump rates are \(q_{t, i}\tp{\*n}\), which yields \eqref{eq:denoising-generator}.
\end{proof}

The birth-rate process \(\tp{\*q_t\tp{\*X_t}}_{t \in \left(0, 1\right]}\) is a martingale. This property yields an equivalent conditional-expectation formula for the rates.

\begin{proposition}
    The birth rates \(\tp{\*q_t\tp{\*X_t}}_{t \in \left(0, 1\right]}\) form a martingale with respect to the natural filtration generated by the denoising process \(\tp{\*X_t}_{t \in \left(0, 1\right]}\).
    \label{prop:denoising-birth-rates-martingale}
\end{proposition}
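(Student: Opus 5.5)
The plan is to identify \(q_{t,i}\tp{\*X_t}\) with a posterior conditional expectation, from which the martingale property follows by the tower property. Concretely, I would show that
\[
q_{t, i}\tp{\*X_t} = \*E\stp{X_{1, i} \mid \*X_t} \cdot \frac{1}{t}\cdot t \;-\; \frac{X_{t,i}}{t}\cdot t\,\Big/\,\tp{1-t}\cdot\frac{1-t}{t}
\]
in a clean form, namely \(q_{t,i}\tp{\*X_t} = \*E\stp{X_{1,i} - X_{t,i} \mid \*X_t} / \tp{1 - t}\), i.e., the birth rate equals the expected number of particles not yet revealed, divided by the remaining time.

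To derive this, I would start from \Cref{prop:noising-transition-semigroup-denoising} with \(t = 1\): \(\mu_t\tp{\*n} = \sum_{\*m \ge \*n} \mu\tp{\*m} \prod_j \binom{m_j}{n_j} t^{n_j}\tp{1-t}^{m_j - n_j}\). Using \(\tp{n_i+1}\binom{m_i}{n_i+1} = \binom{m_i}{n_i}\tp{m_i - n_i}\), I get
\[
\tp{n_i+1}\mu_t\tp{\*n+\*e_i} = \frac{t}{1-t}\sum_{\*m} \mu\tp{\*m}\prod_j\binom{m_j}{n_j}t^{n_j}\tp{1-t}^{m_j-n_j}\tp{m_i - n_i}.
\]
Dividing by \(t\mu_t\tp{\*n}\) and recognizing the posterior \(\nu_{t,\*n}\tp{\*m} \propto \mu\tp{\*m}\prod_j \binom{m_j}{n_j}t^{n_j}\tp{1-t}^{m_j-n_j}\) gives \(q_{t,i}\tp{\*n} = \*E_{\nu_{t,\*n}}\stp{m_i - n_i}/\tp{1-t}\), provided the first moment of \(\mu\) is finite.

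For the martingale property, I would write \(M_t \defeq q_{t,i}\tp{\*X_t}\) and use the Markov property (the posterior given \(\+F_t\) depends only on \(\*X_t\), by \eqref{eq:localization-posterior-law}). Then \(\*E\stp{X_{1,i}\mid \+F_t} = \tp{1-t}M_t + X_{t,i}\). The process \(X_{1,i} - X_{t,i}\) is the count of particles born after \(t\). To avoid the awkward \(X_{t,i}\) term, I would instead verify the martingale property directly via the generator: show \(\partial_t q_{t,i} + \+L_t^{\-d} q_{t,i} = 0\). Equivalently, with \(\*E\stp{X_{1,i}\mid\+F_t}\) a martingale, I need \(\*E\stp{X_{s,i} - X_{t,i} + \tp{1-s}M_s \mid \+F_t} = \tp{1-t}M_t\). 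The compensator of the counting process \(X_{\cdot,i}\) is \(\int M_u\,\dd u\), which gives \(\*E\stp{X_{s,i}-X_{t,i}\mid\+F_t} = \int_t^s \*E\stp{M_u \mid \+F_t}\dd u\). Writing \(m\tp{u} \defeq \*E\stp{M_u\mid\+F_t}\), this yields the integral equation \(\int_t^s m\tp{u}\,\dd u + \tp{1-s}m\tp{s} = \tp{1-t}m\tp{t}\). Differentiating in \(s\) gives \(m\tp{s} - m\tp{s} + \tp{1-s}m'\tp{s} = 0\), so \(m\) is constant, as desired.

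The main obstacle is integrability and regularity: finiteness of the first moments, differentiability of \(u \mapsto \*E\stp{M_u \mid \+F_t}\), and the validity of the compensator formula near \(t = 1\), where the \(1/\tp{1-t}\) factor appears. I would first assume \(\mu\) finitely supported, where every quantity is a polynomial in \(t\), and then extend by truncation. If this route proves cumbersome, the fallback is a direct computation of \(\partial_t q_{t,i} + \+L^{\-d}_t q_{t,i} = 0\) from the explicit polynomial formula for \(\mu_t\), using the Kolmogorov forward equation \(\partial_t \mu_t\tp{\*n} = \sum_j\tp{q_{t,j}\tp{\*n-\*e_j}\mu_t\tp{\*n-\*e_j} - q_{t,j}\tp{\*n}\mu_t\tp{\*n}}\).
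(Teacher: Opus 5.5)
Your argument is correct for times in $(0,1)$, but it runs the paper's logic in reverse.

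The paper proves the martingale property directly. From the kernel identity $\frac{n_i+1}{t}Q^{\mathrm{n}}_{t\to s}(\mathbf{n},\mathbf{n}')=\frac{n_i'+1}{s}Q^{\mathrm{n}}_{t\to s}(\mathbf{n}+\mathbf{e}_i,\mathbf{n}'+\mathbf{e}_i)$ and Bayes' rule, it computes $\mathbf{E}[q_{t,i}(\mathbf{X}_t)\mid\mathbf{X}_s=\mathbf{n}']=q_{s,i}(\mathbf{n}')$ for all $0<s\le t\le 1$. This is a single rearrangement of a nonnegative series. Only afterwards does it deduce $q_{t,i}(\mathbf{X}_t)=\mathbf{E}[X_{1,i}-X_{t,i}\mid\mathbf{X}_t]/(1-t)$ (\Cref{cor:denoising-birth-rates-conditional-expectation}) from the martingale property and the compensator.

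You prove that corollary first, directly from the posterior formula via $(n_i+1)\binom{m_i}{n_i+1}=(m_i-n_i)\binom{m_i}{n_i}$; this computation is right. You then recover the martingale property from three ingredients: the Doob martingale $\mathbf{E}[X_{1,i}\mid\mathcal{F}_t]$, the compensator, and uniqueness for $\int_t^s m(u)\,\mathrm{d}u+(1-s)m(s)=(1-t)m(t)$. Your ordering gives the conditional-mean representation immediately, with a transparent meaning, and that is the form the later It\^o calculus actually uses. The price is reliance on the compensator, hence on \Cref{prop:denoising-generator} and finite first moments, plus an ODE step. The paper's kernel computation needs none of these.

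Some points to tidy:
\begin{itemize}
\item Delete the first displayed formula, which is garbled.
\item The identity $\mathbf{E}[X_{s,i}-X_{t,i}+(1-s)M_s\mid\mathcal{F}_t]=(1-t)M_t$ is not something you need to prove. It follows from the tower property and your representation at times $t$ and $s$.
\item Regularity is not a real obstacle, and no truncation is needed. Since $\int_t^s m(u)\,\mathrm{d}u=\mathbf{E}[X_{s,i}-X_{t,i}\mid\mathcal{F}_t]\le(1-t)M_t<\infty$, $m$ is integrable. Then $(1-s)m(s)=(1-t)m(t)-\int_t^s m(u)\,\mathrm{d}u$ shows $m$ is continuous, hence $C^1$, on $[t,1)$, which justifies differentiating.
\item Finite first moments are needed only for integrability, not for the pointwise identity. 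For $t<1$, finiteness of $q_{t,i}(\mathbf{n})$ already forces the posterior mean to be finite.
\item The statement includes the terminal time, where $q_{1,i}=b_i$. Your representation is vacuous at $u=1$, so the integral equation never reaches $s=1$. Close this separately: $\mathbf{X}_s=\mathbf{X}_1$ for $s$ near $1$ and $q_{s,i}\to b_i$ pointwise, so conditional Fatou gives $\mathbf{E}[b_i(\mathbf{X}_1)\mid\mathcal{F}_t]\le M_t$. Since $\mathbf{E}[b_i(\mathbf{X}_1)]=\mathbf{E}[X_{1,i}]=\mathbf{E}[M_t]$, this upgrades to almost-sure equality. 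The paper's computation covers $t=1$ without extra work.
\end{itemize}
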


\begin{proof}[Proof of \Cref{prop:denoising-birth-rates-martingale}]
    For \(0 < s \le t \le 1\), define the noising kernel from time \(t\) back to time \(s\) by
    \[
    Q_{t \to s}^{\-n}\tp{\*n, \*n'} \defeq \*{Pr}\stp{\*X_s = \*n' \mid \*X_t = \*n} = \prod_{i = 1}^d \-{Bin}\tp{n_i, \frac{s}{t}; n_i'}.
    \]
    For \(\*n' \le \*n\) and \(i \in \stp{d}\), the binomial identity
    \[
    \frac{n_i + 1}{t} Q_{t \to s}^{\-n}\tp{\*n, \*n'} = \frac{n_i' + 1}{s} Q_{t \to s}^{\-n}\tp{\*n + \*e_i, \*n' + \*e_i}
    \]
    holds. Thus, for every \(\*n' \in \supp\tp{\mu_s}\),
    \[
    \begin{aligned}
        \*E\stp{q_{t, i}\tp{\*X_t} \mid \*X_s = \*n'}
        &= \sum_{\*n \ge \*n'} \frac{\mu_t\tp{\*n} Q_{t \to s}^{\-n}\tp{\*n, \*n'}}{\mu_s\tp{\*n'}} \frac{\tp{n_i + 1} \mu_t\tp{\*n + \*e_i}}{t \mu_t\tp{\*n}} \\
        &= \frac{n_i' + 1}{s \mu_s\tp{\*n'}} \sum_{\*n \ge \*n'} \mu_t\tp{\*n + \*e_i} Q_{t \to s}^{\-n}\tp{\*n + \*e_i, \*n' + \*e_i} \\
        &= \frac{\tp{n_i' + 1} \mu_s\tp{\*n' + \*e_i}}{s \mu_s\tp{\*n'}} \\
        &= q_{s, i}\tp{\*n'}.
    \end{aligned}
    \]
    In the first equality, we used Bayes' rule; in the third, we used the transition formula from \(t\) back to \(s\). The Markov property therefore gives
    \[
    \*E\stp{\*q_t\tp{\*X_t} \mid \+F_s} = \*E\stp{\*q_t\tp{\*X_t} \mid \*X_s} = \*q_s\tp{\*X_s},
    \]
    where \(\tp{\+F_t}_{t \in \left(0, 1\right]}\) is the natural filtration of the denoising process.
\end{proof}

\begin{corollary}
    The birth rates of the denoising process admit the conditional-expectation representation
    \[
    \*q_t\tp{\*X_t} = \frac{1}{1 - t} \*E\stp{\*X_1 - \*X_t \mid \*X_t}, \quad t \in \tp{0, 1}.
    \]
    \label{cor:denoising-birth-rates-conditional-expectation}
\end{corollary}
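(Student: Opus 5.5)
Two routes are available: a direct computation from the martingale property in \Cref{prop:denoising-birth-rates-martingale}, or a Bayes computation in reverse time. I would use the martingale property together with the explicit binomial transitions, since this keeps the argument short.

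Fix $t \in (0,1)$ and $i \in [d]$. The first step is to compute the conditional mean of the forward increment. By \Cref{prop:denoising-generator}, the process $\*X$ is a pure birth process whose rate at time $s$ in state $\*X_s$ is $\*q_s(\*X_s)$. Dynkin's formula applied to the coordinate function $f(\*n) = n_i$ gives
\[
\*E\stp{X_{1,i} - X_{t,i} \mid \*X_t} = \int_t^1 \*E\stp{q_{s,i}(\*X_s) \mid \*X_t} \dd s .
\]
Since $\nabla_i^+ f = 1$, the generator acting on $f$ is exactly $q_{s,i}$. The second step is to evaluate the integrand. For $s \ge t$, the martingale property in \Cref{prop:denoising-birth-rates-martingale} gives $\*E\stp{q_{s,i}(\*X_s) \mid \*X_t} = q_{t,i}(\*X_t)$, because $\+F_t$ is generated by the past and the Markov property reduces conditioning on $\+F_t$ to conditioning on $\*X_t$. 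The integral therefore equals $(1-t)\,q_{t,i}(\*X_t)$, and dividing by $1-t$ gives the corollary.

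The main obstacle is justifying Dynkin's formula. The rates are unbounded and the time interval is closed at $s = 1$, so integrability of $\*X_1$ is needed. If $\mu$ has infinite mean, both sides may be infinite, and the identity should be read in $[0,\infty]$. To avoid the localization argument, I would give a direct check through Bayes' rule with the binomial kernel. By \Cref{prop:noising-transition-semigroup-denoising}, given $\*X_1 = \*m$, the variable $\*X_t$ is a product of $\-{Bin}(m_j, t)$ laws. Then
\[
\*E\stp{X_{1,i} - X_{t,i} \mid \*X_t = \*n} = \sum_{\*m \ge \*n} (m_i - n_i)\,\frac{\mu(\*m)\,\prod_j \-{Bin}(m_j,t;n_j)}{\mu_t(\*n)} .
\]
Next I would apply the identity $(m_i - n_i)\binom{m_i}{n_i} = (n_i+1)\binom{m_i}{n_i+1}$. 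This rewrites each term in the sum as
\[
\frac{(n_i+1)(1-t)}{t}\,\mu(\*m)\,\prod_j \-{Bin}(m_j,t;(\*n+\*e_i)_j) .
\]
Summing over $\*m$ produces $\frac{(n_i+1)(1-t)}{t}\,\mu_t(\*n+\*e_i)$. Dividing by $\mu_t(\*n)$ gives $(1-t)\,q_{t,i}(\*n)$. All terms are nonnegative, so Tonelli's theorem justifies the rearrangement without any moment assumption. I would present this computation as the proof and keep the martingale argument as the intuition behind it.
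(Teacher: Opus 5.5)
Your proof is correct, and the computation you propose to present takes a genuinely different route from the paper's.

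The paper's proof is essentially your intuition paragraph. It asserts that $\*X_u - \int_t^u \*q_s(\*X_s)\dd s$ is a martingale and takes conditional expectations to get $\*E[\*X_1-\*X_t\mid\+F_t] = \int_t^1 \*E[\*q_s(\*X_s)\mid\+F_t]\dd s$. It then evaluates the integrand by \Cref{prop:denoising-birth-rates-martingale} and uses the Markov property.

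Your Bayes computation bypasses both the compensator and the martingale proposition. It rests only on Tonelli and on the identity $(m_i-n_i)\,\mathrm{Bin}(m_i,t;n_i) = \frac{(n_i+1)(1-t)}{t}\,\mathrm{Bin}(m_i,t;n_i+1)$. That identity is correct; terms with $m_i=n_i$ vanish on both sides.

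The two routes trade off as follows.
\begin{itemize}
\item The paper's route is shorter given the preceding proposition. It also explains the formula structurally: expected remaining births equal remaining time times the current rate, because the rates form a martingale.
\item Your route is a self-contained, Tweedie/Robbins-type posterior-mean formula for binomial thinning. It sidesteps the integrability and localization issues of unbounded rates up to time $1$, which the paper's one-line martingale claim leaves implicit. The finite-first-moment assumption is only imposed later, in \Cref{subsec:localization-process}.
\end{itemize}

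One small correction to your caveat: no extended-value reading is needed. For $t<1$ and $\*n\in\supp(\mu_t)$, your own computation shows the conditional expectation equals $(1-t)(n_i+1)\mu_t(\*n+\*e_i)/(t\,\mu_t(\*n))$, which is finite. Only the unconditional mean $\*E[X_{1,i}-X_{t,i}]=(1-t)\*E[X_{1,i}]$ can be infinite.
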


\begin{proof}[Proof of \Cref{cor:denoising-birth-rates-conditional-expectation}]
    The compensated process
    \[
    \*X_u - \int_t^u \*q_s\tp{\*X_s} \dd s, \quad u \in \stp{t, 1},
    \]
    is a martingale. Hence, for \(t \in \tp{0, 1}\),
    \[
    \*E\stp{\*X_1 - \*X_t \mid \+F_t} = \int_t^1 \*E\stp{\*q_s\tp{\*X_s} \mid \+F_t} \dd s = \int_t^1 \*q_t\tp{\*X_t} \dd s = \tp{1 - t} \*q_t\tp{\*X_t},
    \]
    where the second equality follows from \Cref{prop:denoising-birth-rates-martingale}. Since the denoising process is Markov, conditioning on \(\+F_t\) can be replaced by conditioning on \(\*X_t\), which proves the claim.
\end{proof}

\subsection{Localization Process}
\label{subsec:localization-process}

Let \(\tp{\*X_t}_{t \in \left(0, 1\right]}\) be the c\`adl\`ag denoising process with target measure \(\mu\) defined in \Cref{subsec:noising-denoising}, and let \(\tp{\+F_t}_{t \in \left(0, 1\right]}\) denote the natural filtration. We assume throughout this subsection that \(\mu\) has finite first moments. The localization process for \(\mu\) is the measure-valued process
\[
\nu_t \defeq \-{Law}\tp{\*X_1 \mid \+F_t} = \-{Law}\tp{\*X_1 \mid \*X_t}, \quad t \in \left(0, 1\right],
\]
with \(\nu_0 \defeq \mu\). More explicitly, for \(t \in \left(0, 1\right]\) and \(\*x \in \supp\tp{\mu_t}\), define \(\nu_{t, \*x} \defeq \-{Law}\tp{\*X_1 \mid \*X_t = \*x}\). Then \(\nu_t = \nu_{t, \*X_t}\) for all \(t \in \left(0, 1\right]\). The measure \(\nu_{t,\*x}\) is given explicitly as follows. At \(t=1\), \(\nu_{1, \*x} = \delta_{\*x}\). For \(t \in \tp{0, 1}\), \Cref{prop:noising-transition-semigroup-denoising} and Bayes' rule yield
\begin{equation}
    \nu_{t, \*x}\tp{\*n} = \frac{\mu\tp{\*n}}{\mu_t\tp{\*x}} \prod_{i = 1}^d \binom{n_i}{x_i} t^{x_i} \tp{1 - t}^{n_i - x_i}, \quad \*n \ge \*x.
    \label{eq:posterior-formula}
\end{equation}

This explicit formula \(\nu_t = \nu_{t, \*X_t}\) also determines the path regularity of the localization process. Almost surely, the process \(\tp{\*X_t}_{t \in \tp{0, 1}}\) has finitely many jumps, and \(t \mapsto \nu_{t, \*x}\) is continuous in total variation on \(\tp{0, 1}\) for every fixed \(\*x\). Consequently, at each jump time \(t\), \(\nu_{t-} = \nu_{t, \*X_{t-}}\) and \(\nu_t = \nu_{t, \*X_t}\). Moreover, almost surely, \(\*X_t = \*0\) for all sufficiently small \(t > 0\) and \(\*X_t = \*X_1\) for all \(t < 1\) sufficiently close to \(1\). It follows that, almost surely,
\[
\lim_{t \to 0^+} \-{TV}\tp{\nu_t, \mu} = 0, \qquad \lim_{t \to 1^-} \-{TV}\tp{\nu_t, \delta_{\*X_1}} = 0.
\]
Therefore, \(\tp{\nu_t}_{t \in \stp{0, 1}}\) is c\`adl\`ag with respect to the total-variation topology.

\begin{definition}
    Poisson stochastic localization is the localization scheme that assigns \(\tp{\nu_t}_{t \in \stp{0, 1}}\) to each target measure \(\mu\).
\end{definition}

We next show that Poisson stochastic localization is a linear-tilt localization scheme in the sense of \cite[Section~2.4.5]{CE25}, a key property for establishing approximate conservation of variance and entropy. We then derive exact dissipation identities for the conditional \(\Phi\)-entropies \(\*E\stp{\*{Ent}_{\nu_t}^{\Phi}\stp{f}}\), with variance and entropy as important special cases. These results are stated in \Cref{thm:poisson-linear-tilt,thm:phi-entropy-evolution}. Their proofs follow from direct \Ito calculus and are deferred to \Cref{subsec:proof-ito-calculus}.

\begin{theorem}
    Let \(\tp{\*M_t}_{t \in \stp{0, 1}}\) be the compensated birth process defined coordinatewise by
    \[
    \dd M_{t, i} \defeq \dd X_{t, i} - q_{t, i}\tp{\*X_{t-}} \dd t.
    \]
    Define \(\tp{\*Z_t}_{t \in \stp{0, 1}}\) by \(Z_{0, i} \defeq 0\) and
    \[
    \dd Z_{t, i} \defeq
    \begin{cases}
        \displaystyle \frac{\dd M_{t, i}}{\mean{\nu_{t-}}_i - X_{t-, i}}, & \mean{\nu_{t-}}_i > X_{t-, i}, \\
        0, & \mean{\nu_{t-}}_i = X_{t-, i}.
    \end{cases}
    \]
    Then \(\*Z_t\) is a martingale on every interval \(\stp{0, T}\) with \(T < 1\), and Poisson stochastic localization satisfies the linear-tilt equation
    \[
    \dd \nu_t\tp{\*n} = \nu_{t-}\tp{\*n} \inner{\*n - \mean{\nu_{t-}}}{\dd \*Z_t}, \quad \forall \*n \in \supp\tp{\mu}, \, t \in \tp{0, 1}.
    \]
    \label{thm:poisson-linear-tilt}
\end{theorem}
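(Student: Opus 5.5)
The plan is to verify the linear-tilt equation pathwise, using the explicit posterior formula \eqref{eq:posterior-formula}. The key observation is that \(\nu_t = \nu_{t, \*X_t}\), and the denoising process is a pure-jump birth process. So \(t \mapsto \nu_t\tp{\*n}\) is \(\+C^1\) between jumps, and changes by an explicit multiplicative factor at each jump. I will compute the drift part and the jump part separately, and then match them with the two parts of \(\dd \*Z_t\): the drift \(-q_{t,i}\,\dd t/\tp{\mean{\nu_{t-}}_i - X_{t-,i}}\) and the unit jumps \(1/\tp{\mean{\nu_{t-}}_i - X_{t-,i}}\). Throughout I write \(\*m_t \defeq \mean{\nu_t}\). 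The identity tying everything together is \Cref{cor:denoising-birth-rates-conditional-expectation}:
\[
\tp{1 - t}\, q_{t,i}\tp{\*x} = m_{t,i} - x_i \qquad \text{at } \*X_t = \*x .
\]

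\emph{Drift part.} Fix \(\*x\) and differentiate \(\log \nu_{t,\*x}\tp{\*n}\) in \(t\). The unnormalized weight \(\mu\tp{\*n}\prod_i \binom{n_i}{x_i} t^{x_i}\tp{1-t}^{n_i-x_i}\) has logarithmic derivative
\[
\sum_i \tp{\frac{x_i}{t} - \frac{n_i - x_i}{1-t}}.
\]
Subtracting its \(\nu_{t,\*x}\)-average (the derivative of the log normalizer \(\mu_t\tp{\*x}\)), the \(x_i/t\) terms cancel and we get
\[
\partial_t \nu_{t,\*x}\tp{\*n} = -\nu_{t,\*x}\tp{\*n}\, \frac{1}{1-t}\sum_i \tp{n_i - m_{t,i}} .
\]
On the other side, the absolutely continuous part of \(\nu_{t-}\tp{\*n}\inner{\*n - \*m_{t-}}{\dd \*Z_t}\) is \(-\nu_t\tp{\*n}\sum_i \tp{n_i - m_{t,i}}\, q_{t,i}/\tp{m_{t,i} - x_i}\,\dd t\). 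By the corollary this equals \(-\nu_t\tp{\*n}\sum_i \tp{n_i - m_{t,i}}\,\dd t/\tp{1-t}\), so the two drifts agree on coordinates with \(m_{t,i} > x_i\). When \(m_{t,i} = x_i\), the posterior is supported on \(\set{n_i = x_i}\), since \(\*n \ge \*x\). Hence \(n_i - m_{t,i} = 0\) on the support, the \(i\)-th term vanishes on both sides, and the case split in the definition of \(\*Z\) is consistent.

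\emph{Jump part.} At a birth in coordinate \(i\) at time \(t\), from \(\*x\) to \(\*x + \*e_i\), the formula gives
\[
\frac{\nu_{t,\*x+\*e_i}\tp{\*n}}{\nu_{t,\*x}\tp{\*n}} = \frac{n_i - x_i}{x_i + 1}\cdot\frac{t}{1-t}\cdot\frac{\mu_t\tp{\*x}}{\mu_t\tp{\*x + \*e_i}} = \frac{n_i - x_i}{\tp{1-t}\,q_{t,i}\tp{\*x}} = \frac{n_i - x_i}{m_{t,i} - x_i},
\]
using the definition of \(q_{t,i}\) in \Cref{prop:denoising-generator} and the corollary. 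Therefore
\[
\Delta \nu_t\tp{\*n} = \nu_{t-}\tp{\*n}\,\frac{n_i - m_{t-,i}}{m_{t-,i} - x_i} = \nu_{t-}\tp{\*n}\inner{\*n - \*m_{t-}}{\Delta \*Z_t},
\]
since \(\Delta Z_{t,i} = 1/\tp{m_{t-,i} - x_i}\) and the other coordinates do not jump. Such a jump has positive rate only if \(q_{t,i} > 0\), that is, only if \(m_{t,i} > x_i\), so no jump occurs in the degenerate case. Summing the drift and jump contributions yields the stated equation.

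\emph{Martingale property of \(\*Z\).} The integrand \(H_{t,i} \defeq 1/\tp{m_{t-,i} - X_{t-,i}}\) (set to \(0\) in the degenerate case) is predictable and positive, and \(\*M\) is the compensated martingale of a counting process with intensity \(q_{t,i}\). It suffices to show that
\[
\*E \int_0^T \abs{H_{t,i}}\, q_{t,i}\tp{\*X_{t-}}\, \dd t < \infty .
\]
By the corollary, \(H_{t,i}\,q_{t,i} = 1/\tp{1-t}\) or \(0\), so this integral is at most \(-\log\tp{1-T} < \infty\), and the standard criterion makes \(\int H\,\dd M\) a true martingale on \(\stp{0,T}\). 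Near \(t = 0\) nothing degenerates: \(\*X_t = \*0\) and \(\*m_t \to \mean{\mu}\), which is finite by the standing first-moment assumption.

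\emph{Main obstacle.} I expect the main obstacle to be the careful handling of left limits versus values at jump times, together with the degenerate coordinates. The finite-moment assumption is also what makes \(\*m_t\) finite and the corollary applicable pathwise. I will handle these by working on the a.s.\ event that \(\*X\) has finitely many jumps on \(\stp{0,T}\), and verifying the identity piece by piece on each inter-jump interval and at each jump.
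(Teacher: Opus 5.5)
Your proposal is correct and follows essentially the same route as the paper. You obtain the between-jump drift by differentiating the posterior formula \eqref{eq:posterior-formula}, and the jump factor $(n_i - x_i)/(m_{t-,i} - x_i)$ from the Bayes ratio of posteriors. You then match both to $\dd \*Z_t$ via $(1-t)\,q_{t,i} = m_{t,i} - x_i$ from \Cref{cor:denoising-birth-rates-conditional-expectation}, dispose of degenerate coordinates because $\nu_{t-}$ is supported on $\{n_i = X_{t-,i}\}$, and get the martingale property from the pathwise bound $\int_0^T H_{t,i}\, q_{t,i}\,\dd t \le -\log(1-T)$; this is exactly the paper's argument, with your jump-ratio computation through the definition of $q_{t,i}$ spelled out in slightly more detail.
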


\begin{theorem}
    Let \(\Phi\) be a differentiable convex function on an interval \(I\), and let \(f \in \+C_{\-{c}}\tp{\supp\tp{\mu}}\) satisfy \(f\tp{\supp\tp{\mu}} \subseteq I\). Define
    \[
    \Psi\tp{u, v} \defeq \Phi\tp{u + v} - \Phi\tp{u} - v \Phi'\tp{u}.
    \]
    Then
    \[
    \dd \*E\stp{\*{Ent}_{\nu_t}^{\Phi}\stp{f}} = -\frac{1}{1 - t} \*E\stp{\sum_{i = 1}^d \tp{\mean{\nu_t}_i - X_{t, i}} \Psi\tp{\*E_{\nu_t}\stp{f}, \frac{\*{Cov}_{\*n \sim \nu_t}\stp{f\tp{\*n}, n_i}}{\mean{\nu_t}_i - X_{t, i}}}} \dd t.
    \]
    In particular:
    \begin{itemize}
        \item If \(\Phi\tp{x} = x^2\), then
        \[
        \dd \*E\stp{\*{Var}_{\nu_t}\stp{f}} = -\frac{1}{1 - t} \*E\stp{\sum_{i = 1}^d \frac{\*{Cov}_{\*n \sim \nu_t}\stp{f\tp{\*n}, n_i}^2}{\mean{\nu_t}_i - X_{t, i}}} \dd t.
        \]
        \item If \(\Phi\tp{x} = x \log x\) and \(f \ge 0\), then
        \[
        \dd \*E\stp{\*{Ent}_{\nu_t}\stp{f}} = -\frac{1}{1 - t} \*E\stp{\*E_{\nu_t}\stp{f} \sum_{i = 1}^d \Psi\tp{\mean{\nu_t}_i - X_{t, i}, \frac{\*{Cov}_{\*n \sim \nu_t}\stp{f\tp{\*n}, n_i}}{\*E_{\nu_t}\stp{f}}}} \dd t,
        \]
        with the convention \(0 \log 0 \defeq 0\).
    \end{itemize}
    Every quotient with zero denominator is interpreted as zero; the corresponding covariance then also vanishes.
    \label{thm:phi-entropy-evolution}
\end{theorem}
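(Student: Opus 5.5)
The plan is to apply Itô's formula for pure-jump processes to $F(t,\*X_t)\defeq \*{Ent}^{\Phi}_{\nu_{t,\*X_t}}[f]$, and then take expectations so that the martingale part vanishes. Write $m_t(\*x)\defeq \*E_{\nu_{t,\*x}}[f]$. By the tower property, $\*E[\*E_{\nu_t}[\Phi(f)]] = \*E_\mu[\Phi(f)]$ is constant in $t$. Hence
\[
\frac{\dd}{\dd t}\*E\stp{\*{Ent}^\Phi_{\nu_t}[f]} = -\frac{\dd}{\dd t}\*E\stp{\Phi(m_t(\*X_t))},
\]
and it suffices to compute the drift of the real-valued process $\Phi(m_t(\*X_t))$. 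This process is a martingale transform of a bounded martingale: $f$ has finite support, so $m_t$ is bounded.

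\textbf{Step 1: jumps of $m_t$ in terms of covariances.} From the posterior formula \eqref{eq:posterior-formula}, a birth $\*x\to\*x+\*e_i$ at time $t$ multiplies $\nu_{t,\*x}(\*n)$ by a factor proportional to $(n_i-x_i)$, because $\binom{n_i}{x_i+1}/\binom{n_i}{x_i} = (n_i-x_i)/(x_i+1)$. Normalizing gives
\[
\nu_{t,\*x+\*e_i}(\*n) = \nu_{t,\*x}(\*n)\,\frac{n_i-x_i}{\mean{\nu_{t,\*x}}_i-x_i}.
\]
This yields the jump identity
\[
m_t(\*x+\*e_i)-m_t(\*x) = \frac{\*{Cov}_{\nu_{t,\*x}}(f,n_i)}{\mean{\nu_{t,\*x}}_i-x_i}\eqdef w_i.
\]
The same identity is what underlies \Cref{thm:poisson-linear-tilt}, so I would take it as already available. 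When $\mean{\nu_{t,\*x}}_i = x_i$, the posterior is concentrated on $n_i=x_i$, so both the covariance and the birth rate vanish, matching the stated convention.

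\textbf{Step 2: the birth rate.} By \Cref{cor:denoising-birth-rates-conditional-expectation}, $q_{t,i}(\*x) = (\mean{\nu_{t,\*x}}_i - x_i)/(1-t)$.

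\textbf{Step 3: Itô's formula.} Since $(m_t(\*X_t))$ is a martingale (Doob), its compensator vanishes, so
\[
\partial_t m_t(\*x) + \sum_i q_{t,i}(\*x)\,w_i = 0.
\]
Itô's formula for $\Phi(m_t(\*X_t))$ gives the drift
\[
\Phi'(m)\,\partial_t m + \sum_i q_{t,i}\,\bigl[\Phi(m+w_i)-\Phi(m)\bigr] = \sum_i q_{t,i}\,\Psi(m,w_i).
\]
Taking expectations (the local martingale part is a true martingale on $[0,T]$ with $T<1$, by boundedness of $f$ and finite first moments) yields
\[
\frac{\dd}{\dd t}\*E[\Phi(m_t)] = \frac{1}{1-t}\,\*E\Bigl[\sum_i (\mean{\nu_t}_i-X_{t,i})\,\Psi(m_t,w_i)\Bigr].
\]
With the sign from the opening reduction, this is the claimed formula. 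Strictly, one differentiates in $t$ under the expectation; alternatively, one verifies directly that $\partial_t m$ is given by the compensator identity, which follows from differentiating \eqref{eq:posterior-formula} in $t$.

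\textbf{Step 4: special cases.} For $\Phi(x)=x^2$ we have $\Psi(u,v)=v^2$, which gives the variance formula. For $\Phi(x)=x\log x$, the homogeneity
\[
a\,\Psi(u,v/a)\cdot a\big/ a \quad\text{in the form}\quad c\,\Psi(u,v) = u\,\Psi(c,cv/u),
\]
which follows from the explicit form of $\Psi$ for $x\log x$ (this needs checking), converts $(\mean{\nu_t}_i-X_{t,i})\,\Psi(m,w_i)$ into $m\,\Psi\bigl(\mean{\nu_t}_i-X_{t,i},\,\*{Cov}/m\bigr)$. Indeed, $u\Psi_{\log}(c,\,c w/u)$ with $cw = \*{Cov}$ matches the stated expression. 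The case $m=0$ is handled by the $0\log 0$ convention.

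\textbf{Main obstacle.} I expect the main obstacle to be justifying the interchange of derivative and expectation, together with the true-martingale property near $t\to 0$ and $t\to 1$ for measures with infinite support. I would handle this by working on $[\eps,T]$, using boundedness of $f$ and integrability of $\sum_i q_{t,i}$, which comes from the finite first moments via Step 2.
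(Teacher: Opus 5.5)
Your proposal is correct and follows the paper's proof essentially step for step. Both arguments use that $\mathbf{E}[\mathbf{E}_{\nu_t}[\Phi(f)]]$ is constant. Both identify the jump of $\mathbf{E}_{\nu_t}[f]$ under a birth in coordinate $i$ as $\mathbf{Cov}_{\nu_t}(f,n_i)/(\mathsf{mean}(\nu_t)_i - X_{t,i})$, and both apply the jump It\^o formula to $\Phi(\mathbf{E}_{\nu_t}[f])$ with compensator $q_{t,i} = (\mathsf{mean}(\nu_t)_i - X_{t,i})/(1-t)$. The identity you flag as needing a check does hold: for $\Phi(x)=x\log x$ one has $\Psi(u,v)=u\,\varphi(v/u)$ with $\varphi(s)=(1+s)\log(1+s)-s$, so $c\,\Psi(u,v)=u\,\Psi(c,cv/u)$, which is the paper's $a\,\Psi(u,v/a)=u\,\Psi(a,v/u)$ after the substitution $v\mapsto av$.
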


\subsection{Associated Limiting Process}
\label{subsec:associated-limiting-process}

Every localization scheme naturally induces a family of Markov chains; see \cite[Section~2.2]{CE25}. Each transition consists of a noising step followed by a denoising step. We now identify these chains and their infinitesimal limit for Poisson stochastic localization.

Let \(\tp{\*X_t}_{t \in \left(0, 1\right]}\) be the denoising process with target measure \(\mu\), and let \(\tp{\nu_t}_{t \in \stp{0, 1}}\) be the corresponding localization process. For \(0 < s \le t \le 1\), write
\[
Q_{t \to s}^{\-n}\tp{\*n, \*n'} \defeq \*{Pr}\stp{\*X_s = \*n' \mid \*X_t = \*n}, \qquad Q_{s \to t}^{\-d}\tp{\*n', \*n} \defeq \*{Pr}\stp{\*X_t = \*n \mid \*X_s = \*n'}.
\]
For \(t \in \tp{0, 1}\), the associated noising-denoising kernel is
\[
Q_{1 \leftrightarrow t} \defeq Q_{1 \to t}^{\-n} Q_{t \to 1}^{\-d}.
\]
Thus, each transition first applies binomial thinning from time \(1\) to time \(t\) and then samples from the posterior at time \(t\). By construction, \(Q_{1 \leftrightarrow t}\) is reversible with respect to \(\mu\).

Define the constants
\begin{equation}
    \gamma_t \defeq \inf_{\substack{f \in \+C_{\-c}\tp{\supp\tp{\mu}} \\ \*{Var}_{\mu}\stp{f} > 0}} \frac{\*E\stp{\*{Var}_{\nu_t}\stp{f}}}{\*{Var}_{\mu}\stp{f}}, \qquad \rho_t \defeq \inf_{\substack{f \in \+C_{\-c}\tp{\supp\tp{\mu}}, \, f \ge 0 \\ \*{Ent}_{\mu}\stp{f} > 0}} \frac{\*E\stp{\*{Ent}_{\nu_t}\stp{f}}}{\*{Ent}_{\mu}\stp{f}}.
    \label{eq:approximate-conservation-constants}
\end{equation}
These constants quantify the fractions of variance and entropy retained on average by the posterior at time \(t\). The variance decomposition and its entropy analogue identify them with functional-inequality constants of \(Q_{1 \leftrightarrow t}\), as follows.

\begin{lemma}[{\cite[Proposition~19]{CE25}}]
    Let \(\gamma_t\) and \(\rho_t\) be defined as in \eqref{eq:approximate-conservation-constants}. Then \(Q_{1 \leftrightarrow t}\) satisfies the \Poincare inequality with constant \(\gamma_t\) and the modified log-Sobolev inequality with constant \(\rho_t\):
    \[
    \gamma_t \, \*{Var}_{\mu}\stp{f} \le \+E_{Q_{1 \leftrightarrow t}}\tp{f, f}, \quad \forall f \in \+C_{\-c}\tp{\supp\tp{\mu}},
    \]
    \[
    \rho_t \, \*{Ent}_{\mu}\stp{f} \le \+E_{Q_{1 \leftrightarrow t}}\tp{\log f, f}, \quad \forall f \in \+C_{\-c}\tp{\supp\tp{\mu}}, \, f \ge 0,
    \]
    and the constant \(\gamma_t\) is the optimal \Poincare constant.
    \label{lem:associated-markov-chain-functional-inequalities}
\end{lemma}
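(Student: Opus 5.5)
The plan is to identify the Dirichlet form of \(Q_{1 \leftrightarrow t}\) with posterior functionals and then read off both inequalities from the definitions of \(\gamma_t\) and \(\rho_t\) in \eqref{eq:approximate-conservation-constants}.

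\textbf{Step 1: the Markov structure of the kernel.} Write \(\*X_1 \sim \mu\) for the current state and \(\*X_t\) for its binomial thinning. Let \(\*X_1'\) be drawn from \(\nu_{t, \*X_t}\). By the adjoint (Bayes) relation between \(Q^{\-n}_{1 \to t}\) and \(Q^{\-d}_{t \to 1}\), the pair \((\*X_1, \*X_1')\) has the law of two conditionally independent draws from \(\nu_t\) given \(\*X_t\). In particular \(Q_{1 \leftrightarrow t}\) is reversible with respect to \(\mu\).

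\textbf{Step 2: the variance representation.} For \(f \in \+C_{\-c}(\supp(\mu))\),
\[
\+E_{Q_{1 \leftrightarrow t}}(f,f) = \tfrac12 \*E\stp{(f(\*X_1) - f(\*X_1'))^2}.
\]
Conditioning on \(\*X_t\) and using conditional independence, the right-hand side equals \(\*E\stp{\*{Var}_{\nu_t}\stp{f}}\), which is \eqref{eq:localization-associated-dirichlet-form}. The definition of \(\gamma_t\) then gives \(\gamma_t \*{Var}_\mu\stp{f} \le \+E_{Q_{1 \leftrightarrow t}}(f,f)\).

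\textbf{Step 3: the entropy representation.} Similarly,
\[
\+E_{Q_{1 \leftrightarrow t}}(\log f, f) = \*E\stp{\*{Cov}_{\nu_t}\stp{\log f, f}} = \*E\stp{\*E_{\nu_t}\stp{f \log f} - \*E_{\nu_t}\stp{f}\,\*E_{\nu_t}\stp{\log f}}.
\]
Jensen's inequality \(\*E_{\nu_t}\stp{\log f} \le \log \*E_{\nu_t}\stp{f}\) bounds this below by \(\*E\stp{\*{Ent}_{\nu_t}\stp{f}}\), which is at least \(\rho_t \*{Ent}_\mu\stp{f}\). For nonnegative \(f\) with zeros, either approximate by \(f + \eps\) and let \(\eps \to 0\), or interpret the inequality through the convention \(0 \log 0 = 0\) with both sides possibly \(+\infty\).

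\textbf{Step 4: optimality of \(\gamma_t\).} Because \(\+E_{Q_{1 \leftrightarrow t}}(f,f) = \*E\stp{\*{Var}_{\nu_t}\stp{f}}\) is an identity, the best Poincaré constant of \(Q_{1 \leftrightarrow t}\) is exactly
\[
\inf_f \frac{\*E\stp{\*{Var}_{\nu_t}\stp{f}}}{\*{Var}_\mu\stp{f}} = \gamma_t.
\]
The only care needed is that the infimum is taken over \(\+C_{\-c}\); this matches the statement, so no further density argument is needed.

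\textbf{Main obstacle.} Justifying these manipulations on the countably infinite space \(\bb N^d\). Compact support of \(f\) makes all sums finite, and the posterior formula \eqref{eq:posterior-formula} makes the conditional-independence computation explicit, so I expect only bookkeeping here.
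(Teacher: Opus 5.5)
Your proposal is correct and is essentially the argument the paper relies on: the paper cites \cite[Proposition~19]{CE25} and sketches the same reasoning in the localization-scheme preliminaries. There, the current and resampled states are conditionally independent given \(\mathbf{X}_t\), which yields \(\mathcal{E}_{Q_{1 \leftrightarrow t}}(f,f) = \mathbf{E}[\mathbf{Var}_{\nu_t}[f]]\) and \(\mathcal{E}_{Q_{1 \leftrightarrow t}}(\log f, f) = \mathbf{E}[\mathbf{Cov}_{\nu_t}(\log f, f)] \ge \mathbf{E}[\mathbf{Ent}_{\nu_t}[f]]\) by Jensen, and the definitions of \(\gamma_t\) and \(\rho_t\) then give both inequalities, with optimality of \(\gamma_t\) following because the variance representation is an identity. (Since finitely supported functions are dense in \(L^2(\mu)\) and \(Q_{1\leftrightarrow t}\) is bounded, the infimum over \(\mathcal{C}_{\mathrm{c}}\) also coincides with the one over all of \(L^2(\mu)\).)
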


The preceding lemma is an instance of the localization framework of \cite{CE25}. Following the limiting-process viewpoint in \cite{CCCYZ25}, we next send \(t \to 1^-\) and accelerate \(Q_{1 \leftrightarrow t}\) by the factor \(1 / \tp{1 - t}\). The limit is the birth-death chain from \eqref{eq:birth-death-generator}, and the corresponding functional inequalities pass to the limit.

\begin{theorem}
    Suppose that the target distribution \(\mu\) is supported on a downward closed subset of \(\bb N^d\). For \(t \in \tp{0, 1}\), set \(\+L^{Q_{1 \leftrightarrow t}} \defeq Q_{1 \leftrightarrow t} - I\). For \(f \in \+C_{\-c}\tp{\supp\tp{\mu}}\) and \(\*n \in \supp\tp{\mu}\), define
    \[
    \+L f\tp{\*n} \defeq \lim_{\eps \to 0^+} \frac{1}{\eps} \+L^{Q_{1 \leftrightarrow 1 - \eps}} f\tp{\*n}.
    \]
    Then
    \[
    \+L f\tp{\*n} = \sum_{i = 1}^d \frac{\tp{n_i + 1} \mu\tp{\*n + \*e_i}}{\mu\tp{\*n}} \nabla_i^+ f\tp{\*n} + \sum_{i = 1}^d n_i \nabla_i^- f\tp{\*n},
    \]
    which coincides with \eqref{eq:birth-death-generator}.
    \label{thm:associated-limiting-process}
\end{theorem}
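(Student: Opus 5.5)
We will compute the kernel \(Q_{1 \leftrightarrow t}\) explicitly and expand it to first order in \(\eps \defeq 1 - t\). A transition from \(\*n\) first thins \(\*n\) to \(\*x\) with law \(\prod_i \-{Bin}\tp{n_i, t; x_i}\), using \Cref{prop:noising-transition-semigroup-denoising} with \(s = t\) and time \(1\). It then resamples \(\*m \sim \nu_{t, \*x}\) via \eqref{eq:posterior-formula}. Hence
\[
Q_{1 \leftrightarrow t}\tp{\*n, \*m} = \sum_{\*x \le \*n \wedge \*m} \prod_{i} \binom{n_i}{x_i} t^{x_i} \eps^{n_i - x_i} \cdot \frac{\mu\tp{\*m}}{\mu_t\tp{\*x}} \prod_i \binom{m_i}{x_i} t^{x_i} \eps^{m_i - x_i},
\]
with \(\mu_t\tp{\*x} = \sum_{\*k \ge \*x} \mu\tp{\*k} \prod_i \binom{k_i}{x_i} t^{x_i} \eps^{k_i - x_i}\). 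Since \(f\) is finitely supported and \(\*n\) is fixed, \(\+L^{Q_{1 \leftrightarrow t}} f\tp{\*n} = \sum_{\*m} Q_{1 \leftrightarrow t}\tp{\*n, \*m}\tp{f\tp{\*m} - f\tp{\*n}}\). We then need the \(O\tp{\eps}\) behaviour of \(Q_{1\leftrightarrow t}\tp{\*n,\*m}\) for each \(\*m \ne \*n\), together with a uniform tail bound for the total mass on \(\*m\) with \(\norm{\*m - \*n}_1 \ge 2\).

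The key steps are as follows. First, the thinning step leaves \(\*x = \*n\) with probability \(t^{\abs{\*n}} = 1 - O\tp{\eps}\), and gives \(\*x = \*n - \*e_i\) with probability \(n_i \eps + O\tp{\eps^2}\). All other \(\*x\) have probability \(O\tp{\eps^2}\).

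Second, we expand the posterior. For \(\*x = \*n\), \(\nu_{t,\*n}\tp{\*n + \*e_i} = \mu\tp{\*n + \*e_i}\tp{n_i + 1}\eps / \mu\tp{\*n} + O\tp{\eps^2}\), because \(\mu_t\tp{\*n} = \mu\tp{\*n} + O\tp{\eps}\). Moves of \(\*m\) farther above \(\*n\) cost \(O\tp{\eps^2}\) in total mass. For \(\*x = \*n - \*e_i\), \(\nu_{t,\*x}\) is within \(O\tp{\eps}\) of \(\delta_{\*x}\), so the resampled point is \(\*n - \*e_i\) up to \(O\tp{\eps}\).

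Third, we collect first-order terms. The move \(\*n \to \*n + \*e_i\) has rate \(\tp{n_i + 1}\mu\tp{\*n + \*e_i}/\mu\tp{\*n}\), coming from the path \(\*x = \*n\). The move \(\*n \to \*n - \*e_i\) has rate \(n_i\), coming from the path \(\*x = \*n - \*e_i\) and then staying there. All other contributions are \(O\tp{\eps^2}\) or are multiplied by \(f\tp{\*m} - f\tp{\*n} = 0\) at \(\*m = \*n\). Dividing by \(\eps\) and letting \(\eps \to 0^+\) gives \eqref{eq:birth-death-generator}. Downward closedness guarantees that \(\*n - \*e_i \in \supp\tp{\mu}\), so the death moves are well defined. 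When \(\*n + \*e_i \notin \supp\tp{\mu}\), the birth rate is zero, consistent with the convention.

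The main obstacle is justifying the \(O\tp{\eps^2}\) remainder. The support may be infinite, so the posterior \(\nu_{t,\*x}\) places mass on arbitrarily large \(\*m\), and we must control both \(\mu_t\tp{\*x}\) and \(\sum_{\*m} \nu_{t,\*x}\tp{\*m}\abs{f\tp{\*m} - f\tp{\*n}}\).

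Because \(f\) is bounded and finitely supported, it suffices to bound \(\nu_{t,\*n}\tp{\set{\*m : \*m \ne \*n, \*m \ne \*n + \*e_i\ \forall i}}\) by \(o\tp{\eps}\). This mass equals \(\mu_t\tp{\*n}^{-1}\sum_{\abs{\*m - \*n} \ge 2} \mu\tp{\*m}\prod_i\binom{m_i}{n_i} t^{n_i}\eps^{m_i - n_i}\). We will dominate the sum by \(\eps^2 \sum_{\*m} \mu\tp{\*m} \binom{\abs{\*m}}{\abs{\*n}}\cdots\) and would first try to obtain convergence by dominated convergence after dividing by \(\eps\). Each term divided by \(\eps\) tends to \(0\) and is bounded by \(\mu\tp{\*m}\prod_i\binom{m_i}{n_i} \cdot m\)-polynomial weights times \(\eps^{\abs{\*m-\*n}-1}\le \eps\cdot 2^{-\ldots}\) for \(\eps\le 1/2\). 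Concretely, \(\binom{m}{n}\eps^{m-n} \le \tp{2\eps}^{m - n} 2^{n}\cdots\), so \(\sum_{\*m} \mu\tp{\*m} \prod_i \binom{m_i}{n_i}\tp{2\eps}^{m_i - n_i}\) is finite for small \(\eps\) once the generating function has positive radius beyond \(1\). If that fails, we fall back on the finite-first-moment assumption and truncate.
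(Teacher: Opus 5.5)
Your route is the paper's: expand the binomial-thinning kernel and the posterior \(\nu_{1-\eps,\*x}\) to first order in \(\eps\) and compose them. Births come from \(\*x = \*n\) followed by one resampled birth, and deaths come from \(\*x = \*n - \*e_i\) followed by no resampled birth. The one step you leave open is the remainder estimate for infinite support. It needs no extra hypothesis, so drop both the ``radius beyond \(1\)'' condition and the finite-first-moment fallback; the latter is not among the theorem's assumptions in any case. Using only \(\mu\tp{\*x + \*k} \le 1\) and the negative binomial series,
\[
\sum_{\norm{\*k}_1 \ge 2} \mu\tp{\*x + \*k} \prod_{i = 1}^d \binom{x_i + k_i}{x_i} \eps^{\norm{\*k}_1} \le \tp{1 - \eps}^{-\tp{\norm{\*x}_1 + d}} - 1 - \tp{\norm{\*x}_1 + d} \eps = O_{\*x}\tp{\eps^2}.
\]
The same computation over \(\norm{\*k}_1 \ge 1\) gives the bound \(\nu_{1 - \eps, \*n - \*e_i}\tp{\set{\*m \ne \*n - \*e_i}} = O_{\*n}\tp{\eps}\) that your death step needs. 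The normalizer causes no trouble, since \(\mu_{1 - \eps}\tp{\*x} \ge \tp{1 - \eps}^{\norm{\*x}_1} \mu\tp{\*x}\) and \(\mu\tp{\*x} > 0\) for \(\*x \in \supp\tp{\mu}\). Equivalently, your own estimate with \(\tp{2\eps}^{m - n}\) only requires \(2\eps < 1\), because \(g_{\mu}\) always converges on the open unit polydisk (its coefficients sum to \(1\)). This is exactly the paper's justification: \(\prod_i \binom{x_i + k_i}{x_i}\) grows polynomially in \(\*k\), while \(\eps^{\norm{\*k}_1}\) decays geometrically.
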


\begin{proof}[Proof of \Cref{thm:associated-limiting-process}]
    Set \(b_i\tp{\*n} \defeq \tp{n_i + 1}\mu\tp{\*n + \*e_i}/\mu\tp{\*n}\). We first compute the transition rates of \(Q_{1 \leftrightarrow 1 - \eps}\). For each fixed \(\*n \in \supp\tp{\mu}\), \Cref{prop:noising-transition-semigroup-denoising} gives
    \[
    \begin{aligned}
        Q_{1 \to 1 - \eps}^{\-n}\tp{\*n, \*n} &= 1 - \eps \norm{\*n}_1 + O_{\*n}\tp{\eps^2}, \\
        Q_{1 \to 1 - \eps}^{\-n}\tp{\*n, \*n - \*e_i} &= \eps n_i + O_{\*n}\tp{\eps^2}, \quad i \in \stp{d}, \, n_i > 0,
    \end{aligned}
    \]
    while the total probability of two or more deaths is \(O_{\*n}\tp{\eps^2}\).
    
    The reverse kernel is the posterior \(Q_{1 - \eps \to 1}^{\-d}\tp{\*x, \cdot} = \nu_{1 - \eps, \*x}\). For fixed \(\*x \in \supp\tp{\mu}\), \Cref{prop:noising-transition-semigroup-denoising} yields
    \[
    \mu_{1 - \eps}\tp{\*x} = \tp{\mu Q_{1 \to 1 - \eps}^{\-n}} \tp{\*x} = \tp{1 - \eps}^{\norm{\*x}_1} \stp{\mu\tp{\*x} + \eps \sum_{i = 1}^d \tp{x_i + 1}\mu\tp{\*x + \*e_i} + O_{\*x}\tp{\eps^2}}.
    \]
    This expansion remains valid when \(\supp\tp{\mu}\) is infinite. Specifically, for \(\*n = \*x + \*k\), we have
    \[
    Q_{1 \to 1 - \eps}^{\-n}\tp{\*n, \*x} = \prod_{i = 1}^d \binom{x_i + k_i}{x_i} \tp{1 - \eps}^{x_i} \eps^{k_i} = \tp{1 - \eps}^{\norm{\*x}_1} \eps^{\norm{\*k}_1} \prod_{i = 1}^d \binom{x_i + k_i}{x_i}.
    \]
    Note that \(\prod_i \binom{x_i + k_i}{x_i}\) grows only polynomially in \(\*k\), whereas \(\eps^{\norm{\*k}_1}\) decays geometrically. Consequently, the sum over terms with \(\norm{\*k}_1 \ge 2\) is \(O_{\*x}\tp{\eps^2}\). Substituting this into the expression for \(\nu_{1 - \eps, \*x}\) gives
    \begin{equation}
        \begin{aligned}
            Q_{1 - \eps \to 1}^{\-d}\tp{\*x, \*x} &= 1 - \eps \sum_{i = 1}^d b_i\tp{\*x} + O_{\*x}\tp{\eps^2}, \\
            Q_{1 - \eps \to 1}^{\-d}\tp{\*x, \*x + \*e_i} &= \eps b_i\tp{\*x} + O_{\*x}\tp{\eps^2}, \quad i \in \stp{d},
        \end{aligned}
        \label{eq:posterior-expansion}
    \end{equation}
    with total remaining posterior mass \(O_{\*x}\tp{\eps^2}\).
    
    Composing the two kernels, we obtain
    \[
    \begin{aligned}
        Q_{1 \leftrightarrow 1 - \eps}\tp{\*n, \*n - \*e_i} &= \eps n_i + O_{\*n}\tp{\eps^2}, \quad i \in \stp{d} \\
        Q_{1 \leftrightarrow 1 - \eps}\tp{\*n, \*n + \*e_i} &= \eps b_i\tp{\*n} + O_{\*n}\tp{\eps^2}, \quad i \in \stp{d} \\
        Q_{1 \leftrightarrow 1 - \eps}\tp{\*n, \*n} &= 1 - \eps \sum_{i = 1}^d \tp{n_i + b_i\tp{\*n}} + O_{\*n}\tp{\eps^2},
    \end{aligned}
    \]
    with total remaining mass \(O_{\*n}\tp{\eps^2}\). Hence, for every \(f \in \+C_{\-c}\tp{\supp\tp{\mu}}\) and \(\*n \in \supp\tp{\mu}\),
    \[
    \+L f\tp{\*n} = \lim_{\eps \to 0^+} \frac{\tp{Q_{1 \leftrightarrow 1 - \eps} - I}f\tp{\*n}}{\eps} = \sum_{i = 1}^d b_i\tp{\*n}\nabla_i^+ f\tp{\*n} + \sum_{i = 1}^d n_i \nabla_i^- f\tp{\*n}.
    \]
    This proves the asserted generator limit.
\end{proof}

\begin{theorem}
    Let \(\gamma_t\) and \(\rho_t\) be defined as in \eqref{eq:approximate-conservation-constants}, and suppose that \(\mu\) has downward closed support and finite first moments. Set
    \[
    \gamma \defeq \liminf_{\eps \to 0^+} \frac{\gamma_{1 - \eps}}{\eps}, \qquad \rho \defeq \liminf_{\eps \to 0^+} \frac{\rho_{1 - \eps}}{\eps}.
    \]
    Then \(\mu\) satisfies the weighted \Poincare inequality:
    \[
    \gamma \, \*{Var}_{\mu}\stp{f} \le \*E_{\*n \sim \mu}\stp{\sum_{i = 1}^d \frac{\tp{n_i + 1} \mu\tp{\*n + \*e_i}}{\mu\tp{\*n}} \tp{\nabla_i^+ f\tp{\*n}}^2} = \+E\tp{f, f}, \quad \forall f \in \+D\tp{\+E},
    \]
    and the weighted Wu-type modified log-Sobolev inequality:
    \[
    \rho \, \*{Ent}_{\mu}\stp{f} \le \*E_{\*n \sim \mu}\stp{\sum_{i = 1}^d \frac{\tp{n_i + 1} \mu\tp{\*n + \*e_i}}{\mu\tp{\*n}} \Psi\tp{f\tp{\*n}, \nabla_i^+ f\tp{\*n}}} \le \+E\tp{\log f, f}, \quad \forall f \in \+A.
    \]
    Here, \(\+E\tp{\cdot, \cdot}\) is the Dirichlet form of \(\+L\) defined as in \eqref{eq:birth-death-dirichlet-form}, and \(\Psi\tp{u, v} \defeq \tp{u + v} \log\tp{u + v} - u \log u - v \tp{\log u + 1}\) with the convention \(\Psi\tp{0, 0} = 0\). The corresponding domains are
    \[
    \+D\tp{\+E} \defeq \set{f \in L^2\tp{\mu} \cmid \+E\tp{f, f} < \infty}, \qquad \+A \defeq \set{f \in L^2\tp{\mu} \cmid f > 0, \, f \in \+D\tp{\+E}, \, \log f \in \+D\tp{\+E}}.
    \]
    \label{thm:limiting-process-functional-inequalities}
\end{theorem}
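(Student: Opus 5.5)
The plan is to work first with $f \in \+C_{\-c}\tp{\supp\tp{\mu}}$ and then extend to $\+D\tp{\+E}$ and $\+A$ by approximation. The starting point is \Cref{lem:associated-markov-chain-functional-inequalities} at $t = 1 - \eps$:
\[
\gamma_{1-\eps}\,\*{Var}_\mu\stp{f} \le \+E_{Q_{1\leftrightarrow 1-\eps}}(f,f), \qquad \rho_{1-\eps}\,\*{Ent}_\mu\stp{f} \le \*E\stp{\*{Ent}_{\nu_{1-\eps}}\stp{f}}.
\]
For the entropy we deliberately keep the sharper middle quantity $\*E\stp{\*{Ent}_{\nu_{1-\eps}}\stp{f}}$ from the definition of $\rho_t$, rather than passing to $\+E(\log f, f)$. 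Dividing both inequalities by $\eps$ and taking $\liminf_{\eps\to0^+}$ turns the left-hand sides into $\gamma\,\*{Var}_\mu\stp{f}$ and $\rho\,\*{Ent}_\mu\stp{f}$. It therefore suffices to compute the limits of the right-hand sides divided by $\eps$.

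For the variance, the limit is read off from \Cref{thm:associated-limiting-process}. Since $f$ is finitely supported, $\+E_{Q}(f,f) = \frac12\sum_{\*n,\*m}\mu(\*n)Q(\*n,\*m)\tp{f(\*m)-f(\*n)}^2$ involves only finitely many $\*n$ with nonzero contributions. The exceptions are pairs with $f(\*n)=0$ and $f(\*m)\ne0$, and by reversibility these are equivalent to summing over $\*m$ in the finite support. Dividing by $\eps$ and using the one-step expansions of $Q_{1\leftrightarrow1-\eps}$, the limit is $\frac12\*E_\mu\stp{\sum_i b_i(\nabla_i^+f)^2 + \sum_i n_i(\nabla_i^-f)^2}$. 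By detailed balance this equals $\+E(f,f)$.

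For the entropy I would use the expansion \eqref{eq:posterior-expansion} of the posterior $\nu_{1-\eps,\*x}$. It places mass $1-\eps\sum_i b_i(\*x)$ on $\*x$ and $\eps b_i(\*x)$ on $\*x+\*e_i$, up to $O_{\*x}(\eps^2)$. For a measure of this form, a Taylor expansion gives
\[
\*{Ent}_{\nu}\stp{f} = \eps\sum_i b_i(\*x)\Psi\tp{f(\*x),\nabla_i^+f(\*x)} + o(\eps),
\]
using $\Psi(u,v) = \Phi(u+v)-\Phi(u)-v\Phi'(u)$ with $\Phi(x) = x\log x$. The outer law of $\*X_{1-\eps}$ converges to $\mu$ pointwise, with $\mu_{1-\eps}(\*x) = \mu(\*x)+O(\eps)$. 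Summing over $\*x$ then yields
\[
\lim \eps^{-1}\*E\stp{\*{Ent}_{\nu_{1-\eps}}\stp{f}} = \*E_\mu\stp{\sum_i b_i\Psi\tp{f,\nabla_i^+f}}.
\]
The second inequality in the statement, comparing this Wu-type functional with $\+E(\log f,f)$, is pointwise. It follows from $\Psi(u,v)\le v\tp{\log(u+v)-\log u}$, which is convexity of $\Phi$ in the form $\Phi(u+v)-\Phi(u)\le v\Phi'(u+v)$ and requires $u+v>0$, together with the identity $\+E(\log f,f) = \*E_\mu\stp{\sum_i b_i\nabla_i^+\log f\,\nabla_i^+f}$.

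I expect the main obstacle to be uniformity in the limit $\eps\to0$: interchanging the limit with the sum over $\*x$ when $\supp\tp{\mu}$ is infinite. A compactly supported $f$ still has posteriors $\nu_{1-\eps,\*x}$ charging $\supp f$ from every $\*x$ below it. The saving point is that only $\*x$ lying below some point of $\supp f$ give nonzero entropy or variance, since otherwise $f\equiv0$ on $\set{\*n\ge\*x}$, and there are only finitely many such $\*x$. For each of these, the tail of the posterior is controlled by the geometric decay $\eps^{\norm{\*k}_1}$ against polynomial binomial growth, as in the proof of \Cref{thm:associated-limiting-process}. I would check this using the finite first moment assumption. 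If $0\in\supp f$ causes issues for the entropy, I would work with $f+c$ and let $c\downarrow0$. Finally, the extension to $\+D\tp{\+E}$ and $\+A$ goes by truncation $f\wedge M$ times a box indicator. For the entropy I would use lower semicontinuity via Fatou on the left-hand side and dominated convergence on the right-hand side, since $\Psi\ge0$ lets monotone arguments apply.
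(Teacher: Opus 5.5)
Your proposal is correct and follows essentially the same route as the paper. You reduce to $f\in\+C_{\-c}\tp{\supp\tp{\mu}}$, expand the posteriors $\nu_{1-\eps,\*x}$ to first order at the finitely many $\*x$ lying below $\supp\tp{f}$, compare with the definitions of $\gamma_{1-\eps}$ and $\rho_{1-\eps}$ and take the $\liminf$, apply $\Psi(u,v)\le v\tp{\log(u+v)-\log u}$, and extend by truncation. Computing the variance limit through $\+E_{Q_{1\leftrightarrow 1-\eps}}(f,f)$ changes nothing, since it equals $\*E\stp{\*{Var}_{\nu_{1-\eps}}\stp{f}}$.

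One correction concerns where the finite-first-moment hypothesis enters. It is not needed for the posterior tails: geometric decay against $\mu\le 1$ already suffices there. It is needed in the extension step. After truncating values at level $M$, the box truncation creates new boundary terms, such as $\mu(\*x)b_i(\*x)f(\*x)$ coming from $\Psi(u,-u)=u$. These terms are not controlled by monotonicity; they vanish only because they are bounded by the tail of $\sum_{\*x}\mu(\*x)\sum_i b_i(\*x)=\sum_i\mean{\mu}_i<\infty$.
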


\begin{remark}
    The modified log-Sobolev inequality obtained above is stronger than the standard inequality involving \(\+E\tp{\log f, f}\). Such an inequality was first introduced by Wu \cite{Wu00}. A more detailed discussion of this class of functional inequalities is deferred to \Cref{subsec:ulc-mlsi}.
\end{remark}

\begin{proof}[Proof of \Cref{thm:limiting-process-functional-inequalities}]
    Let \(\Phi\) be differentiable and convex, and write
    \[
    \Psi_{\Phi}\tp{u, v} \defeq \Phi\tp{u + v} - \Phi\tp{u} - v \Phi'\tp{u}.
    \]
    It suffices initially to take \(f \in \+C_{\-c}\tp{\supp\tp{\mu}}\). For \(\*x \in \supp\tp{\mu}\), the posterior expansion \eqref{eq:posterior-expansion} and the differentiability of \(\Phi\) give
    \[
    \*{Ent}_{\nu_{1 - \eps, \*x}}^{\Phi}\stp{f} = \eps \sum_{i = 1}^d b_i\tp{\*x} \Psi_{\Phi}\tp{f\tp{\*x}, \nabla_i^+ f\tp{\*x}} + o_{\*x}\tp{\eps}.
    \]
    Write \(S_f \defeq \set{\*x \in \supp\tp{\mu} \cmid \exists \*n \in \supp\tp{f}, \, \*n \ge \*x}\). Note that \(S_f\) is finite, and \(\*{Ent}_{\nu_{1 - \eps, \*x}}^{\Phi}\stp{f} = 0\) for \(\*x \notin S_f\). Hence, using \(\mu_{1 - \eps}\tp{\*x} \to \mu\tp{\*x}\) and summing over \(\*x \in S_f\) yields
    \[
    \lim_{\eps \to 0^+} \frac{1}{\eps} \*E\stp{\*{Ent}_{\nu_{1 - \eps}}^{\Phi}\stp{f}} = \*E_{\*n \sim \mu}\stp{\sum_{i = 1}^d b_i\tp{\*n} \Psi_{\Phi}\tp{f\tp{\*n}, \nabla_i^+ f\tp{\*n}}}.
    \]
    
    Taking \(\Phi\tp{x} = x^2\) and using the definition of \(\gamma_{1 - \eps}\), we have
    \[
    \frac{\gamma_{1 - \eps}}{\eps} \, \*{Var}_{\mu}\stp{f} \le \frac{1}{\eps} \*E\stp{\*{Var}_{\nu_{1 - \eps}}\stp{f}}.
    \]
    Passing to the lower limit gives
    \[
    \gamma \, \*{Var}_{\mu}\stp{f} \le \*E_{\*n \sim \mu}\stp{\sum_{i = 1}^d b_i\tp{\*n} \tp{\nabla_i^+ f\tp{\*n}}^2} = \+E\tp{f, f}.
    \]
    Likewise, for \(\Phi\tp{x} = x \log x\) and nonnegative \(f \in \+C_{\-c}\tp{\supp\tp{\mu}}\),
    \[
    \rho \, \*{Ent}_{\mu}\stp{f} \le \*E_{\*n \sim \mu}\stp{\sum_{i = 1}^d b_i\tp{\*n} \Psi\tp{f\tp{\*n}, \nabla_i^+ f\tp{\*n}}}.
    \]
    By the elementary inequality \(\Psi\tp{u, v} \le v \tp{\log\tp{u + v} - \log u}\),
    \[
    \*E_{\*n \sim \mu}\stp{\sum_{i = 1}^d b_i\tp{\*n} \Psi\tp{f\tp{\*n}, \nabla_i^+ f\tp{\*n}}} \le \*E_{\*n \sim \mu}\stp{\sum_{i = 1}^d b_i\tp{\*n} \nabla_i^+ \log f\tp{\*n} \nabla_i^+ f\tp{\*n}} = \+E\tp{\log f, f}.
    \]
    
    Finally, the finite-first-moment assumption implies
    \[
    \sum_{\*n \in \supp\tp{\mu}} \mu\tp{\*n} \sum_{i = 1}^d b_i\tp{\*n} = \*E_{\mu}\stp{\sum_{i = 1}^d n_i} < \infty.
    \]
    Thus, truncation in value followed by restriction to the coordinate boxes \(S_N\) from the proof of \Cref{thm:ulc-poincare-inequality} shows that \(\+C_{\-c}\tp{\supp\tp{\mu}}\) is a core for \(\+D\tp{\+E}\), while the non-negative \(f \in \+C_{\-c}\tp{\supp\tp{\mu}}\) form a core for \(\+A\). The boundary contributions vanish by the preceding finite-conductance identity. Passing to the limit extends the two inequalities to the stated domains.
\end{proof}

\subsection{Conservation of Variance and Entropy via Spectral Independence}
\label{subsec:approximate-conservation-variance-entropy}

By \Cref{lem:associated-markov-chain-functional-inequalities} and \Cref{thm:limiting-process-functional-inequalities}, establishing functional inequalities for the noising-denoising chain \(Q_{1 \leftrightarrow t}\) and the limiting process \(\+L\) reduces to deriving lower bounds on the constants \(\gamma_t\) and \(\rho_t\) from \eqref{eq:approximate-conservation-constants}. This reduction is the ``approximate conservation of variance and entropy'' step of the localization-scheme framework; see \cite[Section~3]{CE25}. Its key ingredient is to apply \Ito calculus to the conditional variance and entropy along the localization process and derive differential inequalities of the form
\begin{equation}
    \partial_t \*E\stp{\*{Var}_{\nu_t}\stp{f}} \ge -\alpha_t \*E\stp{\*{Var}_{\nu_t}\stp{f}}, \qquad \partial_t \*E\stp{\*{Ent}_{\nu_t}\stp{f}} \ge -\alpha_t \*E\stp{\*{Ent}_{\nu_t}\stp{f}}.
    \label{eq:variance-entropy-differential-inequalities}
\end{equation}

For Poisson stochastic localization, \Cref{thm:phi-entropy-evolution} already gives the evolution equations for the conditional variance and entropy. It therefore remains to identify conditions ensuring that these evolution equations yield the differential inequalities in \eqref{eq:variance-entropy-differential-inequalities}. To this end, we introduce a notion of spectral independence adapted to Poisson stochastic localization. The resulting sufficient condition is stated in \Cref{prop:sufficient-condition-approximate-conservation} and generalizes \cite[Propositions~3.3 and~3.7]{CCCYZ25}.

Throughout this subsection, we write \(\Omega \defeq \set{\*x \in \bb N^d \cmid \exists \*x' \in \supp\tp{\mu}, \, \*x \le \*x'}\). In particular, \(\Omega = \supp\tp{\mu}\) when \(\mu\) has downward closed support.

\begin{definition}
    Let \(\mu\) be a probability measure on \(\bb N^d\). We say that \(\mu\) is \(\eta\)-spectrally independent with respect to Poisson stochastic localization if
    \[
    \cov{\mu} \preceq \eta \diag\tp{\mean{\mu}}.
    \]
    Unless otherwise specified, we refer to this property simply as spectral independence.
\end{definition}

\begin{remark}
    This definition differs slightly from the standard notion of spectral independence. In particular, when \(\mu\) is supported on a subset of \(\set{0,1}^d\), the standard definition requires
    \[
    \cov{\mu} \preceq \tp{1 + \eta} \diag\tp{\*{Var}_{\*n \sim \mu}\stp{n_i}}_{i \in \stp{d}}.
    \]
    In the \(\set{0, 1}^d\) setting, our spectral independence is slightly weaker than the standard one. This difference reflects the distinct localization schemes underlying the two notions: ours arises naturally from Poisson stochastic localization, whereas the standard notion is induced by coordinate-by-coordinate localization.
\end{remark}

For probability measures with downward closed support, spectral independence with respect to Poisson stochastic localization can be viewed as the weighted \Poincare inequality restricted to the class of linear functions.

\begin{proposition}
    Let \(\mu\) be a probability measure on \(\bb N^d\) with downward closed support. Write \(\ell_{\*v}\tp{\*x} \defeq \*v^{\top} \*x\). Then \(\mu\) is \(\eta\)-spectrally independent if and only if \(\mu\) satisfies the weighted \Poincare inequality for the class of linear functions with constant \(1 / \eta\):
    \[
    \frac{1}{\eta} \*{Var}_{\mu}\stp{\ell_{\*v}} \le \*E_{\*n \sim \mu}\stp{\sum_{i = 1}^d \frac{\tp{n_i + 1} \mu\tp{\*n + \*e_i}}{\mu\tp{\*n}} \tp{\nabla_i^+ \ell_{\*v}\tp{\*n}}^2}, \quad \forall \*v \in \bb R^d \setminus\set{\*0}.
    \]
    \label{prop:spectral-independence-weighted-poincare}
\end{proposition}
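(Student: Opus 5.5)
The plan is to compute both sides of the restricted inequality explicitly for linear test functions and compare them with the spectral-independence matrix inequality. Fix \(\*v \in \bb R^d \setminus \set{\*0}\) and set \(\ell_{\*v}\tp{\*x} = \*v^{\top}\*x\). The left-hand side is immediate: \(\*{Var}_{\mu}\stp{\ell_{\*v}} = \*v^{\top} \cov{\mu} \*v\). For the right-hand side, observe that \(\nabla_i^+ \ell_{\*v}\tp{\*n} = v_i\) whenever \(\*n + \*e_i \in \supp\tp{\mu}\). When \(\*n + \*e_i \notin \supp\tp{\mu}\), the birth rate \(b_i\tp{\*n} = \tp{n_i+1}\mu\tp{\*n+\*e_i}/\mu\tp{\*n}\) vanishes, so that term contributes nothing under the paper's convention. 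The right-hand side is therefore
\[
\sum_{i=1}^d v_i^2 \, \*E_{\*n \sim \mu}\stp{b_i\tp{\*n}}.
\]

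The key step is the identity \(\*E_{\mu}\stp{b_i} = \mean{\mu}_i\). By definition,
\[
\*E_{\mu}\stp{b_i} = \sum_{\*n \in \supp\tp{\mu}} \tp{n_i+1}\mu\tp{\*n+\*e_i}.
\]
Substitute \(\*m = \*n + \*e_i\). Because the support is downward closed, every \(\*m \in \supp\tp{\mu}\) with \(m_i \ge 1\) arises from exactly one \(\*n = \*m - \*e_i \in \supp\tp{\mu}\). The sum therefore equals \(\sum_{\*m:\, m_i \ge 1} m_i \mu\tp{\*m} = \*E_{\mu}\stp{m_i}\). This is the same finite-conductance identity used at the end of the proof of \Cref{thm:limiting-process-functional-inequalities}. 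Consequently the right-hand side equals \(\*v^{\top}\diag\tp{\mean{\mu}}\*v\).

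With both sides computed, the weighted \Poincare inequality for linear functions with constant \(1/\eta\) reads
\[
\*v^{\top}\cov{\mu}\*v \le \eta\, \*v^{\top}\diag\tp{\mean{\mu}}\*v \quad \text{for all } \*v \ne \*0.
\]
This is precisely the Loewner inequality \(\cov{\mu} \preceq \eta\diag\tp{\mean{\mu}}\), so the two directions follow simultaneously; the case \(\*v = \*0\) is trivial.

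The only point needing care is integrability. The statement implicitly presupposes finite second moments for the covariance to be defined. If some mean is infinite, the right-hand side is \(+\infty\) for the corresponding \(\*v\) and both conditions must be read in the extended sense. I would state the finite-moment assumption explicitly and otherwise regard the proof as routine. The main place where the hypothesis is used is the reindexing step, which relies on downward closedness to avoid losing mass at boundary states.
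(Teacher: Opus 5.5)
Your proof is correct and is essentially the paper's argument: both reduce the two sides to \(\*v^{\top}\cov{\mu}\*v\) and \(\*v^{\top}\diag\tp{\mean{\mu}}\*v\) and read off the Loewner inequality. The only cosmetic difference is how the right-hand side is evaluated. The paper uses detailed balance to rewrite the Dirichlet form as \(\*E_{\*n\sim\mu}\stp{\sum_{i} n_i \tp{\nabla_i^- f\tp{\*n}}^2}\), which is the same reindexing that gives your identity \(\*E_{\mu}\stp{b_i} = \mean{\mu}_i\).
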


\begin{proof}[Proof of \Cref{prop:spectral-independence-weighted-poincare}]
    The Dirichlet form satisfies
    \[
    \+E\tp{f, f} = \*E_{\*n \sim \mu}\stp{\sum_{i = 1}^d \frac{\tp{n_i + 1} \mu\tp{\*n + \*e_i}}{\mu\tp{\*n}} \tp{\nabla_i^+ f\tp{\*n}}^2} = \*E_{\*n \sim \mu}\stp{\sum_{i = 1}^d n_i \tp{\nabla_i^- f\tp{\*n}}^2}.
    \]
    Thus, for any \(\*v \in \bb R^d \setminus \set{\*0}\), the weighted \Poincare inequality for \(\ell_{\*v}\) is equivalent to
    \[
    \*v^{\top} \cov{\mu} \*v = \*{Var}_{\mu}\stp{\ell_{\*v}} \le \eta \*E_{\*n \sim \mu}\stp{\sum_{i = 1}^d n_i v_i^2} = \eta \sum_{i = 1}^d \mean{\mu}_i v_i^2 = \eta \*v^{\top} \diag\tp{\mean{\mu}} \*v.
    \]
    Since this holds for every \(\*v \in \bb R^d \setminus \set{\*0}\) if and only if \(\cov{\mu} \preceq \eta \diag\tp{\mean{\mu}}\), the result follows.
\end{proof}

The following corollary follows directly from \Cref{thm:ulc-poincare-inequality}.

\begin{corollary}
    Every \(\delta\)-ULC probability measure \(\mu\) on \(\bb N^d\) is \(1/\delta\)-spectrally independent.
    \label{cor:ulc-cov-bound}
\end{corollary}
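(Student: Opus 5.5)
The plan is to combine \Cref{thm:ulc-poincare-inequality} with \Cref{prop:spectral-independence-weighted-poincare}. By \Cref{thm:ulc-poincare-inequality}, a \(\delta\)-ULC measure \(\mu\) satisfies the weighted \Poincare inequality with constant \(\delta\) for every \(f \in \+D\tp{\+E}\). The first step is therefore to specialize this inequality to the linear test functions \(\ell_{\*v}\tp{\*n} = \*v^{\top}\*n\). Once we know these lie in \(\+D\tp{\+E}\), \Cref{prop:spectral-independence-weighted-poincare} with \(\eta = 1/\delta\) gives \(\cov{\mu} \preceq \delta^{-1}\diag\tp{\mean{\mu}}\) immediately.

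The one point that needs care is membership of \(\ell_{\*v}\) in the domain, since \(\mu\) may have infinite support.

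\begin{itemize}
\item We interpret the statement under the standing assumption, made explicit in the introduction, that the relevant moments are finite. Concretely, \(\mu\) has finite second moments, so that \(\cov{\mu}\) is defined.
\item Then \(\ell_{\*v} \in L^2\tp{\mu}\).
\item Its energy is
\[
\+E\tp{\ell_{\*v},\ell_{\*v}} = \*E_{\*n\sim\mu}\stp{\sum_i b_i\tp{\*n} v_i^2} = \sum_i v_i^2\,\mean{\mu}_i < \infty.
\]
Here we use the detailed-balance identity \(\*E_\mu\stp{b_i} = \*E_\mu\stp{n_i}\), which is exactly the alternative expression for \(\+E\) used in the proof of \Cref{prop:spectral-independence-weighted-poincare}.
\end{itemize}

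Hence \(\ell_{\*v} \in \+D\tp{\+E}\), and \Cref{thm:ulc-poincare-inequality} applies. Note that \(\nabla_i^+\ell_{\*v} = v_i\) on active coordinates. On inactive coordinates, the weight \(b_i\tp{\*n}\) vanishes, so these terms contribute nothing.

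If one prefers to avoid any domain subtleties, there is an alternative route. Apply the inequality to the truncations \(\min\set{\max\set{\ell_{\*v}, -M}, M}\) restricted to the boxes \(S_N\), and pass to the limit using monotone and dominated convergence. This is the same approximation already used in the proof of \Cref{thm:ulc-poincare-inequality}.

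I expect no real obstacle. The only subtle step is this integrability and approximation issue, which the finite-moment assumption handles.
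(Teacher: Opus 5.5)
Your proposal is correct and follows the same route as the paper: the paper obtains the corollary directly from \Cref{thm:ulc-poincare-inequality} by restricting the weighted \Poincare inequality to linear test functions. This restriction is exactly \Cref{prop:spectral-independence-weighted-poincare} with \(\eta = 1/\delta\). Your domain check that \(\ell_{\*v} \in \+D\tp{\+E}\) under finite second moments, using \(\*E_{\mu}\stp{b_i} = \mean{\mu}_i\), makes explicit a step the paper leaves implicit.
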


\begin{definition}
    Let \(\mu\) be a probability measure on \(\bb N^d\). For \(\*x \in \Omega\), the weighted pinning of \(\mu\) at \(\*x\) is the probability measure defined by
    \[
    \mu^{\*x}\tp{\*k} \propto \mu\tp{\*x + \*k} \prod_{i = 1}^d \binom{x_i + k_i}{x_i}, \quad \*k \in \bb N^d \cmid \*x + \*k \in \supp\tp{\mu}.
    \]
    \label{def:weighted-pinning}
\end{definition}

\begin{remark}
    The definition becomes more transparent in factorial-weighted form:
    \[
    \*k! \mu^{\*x}\tp{\*k} \propto \tp{\*x + \*k}! \mu\tp{\*x + \*k}, \quad \*k \in \bb N^d \cmid \*x + \*k \in \supp\tp{\mu}.
    \]
\end{remark}

Spectral independence for pinnings and tilts implies approximate conservation of variance and entropy.

\begin{proposition}
    Fix \(t \in \left[0, 1\right)\). Suppose that \(\tp{1 - t} * \mu^{\*x}\) is \(\eta_t\)-spectrally independent for every \(\*x \in \Omega\). Then, for every \(f \in \+C_{\-c}\tp{\supp\tp{\mu}}\),
    \[
    \partial_t \*E\stp{\*{Var}_{\nu_t}\stp{f}} \ge -\frac{\eta_t}{1 - t} \*E\stp{\*{Var}_{\nu_t}\stp{f}}.
    \]
    Moreover, suppose that \(\boldsymbol{\theta} * \mu^{\*x}\) is \(\eta\)-spectrally independent for every \(\*x \in \Omega\) and \(\boldsymbol{\theta} \in \bb R_{> 0}^d\). Then, for every nonnegative \(f \in \+C_{\-c}\tp{\supp\tp{\mu}}\),
    \[
    \partial_t \*E\stp{\*{Ent}_{\nu_t}\stp{f}} \ge -\frac{\eta}{1 - t} \*E\stp{\*{Ent}_{\nu_t}\stp{f}}, \quad \forall t \in \left[0, 1\right).
    \]
    \label{prop:sufficient-condition-approximate-conservation}
\end{proposition}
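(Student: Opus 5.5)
The plan is to feed the two covariance hypotheses into the exact dissipation identities of \Cref{thm:phi-entropy-evolution}. Doing so turns each identity into a pointwise comparison, for every fixed posterior $\nu_{t,\*x}$, between the dissipation integrand and $\eta\,\*{Var}_{\nu_{t,\*x}}\stp{f}$ (respectively $\eta\,\*{Ent}_{\nu_{t,\*x}}\stp{f}$).

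\emph{Step 1: identify the posterior.} By \eqref{eq:posterior-formula}, writing $\*n=\*x+\*k$ gives
\[
\nu_{t,\*x}\tp{\*x+\*k}\propto \mu\tp{\*x+\*k}\prod_i\binom{x_i+k_i}{x_i}\tp{1-t}^{k_i}.
\]
So $\nu_{t,\*x}$ is the shift by $\*x$ of the tilted pinning $\tp{1-t}*\mu^{\*x}$ from \Cref{def:weighted-pinning}. Shifting does not change covariances, and it maps means to $\mean{\nu_{t,\*x}}-\*x$. Write $\*D\defeq\diag\tp{\mean{\nu_t}-\*X_t}$. The hypothesis then reads
\[
\*{Cov}_{\nu_t}\stp{\*n}\preceq \eta_t\,\*D .
\]
Similarly, any further tilt $\boldsymbol{\theta}*\nu_{t,\*x}$ is a shift of $\tp{\tp{1-t}\boldsymbol{\theta}}*\mu^{\*x}$. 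Under the second hypothesis, every tilt of every posterior therefore satisfies the analogous bound with constant $\eta$ and its own shifted mean.

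\emph{Step 2: variance.} Let $\*c\defeq\*{Cov}_{\nu_t}\stp{f,\*n}$. Coordinates with $D_{ii}=0$ satisfy $\*{Var}_{\nu_t}\stp{n_i}=0$, hence $c_i=0$, and they drop out of the sum. For any $\*v$, Cauchy--Schwarz and Step 1 give
\[
\tp{\*v^\top\*c}^2=\*{Cov}_{\nu_t}\stp{f,\*v^\top\*n}^2\le \*{Var}_{\nu_t}\stp{f}\,\eta_t\,\*v^\top\*D\*v.
\]
Choosing $\*v=\*D^{\dagger}\*c$ yields $\*c^\top\*D^\dagger\*c\le\eta_t\,\*{Var}_{\nu_t}\stp{f}$. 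Substituting this into the variance formula of \Cref{thm:phi-entropy-evolution} gives the first claim.

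\emph{Step 3: entropy.} Normalize $\*E_{\nu_t}\stp{f}=1$ by homogeneity. Set $m'_i\defeq\mean{\nu_t}_i-X_{t,i}$ and $\pi\defeq f\nu_t$, so that $c_i=\mean{\pi}_i-\mean{\nu_t}_i$ and $\*{Ent}_{\nu_t}\stp{f}=\-{KL}\tp{\pi\|\nu_t}$. The dissipation integrand is
\[
\sum_i m'_i\,\Psi\tp{1,c_i/m'_i},
\]
which equals $\sum_i\stp{(m'_i+c_i)\log\tp{1+c_i/m'_i}-c_i}$. This is $H^*\tp{\*c}$, the convex conjugate of the Poisson cumulant $H\tp{\boldsymbol\lambda}\defeq\sum_i m'_i\tp{e^{\lambda_i}-1}-\boldsymbol\lambda^\top\*m'$. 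The target is therefore an entropic-stability statement, $H^*\tp{\*c}\le\eta\,\-{KL}\tp{\pi\|\nu_t}$. By Donsker--Varadhan,
\[
\-{KL}\tp{\pi\|\nu_t}\ge\sup_{\boldsymbol\theta}\stp{\boldsymbol\theta^\top\*c-F\tp{\boldsymbol\theta}},\qquad F\tp{\boldsymbol\theta}\defeq\log\*E_{\nu_t}\stp{e^{\boldsymbol\theta^\top\*n}}-\boldsymbol\theta^\top\mean{\nu_t}.
\]
Substituting $\boldsymbol\lambda=\eta\boldsymbol\theta$, it suffices to prove the cumulant comparison
\[
\eta F\tp{\boldsymbol\theta}\le H\tp{\eta\boldsymbol\theta}\quad\text{for all }\boldsymbol\theta .
\]
This is where the spectral independence of all tilts enters. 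The Hessian $\nabla^2F\tp{\boldsymbol\theta}$ is the covariance of the tilt $e^{\boldsymbol\theta}*\nu_t$, and the hypothesis gives $\nabla^2F\preceq\eta\,\diag\tp{\nabla F+\*m'}$, with $F\tp{\*0}=0$ and $\nabla F\tp{\*0}=\*0$. The function $G\tp{\boldsymbol\theta}\defeq\eta^{-1}H\tp{\eta\boldsymbol\theta}$ solves this differential inequality with equality.

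\emph{Main obstacle.} The hard step is the comparison $F\le G$, because the Hessian bound is a matrix inequality rather than a diagonal one. A naive ODE argument along rays $s\mapsto s\boldsymbol\theta$ does not close coordinatewise. I would first try the following. Set $g\tp{s}\defeq\boldsymbol\theta^\top\nabla F\tp{s\boldsymbol\theta}$ and use $\boldsymbol\theta^\top\nabla^2F\,\boldsymbol\theta\le\eta\sum_i\theta_i^2\tp{\partial_iF+m'_i}$. To close this, I would apply a second Donsker--Varadhan/convexity step, comparing $\nabla F\tp{s\boldsymbol\theta}+\*m'$ with the Poisson mean vector $\tp{m'_ie^{\eta s\theta_i}}_i$ via a Grönwall argument on the weighted sum $\sum_i\theta_i^2\tp{\partial_iF+m'_i}e^{-\eta s\theta_i}$. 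If that fails, the fallback is the entropic-stability lemma of \cite{CE25}, which reduces the entropy bound to the covariance bound of all tilts along the linear-tilt flow of \Cref{thm:poisson-linear-tilt}.
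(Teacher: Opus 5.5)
Your variance argument is correct and is the paper's. You identify $\nu_{t,\mathbf{x}}$ as the shift of $(1-t)*\mu^{\mathbf{x}}$, apply Cauchy--Schwarz with $\mathbf{v}=\mathbf{D}^{\dagger}\mathbf{c}$ to get $\mathbf{c}^{\top}\mathbf{D}^{\dagger}\mathbf{c}\le\eta_t\,\mathbf{Var}_{\nu_t}[f]$, and insert this into the dissipation identity. Your entropy reduction is also sound. The integrand is $H^*(\mathbf{c})$. Donsker--Varadhan with linear test functions gives $\mathrm{KL}(\pi\|\nu_t)\ge F^*(\mathbf{c})$, which is the paper's information-projection step. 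By Fenchel duality, $H^*\le\eta F^*$ is equivalent to $\eta F(\boldsymbol\theta)\le H(\eta\boldsymbol\theta)$.

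The gap is that this comparison is the whole content of the entropy claim, and you do not prove it. Your Gr\"onwall route does not close. Set $u_i(s):=\partial_iF(s\boldsymbol\theta)+m'_i$ and $w_i(s):=\theta_i^2e^{-\eta s\theta_i}$. Then the derivative of your weighted sum is $\mathbf{w}(s)^{\top}\bigl(\nabla^2F(s\boldsymbol\theta)-\eta\diag(\mathbf{u}(s))\bigr)\boldsymbol\theta$. This is a bilinear form in two non-parallel vectors, and the semidefinite bound assigns it no sign. The underlying obstruction is that, in natural parameters, the hypothesis bounds $\nabla^2F(\boldsymbol\theta)$ by $\eta\diag(\nabla F(\boldsymbol\theta)+\mathbf{m}')$, whereas $\nabla^2G(\boldsymbol\theta)=\eta\diag(\nabla G(\boldsymbol\theta)+\mathbf{m}')$. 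The right-hand sides sit at different gradients, so no pointwise Hessian comparison is available. Your fallback names the right framework. However, the step that must actually be supplied is entropic stability with respect to the Poisson divergence $\sum_i\Psi(y_i,x_i-y_i)$, under a comparison matrix $\eta\diag(\cdot)$ that moves with the tilt. That is not an off-the-shelf consequence of \cite{CE25}.

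The missing idea is to compare after the Legendre transform, where both sides are evaluated at the same mean. Discard coordinates with $m'_i=0$, and take $\mathbf{c}$ with $\mathrm{mean}(\nu_t)+\mathbf{c}$ in the relative interior of the mean domain. Let $\nu^{\mathbf{c}}$ be the exponential tilt of $\nu_t$ with mean $\mathrm{mean}(\nu_t)+\mathbf{c}$. Then $\nabla^2F^*(\mathbf{c})=\mathrm{Cov}(\nu^{\mathbf{c}})^{-1}$, while $\nabla^2H^*(\mathbf{c})=\diag(\mathbf{m}'+\mathbf{c})^{-1}$. Now $\nu^{\mathbf{c}}$ is the shift by $\mathbf{x}$ of a tilt $((1-t)e^{\boldsymbol\theta})*\mu^{\mathbf{x}}$ whose mean is $\mathbf{m}'+\mathbf{c}$. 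So the hypothesis gives $\mathrm{Cov}(\nu^{\mathbf{c}})\preceq\eta\diag(\mathbf{m}'+\mathbf{c})$. Operator monotonicity of inversion then gives $\eta\nabla^2F^*(\mathbf{c})\succeq\nabla^2H^*(\mathbf{c})$. Both $F^*$ and $H^*$ vanish together with their gradients at $\mathbf{c}=\mathbf{0}$, so the convex function $\eta F^*-H^*$ is nonnegative, i.e.\ $H^*\le\eta F^*$. Boundary values of $\mathbf{c}$ follow by replacing $\pi$ with $(1-\varepsilon)\pi+\varepsilon\nu_t$ and using convexity of relative entropy. This is precisely the paper's route: \Cref{lem:spectral-independence-implies-entropic-stability} compares $\nabla^2\chi^*$ with $\nabla^2\psi$, followed by \Cref{lem:entropic-stability-tilted,lem:entropic-stability-implies-entropy-conservation}. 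With it, your Donsker--Varadhan reduction closes.
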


The variance inequality in \Cref{prop:sufficient-condition-approximate-conservation} follows from the Cauchy--Schwarz inequality, whereas the entropy inequality follows from a standard log-Laplace transform argument. We defer the proof to \Cref{subsec:proof-sufficient-condition-approximate-conservation}.

\subsection{Modified Log-Sobolev Inequalities for ULC Measures}
\label{subsec:ulc-mlsi}

In this subsection, we establish modified log-Sobolev inequalities for the associated limiting process \(\+L\) when the target measure \(\mu\) is ultra-log-concave. By contrast, the \Poincare inequality in \Cref{thm:ulc-poincare-inequality} holds for the broader class of \(\delta\)-ULC measures, as follows from the integrated Bakry--\Emery argument developed in \Cref{sec:bakry-emery-birth-death}. Our main result is a weighted Wu-type modified log-Sobolev inequality with constant \(1\). 

The proof proceeds in three steps. First, ULC is preserved under weighted pinnings and exponential tilts. Second, \Cref{cor:ulc-cov-bound} verifies the entropy-conservation criterion in \Cref{prop:sufficient-condition-approximate-conservation} with \(\eta = 1\). Finally, \Cref{thm:limiting-process-functional-inequalities} transfers the resulting bound to \(\+L\). This argument is specific to ULC measures: for general \(\delta\)-ULC measures with \(\delta < 1\), the additional diagonal term in \eqref{eq:ulc-condition} is not invariant under arbitrary exponential tilts.

\begin{lemma}
    ULC is preserved under weighted pinning and exponential tilting. More precisely, if \(\mu\) is ULC, then \(\mu^{\*x}\) is ULC for every \(\*x \in \supp\tp{\mu}\), and \(\boldsymbol{\theta} * \mu\) is ULC for every \(\boldsymbol{\theta} \in \bb R_{> 0}^d\).
    \label{lem:ulc-closure-pinning-exponential-tilt}
\end{lemma}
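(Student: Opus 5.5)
The plan is to compute, for each operation, how the factorial-weighted sequence \(a(\*n) = \*n!\mu(\*n)\) transforms, and to check that the matrix in \eqref{eq:ulc-condition} with \(\delta = 1\) is unchanged or changes only by a diagonal congruence. With \(\delta = 1\), ULC requires, for every \(\*n \in \supp(\mu)\),
\[
\*C_{\*n} = \tp{1 - \frac{a(\*n) a(\*n + \*e_i + \*e_j)}{a(\*n + \*e_i) a(\*n + \*e_j)}}_{i, j \in \+I_{\*n}} \succeq \*O.
\]
This depends only on the cross-ratios of \(a\) and on the support, with no diagonal correction term. That is why both operations should be harmless.

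\emph{Exponential tilt.} For \(\boldsymbol{\theta} \in \bb R_{>0}^d\) with finite normalizing constant, the tilted measure \(\boldsymbol{\theta} * \mu\) has weight
\[
\tilde a(\*n) = Z^{-1} \boldsymbol{\theta}^{\*n} a(\*n).
\]
Its support equals \(\supp(\mu)\), so it is still downward closed and has the same active sets \(\+I_{\*n}\). In the cross-ratio
\[
\frac{\tilde a(\*n)\, \tilde a(\*n + \*e_i + \*e_j)}{\tilde a(\*n + \*e_i)\, \tilde a(\*n + \*e_j)},
\]
the factors \(\boldsymbol{\theta}^{\*n}\,\boldsymbol{\theta}^{\*n + \*e_i + \*e_j}\) and \(\boldsymbol{\theta}^{\*n + \*e_i}\,\boldsymbol{\theta}^{\*n + \*e_j}\) cancel exactly, and so does \(Z\). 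Hence \(\*C_{\*n}\) is literally unchanged. (By contrast, \(\*D_{\*n}\) is rescaled by \(\boldsymbol{\theta}\), which explains why \(\delta < 1\) is not preserved.)

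\emph{Weighted pinning.} By the remark after \Cref{def:weighted-pinning}, the pinned measure has
\[
a^{\*x}(\*k) = \*k!\, \mu^{\*x}(\*k) = c\, a(\*x + \*k)
\]
for a constant \(c > 0\). So \(a^{\*x}\) is a translate of \(a\). Its support is \(\set{\*k \in \bb N^d \cmid \*x + \*k \in \supp(\mu)}\), which is downward closed because \(\supp(\mu)\) is, and it contains \(\*0\) since \(\*x \in \supp(\mu)\). The active set of \(\mu^{\*x}\) at \(\*k\) is \(\+I_{\*x + \*k}\) computed for \(\mu\). The constant \(c\) cancels in every cross-ratio. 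Therefore the ULC matrix of \(\mu^{\*x}\) at \(\*k\) equals \(\*C_{\*x + \*k}\) for \(\mu\), which is PSD by hypothesis.

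The only care needed is bookkeeping. For pinning, I would confirm that the convention \(a(\*n + \*e_i + \*e_j) = 0\) when that point lies outside the support transfers consistently; it does, since supports correspond by translation. I would also check that the normalizing constant of \(\mu^{\*x}\) is finite. This follows because
\[
\sum_{\*k} \mu(\*x + \*k) \prod_i \binom{x_i + k_i}{x_i}
\]
is, up to the factor \(\*x!^{-1}\), the value \(\mu_t(\*x)\) of the thinned law, and is finite for the pinnings arising along the flow. Alternatively, I would note that ULC implies the birth ratios are controlled, so moment-type sums converge; in any case finiteness is implicit in the definition of \(\mu^{\*x}\).

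Combining the two operations gives closure for \(\boldsymbol{\theta} * \mu^{\*x}\), which is what \Cref{prop:sufficient-condition-approximate-conservation} needs. I expect no genuine obstacle beyond verifying these support and normalization details.
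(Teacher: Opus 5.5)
Your proof is correct and follows the same route as the paper. For pinning, the weights are a constant multiple of the translate \(a(\*x + \cdot)\); for tilting, they pick up only the geometric factor \(\boldsymbol{\theta}^{\*n}\). Supports, active sets, and all cross-ratios are therefore preserved, so the ULC matrices coincide, while \(\*D_{\*n}\) is rescaled under tilting, as you note.

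One correction to your aside on normalization. The normalizer of \(\mu^{\*x}\) is the factorial moment \(\*E_{\mu}\stp{\prod_i \binom{X_i}{x_i}}\). The quantity \(\mu_t(\*x)/t^{\norm{\*x}_1}\) instead normalizes the tilted pinning \(\tp{1 - t} * \mu^{\*x}\), and there is no \(\*x!^{-1}\) relation. This does not affect your proof, since the paper's \(\propto\) convention already assumes the normalizing constant is finite.
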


\begin{proof}[Proof of \Cref{lem:ulc-closure-pinning-exponential-tilt}]
    Write \(a\tp{\*n} \defeq \*n! \mu\tp{\*n}\). For \(\*x \in \supp\tp{\mu}\), the factorial-weighted mass function of \(\mu^{\*x}\) satisfies
    \[
    a_{\*x}\tp{\*k} \defeq \*k! \mu^{\*x}\tp{\*k} = c_{\*x} a\tp{\*x + \*k}
    \]
    for some \(c_{\*x} > 0\). The support of \(\mu^{\*x}\) is downward closed, and its active coordinates at \(\*k\) are exactly those of \(\mu\) at \(\*x + \*k\). Moreover, for every pair of active coordinates \(i, j\),
    \[
    \frac{a_{\*x}\tp{\*k} a_{\*x}\tp{\*k + \*e_i + \*e_j}}{a_{\*x}\tp{\*k + \*e_i} a_{\*x}\tp{\*k + \*e_j}} = \frac{a\tp{\*x + \*k} a\tp{\*x + \*k + \*e_i + \*e_j}}{a\tp{\*x + \*k + \*e_i} a\tp{\*x + \*k + \*e_j}}.
    \]
    Thus the local curvature matrix of \(\mu^{\*x}\) at \(\*k\) equals that of \(\mu\) at \(\*x + \*k\), so \(\mu^{\*x}\) is ULC.

    Now fix \(\boldsymbol{\theta} \in \bb R_{> 0}^d\) and set \(\wt{\mu} \defeq \boldsymbol{\theta} * \mu\). For its normalizing constant \(Z_{\boldsymbol{\theta}}\),
    \[
    \wt{a}\tp{\*n} \defeq \*n! \wt{\mu}\tp{\*n} = Z_{\boldsymbol{\theta}}^{-1} \boldsymbol{\theta}^{\*n} a\tp{\*n}, \qquad \boldsymbol{\theta}^{\*n} \defeq \prod_{i = 1}^d \theta_i^{n_i}.
    \]
    The tilt has the same support as \(\mu\), and the geometric factor cancels from each ratio in the ULC matrix:
    \[
    \frac{\wt{a}\tp{\*n} \wt{a}\tp{\*n + \*e_i + \*e_j}}{\wt{a}\tp{\*n + \*e_i} \wt{a}\tp{\*n + \*e_j}} = \frac{a\tp{\*n} a\tp{\*n + \*e_i + \*e_j}}{a\tp{\*n + \*e_i} a\tp{\*n + \*e_j}}.
    \]
    Hence \(\wt{\mu}\) has the same ULC matrix as \(\mu\) at every state and is therefore ULC.
\end{proof}

\begin{theorem}
    Every ULC probability measure \(\mu\) on \(\bb N^d\) with finite first moments satisfies the weighted Wu-type modified log-Sobolev inequality with constant \(1\):
    \[
    \*{Ent}_{\mu}\stp{f} \le \*E_{\*n \sim \mu}\stp{\sum_{i = 1}^d \frac{\tp{n_i + 1} \mu\tp{\*n + \*e_i}}{\mu\tp{\*n}} \Psi\tp{f\tp{\*n}, \nabla_i^+ f\tp{\*n}}} \le \+E\tp{\log f, f}, \quad \forall f \in \+A.
    \]
    Here, \(\Psi\tp{u, v} \defeq \tp{u + v} \log\tp{u + v} - u \log u - v \tp{\log u + 1}\), with \(\Psi\tp{0, 0} \defeq 0\), and \(\+E\) and \(\+A\) are defined in \Cref{thm:limiting-process-functional-inequalities}.
    \label{thm:ulc-modified-log-sobolev-inequality}
\end{theorem}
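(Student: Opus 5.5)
The plan follows the three-step outline at the start of this subsection and reduces everything to \Cref{thm:limiting-process-functional-inequalities}. It suffices to show that \(\rho \defeq \liminf_{\eps \to 0^+} \rho_{1 - \eps} / \eps \ge 1\). Once that is established, \Cref{thm:limiting-process-functional-inequalities} gives both inequalities on \(\+A\), namely the weighted Wu bound and its comparison with \(\+E\tp{\log f, f}\). That theorem applies here because \(\mu\) has downward closed support (by the definition of ULC) and finite first moments (by hypothesis).

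\textbf{Step 1: verify the entropy hypothesis of \Cref{prop:sufficient-condition-approximate-conservation} with \(\eta = 1\).}
\begin{itemize}
\item Fix \(\*x \in \Omega = \supp\tp{\mu}\) and \(\boldsymbol{\theta} \in \bb R_{>0}^d\).
\item By \Cref{lem:ulc-closure-pinning-exponential-tilt}, \(\mu^{\*x}\) is ULC, and hence so is \(\boldsymbol{\theta} * \mu^{\*x}\), provided the tilt is normalizable.
\item By \Cref{cor:ulc-cov-bound} with \(\delta = 1\), every such measure is \(1\)-spectrally independent.
\item One technical point: the normalizing constant of \(\boldsymbol{\theta} * \mu^{\*x}\) may be infinite for large \(\boldsymbol{\theta}\). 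In the proof of the proposition, however, only tilts \(\tp{1 - t} * \mu^{\*x}\) and their log-Laplace interpolants along the actual posteriors \(\nu_{t, \*x}\) appear. These are finite because \(\nu_{t, \*x}(\*x + \*k) \propto \mu^{\*x}(\*k)\tp{1 - t}^{\norm{\*k}_1}\) by \eqref{eq:posterior-formula}. I would restrict to tilts that are finite, or note that the argument only uses those.
\item Also, \Cref{cor:ulc-cov-bound} is stated via \Cref{thm:ulc-poincare-inequality} applied to linear functions. These lie in \(\+D\tp{\+E}\) precisely when first moments are finite, which holds for the tilts \(\theta_i \le 1\) that actually arise and for \(\mu^{\*x}\) itself. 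For the posteriors, the factor \(\tp{1 - t}^{\norm{\*k}_1}\) together with the polynomial binomial weights still leaves finite moments because \(\mu\) has finite first moments.
\end{itemize}

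\textbf{Step 2: integrate the differential inequality.} The proposition gives \(\partial_t \*E\stp{\*{Ent}_{\nu_t}\stp{f}} \ge -\frac{1}{1 - t} \*E\stp{\*{Ent}_{\nu_t}\stp{f}}\) for nonnegative \(f \in \+C_{\-c}\tp{\supp\tp{\mu}}\). By Grönwall, \(\tp{1 - t}^{-1} \*E\stp{\*{Ent}_{\nu_t}\stp{f}}\) is nondecreasing in \(t\). Its value at \(t = 0\) is \(\*{Ent}_{\mu}\stp{f}\), since \(\nu_0 = \mu\). Therefore \(\*E\stp{\*{Ent}_{\nu_t}\stp{f}} \ge \tp{1 - t} \*{Ent}_{\mu}\stp{f}\). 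This says \(\rho_t \ge 1 - t\), so \(\rho_{1 - \eps} / \eps \ge 1\) and \(\rho \ge 1\). The boundary behaviour at \(t = 0\) needs a continuity check: \(\*E\stp{\*{Ent}_{\nu_t}\stp{f}} \to \*{Ent}_{\mu}\stp{f}\) as \(t \to 0^+\). For compactly supported bounded \(f\) this follows from \(\-{TV}\tp{\nu_t, \mu} \to 0\) almost surely and bounded convergence.

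\textbf{Step 3: conclude.} Plug \(\rho \ge 1\) into \Cref{thm:limiting-process-functional-inequalities}. The main obstacle is Step 1: justifying the entropy (log-Laplace) argument of \Cref{prop:sufficient-condition-approximate-conservation} under the restriction to normalizable tilts and with finite moments for all intermediate measures. I expect this to be handled by checking that only posterior-type tilts with \(\theta_i \le 1\) enter, for which Step 1's closure argument applies verbatim.
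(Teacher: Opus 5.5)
Your proposal is correct and follows the paper's proof step for step. Both use closure of ULC under weighted pinning and tilting (\Cref{lem:ulc-closure-pinning-exponential-tilt}), then \(1\)-spectral independence from \Cref{cor:ulc-cov-bound}, the entropy criterion of \Cref{prop:sufficient-condition-approximate-conservation} with \(\eta = 1\), integration to \(\rho_t \ge 1 - t\), and transfer via \Cref{thm:limiting-process-functional-inequalities}.

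One correction to your side remark. The entropy part of that proposition goes through \Cref{lem:spectral-independence-implies-entropic-stability}, which needs spectral independence of \emph{all} exponential tilts \(\boldsymbol{\lambda} * \rho_t^{\*x}\). Large \(\boldsymbol{\lambda}\) may be required to match the mean of the \(f\)-reweighted posterior, so you should not restrict to \(\theta_i \le 1\). The closure lemma already covers every \(\boldsymbol{\theta} \in \bb R_{>0}^d\), which is how the paper uses it.
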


\begin{proof}[Proof of \Cref{thm:ulc-modified-log-sobolev-inequality}]
    Fix \(\*x \in \supp\tp{\mu}\) and \(\boldsymbol{\theta} \in \bb R_{> 0}^d\). By \Cref{lem:ulc-closure-pinning-exponential-tilt}, \(\boldsymbol{\theta} * \mu^{\*x}\) is ULC, and hence \Cref{cor:ulc-cov-bound} gives
    \[
    \cov{\boldsymbol{\theta} * \mu^{\*x}} \preceq \diag\tp{\mean{\boldsymbol{\theta} * \mu^{\*x}}}.
    \]
    Applying \Cref{prop:sufficient-condition-approximate-conservation} with \(\eta = 1\) yields
    \[
    \partial_t \*E\stp{\*{Ent}_{\nu_t}\stp{f}} \ge -\frac{1}{1 - t} \*E\stp{\*{Ent}_{\nu_t}\stp{f}}, \quad \forall t \in \left[0, 1\right).
    \]
    Integrating this differential inequality gives \(\*E\stp{\*{Ent}_{\nu_t}\stp{f}} \ge \tp{1 - t} \*{Ent}_{\mu}\stp{f}\), and therefore
    \[
    \rho_t = \inf_{\substack{f \in \+C_{\-c}\tp{\supp\tp{\mu}}, \, f \ge 0 \\ \*{Ent}_{\mu}\stp{f} > 0}} \frac{\*E\stp{\*{Ent}_{\nu_t}\stp{f}}}{\*{Ent}_{\mu}\stp{f}} \ge \exp\tp{-\int_0^t \frac{1}{1 - s} \dd s} = 1 - t.
    \]
    Consequently, \Cref{thm:limiting-process-functional-inequalities} gives the claimed inequality because
    \[
    \rho = \liminf_{\eps \to 0^+} \frac{\rho_{1 - \eps}}{\eps} \ge 1.
    \]
\end{proof}

\begin{example}[Poisson Point Processes]
    Wu's modified log-Sobolev inequality \cite{Wu00} is the Poisson-space counterpart of \Cref{thm:ulc-modified-log-sobolev-inequality}. Let \(\bb P\) be the law of a Poisson point process on a measurable space \(X\) with intensity measure \(\nu\), and define the add-one operator by \(D_z F\tp{\omega} \defeq F\tp{\omega + \delta_z} - F\tp{\omega}\). Then, for every suitable nonnegative functional \(F\),
    \[
    \*{Ent}_{\bb P}\stp{F} \le \*E_{\bb P}\stp{\int_X \Psi\tp{F, D_z F} \nu\tp{\-d z}}.
    \]
    At least informally, Wu's inequality can be recovered from \Cref{thm:ulc-modified-log-sobolev-inequality} by identifying a simple point configuration on \(X\) with its indicator in the infinite-dimensional hypercube \(\set{0,1}^X\). Under this identification, the add-one operator \(D_z\) is the continuum analogue of the discrete coordinatewise increment \(\nabla_z^+\), and the Poisson law is ULC in the corresponding infinite-dimensional sense. Applying \Cref{thm:ulc-modified-log-sobolev-inequality} therefore yields Wu's modified log-Sobolev inequality on Poisson space.
\end{example}

\begin{example}[ULC Measures on \(\bb N\)]
    In one dimension, \Cref{thm:ulc-modified-log-sobolev-inequality} specializes to
    \[
    \*{Ent}_{\mu}\stp{f} \le \*E_{n \sim \mu}\stp{\frac{\tp{n + 1} \mu\tp{n + 1}}{\mu\tp{n}} \Psi\tp{f\tp{n}, \nabla^+ f\tp{n}}}, \quad \forall f \in \+A.
    \]
    Since \(a\tp{n} \defeq n! \mu\tp{n}\) is log-concave, the birth rates
    \[
    \frac{\tp{n + 1} \mu\tp{n + 1}}{\mu\tp{n}} = \frac{a\tp{n + 1}}{a\tp{n}}, \quad n \in \supp\tp{\mu},
    \]
    are non-increasing in \(n\). In particular, they are bounded above by \(\mu\tp{1} / \mu\tp{0}\). The weighted inequality therefore implies the standard Wu-type modified log-Sobolev inequality
    \[
    \*{Ent}_{\mu}\stp{f} \le \frac{\mu\tp{1}}{\mu\tp{0}} \*E_{\mu}\stp{\Psi\tp{f, \nabla^+ f}}, \quad \forall f \in \+A.
    \]
    This recovers \cite[Theorem~1.3]{Joh17}. L\'opez-Rivera and Shenfeld subsequently improved the constant \(\mu\tp{1} / \mu\tp{0}\) to \(\abs{\log \mu\tp{0}}\) and conjectured that it could be further improved to \(\mean{\mu}\) \cite{LRS25}. We note that their conjecture is false and that \(\abs{\log \mu\tp{0}}\) is the sharp constant. Indeed, applying the inequality to \(f \defeq \delta_0\) yields 
    \[
    \*{Ent}_{\mu}\stp{f} = \mu\tp{0}\abs{\log \mu\tp{0}}, \qquad \*E_{\mu}\stp{\Psi\tp{f, \nabla^+ f}} = \mu\tp{0}.
    \] 
    Thus, any admissible constant must be at least \(\abs{\log \mu\tp{0}}\).
    \label{ex:one-dim-ulc-mlsi}
\end{example}

The proof of the weighted Wu-type modified log-Sobolev inequality in \Cref{thm:ulc-modified-log-sobolev-inequality} relies on the fact that ULC measures are \(1\)-spectrally independent, which is a direct consequence of the weighted \Poincare inequality for \(\delta\)-ULC measures established in \Cref{thm:ulc-poincare-inequality}. We note that such a weighted \Poincare inequality can also be derived within the localization-scheme framework, without invoking Bakry--\Emery theory. The key ingredient is establishing spectral independence via the trickle-down theorem. See \Cref{sec:trickle-down} for an alternative proof.

\subsection{ULC Measures and Log-Concave Probability-Generating Functions}
\label{subsec:ulc-log-concave-generating-functions}

In this subsection, we characterize ultra-log-concavity in terms of spectral independence and log-concavity of the probability-generating function.

Gurvits introduced strong log-concavity for entire functions with nonnegative coefficients \cite{Gur09a}, while completely log-concave polynomials were introduced in \cite{AOGV21}. In the homogeneous setting, both notions coincide with Lorentzian polynomials \cite{BH20}. For inhomogeneous measures on \(\bb N^d\) with downward closed support, we show that these notions are equivalent to our local-matrix condition \eqref{eq:ulc-condition} defining ULC measures, as well as \(1\)-spectral independence of the tilted pinnings. This extends \cite[Proposition~3.8]{CCCYZ25} from measures on downward closed subsets of the hypercube \(\set{0, 1}^d\) to measures on downward closed subsets of \(\bb N^d\).

\begin{definition}
    Let \(\mu\) be a probability measure on \(\bb N^d\). Its probability-generating function is the formal power series
    \[
    g_{\mu}\tp{\boldsymbol{\theta}} \defeq \sum_{\*n \in \bb N^d} \mu\tp{\*n} \boldsymbol{\theta}^{\*n}.
    \]
    When \(\mu\) has finite support, \(g_{\mu}\) is its generating polynomial. Following \cite{Gur09a}, we say that \(g_{\mu}\) is strongly log-concave if, for every \(\*x \in \bb N^d\),
    \[
    \partial^{\*x} g_{\mu} \defeq \partial_1^{x_1} \cdots \partial_d^{x_d} g_{\mu}
    \]
    is either identically zero or finite, positive, and log-concave on \(\bb R_{> 0}^d\). Following \cite{AOGV21}, we say that \(g_{\mu}\) is completely log-concave if, for every \(k \in \bb N\) and \(\*v_1, \dots, \*v_k \in \bb R_{\ge 0}^d\),
    \[
    \partial_{\*v_1} \dots \partial_{\*v_k} g_{\mu}
    \]
    is either identically zero or finite, positive, and log-concave on \(\bb R_{> 0}^d\). Here \(\partial_{\*v} \defeq \sum_{i = 1}^d v_i \partial_i\) denotes the directional derivative in the direction of \(\*v\).
\end{definition}

\begin{theorem}
    Let \(\mu\) be a probability measure on \(\bb N^d\) with downward closed support. Then the following statements are equivalent:
    \begin{enumerate}
        \item The measure \(\mu\) is ultra-log-concave.
        \item For every \(\*x \in \supp\tp{\mu}\) and \(t \in \left[0, 1\right)\), the tilted pinning \(\tp{1 - t} * \mu^{\*x}\) is \(1\)-spectrally independent.
        \item For every \(\*x \in \supp\tp{\mu}\) and \(\boldsymbol{\theta} \in \bb R_{> 0}^d\), the tilted pinning \(\boldsymbol{\theta} * \mu^{\*x}\) is \(1\)-spectrally independent.
        \item The probability-generating function \(g_{\mu}\) is strongly log-concave.
        \item The probability-generating function \(g_{\mu}\) is completely log-concave.
    \end{enumerate}
    \label{thm:ulc-generating-function-characterization}
\end{theorem}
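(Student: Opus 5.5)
The plan is to prove the cycle (1)$\Rightarrow$(3)$\Rightarrow$(2)$\Rightarrow$(1) and then attach (4) and (5) through the Hessian of $\log g_\mu$. The implication (1)$\Rightarrow$(3) is already available: by \Cref{lem:ulc-closure-pinning-exponential-tilt}, $\boldsymbol{\theta} * \mu^{\*x}$ is ULC, and \Cref{cor:ulc-cov-bound} with $\delta = 1$ gives $\cov{\boldsymbol{\theta} * \mu^{\*x}} \preceq \diag\tp{\mean{\boldsymbol{\theta} * \mu^{\*x}}}$. The implication (3)$\Rightarrow$(2) is immediate by taking $\boldsymbol{\theta} = \tp{1 - t}\*1$.

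For (2)$\Rightarrow$(1), fix $\*x \in \supp\tp{\mu}$ and send $t \to 1^-$, so that the tilt parameter $\eps \defeq 1 - t \to 0$. Write $a_{\*x}\tp{\*k} \propto a\tp{\*x + \*k}$. The measure $\eps * \mu^{\*x}$ puts mass of order $1$ at $\*0$, order $\eps\, a\tp{\*x+\*e_i}/a\tp{\*x}$ at $\*e_i$, and order $\eps^2$ at $\*e_i+\*e_j$ (with the factor $1/2$ on the diagonal from $\*k!$). I will expand the mean and the covariance to second order in $\eps$.
\begin{itemize}
\item The first-order terms of $\cov{}$ and $\diag\tp{\mean{}}$ coincide, both equal to $\eps\,\diag\tp{r_i}$ with $r_i = a\tp{\*x+\*e_i}/a\tp{\*x}$. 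They therefore cancel in the difference.
\item At order $\eps^2$, the difference $\diag\tp{\mean{}} - \cov{}$ equals $\eps^2$ times $\tp{r_i r_j - r_{ij}}_{ij}$, using $\mathbf{E}[k_ik_j] - \mathbf{E}[k_i] = $ second-moment terms.
\end{itemize}
After the diagonal congruence by $\diag\tp{r_i^{-1}}$, positive semidefiniteness of this $\eps^2$ matrix on the active coordinates is exactly \eqref{eq:ulc-condition} with $\delta=1$ at $\*x$. Since the hypothesis gives the matrix inequality for every $\eps$, dividing by $\eps^2$ and letting $\eps \to 0$ yields ULC at $\*x$. The tail masses are $O\tp{\eps^3}$ under the finite-moment control coming from convergence of the generating function at small $\eps$, which justifies the expansion.

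For (3)$\Leftrightarrow$(4), use the identity
\[
\partial^{\*x} g_\mu\tp{\boldsymbol{\theta}} = \*x!\, \mu\tp{\*x}\, c \cdot \sum_{\*k} \mu^{\*x}\tp{\*k}\, \boldsymbol{\theta}^{\*k}
\]
up to a positive constant, i.e.\ $\partial^{\*x} g_\mu$ is proportional to $g_{\mu^{\*x}}$, and it vanishes identically precisely when $\*x \notin \supp\tp{\mu}$, by downward closedness. For any $\nu$, writing $\boldsymbol{\theta} = e^{\*s}$, the Hessian of $\log g_\nu$ in $\boldsymbol{\theta}$ is
\[
\nabla^2_{\boldsymbol{\theta}} \log g_\nu = \Theta^{-1}\tp{\cov{\boldsymbol{\theta} * \nu} - \diag\tp{\mean{\boldsymbol{\theta} * \nu}}}\Theta^{-1}, \qquad \Theta = \diag\tp{\boldsymbol{\theta}}.
\]
This follows from the chain rule: the Hessian in $\*s$ is the covariance, and the change of variables contributes the $-\diag\tp{\text{mean}}$ term. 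Hence log-concavity of $g_{\mu^{\*x}}$ on $\bb R^d_{>0}$ is exactly $1$-spectral independence of every tilt $\boldsymbol{\theta} * \mu^{\*x}$. Finiteness of $g_\mu$ on all of $\bb R_{>0}^d$ must also be checked. For (1)$\Rightarrow$(4), ULC forces the birth ratios $a\tp{\*n+\*e_i}/a\tp{\*n}$ to be nonincreasing along each coordinate (the diagonal entries), so $a$ grows at most geometrically and $g_\mu$ is entire. Conversely, (4) assumes finiteness outright.

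For (4)$\Leftrightarrow$(5), the direction (5)$\Rightarrow$(4) is trivial by taking coordinate directions. For (4)$\Rightarrow$(5), a mixed directional derivative $\partial_{\*v_1}\cdots\partial_{\*v_k} g_\mu$ is the generating function, up to scaling, of a nonnegative combination of pinnings. I expect this to be the main obstacle, because log-concavity is not preserved under sums. The plan is to show that the class of ULC measures is closed under the operation $g \mapsto \partial_{\*v} g$ by a direct local computation. I would first try the measure with generating function $\partial_{\*v} g_\mu$, whose factorial weights are $\tilde a\tp{\*n} = \sum_i v_i\, a\tp{\*n + \*e_i}$. The goal is to verify that its local matrix is positive semidefinite by writing it as a Schur-complement/averaging of the ULC matrices of $\mu$ at $\*n$ and at its neighbors, mimicking the Lorentzian argument of \cite{BH20}.

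If that fails, the fallback is a homogenization/limit argument. Truncate to finite support and homogenize with a slack variable $x_0$ scaled as in the Poisson limit, so that ULC of $\mu$ corresponds to the Lorentzian property of a homogeneous polynomial. Lorentzian polynomials are closed under nonnegative directional derivatives \cite{BH20}, and the limit then transfers back. Given the equivalence of (1) through (4) already established, it suffices to show that $\partial_{\*v} g_\mu$ is again strongly log-concave. Iterating this covers all $k$.
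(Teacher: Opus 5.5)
Your treatment of items 1--4 follows the paper's argument. Specifically, you get $1\Rightarrow3$ from \Cref{lem:ulc-closure-pinning-exponential-tilt} and \Cref{cor:ulc-cov-bound}. Your second-order expansion for $2\Rightarrow1$ is the paper's limit $\cov{\eps*\mu^{\*x}}-\diag\tp{\mean{\eps*\mu^{\*x}}}=\eps^2\nabla^2\log h_{\*x}\tp{\eps\*1}$ as $\eps\to0^+$. And $3\Leftrightarrow4$ uses the same log-Hessian identity.

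\textbf{The gap is in reaching item 5.} Your primary plan, a direct local check that $\tilde a\tp{\*n}=\sum_i v_i a\tp{\*n+\*e_i}$ is ULC, is not carried out. There is no reason to expect it to reduce to an average or Schur complement of ULC matrices. The quadratic-form condition alone is not preserved by $\partial_{\*v}$: every quadratic derivative of $\theta_1^3+\theta_2^3$ has at most one positive eigenvalue, but $(\partial_1+\partial_2)$ of it does not. Any such computation must therefore use support information.

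Your fallback is the paper's route, but it assumes without proof its one nontrivial step, namely that ULC makes the homogenization Lorentzian. Take
\[
\+H_N g_\mu=\sum_{\*n}\frac{\mu\tp{\*n}}{\tp{N-\norm{\*n}_1}!}\,\theta_0^{N-\norm{\*n}_1}\boldsymbol\theta^{\*n}.
\]
The Schur complement of the $\theta_0\theta_0$ entry in the Hessian of $\partial_0^{N-\norm{\*n}_1-2}\partial^{\*n}\+H_N g_\mu$ is congruent to the ULC matrix at $\*n$. So ULC does give ``at most one positive eigenvalue'' for every quadratic derivative.

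However, \cite[Theorem~2.25]{BH20} also needs the support $\set{\tp{N-\norm{\*n}_1,\*n}}$ to be $M$-convex. Downward closedness does not give this: the downward closure of $\set{(1,1,0),(0,0,1)}$ is not $M^{\natural}$-convex. The missing idea, supplied in \Cref{lem:ulc-homogenization-lorentzian}, is to extract $M^{\natural}$-convexity from item 4:
\begin{itemize}
\item For each nonzero $h=\partial^{\*x}g_\mu$, log-concavity gives $\nabla h\nabla h^{\top}-h\nabla^2 h\succeq\*O$.
\item Combined with $h,\partial_i h,\partial_{ii}h\ge0$, this yields $h\,\partial_{ij}h\le 2\,\partial_i h\,\partial_j h$, so $g_\mu$ is $2$-Rayleigh.
\item Since $g_\mu\tp{\*0}=\mu\tp{\*0}>0$, \cite[Lemma~2.22]{BH20} then gives $M^{\natural}$-convex support.
\end{itemize}
Once $\+H_N g_\mu$ is Lorentzian, \cite[Theorem~2.30]{BH20} gives complete log-concavity directly, so no iteration over $\partial_{\*v}$ is needed. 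This passes to the limit $N!\,\+H_N g_\mu\tp{1,\boldsymbol\theta/N}\to g_\mu$, and then to infinite support by $\ell_1$-truncation.

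Separately, your finiteness argument for $1\Rightarrow4$ is invalid as stated. The diagonal ULC entries only make $r_i\tp{\*n}=a\tp{\*n+\*e_i}/a\tp{\*n}$ nonincreasing as $\*n$ moves along $\*e_i$. Along $\*e_j$, $r_i$ can grow, because the ratio $a\tp{\*n}a\tp{\*n+\*e_i+\*e_j}/\tp{a\tp{\*n+\*e_i}a\tp{\*n+\*e_j}}$ may exceed $1$. For example, $a\tp{\*n}=\exp\tp{-\tfrac{h}{2}\tp{n_1^2+n_2^2}+k n_1 n_2}$ with $k>0$ and $e^k\le 2-e^{-h}$ is ULC on $\bb N^2$, yet $r_1$ is unbounded.

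A correct argument uses the box truncations $\mu_N$, which are finitely supported and ULC. By the finite-support case, $\log g_{\mu_N}$ is concave on the orthant. The tangent bound at $\*0$ then gives
\[
\sum_{\*n\in S_N}\mu\tp{\*n}\boldsymbol\theta^{\*n}\le \mu\tp{\*0}\exp\tp{\sum_i \theta_i\,\mu\tp{\*e_i}/\mu\tp{\*0}}.
\]
Monotone convergence then shows that $g_\mu$ is entire. This also guarantees that the tilted pinnings in item 3 are well defined.
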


\begin{remark}
    We prove here only the equivalence of Items~1--4. Establishing the equivalence with Item~5 requires an appropriate homogenization of \(g_{\mu}\), together with the theory of Lorentzian polynomials developed in \cite{BH20}. Since the implication \(5 \Rightarrow 4\) is immediate, it remains only to establish \(1 \Rightarrow 5\); we defer its proof to \Cref{subsec:ulc-completely-log-concavity}.
\end{remark}

\begin{proof}[Proof of \Cref{thm:ulc-generating-function-characterization} (Equivalence of Items~1--4)]
    The implication \(3 \Rightarrow 2\) is immediate. Write \(a\tp{\*n} \defeq \*n! \mu\tp{\*n}\). For \(\*x \in \supp\tp{\mu}\), define
    \[
    h_{\*x}\tp{\boldsymbol{\theta}} \defeq \partial^{\*x} g_{\mu}\tp{\boldsymbol{\theta}} = \sum_{\*k \in \bb N^d} \frac{a\tp{\*x + \*k}}{\*k!} \boldsymbol{\theta}^{\*k}.
    \]
    Write \(\mu^{\*x, \boldsymbol{\theta}} \defeq \boldsymbol{\theta} * \mu^{\*x}\). Since
    \[
    \mu^{\*x, \boldsymbol{\theta}}\tp{\*k} \propto \boldsymbol{\theta}^{\*k} \mu\tp{\*x + \*k} \prod_{i = 1}^d \binom{x_i + k_i}{x_i} \propto \frac{a\tp{\*x + \*k} \boldsymbol{\theta}^{\*k}}{\*k!},
    \]
    we have
    \[
    \mu^{\*x, \boldsymbol{\theta}}\tp{\*k} = \frac{a\tp{\*x + \*k} \boldsymbol{\theta}^{\*k}}{\*k! \, h_{\*x}\tp{\boldsymbol{\theta}}}.
    \]
    Differentiating the log-partition function gives
    \begin{equation}
        \nabla^2 \log h_{\*x}\tp{\boldsymbol{\theta}} = \diag\tp{\boldsymbol{\theta}}^{-1} \tp{\cov{\mu^{\*x, \boldsymbol{\theta}}} - \diag\tp{\mean{\mu^{\*x, \boldsymbol{\theta}}}}} \diag\tp{\boldsymbol{\theta}}^{-1}.
        \label{eq:tilted-pinning-log-hessian}
    \end{equation}

    \paragraph{\(1 \Rightarrow 3\).} Suppose that \(\mu\) is ULC. By \Cref{lem:ulc-closure-pinning-exponential-tilt}, \(\mu^{\*x, \boldsymbol{\theta}}\) is ULC for every \(\*x \in \supp\tp{\mu}\) and \(\boldsymbol{\theta} \in \bb R_{> 0}^d\). Hence, by \Cref{cor:ulc-cov-bound}, \(\mu^{\*x, \boldsymbol{\theta}}\) is \(1\)-spectrally independent.

    \paragraph{\(2 \Rightarrow 1\).} Suppose that Statement~2 holds, fix \(\*x \in \supp\tp{\mu}\), and set \(\eps \defeq 1 - t\). Applying \eqref{eq:tilted-pinning-log-hessian} with \(\boldsymbol{\theta} = \eps \*1\) shows that
    \[
    \nabla^2 \log h_{\*x}\tp{\eps \*1} \preceq \*O, \quad \forall \eps \in \tp{0, 1}.
    \]
    Letting \(\eps \to 0^+\) gives
    \[
    \nabla^2 \log h_{\*x}\tp{\*0} = \frac{\tp{a\tp{\*x + \*e_i + \*e_j}}_{i, j \in \stp{d}}}{a\tp{\*x}} - \frac{\tp{a\tp{\*x + \*e_i} a\tp{\*x + \*e_j}}_{i, j \in \stp{d}}}{a\tp{\*x}^2} \preceq \*O.
    \]
    Equivalently,
    \[
    \tp{1 - \frac{a\tp{\*x} a\tp{\*x + \*e_i + \*e_j}}{a\tp{\*x + \*e_i} a\tp{\*x + \*e_j}}}_{i, j \in \+I_{\*x}} \succeq \*O,
    \]
    where \(\+I_{\*x} \defeq \set{i \in \stp{d} \cmid \*x + \*e_i \in \supp\tp{\mu}}\) denotes the set of active coordinates at \(\*x\). This is precisely the ULC condition in \eqref{eq:ulc-condition} with \(\delta = 1\).

    \paragraph{\(3 \Leftrightarrow 4\).} If \(\*x \notin \supp\tp{\mu}\), downward closure implies that \(\partial^{\*x} g_{\mu}\) is identically zero. If \(\*x \in \supp\tp{\mu}\), then \eqref{eq:tilted-pinning-log-hessian} and the positive definiteness of \(\diag\tp{\boldsymbol{\theta}}\) show that \(h_{\*x} = \partial^{\*x} g_{\mu}\) is log-concave on \(\bb R_{> 0}^d\) if and only if
    \[
    \cov{\mu^{\*x, \boldsymbol{\theta}}} \preceq \diag\tp{\mean{\mu^{\*x, \boldsymbol{\theta}}}}, \quad \forall \boldsymbol{\theta} \in \bb R_{> 0}^d.
    \]
    This is exactly the equivalence of items~3 and~4.
\end{proof}

We emphasize that a substantial part of \Cref{thm:ulc-generating-function-characterization} does not require the support of \(\mu\) to be downward closed.\footnote{In fact, strong log-concavity of the probability-generating function forces the support to be \(M^{\natural}\)-convex.} In particular, the equivalence \(3 \Leftrightarrow 4\) continues to hold by essentially the same calculation, except that one must consider \(\*x \in \bb N^d\) for which there exists \(\*x' \in \supp\tp{\mu}\) satisfying \(\*x \le \*x'\), rather than \(\*x \in \supp\tp{\mu}\). Consequently, following the proof mechanism underlying \Cref{thm:ulc-modified-log-sobolev-inequality}:
\[
\begin{aligned}
    \text{strong log-concavity of } g_{\mu} &\Longrightarrow 1 \text{-spectral independence of tilted pinnings } \boldsymbol{\theta} * \mu^{\*x} \\
    &\Longrightarrow \text{approximate conservation of entropy} \\
    &\Longrightarrow \text{modified log-Sobolev inequality for limiting process } \+L,
\end{aligned}
\]
we obtain the following more general conclusion: for any probability measure \(\mu\) on \(\bb N^d\) with finite first moments and a strongly log-concave probability-generating function, its limiting process \(\+L\) associated with Poisson stochastic localization satisfies a modified log-Sobolev inequality with constant \(1\).

The support condition affects only the precise form of the limiting process \(\+L\). When \(\supp\tp{\mu}\) is downward closed, \(\+L\) is the birth-death process in \eqref{eq:birth-death-generator}. When \(\supp\tp{\mu} \subseteq \set{\*n \in \bb N^d \cmid \norm{\*n}_1 = N}\) is homogeneous, \(\+L\) is the continuous-time down-up walk
\[
\+L f\tp{\*n} = \sum_{i = 1}^d n_i \sum_{j = 1}^d \frac{a\tp{\*n - \*e_i + \*e_j}}{\sum_{k = 1}^d a\tp{\*n - \*e_i + \*e_k}} \tp{f\tp{\*n - \*e_i + \*e_j} - f\tp{\*n}}, \qquad a\tp{\*n} = \*n! \mu\tp{\*n}.
\]
This yields a natural extension to \(\bb N^d\) of the result of \cite{CGM21}, together with a new intrinsic proof based on Poisson stochastic localization.

\section{Poisson-Type Concentration and Maximum Entropy}
\label{sec:concentration-maximum-entropy}

In this section, we derive two consequences of the weighted Wu-type modified log-Sobolev inequality in \Cref{thm:ulc-modified-log-sobolev-inequality}: Poisson-type concentration inequalities and a maximum-entropy principle. Since the proofs rely only on this functional inequality, both results extend beyond the class of ULC measures. In particular, they extend the results of \cite{Joh17,AMM21} from one-dimensional ULC measures to the high-dimensional class \(\-{WLSI}\tp{1}\). The key ingredient is a comparison with the moment-generating function of a product Poisson law.

The class of ULC measures also enjoys several important closure properties, including closure under convolution, marginalization, coordinatewise binomial thinning, and particlewise stochastic projection. We collect these results in \Cref{sec:ulc-closure-properties}. Several other properties of one-dimensional ULC measures also admit high-dimensional extensions, including the monotone entropic law of thin numbers \cite{Yu09} (see \cite[Theorem~1]{YJ09} for a concise and complete formulation) and the concavity of entropy under thinning \cite{YJ09}. Since their proofs require only minor modifications of the one-dimensional arguments, we omit the details.

Throughout this section, let \(\mu\) be a probability measure on \(\bb N^d\) with downward closed support and finite mean vector \(\*m \defeq \mean{\mu} \in \bb R_{> 0}^d\), and let \(\*X \sim \mu\). For \(\*n \in \supp\tp{\mu}\), write
\[
b_i\tp{\*n} \defeq \frac{\tp{n_i + 1} \mu\tp{\*n + \*e_i}}{\mu\tp{\*n}}, \quad i \in \stp{d}.
\]
We write \(\mu \in \-{WLSI}\tp{1}\) if \(\mu\) satisfies the weighted Wu-type modified log-Sobolev inequality with constant \(1\):
\begin{equation}
    \*{Ent}_{\mu}\stp{f} \le \*E_{\mu}\stp{\sum_{i = 1}^d b_i \Psi\tp{f, \nabla_i^+ f}}, \quad \forall f \in \+A.
    \label{eq:weighted-wu-log-sobolev}
\end{equation}
Here, \(\Psi\tp{u, v} \defeq \tp{u + v} \log\tp{u + v} - u \log u - v \tp{\log u + 1}\), with \(\Psi\tp{0, 0} \defeq 0\), and \(\+A\) denotes the domain defined in \Cref{thm:limiting-process-functional-inequalities}. By \Cref{thm:ulc-modified-log-sobolev-inequality}, every ULC measure with finite first moments belongs to \(\-{WLSI}\tp{1}\).

\subsection{Poisson Moment-Generating Function Domination}

The following comparison of moment-generating functions extends \cite[Lemma~2.1]{AMM21} from one-dimensional ULC measures to the high-dimensional class \(\-{WLSI}\tp{1}\).

\begin{theorem}
    Suppose that \(\mu \in \-{WLSI}\tp{1}\), and let \(\*Z \sim \pi_{\*m} \defeq \bigotimes_{i = 1}^d \-{Pois}\tp{m_i}\). Then, for every \(\boldsymbol{\lambda} \in \bb R^d\),
    \[
    \*E\stp{e^{\inner{\boldsymbol{\lambda}}{\*X}}} \le \exp\tp{\sum_{i = 1}^d m_i \tp{e^{\lambda_i} - 1}} = \*E\stp{e^{\inner{\boldsymbol{\lambda}}{\*Z}}}.
    \]
    \label{thm:poisson-laplace-transform-domination}
\end{theorem}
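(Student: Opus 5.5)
The plan is a Herbst-type argument. Apply the weighted Wu inequality \eqref{eq:weighted-wu-log-sobolev} to the exponential test function \(f_s(\*n) \defeq e^{s\inner{\boldsymbol{\lambda}}{\*n}}\), \(s \in [0,1]\), and set \(H(s) \defeq \*E[e^{s\inner{\boldsymbol{\lambda}}{\*X}}]\).

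We first reduce to the case \(H(1) < \infty\). If \(\boldsymbol{\lambda} \le \*0\) there is nothing to check. In general, we would truncate, replacing \(\inner{\boldsymbol{\lambda}}{\*n}\) by \(\min\{\inner{\boldsymbol{\lambda}}{\*n}, M\}\) or restricting to boxes, and then pass \(M\to\infty\) by monotone convergence. Since \(\Psi(u,v)\ge 0\), the truncation does not spoil the inequality in the direction we need. Alternatively, the argument below first gives the bound for \(s\) near \(0\), and bootstrapping extends it.

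Next we compute the Wu form for \(f = f_s\). We have \(\nabla_i^+ f(\*n) = f(\*n)(e^{s\lambda_i}-1)\), and \(\Psi\) is 1-homogeneous, so
\[
\Psi\bigl(f, \nabla_i^+ f\bigr) = f\,\psi(s\lambda_i), \qquad \psi(x) \defeq x e^{x} - e^{x} + 1 .
\]
Thus the right-hand side of \eqref{eq:weighted-wu-log-sobolev} equals \(\sum_i \psi(s\lambda_i)\, \*E[b_i(\*X) e^{s\inner{\boldsymbol{\lambda}}{\*X}}]\).

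The key step is to remove the birth rates by detailed balance. We have \(\*E[b_i(\*X) g(\*X)] = \*E[X_i\, g(\*X - \*e_i)]\), so
\[
\*E\bigl[b_i(\*X) e^{s\inner{\boldsymbol{\lambda}}{\*X}}\bigr] = e^{-s\lambda_i}\, \*E\bigl[X_i e^{s\inner{\boldsymbol{\lambda}}{\*X}}\bigr] = e^{-s\lambda_i}\, \partial_{\lambda_i}\text{-type derivative of } H .
\]
This is still not closed in \(H\), because it involves the tilted means \(\*E[X_i e^{s\inner{\boldsymbol{\lambda}}{\*X}}]\). The left-hand side equals \(sH'(s) - H\log H\), where \(sH'(s) = \sum_i s\lambda_i\, \*E[X_i e^{s\inner{\boldsymbol{\lambda}}{\*X}}]\). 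Write \(w_i(s) \defeq \*E[X_i e^{s\inner{\boldsymbol{\lambda}}{\*X}}]/H(s)\), the mean of the tilted measure. The inequality becomes
\[
\sum_i \bigl(s\lambda_i - \psi(s\lambda_i)e^{-s\lambda_i}\bigr) w_i(s) \le \log H(s),
\]
that is,
\[
\sum_i \bigl(1 - e^{-s\lambda_i}\bigr) w_i(s) \le \log H(s) .
\]

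We close this by an ODE argument in \(s\). Since \(\frac{d}{ds}\log H(s) = \sum_i \lambda_i w_i(s)\), the inequality involves the vector \(\*w\) and not just \(\frac{d}{ds}\log H\), so it does not close in one variable. Instead we work with the multivariate log-Laplace function \(\Lambda(\boldsymbol{\lambda}) \defeq \log \*E[e^{\inner{\boldsymbol{\lambda}}{\*X}}]\). The inequality at \(s=1\) reads
\[
\sum_i (1-e^{-\lambda_i})\,\partial_i \Lambda(\boldsymbol{\lambda}) \le \Lambda(\boldsymbol{\lambda}) \quad \text{for all } \boldsymbol{\lambda}.
\]
Note that \(G(\boldsymbol{\lambda}) \defeq \sum_i m_i(e^{\lambda_i}-1)\) satisfies this with equality. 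We then change variables to \(u_i \defeq 1-e^{-\lambda_i}\), or equivalently follow the characteristic curves \(\dot{\boldsymbol{\lambda}} = (1-e^{-\lambda_i})_i\). Along a characteristic, \(\frac{d}{d\tau}\Lambda \le \Lambda\), and the same flow run backward tends to \(\*0\) as \(\tau\to-\infty\). Comparing \(\Lambda - G\) along this flow, with \(\Lambda(\*0)=G(\*0)=0\) and \(\nabla\Lambda(\*0)=\nabla G(\*0)=\*m\), gives \(\frac{d}{d\tau}\bigl(e^{-\tau}(\Lambda-G)\bigr) \le 0\). The first-order agreement at \(\*0\) should then force \(e^{-\tau}(\Lambda - G)\to 0\) as \(\tau\to-\infty\), since near \(\*0\) the flow is \(\boldsymbol{\lambda}\approx e^{\tau}\boldsymbol{\lambda}_0\) and \(\Lambda-G = O(|\boldsymbol{\lambda}|^2)\). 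Hence \(\Lambda \le G\).

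The main obstacle is this characteristic/ODE step: identifying the right flow, handling \(\lambda_i\) of either sign (for \(\lambda_i<0\) the flow stays bounded and \(1-e^{-\lambda_i}<0\), which still works), and justifying the boundary limit at \(\*0\). The integrability issues are secondary.
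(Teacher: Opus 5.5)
Your core argument is the paper's. You use the same exponential test function, the same detailed-balance identity $\mathbf{E}[b_i(\mathbf X)g(\mathbf X)]=\mathbf{E}[X_i\,g(\mathbf X-\mathbf e_i)]$, the same inequality $\sum_i(1-e^{-\lambda_i})\partial_i\Lambda\le\Lambda$, and the same characteristics. The step you flag as the main obstacle is routine. The curve $\lambda_i(s):=\log\bigl(1+e^{s}(e^{\lambda_i}-1)\bigr)$, $s\le 0$, solves $\dot\lambda_i=1-e^{-\lambda_i}$ for either sign of $\lambda_i$, with $\lambda_i(s)\sim e^{s}(e^{\lambda_i}-1)$ as $s\to-\infty$. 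So $e^{-s}\Lambda(\boldsymbol\lambda(s))$ is nonincreasing, and its limit $\sum_i m_i(e^{\lambda_i}-1)$ follows from differentiability at $\mathbf 0$ alone.

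The genuine gap is the step you call secondary. Finiteness of exponential moments is part of the conclusion, yet your argument runs on the untruncated $\Lambda$. That function is a priori finite only on $\{\boldsymbol\lambda\le\mathbf 0\}$ and is not known to be differentiable at $\mathbf 0$. Also, $e^{\langle\boldsymbol\lambda,\mathbf n\rangle}$ need not lie in $\mathcal A$: membership requires $\langle\boldsymbol\lambda,\mathbf n\rangle\in L^2(\mu)$, i.e.\ second moments, even for $\boldsymbol\lambda\le\mathbf 0$.

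Neither of your proposed fixes works as stated.
\begin{itemize}
\item ``Bootstrapping'' has no base case: for $\boldsymbol\lambda$ with a positive coordinate, the characteristic immediately leaves the region where $\Lambda$ is known to be finite.
\item The truncation $\min\{\langle\boldsymbol\lambda,\mathbf n\rangle,M\}$ does not close. The entropy acquires a term from the cap. For $\lambda_i>0$ the Wu side acquires kink terms supported on $\{M\le\langle\boldsymbol\lambda,\mathbf X\rangle<M+\lambda_i\}$ and weighted by $X_i$. So you no longer get a closed first-order inequality for one function. Nonnegativity of $\Psi$ does not touch the entropy side, and nothing in your argument controls these terms as $M\to\infty$.
\end{itemize}

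The missing idea is to truncate coordinatewise, so that the exponential-family structure survives. Take $f_N(\mathbf n):=\exp\bigl(\sum_i\lambda_i\min\{n_i,N\}\bigr)$.
\begin{itemize}
\item Both $f_N$ and $\log f_N$ are bounded with bounded increments, and $\mathbf{E}_\mu[\sum_i b_i]=\sum_i m_i<\infty$, so $f_N\in\mathcal A$.
\item Now $\Psi(f_N,\nabla_i^+f_N)=f_N\,\psi(\lambda_i)\mathbf 1_{\{n_i<N\}}$. Detailed balance turns the Wu side into $\sum_i(\lambda_i-1+e^{-\lambda_i})\,\mathbf{E}[X_if_N(\mathbf X)\mathbf 1_{\{X_i\le N\}}]$.
\item This is at most $\sum_i(\lambda_i-1+e^{-\lambda_i})\,\mathbf{E}[X^{(N)}_if_N(\mathbf X)]$, where $X^{(N)}_i:=\min\{X_i,N\}$, because the coefficient $e^{-\lambda_i}\psi(\lambda_i)$ is nonnegative. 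This is where your positivity intuition actually enters.
\item The entropy is exactly $\langle\boldsymbol\lambda,\nabla M_N\rangle-M_N\chi_N$, where $\chi_N=\log M_N$ is the log-Laplace transform of the bounded vector $\mathbf X^{(N)}$. Hence $\chi_N$ is smooth on all of $\mathbb R^d$ and satisfies $\sum_i(1-e^{-\lambda_i})\partial_i\chi_N\le\chi_N$.
\item Your characteristic argument then gives $\chi_N(\boldsymbol\lambda)\le\sum_i m^{(N)}_i(e^{\lambda_i}-1)$, where $\mathbf m^{(N)}=\nabla\chi_N(\mathbf 0)=\mathbf{E}[\mathbf X^{(N)}]$.
\item Conclude with Fatou's lemma and $\mathbf m^{(N)}\to\mathbf m$. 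Monotone convergence, which you invoke, fails for mixed-sign $\boldsymbol\lambda$, since then $e^{\langle\boldsymbol\lambda,\mathbf X^{(N)}\rangle}$ is not monotone in $N$.
\end{itemize}
This is the paper's proof.
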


\begin{proof}[Proof of \Cref{thm:poisson-laplace-transform-domination}]
    For \(N \in \bb N_{> 0}\), set \(X_i^{\tp{N}} \defeq \min\set{X_i, N}\), \(\*m^{\tp{N}} \defeq \*E\stp{\*X^{\tp{N}}}\), and
    \[
    M_N\tp{\boldsymbol{\lambda}} \defeq \*E\stp{e^{\inner{\boldsymbol{\lambda}}{\*X^{\tp{N}}}}}, \qquad \chi_N\tp{\boldsymbol{\lambda}} \defeq \log M_N\tp{\boldsymbol{\lambda}}.
    \]
    Define
    \[
    f_{N, \boldsymbol{\lambda}}\tp{\*n} \defeq \exp\tp{\sum_{i = 1}^d \lambda_i \min\set{n_i, N}}.
    \]
    Both \(f_{N, \boldsymbol{\lambda}}\) and \(\log f_{N, \boldsymbol{\lambda}}\) are bounded, as are their forward differences. Moreover, detailed balance and finite first moments give \(\*E_{\mu}\stp{\sum_{i = 1}^d b_i} = \sum_{i = 1}^d m_i < \infty\). Thus both functions have finite energy, and hence \(f_{N, \boldsymbol{\lambda}} \in \+A\). If \(n_i < N\), then
    \[
    \Psi\tp{f_{N, \boldsymbol{\lambda}}\tp{\*n}, \nabla_i^+ f_{N, \boldsymbol{\lambda}}\tp{\*n}} = f_{N, \boldsymbol{\lambda}}\tp{\*n} \tp{\lambda_i e^{\lambda_i} - e^{\lambda_i} + 1},
    \]
    whereas the increment vanishes when \(n_i \ge N\). Detailed balance gives
    \[
    \begin{aligned}
        \*E_{\mu}\stp{\sum_{i = 1}^d b_i \Psi\tp{f_{N, \boldsymbol{\lambda}}, \nabla_i^+ f_{N, \boldsymbol{\lambda}}}} &= \*E\stp{\sum_{i = 1}^d b_i\tp{\*X} \tp{\lambda_i e^{\lambda_i} - e^{\lambda_i} + 1} f_{N, \boldsymbol{\lambda}}\tp{\*X} \*1_{\set{X_i < N}}} \\
        &= \*E\stp{\sum_{i = 1}^d \tp{\lambda_i e^{\lambda_i} - e^{\lambda_i} + 1} X_i f_{N, \boldsymbol{\lambda}}\tp{\*X - \*e_i} \*1_{\set{X_i \le N}}} \\
        &= \*E\stp{\sum_{i = 1}^d \tp{\lambda_i - 1 + e^{-\lambda_i}} X_i f_{N, \boldsymbol{\lambda}}\tp{\*X} \*1_{\set{X_i \le N}}} \\
        &\le \*E\stp{\sum_{i = 1}^d \tp{\lambda_i - 1 + e^{-\lambda_i}} X_i^{\tp{N}} f_{N, \boldsymbol{\lambda}}\tp{\*X}}.
    \end{aligned}
    \]
    Here, the last inequality uses \(\lambda_i - 1 + e^{-\lambda_i} \ge 0\). Moreover,
    \[
    \*{Ent}_{\mu}\stp{f_{N, \boldsymbol{\lambda}}} = \inner{\boldsymbol{\lambda}}{\*E\stp{\*X^{\tp{N}} f_{N, \boldsymbol{\lambda}}\tp{\*X}}} - M_N\tp{\boldsymbol{\lambda}} \chi_N\tp{\boldsymbol{\lambda}}, \qquad \*E\stp{X_i^{\tp{N}} f_{N, \boldsymbol{\lambda}}\tp{\*X}} = \partial_i M_N\tp{\boldsymbol{\lambda}}.
    \]
    Hence, substituting \(f_{N, \boldsymbol{\lambda}}\) into \eqref{eq:weighted-wu-log-sobolev} and dividing by \(M_N\tp{\boldsymbol{\lambda}}\) yields
    \[
    \inner{\boldsymbol{\lambda}}{\nabla \chi_N\tp{\boldsymbol{\lambda}}} - \chi_N\tp{\boldsymbol{\lambda}} \le \sum_{i = 1}^d \tp{\lambda_i - 1 + e^{-\lambda_i}} \partial_i \chi_N\tp{\boldsymbol{\lambda}}.
    \]
    Equivalently,
    \begin{equation}
        \sum_{i = 1}^d \tp{1 - e^{-\lambda_i}} \partial_i \chi_N\tp{\boldsymbol{\lambda}} \le \chi_N\tp{\boldsymbol{\lambda}}.
        \label{eq:poisson-laplace-differential-inequality}
    \end{equation}

    Fix \(\boldsymbol{\lambda} \in \bb R^d\). For \(s \le 0\), define the coordinatewise path from \(\*0\) to \(\boldsymbol{\lambda}\) by
    \[
    \lambda_i\tp{s} \defeq \log\tp{1 + e^s \tp{e^{\lambda_i} - 1}}.
    \]
    Then \(\lambda_i\tp{0} = \lambda_i\), \(\lambda_i\tp{s} \to 0\) as \(s \to -\infty\), and \(\partial_s \lambda_i\tp{s} = 1 - e^{-\lambda_i\tp{s}}\). Hence \eqref{eq:poisson-laplace-differential-inequality} implies
    \[
    \partial_s \tp{e^{-s} \chi_N\tp{\boldsymbol{\lambda}\tp{s}}} \le 0.
    \]
    Moreover, differentiability of \(\chi_N\) at \(\*0\) gives
    \[
    \lim_{s \to -\infty} e^{-s} \chi_N\tp{\boldsymbol{\lambda}\tp{s}} = \sum_{i = 1}^d \partial_i \chi_N\tp{\*0} \tp{e^{\lambda_i} - 1} = \sum_{i = 1}^d m_i^{\tp{N}} \tp{e^{\lambda_i} - 1}.
    \]
    Monotonicity along this path therefore yields
    \[
    \chi_N\tp{\boldsymbol{\lambda}} \le \sum_{i = 1}^d m_i^{\tp{N}} \tp{e^{\lambda_i} - 1}.
    \]
    Finally, \(\*X^{\tp{N}} \to \*X\) pointwise and \(\*m^{\tp{N}} \to \*m\). Fatou's lemma gives
    \[
    \*E\stp{e^{\inner{\boldsymbol{\lambda}}{\*X}}} \le \liminf_{N \to \infty} \exp\tp{\chi_N\tp{\boldsymbol{\lambda}}} \le \exp\tp{\sum_{i = 1}^d m_i \tp{e^{\lambda_i} - 1}} = \*E\stp{e^{\inner{\boldsymbol{\lambda}}{\*Z}}},
    \]
    proving the claim.
\end{proof}

\subsection{Poisson-Type Concentration Inequalities}
\label{subsec:concentration}

The moment-generating-function bound in \Cref{thm:poisson-laplace-transform-domination} yields mixed upper- and lower-tail estimates through a multivariate Chernoff argument. For one-dimensional ULC measures, the resulting Poisson-type tail bounds recover \cite[Theorem~1.1]{AMM21}.

Define the Bennett function by
\[
h\tp{x} \defeq 2 \frac{\tp{1 + x} \log\tp{1 + x} - x}{x^2}, \quad x \ge -1,
\]
with \(h\tp{0} \defeq 1\) and the convention \(0 \log 0 \defeq 0\).

\begin{theorem}
    Suppose that \(\mu \in \-{WLSI}\tp{1}\) and \(\*m \defeq \mean{\mu} \in \bb R_{> 0}^d\). For \(\*t \in \bb R^d\) satisfying \(t_i > -m_i\) for every \(i \in \stp{d}\), define the orthant event
    \[
    \+O_{\*t} \defeq \bigcap_{\substack{i \in \stp{d} \\ t_i > 0}} \set{X_i - m_i \ge t_i} \cap \bigcap_{\substack{i \in \stp{d} \\ t_i < 0}} \set{X_i - m_i \le t_i}.
    \]
    Then
    \[
    \*{Pr}_{\mu}\stp{\+O_{\*t}} \le \exp\tp{-\sum_{i = 1}^d \frac{t_i^2}{2 m_i} h\tp{\frac{t_i}{m_i}}}.
    \]
    \label{thm:poisson-orthant-concentration}
\end{theorem}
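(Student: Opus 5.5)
We apply a multivariate Chernoff bound to the moment-generating-function comparison in \Cref{thm:poisson-laplace-transform-domination}, optimizing each coordinate's exponent separately. Let \(P \defeq \set{i \cmid t_i > 0}\) and \(N \defeq \set{i \cmid t_i < 0}\). Choose \(\boldsymbol{\lambda} \in \bb R^d\) with \(\lambda_i \ge 0\) for \(i \in P\), \(\lambda_i \le 0\) for \(i \in N\), and \(\lambda_i = 0\) when \(t_i = 0\).

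On \(\+O_{\*t}\), each term satisfies \(\lambda_i \tp{X_i - m_i - t_i} \ge 0\). Hence \(\*1_{\+O_{\*t}} \le \exp\tp{\inner{\boldsymbol{\lambda}}{\*X - \*m - \*t}}\) pointwise. Taking expectations and invoking \Cref{thm:poisson-laplace-transform-domination} gives
\[
\*{Pr}_{\mu}\stp{\+O_{\*t}} \le \exp\tp{\sum_{i = 1}^d \stp{m_i \tp{e^{\lambda_i} - 1} - \lambda_i \tp{m_i + t_i}}}.
\]
The exponent is a sum of decoupled one-dimensional terms, so we minimize each separately over its allowed sign.

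For each \(i\), the function \(\phi_i\tp{\lambda} \defeq m_i \tp{e^{\lambda} - 1} - \lambda \tp{m_i + t_i}\) is convex with minimizer \(\lambda_i^{\star} = \log\tp{1 + t_i / m_i}\), which is well defined because \(t_i > -m_i\). This minimizer has the sign of \(t_i\), so it lies in the admissible range. Substituting it gives
\[
\phi_i\tp{\lambda_i^{\star}} = t_i - \tp{m_i + t_i} \log\tp{1 + t_i / m_i} = -m_i \stp{\tp{1 + x_i} \log\tp{1 + x_i} - x_i} = -\frac{t_i^2}{2 m_i} h\tp{x_i},
\]
where \(x_i \defeq t_i / m_i\). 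For \(t_i = 0\) we take \(\lambda_i = 0\), which contributes \(0 = -\frac{t_i^2}{2 m_i} h\tp{0}\). Summing over \(i\) yields the claimed bound.

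The only point needing care is the sign compatibility between \(\lambda_i\) and the direction of each orthant constraint, which the choice \(\lambda_i^{\star} = \log\tp{1 + t_i/m_i}\) handles automatically. The remaining work is the elementary algebra identifying \(\phi_i\tp{\lambda_i^{\star}}\) with the Bennett form above; no further input beyond \Cref{thm:poisson-laplace-transform-domination} is needed.
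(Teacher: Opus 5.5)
Your proposal is correct and matches the paper's proof essentially step for step. The paper also sets $\lambda_i = \log(1 + t_i/m_i)$ and uses its sign agreement with $t_i$ to get $\langle \boldsymbol{\lambda}, \mathbf{X} - \mathbf{m}\rangle \ge \langle \boldsymbol{\lambda}, \mathbf{t}\rangle$ on $\mathcal{O}_{\mathbf{t}}$; it then applies Markov's inequality together with the Poisson moment-generating-function domination and performs the same per-coordinate algebra to reach the Bennett form.
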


\begin{proof}[Proof of \Cref{thm:poisson-orthant-concentration}]
    \Cref{thm:poisson-laplace-transform-domination} gives the following bound on the centered log moment-generating function:
    \begin{equation}
        \log \*E\stp{e^{\inner{\boldsymbol{\lambda}}{\*X - \*m}}} \le \sum_{i = 1}^d m_i \tp{e^{\lambda_i} - \lambda_i - 1} = \log \*E\stp{e^{\inner{\boldsymbol{\lambda}}{\*Z - \*m}}}.
        \label{eq:centered-log-mgf-bound}
    \end{equation}
    Set
    \[
    \lambda_i \defeq \log\tp{1 + \frac{t_i}{m_i}}.
    \]
    Each \(\lambda_i\) has the same sign as \(t_i\). Consequently, on \(\+O_{\*t}\),
    \[
    \inner{\boldsymbol{\lambda}}{\*X - \*m} \ge \inner{\boldsymbol{\lambda}}{\*t}.
    \]
    Markov's inequality and \eqref{eq:centered-log-mgf-bound} therefore give
    \[
    \begin{aligned}
        \*{Pr}_{\mu}\stp{\+O_{\*t}} &\le \exp\tp{-\inner{\boldsymbol{\lambda}}{\*t}} \*E\stp{e^{\inner{\boldsymbol{\lambda}}{\*X - \*m}}} \\
        &\le \exp\tp{-\sum_{i = 1}^d \stp{\tp{m_i + t_i} \log\tp{1 + \frac{t_i}{m_i}} - t_i}} \\
        &= \exp\tp{-\sum_{i = 1}^d \frac{t_i^2}{2 m_i} h\tp{\frac{t_i}{m_i}}}.
    \end{aligned}
    \]
\end{proof}

\subsection{Poisson Maximum-Entropy Principle}
\label{subsec:maximum-entropy}

For a probability measure \(\mu\) on \(\bb N^d\), define its Shannon entropy by
\[
H\tp{\mu} \defeq -\sum_{\*n \in \supp\tp{\mu}} \mu\tp{\*n} \log \mu\tp{\*n}.
\]
The one-dimensional Poisson maximum-entropy principle for ULC measures was established in \cite{Joh07}; see also \cite{JKM13,Yu10}. The comparison of moment-generating functions in \Cref{thm:poisson-laplace-transform-domination} yields the following high-dimensional extension to \(\-{WLSI}\tp{1}\), together with a relative-entropy remainder.

\begin{theorem}
    Suppose that \(\mu \in \-{WLSI}\tp{1}\) has mean \(\*m \in \bb R_{> 0}^d\), and let \(\pi_{\*m} \defeq \bigotimes_{i = 1}^d \-{Pois}\tp{m_i}\). Then
    \begin{equation}
        H\tp{\pi_{\*m}} - H\tp{\mu} \ge \-{KL}\tp{\mu \| \pi_{\*m}} \ge 0.
        \label{eq:poisson-maximum-entropy-stability}
    \end{equation}
    Consequently, \(\pi_{\*m}\) is the unique maximizer of Shannon entropy among all measures in \(\-{WLSI}\tp{1}\) with mean \(\*m\).
    \label{thm:poisson-maximum-entropy}
\end{theorem}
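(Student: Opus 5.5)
The plan is to reduce \eqref{eq:poisson-maximum-entropy-stability} to a single linear-moment inequality and obtain that inequality from \Cref{thm:poisson-laplace-transform-domination}.

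\textbf{Step 1: rewrite the entropy gap.} Since \(\log \pi_{\*m}\tp{\*n} = \sum_i \tp{n_i \log m_i - m_i - \log n_i!}\), expanding the definitions gives
\[
H\tp{\pi_{\*m}} - H\tp{\mu} - \-{KL}\tp{\mu \| \pi_{\*m}} = \*E_{\mu}\stp{\log \pi_{\*m}\tp{\*X}} - \*E_{\pi_{\*m}}\stp{\log \pi_{\*m}\tp{\*Z}}.
\]
Because \(\mu\) and \(\pi_{\*m}\) have the same mean, the linear and constant terms cancel. Hence \eqref{eq:poisson-maximum-entropy-stability} is equivalent to
\[
\sum_{i=1}^d \*E\stp{\log X_i!} \le \sum_{i=1}^d \*E\stp{\log Z_i!}, \qquad \*Z \sim \pi_{\*m}.
\]
This decouples coordinatewise. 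It therefore suffices to prove \(\*E\stp{\log X_i!} \le \*E\stp{\log Z_i!}\) for each \(i\), where \(X_i\) has mean \(m_i\) and \(Z_i \sim \-{Pois}\tp{m_i}\). One should first check that \(H\tp{\mu}\) is finite; this follows from the finite mean by comparison with a geometric law.

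\textbf{Step 2: reduce to the one-dimensional marginal and its MGF.} Specialize \Cref{thm:poisson-laplace-transform-domination} to \(\boldsymbol{\lambda} = \lambda \*e_i\). This gives \(\*E\stp{e^{\lambda X_i}} \le \*E\stp{e^{\lambda Z_i}}\) for all \(\lambda \in \bb R\), with \(\*E X_i = \*E Z_i\). The task is then to pass from Laplace-transform domination to domination of the expectation of the specific convex function \(\phi\tp{n} \defeq \log n!\).

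\textbf{Step 3 (main obstacle): integral representation of \(\phi\).} Write \(\phi\) as a mixture of exponentials minus an affine part, that is, in a form \(\phi\tp{n} = \alpha + \beta n + \int \tp{e^{\lambda n} - \cdots}\, \nu\tp{\-d\lambda}\) with \(\nu \ge 0\). The natural route is the Malmstén / Binet-type formula
\[
\log \Gamma\tp{n+1} = \int_0^\infty \tp{n e^{-s} - \frac{1 - e^{-ns}}{1 - e^{-s}}} \frac{\dd s}{s}.
\]
Expand \(\frac{1 - e^{-ns}}{1 - e^{-s}} = \sum_{k<n} e^{-ks}\). Then \(\log n!\) is expressed through the functions \(n \mapsto -e^{-ns}\) (with positive weight \(\frac{1}{s\tp{1-e^{-s}}}\)) together with linear terms in \(n\). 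Concretely,
\[
\phi\tp{n} = \int_0^\infty \tp{n\tp{e^{-s} - \tfrac{1}{1-e^{-s}}} \cdot \text{(linear)} + \frac{e^{-ns} - 1}{1 - e^{-s}}} \frac{\dd s}{s}.
\]
Taking expectations, the linear parts agree because the means match. The remaining term requires \(\*E\stp{e^{-sX_i}} \le \*E\stp{e^{-sZ_i}}\) for each \(s > 0\), which is exactly the domination from Step 2 at \(\lambda = -s\). The pieces appear with the correct (positive) sign, which yields \(\*E\phi\tp{X_i} \le \*E\phi\tp{Z_i}\).

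The delicate part is justifying Fubini near \(s \to 0\) and \(s \to \infty\). Near \(s \to 0\) one uses the bound \(\abs{e^{-ns} - 1 + ns} \le n^2 s^2 / 2\), and the needed moments should come from finiteness of the MGF. If the integrand at \(s \to 0\) is not absolutely integrable termwise, I will first regroup so that each term is \(\frac{e^{-ns} - 1 + ns}{\cdots}\ge 0\) and then apply monotone convergence.

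\textbf{Step 4: uniqueness.} If \(H\tp{\mu} = H\tp{\pi_{\*m}}\), then \(\-{KL}\tp{\mu \| \pi_{\*m}} = 0\), so \(\mu = \pi_{\*m}\). Moreover, \(\pi_{\*m}\) itself lies in \(\-{WLSI}\tp{1}\) because it is ULC (\Cref{thm:ulc-modified-log-sobolev-inequality}, \Cref{ex:ulc-product-measure}). This shows that \(\pi_{\*m}\) is the unique maximizer.
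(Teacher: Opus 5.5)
Your route is the paper's: use the matched means to reduce \eqref{eq:poisson-maximum-entropy-stability} to $\mathbf{E}[\log X_i!] \le \mathbf{E}[\log Z_i!]$, get $\mathbf{E}[e^{-sX_i}] \le \mathbf{E}[e^{-sZ_i}]$ from the Laplace domination at $\boldsymbol{\lambda} = -s\mathbf{e}_i$, integrate against a Malmst\'en/Frullani representation of $\log n!$, and deduce uniqueness from $\mathrm{KL}(\mu \| \pi_{\mathbf{m}}) = 0$.

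The gap is that the representation you display is false, so Step~3 fails as written. The term $\frac{1-e^{-ns}}{1-e^{-s}}$ must also carry the factor $e^{-s}$. Already for $n = 1$ your right-hand side is $\int_0^\infty (e^{-s} - 1)\,\frac{\mathrm{d}s}{s} = -\infty$, not $\log 1! = 0$. As a result, your comparison weight comes out as $\frac{1}{s(1-e^{-s})}$ instead of $\frac{e^{-s}}{s(1-e^{-s})}$. As $s \to \infty$, $\mathbf{E}[e^{-sZ_i}] - \mathbf{E}[e^{-sX_i}] \to e^{-m_i} - \Pr[X_i = 0] \ge 0$. So the resulting difference integral diverges at infinity whenever this limit is positive. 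Regrouping near $s = 0$ cannot repair this, and your ``Concretely'' display is not a well-defined formula either.

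The correct identity, which the paper uses, is
\[
\log n! = \int_0^\infty \frac{e^{-s}}{s}\Bigl(n - \frac{1-e^{-ns}}{1-e^{-s}}\Bigr)\,\mathrm{d}s = \int_0^\infty \frac{e^{-s}}{s} \sum_{k=0}^{n-1} \bigl(1 - e^{-ks}\bigr)\,\mathrm{d}s .
\]
Equivalently, $\log n! = \int_0^\infty \frac{e^{-s}}{s(1-e^{-s})}\bigl(e^{-ns} - 1 + n(1-e^{-s})\bigr)\,\mathrm{d}s$, which is exactly the positive mixture of exponentials plus an affine part that you were after. The integrand is nonnegative and bounded by $\min\{n(n-1)/2,\, n e^{-s}/s\}$, so Tonelli applies directly and gives
\[
\mathbf{E}[\log X_i!] = \int_0^\infty \frac{e^{-s}}{s}\Bigl(m_i - \frac{1 - \mathbf{E}[e^{-sX_i}]}{1-e^{-s}}\Bigr)\,\mathrm{d}s ,
\]
and likewise for $Z_i$. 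The domination $\mathbf{E}[e^{-sX_i}] \le \mathbf{E}[e^{-sZ_i}]$ makes the $X_i$-integrand pointwise no larger than the $Z_i$-integrand. Hence $\mathbf{E}[\log X_i!] \le \mathbf{E}[\log Z_i!] < \infty$, with no regrouping or moment estimates needed. This also gives the finiteness of $\mathbf{E}[\log X_i!]$ that your Step~1 rearrangement tacitly requires; the paper obtains it instead from finite exponential moments. With this correction, the rest of your proposal goes through as in the paper.
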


\begin{proof}[Proof of \Cref{thm:poisson-maximum-entropy}]
    Let \(\*Z \sim \pi_{\*m}\). Applying \Cref{thm:poisson-laplace-transform-domination} with \(\boldsymbol{\lambda} = -s \*e_i\) gives
    \begin{equation}
        \*E\stp{e^{-s X_i}} \le \*E\stp{e^{-s Z_i}}, \quad \forall s \ge 0, \, i \in \stp{d}.
        \label{eq:poisson-coordinate-laplace-order}
    \end{equation}
    For every \(n \in \bb N\), Frullani's integral identity gives
    \begin{equation}
        \log n! = \int_0^{+\infty} \frac{e^{-s}}{s} \tp{n - \frac{1 - e^{-s n}}{1 - e^{-s}}} \dd s.
        \label{eq:frullani-log-factorial}
    \end{equation}
    \Cref{thm:poisson-laplace-transform-domination} implies that \(\*X\) has finite exponential moments. Consequently, \(H\tp{\mu} < \infty\) and \(\*E\stp{\log X_i!} < \infty\) for every \(i \in \stp{d}\); the analogous quantities for \(\*Z\) are finite as well. Taking expectations in \eqref{eq:frullani-log-factorial}, using Tonelli's theorem, and subtracting the resulting identities for \(X_i\) and \(Z_i\), whose means both equal \(m_i\), gives
    \begin{equation}
        \*E\stp{\log Z_i!} - \*E\stp{\log X_i!} = \int_0^{+\infty} \frac{e^{-s}}{s \tp{1 - e^{-s}}} \tp{\*E\stp{e^{-s Z_i}} - \*E\stp{e^{-s X_i}}} \dd s \ge 0,
        \label{eq:factorial-moment-comparison}
    \end{equation}
    where the inequality follows from \eqref{eq:poisson-coordinate-laplace-order}.

    The product Poisson mass function is
    \[
    \pi_{\*m}\tp{\*n} = e^{-\sum_{i = 1}^d m_i} \prod_{i = 1}^d \frac{m_i^{n_i}}{n_i!}, \quad \*n \in \bb N^d.
    \]
    Since \(\mean{\mu} = \*m\), a direct computation gives
    \[
    \-{KL}\tp{\mu \| \pi_{\*m}} = -H\tp{\mu} + \sum_{i = 1}^d \tp{m_i - m_i \log m_i} + \sum_{i = 1}^d \*E\stp{\log X_i!},
    \]
    whereas
    \[
    H\tp{\pi_{\*m}} = \sum_{i = 1}^d \tp{m_i - m_i \log m_i} + \sum_{i = 1}^d \*E\stp{\log Z_i!}.
    \]
    Subtracting these identities and applying \eqref{eq:factorial-moment-comparison} gives
    \[
    H\tp{\pi_{\*m}} - H\tp{\mu} = \-{KL}\tp{\mu \| \pi_{\*m}} + \sum_{i = 1}^d \tp{\*E\stp{\log Z_i!} - \*E\stp{\log X_i!}} \ge \-{KL}\tp{\mu \| \pi_{\*m}}.
    \]
    This proves \eqref{eq:poisson-maximum-entropy-stability}. The product Poisson law is ULC and therefore belongs to \(\-{WLSI}\tp{1}\) by \Cref{thm:ulc-modified-log-sobolev-inequality}. Finally, if \(H\tp{\mu} = H\tp{\pi_{\*m}}\), then \eqref{eq:poisson-maximum-entropy-stability} forces \(\-{KL}\tp{\mu \| \pi_{\*m}} = 0\), and hence \(\mu = \pi_{\*m}\), proving uniqueness.
\end{proof}

\section{Examples}
\label{sec:applications}

In this section, we establish ultra-log-concavity for polymatroid and Potts models and derive the resulting algorithmic consequences.

Our notion of \(\delta\)-ULC extends the criterion in \cite[Theorem~1.9]{CCCYZ25} from distributions on \(\set{0, 1}^d\) to distributions on \(\bb N^d\). Consequently, all of the applications in \cite[Section~5]{CCCYZ25} also fit our framework, and we do not repeat them here.

\subsection{Weighted Independent Vectors in Polymatroids}
\label{subsec:polymatroid}

We first recall the rank-function description of a discrete polymatroid.

\begin{definition}[Discrete Polymatroids]
    An integral polymatroid rank function on \(\stp{d}\) is a map \(r:2^{\stp{d}} \to \bb N\) satisfying the following properties:
    \begin{itemize}
        \item Normalization: \(r\tp{\varnothing} = 0\).
        \item Monotonicity: \(r\tp{S} \le r\tp{T}\) whenever \(S \subseteq T \subseteq \stp{d}\).
        \item Submodularity: \(r\tp{S} + r\tp{T} \ge r\tp{S \cup T} + r\tp{S \cap T}\) for all \(S, T \subseteq \stp{d}\).
    \end{itemize}
    For \(\*n \in \bb N^d\) and \(S \subseteq \stp{d}\), write \(\*n\tp{S} \defeq \sum_{i \in S} n_i\). The discrete polymatroid associated with \(r\) is the set of independent vectors
    \[
    P_r \defeq \set{\*n \in \bb N^d \cmid \forall S \subseteq \stp{d}, \, \*n\tp{S} \le r\tp{S}}.
    \]
\end{definition}

The set \(P_r\) is finite and downward closed. Given an activity vector \(\boldsymbol{\lambda} \in \bb R_{> 0}^d\), define the Poisson-weighted measure on \(P_r\) by
\[
\mu_{r, \boldsymbol{\lambda}}\tp{\*n} \propto \frac{\boldsymbol{\lambda}^{\*n}}{\*n!}, \quad \*n \in P_r.
\]
Equivalently, \(\mu_{r, \boldsymbol{\lambda}}\) is the law of a vector whose coordinates are independent Poisson random variables with means \(\lambda_1, \dots, \lambda_d\), conditioned to lie in \(P_r\). If \(r\) is the rank function of a matroid, then \(P_r \subseteq \set{0, 1}^d\), and this construction recovers the weighted independent-set measure considered in \cite[Section~5.1]{CCCYZ25}.

\begin{theorem}
    For every integral polymatroid rank function \(r\) and every activity vector \(\boldsymbol{\lambda} \in \bb R_{> 0}^d\), the measure \(\mu_{r, \boldsymbol{\lambda}}\) is ULC.
    \label{thm:polymatroid-independent-vectors-ulc}
\end{theorem}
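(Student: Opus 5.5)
Since $\mu_{r,\boldsymbol{\lambda}}$ is $\boldsymbol{\lambda}*\mu_{r,\*1}$ up to normalization, \Cref{lem:ulc-closure-pinning-exponential-tilt} reduces the claim to $\boldsymbol{\lambda}=\*1$. For this measure, $a(\*n)=\*n!\mu(\*n)$ is constant, say $1$, on $P_r$ and vanishes off it. The support $P_r$ is finite and downward closed.

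Fix $\*n\in P_r$ and its active set $\+I=\+I_{\*n}$. For $i,j\in\+I$ the ratio $a(\*n)a(\*n+\*e_i+\*e_j)/(a(\*n+\*e_i)a(\*n+\*e_j))$ equals $\*1[\*n+\*e_i+\*e_j\in P_r]$. The ULC condition \eqref{eq:ulc-condition} with $\delta=1$ therefore says that the matrix $\*C_{\*n}$, with entries $\tp{\*C_{\*n}}_{ij}=\*1[\*n+\*e_i+\*e_j\notin P_r]$ for $i,j\in\+I$, is positive semidefinite.

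The plan is to show that this $0/1$ matrix is, up to simultaneous permutation, block diagonal with all-ones blocks plus a diagonal with entries in $\set{0,1}$. Such a matrix is a sum of rank-one PSD matrices and diagonal PSD matrices, hence PSD.

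To get this structure, I would use the standard polymatroid facts. For $\*m\in P_r$, call $S$ tight if $\*m(S)=r(S)$. By submodularity, tight sets are closed under union and intersection, so there is a unique maximal tight set $T(\*m)$. Moreover $\*m+\*e_i\in P_r$ if and only if $i\notin T(\*m)$. Hence $\+I=[d]\setminus T(\*n)$.

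Now define an equivalence relation on $\+I$ by $i\sim j$ when $\*n+\*e_i+\*e_j\notin P_r$ for $i\ne j$. This occurs exactly when some set $S\ni i,j$ has $\*n(S)=r(S)-1$, i.e. $S$ is ``near-tight'' with slack $1$. The main step is to verify transitivity. Suppose $S\ni i,j$ and $S'\ni j,k$ both have slack $1$ and are not tight. Submodularity gives
\[
\bigl(r(S\cup S')-\*n(S\cup S')\bigr)+\bigl(r(S\cap S')-\*n(S\cap S')\bigr)\le 2.
\]
Since $j\in S\cap S'$ and $j\in\+I$, the set $S\cap S'$ is not tight, so it has slack $\ge1$. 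Then $S\cup S'$ has slack $\le1$, and it is not tight because it contains $i\in\+I$. So $S\cup S'$ has slack exactly $1$ and contains $i,k$, giving $i\sim k$.

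With transitivity in hand, the off-diagonal entries are constant ($1$) within each class and $0$ across classes. Diagonal entries are $\*1[\*n+2\*e_i\notin P_r]\in\set{0,1}$. For any $i$ belonging to a class of size at least $2$, a witness set $S$ has slack $1$, so $\*n+2\*e_i\notin P_r$ and the diagonal entry is $1$. The matrix therefore equals a sum of all-ones blocks on the classes of size $\ge2$, plus a $0/1$ diagonal on the singleton classes. This is PSD.

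I expect the transitivity/uncrossing step to be the only real obstacle, in particular handling $i=k$ and the case where $S\cap S'$ might be tight. That case is excluded because $j$ is active, and this is the point to check carefully. The rest is routine. Since $P_r$ is finite, no limiting arguments are needed.
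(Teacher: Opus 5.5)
Your proposal is correct and follows essentially the paper's argument. You reduce the ULC matrix to the $0/1$ matrix $\bigl(\mathbf{1}[\mathbf{n}+\mathbf{e}_i+\mathbf{e}_j\notin P_r]\bigr)_{i,j\in\mathcal{I}_{\mathbf{n}}}$, use submodularity of the slack $r(S)-\mathbf{n}(S)$ to show the slack-one relation is transitive (activity of $j$ rules out a tight $S\cap S'$, activity of $i$ a tight $S\cup S'$), and conclude that the matrix is block diagonal with all-ones blocks plus a diagonal part. The differences are cosmetic: the reduction to $\boldsymbol{\lambda}=\mathbf{1}$ via the tilt lemma is valid but unnecessary, since the $\boldsymbol{\lambda}$ factors cancel in the ratios directly; and the paper defines the relation on the coordinates with diagonal entry $1$ and shows the remaining rows vanish, rather than working on distinct pairs and handling the diagonal separately.
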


\begin{remark}
    Lorentzian polynomials provide an alternative proof of \Cref{thm:polymatroid-independent-vectors-ulc}; see \cite{BH20}. Indeed, it suffices to observe that \(P_r\) is \(M^{\natural}\)-convex and then apply a suitable homogenization.
\end{remark}

\begin{proof}[Proof of \Cref{thm:polymatroid-independent-vectors-ulc}]
    Write \(\mu \defeq \mu_{r, \boldsymbol{\lambda}}\) and \(a\tp{\*n} \defeq \*n! \mu\tp{\*n}\). Then
    \[
    a\tp{\*n} \propto \boldsymbol{\lambda}^{\*n}, \quad \*n \in P_r.
    \]
    Fix \(\*n \in P_r\), and let \(\+I_{\*n} \defeq \set{k \in \stp{d} \cmid \*n + \*e_k \in P_r}\) be its set of active coordinates. For every \(i, j \in \+I_{\*n}\),
    \[
    \frac{a\tp{\*n} a\tp{\*n + \*e_i + \*e_j}}{a\tp{\*n + \*e_i} a\tp{\*n + \*e_j}} = \*1_{\set{\*n + \*e_i + \*e_j \in P_r}}.
    \]
    Consequently, for \(\delta = 1\), the matrix on the left-hand side of \eqref{eq:ulc-condition} reduces to
    \[
    \*M_{\*n} \defeq \tp{\*1_{\set{\*n + \*e_i + \*e_j \notin P_r}}}_{i, j \in \+I_{\*n}}.
    \]
    It remains to show that \(\*M_{\*n}\) is positive semidefinite.

    For \(S \subseteq \stp{d}\), define the slack
    \[
    s_{\*n}\tp{S} \defeq r\tp{S} - \*n\tp{S}.
    \]
    Since \(r\) is submodular and \(\*n\tp{\cdot}\) is modular, \(s_{\*n}\) is a nonnegative, integer-valued submodular function. Moreover,
    \[
    i \in \+I_{\*n} \iff \*n + \*e_i \in P_r \iff s_{\*n}\tp{S} \ge 1 \text{ for every } S \subseteq \stp{d} \text{ containing } i,
    \]
    while, for \(i, j \in \+I_{\*n}\),
    \[
    \tp{\*M_{\*n}}_{ij} = 1 \iff \*n + \*e_i + \*e_j \notin P_r \iff s_{\*n}\tp{S} = 1 \text{ for some } S \subseteq \stp{d} \text{ containing } i, j.
    \]

    Let \(B_{\*n} \defeq \set{i \in \+I_{\*n} \cmid \tp{\*M_{\*n}}_{ii} = 1}\), and define a relation on \(B_{\*n}\) by setting \(i \sim j\) whenever \(\tp{\*M_{\*n}}_{ij} = 1\). This relation is reflexive and symmetric. To prove transitivity, suppose that \(i \sim j\) and \(j \sim k\). Choose \(S, T \subseteq \stp{d}\) such that \(i, j \in S\), \(j, k \in T\), and \(s_{\*n}\tp{S} = s_{\*n}\tp{T} = 1\). Since \(j\) is active and belongs to \(S \cap T\), while \(i\) is active and belongs to \(S \cup T\), the preceding characterization, together with the submodularity of \(s_{\*n}\), gives
    \[
    2 = s_{\*n}\tp{S} + s_{\*n}\tp{T} \ge s_{\*n}\tp{S \cap T} + s_{\*n}\tp{S \cup T} \ge 2.
    \]
    Both terms on the right are at least \(1\), so equality holds throughout and \(s_{\*n}\tp{S \cup T} = 1\). Therefore, \(i \sim k\), and \(\sim\) is an equivalence relation on \(B_{\*n}\). Moreover, for any \(i \in \+I_{\*n} \setminus B_{\*n}\), \(\*n + 2 \*e_i \in P_r\), so \(s_{\*n}\tp{S} \ge 2\) for every \(S \subseteq \stp{d}\) containing \(i\). Thus \(\tp{\*M_{\*n}}_{ij} = \tp{\*M_{\*n}}_{ji} = 0\) for every \(j \in \+I_{\*n}\).

    Therefore, after reordering the coordinates, \(\*M_{\*n}\) is block diagonal: each equivalence class of \(\sim\) contributes an all-ones block, while the coordinates outside \(B_{\*n}\) contribute a zero block. Every such block is positive semidefinite, so \(\*M_{\*n} \succeq \*O\). Since this holds for every \(\*n \in P_r\), the measure \(\mu_{r, \boldsymbol{\lambda}}\) is ULC.
\end{proof}

\begin{corollary}
    The probability-generating function of \(\mu_{r, \boldsymbol{\lambda}}\) is completely log-concave.
\end{corollary}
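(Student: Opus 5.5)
This corollary follows by combining \Cref{thm:polymatroid-independent-vectors-ulc} with the equivalence $1 \Leftrightarrow 5$ in \Cref{thm:ulc-generating-function-characterization}. The first step is to check that the hypotheses of that characterization hold. The support of $\mu_{r,\boldsymbol{\lambda}}$ is exactly $P_r$, because every $\*n \in P_r$ has positive weight $\boldsymbol{\lambda}^{\*n}/\*n! > 0$. The set $P_r$ is downward closed: if $\*m \le \*n \in P_r$, then $\*m\tp{S} \le \*n\tp{S} \le r\tp{S}$ for every $S$. It is also finite, since $n_i \le r\tp{\set{i}}$ for each $i$. Hence $\mu_{r,\boldsymbol{\lambda}}$ is a probability measure on $\bb N^d$ with finite, downward closed support. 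Moreover, its generating function $g_{\mu_{r,\boldsymbol{\lambda}}}$ is a polynomial, so every directional derivative $\partial_{\*v_1}\cdots\partial_{\*v_k} g$ is finite everywhere. This removes any analytic subtlety in the definition of complete log-concavity.

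The second step is to invoke \Cref{thm:polymatroid-independent-vectors-ulc}, which states that $\mu_{r,\boldsymbol{\lambda}}$ is ULC (Item~1). The implication $1 \Rightarrow 5$ of \Cref{thm:ulc-generating-function-characterization} then gives that $g_{\mu_{r,\boldsymbol{\lambda}}}$ is completely log-concave.

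The only substantive input is the implication $1 \Rightarrow 5$, whose proof the paper defers to \Cref{subsec:ulc-completely-log-concavity}. There it is obtained by homogenizing $g_\mu$ with a slack variable $x_0$ and applying the Lorentzian theory of \cite{BH20}. For finitely supported $\mu$ such as ours, this homogenization is a genuine polynomial of degree $R = r\tp{\stp{d}}$, which is exactly the setting where that machinery applies cleanly. I would therefore expect no obstacle beyond citing that result.

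As an alternative sanity check, one could instead argue as in the remark following \Cref{thm:polymatroid-independent-vectors-ulc}. Since $P_r$ is $M^{\natural}$-convex, the homogenized polynomial $\sum_{\*n \in P_r} \boldsymbol{\lambda}^{\*n} x_0^{R - \norm{\*n}_1} \big/ \tp{\*n!\,\tp{R - \norm{\*n}_1}!}$ is Lorentzian by \cite{BH20}. Setting $x_0 = 1$ preserves complete log-concavity, and dropping the factor $1/\tp{R - \norm{\*n}_1}!$ should be handled by normalization. However, this route changes the coefficients, so I would use the primary argument through \Cref{thm:ulc-generating-function-characterization}.
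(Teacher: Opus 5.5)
Your proof is correct and matches the paper: the corollary is stated without a separate proof, as the immediate combination of \Cref{thm:polymatroid-independent-vectors-ulc} with the implication $1 \Rightarrow 5$ of \Cref{thm:ulc-generating-function-characterization}, and you correctly check that the support $P_r$ is finite and downward closed. One small correction to your description of that implication: in \Cref{subsec:ulc-completely-log-concavity} the paper does not homogenize at degree $R$. It uses the rescaled polynomials $N!\,\mathcal{H}_N g_\mu(1,\boldsymbol{\theta}/N)$ and lets $N \to \infty$, which removes the distortion from the factors $1/(N-\|\mathbf{n}\|_1)!$ that you rightly flag in your alternative route.
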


The ULC conclusion has an immediate algorithmic consequence. By \Cref{thm:ulc-modified-log-sobolev-inequality}, the linear-death chain for \(\mu_{r, \boldsymbol{\lambda}}\) satisfies a modified log-Sobolev inequality with constant \(1\). Uniformization followed by lazification yields the following sampler and mixing bound.

\begin{corollary}
    Set \(R \defeq r\tp{\stp{d}}\), \(\Lambda \defeq \sum_{i = 1}^d \lambda_i\), \(C \defeq R + \Lambda\). Define a Markov kernel \(Q\) on \(P_r\) by
    \[
    Q\tp{\*n, \*n - \*e_i} \defeq \frac{n_i}{C}, \quad i \in \stp{d}, \, n_i > 0, \qquad Q\tp{\*n, \*n + \*e_i} \defeq \frac{\lambda_i}{C}, \quad i \in \+I_{\*n},
    \]
    with the remaining probability assigned to staying at \(\*n\), where \(\+I_{\*n} \defeq \set{i \in \stp{d} \cmid \*n + \*e_i \in P_r}\). Then \(Q\) is reversible with respect to \(\mu_{r, \boldsymbol{\lambda}}\), and its lazification \(\bar Q \defeq \tp{I + Q} / 2\) satisfies a modified log-Sobolev inequality with constant at least \(1 / \tp{2 C}\). Consequently, for every \(\eps \in \tp{0, 1 / 2}\),
    \[
    T_{\-{mix}}\tp{\eps; \bar Q} = O_{\boldsymbol{\lambda}}\tp{\tp{d + R} \tp{\log R + \log \log\tp{d + R} + \log \frac{1}{\eps}}}.
    \]
    Moreover, this chain can be implemented directly given a membership oracle for \(P_r\).
    \label{cor:polymatroid-birth-death-mixing}
\end{corollary}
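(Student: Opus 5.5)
We verify reversibility directly, then transfer the modified log-Sobolev inequality of \Cref{thm:ulc-modified-log-sobolev-inequality} to the discrete kernel via uniformization, and finally apply \Cref{thm:functional-inequality-mixing}.

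\emph{Reversibility.} For $\mu = \mu_{r,\boldsymbol{\lambda}}$ and $\*n, \*n + \*e_i \in P_r$ we have
\[
\mu\tp{\*n}\,\frac{\lambda_i}{C} \propto \frac{\boldsymbol{\lambda}^{\*n+\*e_i}}{\*n!\,C} = \frac{\boldsymbol{\lambda}^{\*n+\*e_i}}{\tp{\*n+\*e_i}!}\cdot\frac{n_i+1}{C} \propto \mu\tp{\*n+\*e_i}\, Q\tp{\*n+\*e_i,\*n}.
\]
For $Q$ to be a Markov kernel, the off-diagonal mass at $\*n$ must satisfy $\sum_i n_i + \sum_{i\in\+I_{\*n}} \lambda_i \le \norm{\*n}_1 + \Lambda \le R + \Lambda = C$. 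Here $\norm{\*n}_1 \le r\tp{\stp{d}} = R$ by the defining constraint of $P_r$.

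\emph{Functional inequality.} The birth rates are $b_i\tp{\*n} = \tp{n_i+1}\mu\tp{\*n+\*e_i}/\mu\tp{\*n} = \lambda_i\*1_{\set{i\in\+I_{\*n}}}$. Hence $Q - I = \+L / C$ exactly, where $\+L$ is the generator in \eqref{eq:birth-death-generator}, so $\+E_Q = \+E/C$. By \Cref{thm:polymatroid-independent-vectors-ulc}, $\mu$ is ULC, and its support is finite, so first moments are finite. \Cref{thm:ulc-modified-log-sobolev-inequality} then gives $\*{Ent}_\mu\stp{f} \le \+E\tp{\log f, f} = C\,\+E_Q\tp{\log f, f}$. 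Lazification halves the Dirichlet form, $\+E_{\bar Q} = \+E_Q/2$, so $\bar Q$ satisfies the modified log-Sobolev inequality with constant $1/\tp{2C}$. In addition, $\bar Q$ is positive semidefinite, and it is irreducible, since every state connects to $\*0$ by deaths; it is aperiodic because $\bar Q$ has holding probability at least $1/2$.

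\emph{Mixing bound.} \Cref{thm:functional-inequality-mixing} yields $T_{\-{mix}} \le \ceil{2C\tp{\log\tfrac{1}{2\eps^2} + \log\log\tfrac{1}{\mu_{\min}}}}$. The main step, and the only real estimate, is lower-bounding $\mu_{\min}$. We write $\mu\tp{\*n} = \tp{\boldsymbol{\lambda}^{\*n}/\*n!}/Z$ with $Z \le e^{\Lambda}$, and use $\norm{\*n}_1 \le R$ to get
\[
\log\frac{1}{\mu\tp{\*n}} \le \Lambda + \log \*n! + R\max_i\abs{\log\lambda_i} \le \Lambda + R\log R + R\,O_{\boldsymbol{\lambda}}\tp{1},
\]
using $\*n! \le \norm{\*n}_1! \le R^R$. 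This gives $\log\log\tp{1/\mu_{\min}} = \log R + \log\log R + O_{\boldsymbol{\lambda}}\tp{1}$, which is $O\tp{\log R + \log\log\tp{d+R}}$ up to $\boldsymbol{\lambda}$-dependent constants; one should check the case $R \le 1$ separately, where it is absorbed. Since $C = R + \Lambda = O_{\boldsymbol{\lambda}}\tp{d + R}$, because $\Lambda \le d\max_i\lambda_i$, the stated bound follows.

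\emph{Implementation.} One step of $Q$ samples a uniformly random "clock" among the total rate $C$. It either proposes a death of one of the $\norm{\*n}_1$ particles, chosen with weight $1$ each, with the padding up to $R$ giving a hold, or proposes a birth in coordinate $i$ with probability $\lambda_i/C$. A birth is accepted if one oracle query confirms $\*n + \*e_i \in P_r$. Deaths always stay in $P_r$ by downward closure.
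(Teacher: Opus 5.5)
Your proof follows the route the paper indicates: ULC of \(\mu_{r,\boldsymbol\lambda}\), the constant-\(1\) Wu inequality, uniformization at rate \(C\), lazification, then \Cref{thm:functional-inequality-mixing}. The following parts are correct: reversibility, the bound \(\|\mathbf n\|_1 \le R\) on the total jump rate, the identity \(Q - I = \mathcal{L}/C\), the constant \(1/(2C)\) for \(\bar Q\), positive semidefiniteness, ergodicity, and the implementation.

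The gap is in the \(\mu_{\min}\) estimate, which you yourself call the only real estimate. From \(Z \le e^{\Lambda}\) you get \(\log(1/\mu_{\min}) \le \Lambda + R\log R + O_{\boldsymbol\lambda}(R)\). You then drop \(\Lambda\) when you write \(\log\log(1/\mu_{\min}) = \log R + \log\log R + O_{\boldsymbol\lambda}(1)\). This contradicts your own bound \(C = O_{\boldsymbol\lambda}(d+R)\), which uses that \(\Lambda\) can be as large as \(d\max_i\lambda_i\). With \(\Lambda\) of order \(d\), your estimate only gives \(\log\log(1/\mu_{\min}) \lesssim \log(d + R\log R)\). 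That is of order \(\log d\) when \(d \gg R\) (e.g. \(R\) fixed, \(d \to \infty\)). You would therefore only obtain \(T_{\mathrm{mix}} = O_{\boldsymbol\lambda}\bigl((d+R)(\log(d+R) + \log(1/\varepsilon))\bigr)\), not the stated \(\log R + \log\log(d+R)\). If instead the implicit constant may depend on the whole vector \(\boldsymbol\lambda\), then \(\Lambda = O_{\boldsymbol\lambda}(1)\), but \(d\) is then fixed too and the \(d\)-dependence of the bound says nothing.

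The fix is to use the constraint \(\|\mathbf n\|_1 \le R\) on \(P_r\) in the partition function as well. By the multinomial theorem and \(\binom{R}{k} \ge 1/k!\),
\[
Z = \sum_{\mathbf n \in P_r} \frac{\boldsymbol\lambda^{\mathbf n}}{\mathbf n!} \le \sum_{k=0}^{R} \frac{\Lambda^k}{k!} \le (1+\Lambda)^R .
\]
Hence
\[
\log\frac{1}{\mu_{\min}} \le R\log\bigl(1 + d\max_i\lambda_i\bigr) + R\log R + R\max_i\left|\log\lambda_i\right| = O_{\boldsymbol\lambda}\bigl(R\log(d+R)\bigr),
\]
so \(\log\log(1/\mu_{\min}) \le \log R + \log\log(d+R) + O_{\boldsymbol\lambda}(1)\). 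This has exactly the shape of the stated bound. Combined with \(\rho = 1/(2C)\) and \(C \le R + d\max_i\lambda_i\), the rest of your argument then goes through.
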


\subsection{Antiferromagnetic \texorpdfstring{\(q\)}{q}-Potts Model}
\label{subsec:q-potts}

Let \(G = \tp{V, E}\) be a graph, and fix \(q \ge 3\). For a configuration \(\sigma \in \stp{q}^V\), let
\[
m_G\tp{\sigma} \defeq \abs{\set{vw \in E \cmid \sigma_v = \sigma_w}}
\]
be the number of monochromatic edges. The \(q\)-state Potts measure with interaction parameter \(B > 0\) is
\[
\mu_{G, B}\tp{\sigma} \propto B^{m_G\tp{\sigma}}, \quad \sigma \in \stp{q}^V.
\]
The model is ferromagnetic for \(B > 1\) and antiferromagnetic for \(0 < B < 1\).

To place this measure within the downward closed framework, we designate color \(q\) as a reference color. Rather than using \(q\) one-hot coordinates at each vertex, we represent color \(q\) by \(\*0 \in \bb N^{q - 1}\) and color \(k \in \stp{q - 1}\) by \(\*e_k\). Equivalently, define \(\varphi: \stp{q}^V \to \bb N^{V \times \stp{q - 1}}\) by
\begin{equation}
    \varphi\tp{\sigma}_{v, k} \defeq \*1_{\set{\sigma_v = k}}, \quad v \in V, \, k \in \stp{q - 1}.
    \label{eq:downward-closed-potts-encoding}
\end{equation}
Its image is
\[
\Omega \defeq \set{\*x \in \set{0, 1}^{V \times \stp{q - 1}} \cmid \sum_{k = 1}^{q - 1} x_{v, k} \le 1, \quad \forall v \in V},
\]
which is downward closed. We henceforth identify \(\mu_{G, B}\) with this pushforward and regard it as a probability measure on \(\Omega \subseteq \bb N^{V \times \stp{q - 1}}\).

\begin{theorem}
    Let \(G\) be a graph with maximum degree at most \(\Delta\), where \(\Delta \ge 1\), and let \(\*A_G\) be its adjacency matrix. Set \(\lambda^{\star} \defeq -\lambda_{\min}\tp{\*A_G}\), and suppose that \(q \ge 3\), \(0 < B < 1\), and
    \[
    \lambda^{\star} \tp{1 - B} B^{1 - \Delta} < 1.
    \]
    Then the embedded antiferromagnetic Potts measure \(\mu_{G, B}\) is \(\delta\)-ULC with
    \[
    \delta \defeq 1 - \lambda^{\star} \tp{1 - B} B^{1 - \Delta}.
    \]
    \label{thm:antiferromagnetic-potts-ulc}
\end{theorem}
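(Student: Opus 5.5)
Since $\Omega\subseteq\{0,1\}^{V\times[q-1]}$, every factorial weight is $1$, so $a(\*x)=\mu_{G,B}(\*x)\propto B^{m_G(\sigma)}$. The plan is to compute the matrix in \eqref{eq:ulc-condition} exactly in Kronecker form, bound the diagonal part $\*D_{\*x}$ from below by a scalar, and reduce positive semidefiniteness to an eigenvalue bound for an induced adjacency matrix.

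\textbf{Step 1: active coordinates and first-order ratios.} Fix $\*x=\varphi(\sigma)$. The active coordinates are $\+I_{\*x}=U\times[q-1]$, where $U\defeq\{v\in V: \sigma_v=q\}$. For $v\in U$, write $c_v(k)$ for the number of neighbours of $v$ with colour $k$. Recolouring $v$ from $q$ to $k$ changes $m_G$ by $c_v(k)-c_v(q)$. Hence
\[
\frac{a(\*x)}{a(\*x+\*e_{v,k})}=B^{c_v(q)-c_v(k)}\ge B^{c_v(q)}\ge B^{\Delta}.
\]
Since $0<B<1$, this gives $\*D_{\*x}\succeq B^{\Delta}\*I$.

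\textbf{Step 2: second-order ratios.} Consider $i=(v,k)$ and $j=(w,l)$.
\begin{itemize}
\item If $v=w$, then $\*x+\*e_i+\*e_j\notin\Omega$, so $(\*C_{\*x})_{ij}=1$. This includes the diagonal $i=j$.
\item If $v\neq w$ are nonadjacent, the two recolourings do not interact and the entry is $0$.
\item If $v\sim w$ (both in $U$), only the edge $vw$ interacts. It is monochromatic in $\sigma$, not monochromatic after a single recolouring, and monochromatic after both recolourings iff $k=l$. The ratio is therefore $B^{1+[k=l]}$, and the entry is $1-B^{1+[k=l]}$.
\end{itemize}
Thus, with $\*J$ the all-ones matrix on $[q-1]$ and $\*A_U$ the adjacency matrix of $G[U]$,
\[
\*C_{\*x}=\*I_U\otimes\*J+\*A_U\otimes\bigl((1-B)\*J+B(1-B)\*I\bigr).
\]

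\textbf{Step 3: spectral bound.} I claim $\*C_{\*x}\succeq -\lambda^{\star}B(1-B)\*I$. By Cauchy interlacing, $\*A_U\succeq-\lambda^{\star}\*I$. Decompose $\bb R^{q-1}$ into $\mathrm{span}(\*1)$ and its orthogonal complement; both are invariant under $\*J$.
\begin{itemize}
\item On $\*1^{\perp}$, $\*C_{\*x}$ acts as $B(1-B)\*A_U\succeq-\lambda^{\star}B(1-B)\*I$.
\item On $\mathrm{span}(\*1)$, it acts as $(q-1)\*I+\bigl((q-1)(1-B)+B(1-B)\bigr)\*A_U$. This is bounded below by $(q-1)(1-\lambda^{\star}(1-B))-\lambda^{\star}B(1-B)$.
\end{itemize}
The first term in the last expression is nonnegative: the hypothesis gives $\lambda^{\star}(1-B)B^{1-\Delta}<1$, and $B^{1-\Delta}\ge1$, so $\lambda^{\star}(1-B)<1$. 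This proves the claim.

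\textbf{Step 4: conclusion.} Since $1-\delta\ge0$, Steps 1 and 3 give
\[
\*C_{\*x}+(1-\delta)\*D_{\*x}\succeq\bigl((1-\delta)B^{\Delta}-\lambda^{\star}B(1-B)\bigr)\*I.
\]
The right-hand side vanishes exactly when $1-\delta=\lambda^{\star}(1-B)B^{1-\Delta}$, which is the stated choice of $\delta$. Hence $\mu_{G,B}$ is $\delta$-ULC.

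The main obstacle is the bookkeeping in Step 2: correctly identifying the $B^{1+[k=l]}$ interaction and the infeasible same-vertex pairs, since these produce the Kronecker structure that drives Step 3.
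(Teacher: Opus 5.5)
Your proof is correct and follows essentially the same route as the paper: the same Kronecker form $\*I_U\otimes\*J+(1-B)\*A_U\otimes(\*J+B\*I)$ for the ULC matrix, interlacing to get $\*A_U\succeq-\lambda^{\star}\*I$, a split of the colour coordinates along $\mathrm{span}(\*1)\oplus\*1^{\perp}$, and the diagonal bound $\*D_{\*x}\succeq B^{\Delta}\*I$. The only difference is cosmetic: the paper first writes the exact smallest eigenvalue of $\*C_{\*x}$ and then bounds it, whereas you bound each invariant block directly.
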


\begin{proof}[Proof of \Cref{thm:antiferromagnetic-potts-ulc}]
    Set \(p \defeq q - 1\). Fix \(\*x \in \Omega\), write \(\sigma \defeq \varphi^{-1}\tp{\*x}\), and let \(U \defeq \set{v \in V \cmid \sigma_v = q}\). The active coordinates at \(\*x\) are precisely \(U \times \stp{p}\). If \(U = \varnothing\), the ULC condition is vacuous, so assume that \(U \ne \varnothing\). Since \(\Omega \subseteq \set{0, 1}^{V \times \stp{p}}\), the factorial-weighted mass satisfies \(a\tp{\*x} \defeq \*x! \mu_{G, B}\tp{\*x} = \mu_{G, B}\tp{\*x}\). Let \(\*M_{\*x}\) denote the first matrix on the left-hand side of \eqref{eq:ulc-condition}. A direct computation gives
    \[
    \tp{\*M_{\*x}}_{\tp{v, k}, \tp{w, \ell}} = \begin{cases}
        1, & v = w, \\
        1 - B^2, & v \ne w, \, vw \in E, \, k = \ell, \\
        1 - B, & v \ne w, \, vw \in E, \, k \ne \ell, \\
        0, & v \ne w, \, vw \notin E.
    \end{cases}
    \]
    Let \(\*A_U\) be the adjacency matrix of the induced subgraph \(G\stp{U}\), let \(\*I_U\) be the identity matrix indexed by \(U\), and let \(\*J_p\) and \(\*I_p\) be the \(p \times p\) all-ones and identity matrices, respectively. Grouping the active coordinates by vertex gives
    \[
    \*M_{\*x} = \*I_U \otimes \*J_p + \tp{1 - B} \*A_U \otimes \tp{\*J_p + B \*I_p}.
    \]
    Diagonalizing the matrices acting on the color coordinates yields
    \[
    \lambda_{\min}\tp{\*M_{\*x}} = B \tp{1 - B} \lambda_{\min}\tp{\*A_U} + \min\set{0, \, p \tp{1 + \tp{1 - B} \lambda_{\min}\tp{\*A_U}}}.
    \]
    By eigenvalue interlacing, \(\lambda_{\min}\tp{\*A_U} \ge \lambda_{\min}\tp{\*A_G} = -\lambda^{\star}\). Moreover, since \(B^{1 - \Delta} \ge 1\), the hypothesis also implies \(\lambda^{\star} \tp{1 - B} < 1\). Hence,
    \[
    \lambda_{\min}\tp{\*M_{\*x}} \ge -B \tp{1 - B} \lambda^{\star} + \min\set{0, \, p \tp{1 - \tp{1 - B} \lambda^{\star}}} = -B \tp{1 - B} \lambda^{\star}.
    \]
    Let \(\*D_{\*x}\) denote the diagonal matrix in \eqref{eq:ulc-condition}. For \(c \in \stp{q}\), let \(d_c^{\sigma}\tp{v}\) be the number of neighbors of \(v\) having color \(c\) under \(\sigma\). Let \(\*e_{v, k}\) denote the standard basis vector corresponding to \(\tp{v, k}\). Switching an active vertex \(v \in U\) from color \(q\) to color \(k \in \stp{p}\) gives
    \[
    \tp{\*D_{\*x}}_{\tp{v, k}, \tp{v, k}} = \frac{a\tp{\*x}}{a\tp{\*x + \*e_{v, k}}} = B^{d_q^{\sigma}\tp{v} - d_k^{\sigma}\tp{v}} \ge B^{\Delta},
    \]
    where the inequality follows from \(d_q^{\sigma}\tp{v} - d_k^{\sigma}\tp{v} \le \deg_G\tp{v} \le \Delta\). Thus \(\lambda_{\min}\tp{\*D_{\*x}} \ge B^{\Delta}\), and hence
    \[
    \*M_{\*x} + \tp{1 - \delta} \*D_{\*x} \succeq \tp{-B \tp{1 - B} \lambda^{\star} + \tp{1 - \delta} B^{\Delta}} \*I_{U \times \stp{p}} = \*O,
    \]
    where the equality follows from the definition of \(\delta\). Hence the ULC condition holds at every \(\*x \in \Omega\).
\end{proof}

Set \(n \defeq \abs{V}\). For \(v \in V\) and \(c \in \stp{q}\), let \(d_c^{\sigma}\tp{v}\) be the number of neighbors of \(v\) having color \(c\), and let \(\sigma^{v \gets c}\) be the configuration obtained from \(\sigma\) by assigning color \(c\) to \(v\). The transition kernel of the single-site heat-bath Glauber dynamics is
\[
P_{\-{GD}}\tp{\sigma, \sigma^{v \gets c}} \defeq \frac{1}{n} \frac{B^{d_c^{\sigma}\tp{v}}}{\sum_{a = 1}^q B^{d_{a}^{\sigma}\tp{v}}}, \quad v \in V, \, c \in \stp{q}.
\]
A direct comparison between the linear-death chain \eqref{eq:birth-death-generator} and \(P_{\-{GD}}\) yields the following result. The proofs of \Cref{cor:q-potts-rapid-mixing,cor:q-potts-random-regular-mixing} are deferred to \Cref{subsec:proofs-antiferromagnetic-potts}.

\begin{corollary}
    In the setting of \Cref{thm:antiferromagnetic-potts-ulc}, \(P_{\-{GD}}\) has spectral gap at least
    \[
    \gamma_{\-{GD}} \ge \frac{\delta B^{\Delta}}{q n}.
    \]
    \label{cor:q-potts-rapid-mixing}
\end{corollary}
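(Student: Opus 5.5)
Our plan is to compare Dirichlet forms. By \Cref{thm:antiferromagnetic-potts-ulc} and \Cref{thm:ulc-poincare-inequality}, the linear-death generator \(\+L\) of \eqref{eq:birth-death-generator} for the embedded measure \(\mu_{G,B}\) on \(\Omega\) has spectral gap at least \(\delta\):
\[
\delta\,\*{Var}_\mu\stp{f} \le \+E\tp{f,f} = \sum_{\*x} \mu\tp{\*x} \sum_{i} n_i \tp{\nabla_i^- f\tp{\*x}}^2 .
\]
It therefore suffices to show \(\+E\tp{f,f} \le \frac{qn}{B^\Delta}\,\+E_{P_{\-{GD}}}\tp{f,f}\) for every \(f\), with both forms viewed on \([q]^V\) via \(\varphi\).

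We first identify the transitions of \(\+L\) in Potts language. Every edge of the birth-death graph joins a configuration \(\sigma\) with \(\sigma_v=q\) to \(\sigma^{v\gets k}\) for some \(k\in[p]\). The death rate from \(\sigma^{v\gets k}\) to \(\sigma\) equals \(x_{v,k}=1\). Reversibility then gives the edge weight of this pair in \(\+E\): it is \(\mu\tp{\sigma^{v\gets k}}\tp{f\tp{\sigma^{v\gets k}}-f\tp{\sigma}}^2\), where we use \(\+E(f,f)=\sum_{\*x}\mu(\*x)\sum_i n_i(\nabla_i^-f)^2\) as in the proof of \Cref{prop:spectral-independence-weighted-poincare}. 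Each such unordered pair appears exactly once. In \(\+E_{P_{\-{GD}}}(f,f)=\frac12\sum_{\sigma,\tau}\mu(\sigma)P_{\-{GD}}(\sigma,\tau)(f(\tau)-f(\sigma))^2\), the same pair, counted in both orders, has weight
\[
\mu\tp{\sigma^{v\gets k}}\,P_{\-{GD}}\tp{\sigma^{v\gets k},\sigma} = \mu\tp{\sigma^{v\gets k}}\frac{1}{n}\frac{B^{d_q(v)}}{\sum_a B^{d_a(v)}}.
\]
The heat-bath probability satisfies
\[
\frac{B^{d_q(v)}}{\sum_a B^{d_a(v)}} \ge \frac{B^{\Delta}}{q},
\]
since the numerator is at least \(B^{\Delta}\) and every term of the denominator is at most \(1\) because \(B<1\). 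Hence each \(\+E\)-edge weight is at most \(\frac{qn}{B^\Delta}\) times the corresponding \(\+E_{P_{\-{GD}}}\)-edge weight. Glauber dynamics has additional edges, namely recolorings between two non-reference colors, but they only enlarge \(\+E_{P_{\-{GD}}}\), so dropping them is harmless.

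Combining these, we get
\[
\delta\,\*{Var}\stp{f}\le \+E(f,f)\le \frac{qn}{B^\Delta}\+E_{P_{\-{GD}}}(f,f),
\]
which is the claimed gap \(\delta B^\Delta/(qn)\). The only delicate step is the bookkeeping: each unordered pair is counted once in \(\+E\), while the \(\frac12\) in the discrete-time form is absorbed by the two orientations. We will check this on a single edge to confirm that no stray factor of \(2\) appears. State space finiteness makes the appeal to \Cref{thm:ulc-poincare-inequality} immediate.
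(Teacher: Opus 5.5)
Your proposal is correct and is essentially the paper's proof. Both arguments compare the linear-death Dirichlet form with the Glauber Dirichlet form edge by edge, discard recolorings between two non-reference colors, and combine the result with \Cref{thm:antiferromagnetic-potts-ulc} and \Cref{thm:ulc-poincare-inequality}. The only difference is cosmetic: you index each edge from its death end instead of its birth end, so the ratio you bound, the inverse heat-bath probability of color $q$, is exactly the paper's quantity $\sum_{a=1}^{q} B^{d_a^{\sigma}(v) - d_q^{\sigma}(v)} \le q B^{-\Delta}$.
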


Thus, for fixed \(q\), \(P_{\-{GD}}\) is a polynomial-time sampler whenever \(\delta^{-1}\) and \(B^{-\Delta}\) are polynomially bounded. For a uniformly random \(\Delta\)-regular graph, the eigenvalue estimates in \cite{Fri08,Bor20} give
\[
\lambda^{\star} = -\lambda_{\min}\tp{\*A_G} \le \tp{2 + o_n\tp{1}} \sqrt{\Delta - 1}
\]
with probability \(1 - o_n\tp{1}\).

\begin{corollary}
    Fix \(q \ge 3\) and \(\zeta \in \tp{0, 1 / 2}\). Let \(G\) be a uniformly random \(\Delta\)-regular graph on \(n\) vertices, and suppose that
    \[
    1 - B \le \tp{\frac{1}{2} - \zeta} \frac{\log \Delta}{\Delta}.
    \]
    For all sufficiently large \(\Delta\), with probability \(1 - o_n\tp{1}\), the embedded Potts measure is \(\delta\)-ULC with \(\delta\) as in \Cref{thm:antiferromagnetic-potts-ulc} and \(\delta \ge 1 / 2\), and
    \[
    T_{\-{mix}}\tp{\eps; P_{\-{GD}}} = O_q\tp{\sqrt{\Delta} n^2 \log \frac{1}{\eps}}.
    \]
    \label{cor:q-potts-random-regular-mixing}
\end{corollary}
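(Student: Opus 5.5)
The plan is to feed the random-graph spectral estimate into \Cref{thm:antiferromagnetic-potts-ulc} and \Cref{cor:q-potts-rapid-mixing}, and then convert the spectral gap into a mixing bound with \Cref{thm:functional-inequality-mixing}.

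\textbf{Step 1: the ULC parameter.} We work on the event, of probability $1-o_n(1)$ by \cite{Fri08,Bor20}, that $\lambda^\star \le 3\sqrt{\Delta-1}$. Write $\beta \defeq 1-B \le (\tfrac12-\zeta)\tfrac{\log\Delta}{\Delta}$; note $\beta \to 0$ as $\Delta\to\infty$. Using $-\log B = -\log(1-\beta) \le \beta + \beta^2$ for small $\beta$, we obtain
\[
B^{1-\Delta} = e^{(\Delta-1)(-\log B)} \le \exp\bigl((\tfrac12-\zeta)(1+\beta)\log\Delta\bigr) \le \Delta^{1/2-\zeta/2}
\]
for large $\Delta$. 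Hence
\[
\lambda^\star(1-B)B^{1-\Delta} \le 3\sqrt{\Delta}\cdot\tfrac{\log\Delta}{2\Delta}\cdot\Delta^{1/2-\zeta/2} = \tfrac32\,\Delta^{-\zeta/2}\log\Delta,
\]
which tends to $0$. So for all sufficiently large $\Delta$ this quantity is at most $1/2$. \Cref{thm:antiferromagnetic-potts-ulc} then applies and gives $\delta\ge 1/2$.

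\textbf{Step 2: the spectral gap.} By the same estimate, $B^\Delta = B\cdot B^{\Delta-1} \ge \tfrac12\Delta^{-1/2+\zeta/2}$. \Cref{cor:q-potts-rapid-mixing} then yields
\[
\gamma_{\mathrm{GD}} \ge \frac{\delta B^\Delta}{qn} \ge \frac{\Delta^{-1/2+\zeta/2}}{4qn}.
\]

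\textbf{Step 3: the mixing time.} To apply \Cref{thm:functional-inequality-mixing} we need $P_{\mathrm{GD}}$ to be positive semidefinite on $L^2(\mu)$. It is: $P_{\mathrm{GD}}$ is the average over $v$ of the heat-bath resampling operators at $v$, and each of these is the conditional expectation given $\sigma_{V\setminus\{v\}}$, hence an orthogonal projection. Next we lower-bound $\mu_{\min}$. Since $Z\le q^n$ and $m_G(\sigma)\le |E| = n\Delta/2$, we get $\mu(\sigma)\ge q^{-n}B^{n\Delta/2}$. Therefore
\[
\log\frac{1}{\mu_{\min}} \le n\log q + \tfrac{n\Delta}{2}(\beta+\beta^2) = O(n\log q + n\log\Delta).
\]
\Cref{thm:functional-inequality-mixing} now gives
\[
T_{\mathrm{mix}} \le 4qn\Delta^{1/2-\zeta/2}\bigl(\log\tfrac1{2\eps} + O(n\log q + n\log\Delta)\bigr).
\]
The factor $\Delta^{-\zeta/2}\log\Delta$ is bounded, so this is $O_q(\sqrt\Delta\, n^2\log\tfrac1\eps)$ for $\eps<1/2$.

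The main obstacle is this last bookkeeping step. Because $\log(1/\mu_{\min})$ carries a $\log\Delta$ factor, it must be absorbed by the $\Delta^{-\zeta/2}$ slack from Step 2. That is why Steps 1 and 2 keep the exponent $1/2-\zeta/2$ rather than rounding to $1/2$.
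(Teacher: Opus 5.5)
Your proposal is correct and follows essentially the same route as the paper's proof. You bound \(B^{1-\Delta}\) by \(\Delta^{1/2-\zeta/2}\), combine this with the Friedman--Bordenave estimate to get \(\delta \ge 1/2\), and invoke \Cref{cor:q-potts-rapid-mixing}; then you show \(\log(1/\mu_{\min}) = O_q(n\log\Delta)\), justify positive semidefiniteness via orthogonal projections, and let the \(\Delta^{-\zeta/2}\) slack absorb the \(\log\Delta\). The only differences are cosmetic: you use \(-\log(1-\beta)\le\beta+\beta^2\) together with \(B\ge 1/2\), whereas the paper uses \(\log(1/B)\le(1-B)/B\) to bound \(B^{-\Delta}\) directly.
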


For each fixed \(q\), this regime includes values \(B < B_c\) for all sufficiently large \(\Delta\), where the conjectured tree-uniqueness threshold \(B_c\) is
\[
B_c \defeq \max\set{0, 1 - \frac{q}{\Delta}}.
\]
This formula for the threshold has been established for \(q = 3, 4\) and for fixed \(q \ge 5\) when \(\Delta\) is sufficiently large \cite{GGY18,BBR23,BBBR23}.

\section{Laguerre Dynamics}
\label{sec:laguerre}

In this section, we consider the continuous limit of the birth-death chains in \Cref{sec:bakry-emery-birth-death} and identify the scaling limit of the linear-death chain with Laguerre diffusion.

The Laguerre diffusion is a canonical reversible diffusion on the positive orthant. Let \(V \in \+C^3\tp{\bb{R}_{> 0}^d}\) be such that \(\mu\tp{\-d \*x} \propto e^{-V\tp{\*x}} \dd \*x\) is a probability measure on \(\bb R_{> 0}^d\), and assume the usual conditions ensuring that the following SDE is well defined and non-explosive:
\[
\dd X_{t, i} = \tp{1 - X_{t, i} \partial_i V\tp{\*X_t}} \dd t + \sqrt{2 X_{t, i}} \dd B_{t, i}, \quad i \in \stp{d},
\]
where \(\tp{\*B_t}_{t \ge 0}\) is a standard \(d\)-dimensional Brownian motion. The generator is
\[
\+L f\tp{\*x} \defeq \sum_{i = 1}^d \tp{1 - x_i \partial_i V\tp{\*x}} \partial_i f\tp{\*x} + \sum_{i = 1}^d x_i \partial_{ii} f\tp{\*x},
\]
and its divergence-form representation shows that \(\+L\) is reversible with respect to \(\mu\). The associated Dirichlet form is
\[
\+E\tp{f, g} \defeq -\*E_{\mu}\stp{f \+L g} = \*E_{\mu}\stp{\sum_{i = 1}^d x_i \partial_i f\tp{\*x} \partial_i g\tp{\*x}}.
\]
Accordingly, the \Poincare inequality is weighted:
\[
\gamma \, \*{Var}_{\mu}\stp{f} \le \*E_{\mu}\stp{\sum_{i = 1}^d x_i \tp{\partial_i f\tp{\*x}}^2}, \quad \forall f \in \+C_{\-c}^{\infty}\tp{\bb{R}_{> 0}^d}.
\]

\paragraph{Generator and Dirichlet form.}

For \(N \in \bb{N}_{> 0}\), define
\[
\iota_N\tp{\*n} \defeq \frac{\*n + \*1}{N}, \qquad \mu_N\tp{\*n} \propto \exp\tp{-V\tp{\iota_N\tp{\*n}}}, \quad \*n \in \bb{N}^d,
\]
and let \(\+L_N\) and \(\+E_N\) denote the birth-death generator and Dirichlet form associated with \(\mu_N\). Writing \(\*x = \iota_N\tp{\*n}\), the birth rates are
\[
b_{N, i}\tp{\*n} = \frac{\tp{n_i + 1} \mu_N\tp{\*n + \*e_i}}{\mu_N\tp{\*n}} = N x_i \exp\tp{V\tp{\*x} - V\tp{\*x + N^{-1} \*e_i}} = N x_i - x_i \partial_i V\tp{\*x} + O\tp{N^{-1}}.
\]
Write \(f_N \defeq f \circ \iota_N\). Direct Taylor expansion gives
\[
\begin{aligned}
    \+L_N f_N\tp{\*n} &= \sum_{i = 1}^d b_{N, i}\tp{\*n} \nabla_i^+ f_N\tp{\*n} + \sum_{i = 1}^d n_i \nabla_i^- f_N\tp{\*n} \\
    &= \sum_{i = 1}^d \tp{N x_i - x_i \partial_i V\tp{\*x} + O\tp{N^{-1}}} \tp{f\tp{\*x + N^{-1} \*e_i} - f\tp{\*x}} \\
    &\quad + \sum_{i = 1}^d \tp{N x_i - 1} \tp{f\tp{\*x - N^{-1} \*e_i} - f\tp{\*x}} \\
    &= N^{-1} \tp{\sum_{i = 1}^d \tp{1 - x_i \partial_i V\tp{\*x}} \partial_i f\tp{\*x} + \sum_{i = 1}^d x_i \partial_{ii} f\tp{\*x}} + O\tp{N^{-2}}.
\end{aligned}
\]
Hence, locally uniformly on \(\bb{R}_{> 0}^d\),
\[
N \+L_N f_N\tp{\*n} \to \+L f\tp{\*x} \quad \text{as } N \to \infty, \qquad \*x = \iota_N\tp{\*n}.
\]
The pushforward measures of \(\mu_N\) under \(\iota_N\) converge to \(\mu\) by Riemann-sum convergence. Moreover, if \(g_N \defeq g \circ \iota_N\), then
\begin{equation}
    N \+E_N\tp{f_N, g_N} \to \+E\tp{f, g} \quad \text{as } N \to \infty,
    \label{eq:dirichlet-form-convergence}
\end{equation}
for smooth compactly supported \(f, g\).

\paragraph{Local curvature matrix and \(\kappa\)-Laguerre-ULC.}

Define the local curvature matrix in the Laguerre setting by
\begin{equation}
    \*K_{\*x} \defeq \nabla^2 V\tp{\*x} + \diag \tp{\frac{\partial_i V\tp{\*x}}{x_i}}_{i \in \stp{d}}.
    \label{eq:laguerre-local-curvature}
\end{equation}
We say that \(\mu\) is \(\kappa\)-Laguerre-ULC, where \(\kappa > 0\), if
\begin{equation}
    \*K_{\*x} \succeq \kappa \, \diag \tp{\frac{1}{x_i}}_{i \in \stp{d}}, \quad \forall \*x \in \bb{R}_{> 0}^d.
    \label{eq:laguerre-ulc-condition}
\end{equation}

These definitions are justified by the fact that they arise as the scaling limit of the discrete local curvature matrix in \eqref{eq:local-curvature-matrix} and the \(\delta\)-ULC condition. Let \(a_N\tp{\*n} \defeq \*n! \mu_N\tp{\*n}\), let \(\*K_{\*n}^{\tp{N}}\) be the discrete local curvature matrix in \eqref{eq:local-curvature-matrix}, and set
\[
\*D_{\*n}^{\tp{N}} \defeq \diag \tp{\frac{a_N\tp{\*n}}{a_N\tp{\*n + \*e_i}}}_{i \in \stp{d}}.
\]
A direct entrywise Taylor expansion yields, locally uniformly on the positive orthant,
\[
\*K_{\*n}^{\tp{N}} = N^{-2} \*K_{\*x} + O\tp{N^{-3}}, \qquad \*D_{\*n}^{\tp{N}} = N^{-1} \diag \tp{\frac{1}{x_i}}_{i \in \stp{d}} + O\tp{N^{-2}}, \qquad \*x = \iota_N\tp{\*n}.
\]
Therefore, for any sequence \(\delta_N\) satisfying \(N \delta_N \to \kappa\),
\[
N^2 \tp{\*K_{\*n}^{\tp{N}} - \delta_N \*D_{\*n}^{\tp{N}}} \to \*K_{\*x} - \kappa \, \diag \tp{\frac{1}{x_i}}_{i \in \stp{d}}.
\]
Thus \eqref{eq:laguerre-ulc-condition} is precisely the scaling limit of the discrete \(\delta_N\)-ULC condition. Under uniform control of the Taylor remainders, for every \(0 < \kappa' < \kappa\), it yields a sequence \(\delta_N\) such that \(N \delta_N \to \kappa'\) and \(\mu_N\) is \(\delta_N\)-ULC. Without such uniform control, the same conclusion follows by first applying this observation on compact truncations and then removing the truncation by a standard cutoff argument.

\paragraph{\Poincare and Brascamp--Lieb Inequalities.}

\begin{theorem}[Laguerre \Poincare and Brascamp--Lieb Inequalities]
    If \(\mu\) is \(\kappa\)-Laguerre-ULC, then
    \[
    \kappa \, \*{Var}_{\mu}\stp{f} \le \*E_{\mu}\stp{\sum_{i = 1}^d x_i \tp{\partial_i f\tp{\*x}}^2}, \quad \forall f \in \+C_{\-c}^{\infty}\tp{\bb{R}_{> 0}^d}.
    \]
    More generally, if the matrix \(\*K_{\*x}\) defined in \eqref{eq:laguerre-local-curvature} is positive definite for every \(\*x \in \bb{R}_{> 0}^d\), then
    \[
    \*{Var}_{\mu}\stp{f} \le \*E_{\mu}\stp{\tp{\nabla f}^{\top} \*K^{-1} \nabla f}, \quad \forall f \in \+C_{\-c}^{\infty}\tp{\bb{R}_{> 0}^d},
    \]
    where \(\*K = \*K_{\*x}\) in the integrand.
    \label{thm:laguerre-functional-inequalities}
\end{theorem}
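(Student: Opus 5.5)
We plan to transfer the discrete results (\Cref{thm:ulc-poincare-inequality,thm:discrete-brascamp-lieb}) to the continuum through the lattice scaling \(\iota_N\tp{\*n} = \tp{\*n + \*1}/N\) already set up above. Fix \(f \in \+C_{\-c}^{\infty}\tp{\bb R_{>0}^d}\) with support in a compact set \(\Omega_0 \subset \bb R_{>0}^d\), and set \(f_N \defeq f\circ\iota_N\). Since \(\mu_N\) has support all of \(\bb N^d\) (downward closed), every coordinate is active and \(\+I_{\*n} = \stp{d}\). By Riemann-sum convergence of \(\tp{\iota_N}_\# \mu_N\) to \(\mu\), we have \(\*{Var}_{\mu_N}\stp{f_N} \to \*{Var}_{\mu}\stp{f}\). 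By \eqref{eq:dirichlet-form-convergence}, we have \(N\+E_N\tp{f_N,f_N} \to \*E_\mu\stp{\sum_i x_i\tp{\partial_i f}^2}\).

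\textbf{Brascamp--Lieb part.} This part is cleaner, so we do it first. Apply \Cref{thm:discrete-brascamp-lieb} to \(\mu_N\) and \(f_N\). Here \(\nabla_i^+ f_N\tp{\*n} = N^{-1}\partial_i f\tp{\*x} + O\tp{N^{-2}}\), and \(\*K^{\tp N}_{\*n} = N^{-2}\*K_{\*x} + O\tp{N^{-3}}\) locally uniformly. Hence on \(\iota_N^{-1}\tp{\Omega_0'}\), for a slightly enlarged compact \(\Omega_0'\), we get \(\tp{\*K^{\tp N}_{\*n}}^{-1} = N^2 \*K_{\*x}^{-1} + O\tp{N}\), using continuity and positive definiteness of \(\*K_{\*x}\) on compacts. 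The integrand therefore converges uniformly to \(\nabla f^\top \*K_{\*x}^{-1}\nabla f\), and Riemann-sum convergence gives the claim.

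The obstacle is that \Cref{thm:discrete-brascamp-lieb} requires \(\*K^{\tp N}_{\*n} \succ \*O\) at \emph{every} \(\*n \in \bb N^d\), not just near \(\Omega_0\). We only know this asymptotically on compacts. To handle it, we modify \(\mu\) outside a large compact \(\Omega_R \supset \Omega_0'\). One option is to restrict \(\mu_N\) to a box \(\set{\norm{\*n}_\infty \le M_N}\) together with a lower region. Truncation from above only adds nonnegative diagonal entries, as noted in the proof of \Cref{thm:ulc-poincare-inequality}, so it preserves positivity.

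Near the boundary \(x_i \to 0\), i.e.\ small \(n_i\), the Taylor expansion degenerates. Here we would replace \(V\) by \(V_R\), which agrees with \(V\) on \(\Omega_R\) and is strongly Laguerre-convex elsewhere, for example by adding \(\sum_i \phi_R\tp{x_i}\) with \(\phi_R\) supported near the boundary and large there. We then use the exact discrete formula to check \(\*K^{\tp N} \succ \*O\) on that region. This gives the inequality for \(\mu_R \propto e^{-V_R}\). Letting \(R\to\infty\) recovers \(\mu\), since both sides converge by dominated convergence (\(f\) has compact support, and \(\*{Var}_{\mu_R}\stp{f} \to \*{Var}_\mu\stp{f}\)). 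This cutoff step is where we expect the main technical work. The text preceding the theorem already signals a ``standard cutoff argument,'' and we follow that route.

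\textbf{Poincaré part.} We derive it in two ways as a consistency check. First, \(\*K_{\*x} \succeq \kappa \diag\tp{1/x_i}\) gives \(\*K_{\*x}^{-1} \preceq \kappa^{-1}\diag\tp{x_i}\), so the Poincaré bound follows immediately from the Brascamp--Lieb inequality. Second, we can argue directly from \Cref{thm:ulc-poincare-inequality} applied with \(\delta_N\), where \(N\delta_N \to \kappa' < \kappa\). This gives \(\delta_N \*{Var}_{\mu_N}\stp{f_N} \le \+E_N\tp{f_N,f_N}\). Multiplying by \(N\) and passing to the limit yields the inequality with constant \(\kappa'\); letting \(\kappa' \uparrow \kappa\) finishes. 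We would present the Brascamp--Lieb route as the proof, with the Poincaré statement as a corollary.
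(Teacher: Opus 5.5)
Your route is the paper's: discretize through \(\iota_N\), apply \Cref{thm:discrete-brascamp-lieb} (or \Cref{thm:ulc-poincare-inequality}) to \(\mu_N\), rescale, and pass to the limit. Deriving the \Poincare part from Brascamp--Lieb via \(\*K_{\*x}^{-1} \preceq \kappa^{-1}\diag\tp{x_i}\) is a sound variant. The paper instead asserts that \(\mu_N\) is \(\delta_N\)-ULC with \(N\delta_N \to \kappa'\); your version needs only \(\*K^{\tp{N}}_{\*n} \succ \*O\) globally, which is a weaker discrete input. The limit computations and the upper box truncation (which only adds non-negative diagonal entries) are fine.

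The gap is the step you defer: positivity of \(\*K^{\tp{N}}_{\*n}\) at lattice points with some \(n_i = O\tp{1}\). There the expansion \(N^{-2}\*K_{\*x} + O\tp{N^{-3}}\) carries no information, and continuous Laguerre convexity does not transfer.

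\emph{One dimension.} \(\-{Gamma}\tp{3/2,\kappa}\) is exactly \(\kappa\)-Laguerre-ULC. Its lattice discretization \(\mu_N\tp{n} \propto \tp{n+1}^{1/2} e^{-\kappa\tp{n+1}/N}\) has \(\*K^{\tp{N}}_{0} = 1 - \sqrt{3} + e^{\kappa/N}/\sqrt{2} \to 1 - \sqrt{3} + 1/\sqrt{2} \approx -0.025\). So it is negative for large \(N\), and \(\mu_N\) is not \(\delta\)-ULC for any \(\delta > 0\). Here a steep convex penalty near \(0\) can restore positivity of the diagonal, since it forces \(\tp{\*C_{\*n}}_{ii} \to 1\).

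\emph{Two or more dimensions.} Your separable penalty \(\sum_i \phi_R\tp{x_i}\) cannot work in general.
\begin{itemize}
\item Mixed differences of a separable function vanish, so every off-diagonal entry \(1 - e^{-\Delta_{ij}V}\) of the matrix \(\*C_{\*n}\) of \(\mu_N\) is unchanged. Here \(\Delta_{ij}V\) is the mixed second difference of \(V\) with step \(1/N\).
\item The diagonal entries always satisfy \(\tp{\*C_{\*n}}_{ii} < 1\).
\item \(\tp{\*D_{\*n}}_{ii} = e^{\Delta_i V_R}/\tp{n_i + 1}\) can only decrease. A penalty vanishing on \(\Omega_R\) with \(\phi'' + \phi'/x \ge 0\) has \(x\phi'\) nondecreasing and zero at \(e^{-R}\), hence is nonincreasing near \(0\).
\end{itemize}
Nothing in the hypotheses controls mixed differences of \(V\) as \(x_i \to 0\). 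For example, \(V\tp{\*x} = \tp{x_2 + x_2^{-1}}/\tp{2 x_1^2} + x_1 + x_2\) is \(1\)-Laguerre-ULC; in \(s_i = \log x_i\) it equals \(e^{-2 s_1}\cosh s_2 + e^{s_1} + e^{s_2}\). Yet at \(n_1 = 0\), \(x_2 = 2\), the off-diagonal entry is \(1 - e^{9N/32 + O\tp{1}}\). Its square dwarfs the product \(O\tp{e^{3N/8}/N}\) of the two diagonal entries, and no separable penalty changes either quantity.

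A working cutoff therefore has to be non-separable and quantitative. One option: write \(\*K_{\*x} = \diag\tp{\*x}^{-1} \nabla_{\*s}^2 W \, \diag\tp{\*x}^{-1}\) with \(W \defeq V \circ \exp\). Then replace \(W\) near \(s_i \to -\infty\) by a convex function with controlled second derivatives, mixed ones included, plus steep separable convex terms. You must then verify the exact discrete inequality at \(n_i = O\tp{1}\). Alternatively, bypass the lattice and argue in continuous space, via integrated \(\Gamma_2\) for the Laguerre generator or the intertwining argument mentioned in the paper's remark.

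The paper's proof covers this step only with ``compact truncations'' and a ``standard cutoff argument''. So you are not missing an idea it supplies, but neither that remark nor your sketch actually closes the step.
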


\begin{remark}
    \Cref{thm:laguerre-functional-inequalities} may also be proved by developing the integrated Bakry--\Emery calculus directly for the Laguerre diffusion. The argument parallels the one in \Cref{sec:bakry-emery-birth-death}.
\end{remark}

\begin{remark}
    The Laguerre Brascamp--Lieb inequality can also be derived from the intertwining framework of \cite[Theorem~3.1]{ABJ18}. Indeed, for the auxiliary Langevin generator \(\+L_0 = \Delta - \nabla V \cdot \nabla\), choose the distortion matrix
    \[
    \*A\tp{\*x} \defeq \diag \tp{\frac{1}{x_i}}_{i \in \stp{d}}.
    \]
    Since \(\+L_0 x_i = -\partial_i V\tp{\*x}\), the curvature matrix in that framework is
    \[
    \*A^{-1} \*M_{\*A} \*A = \nabla^2 V\tp{\*x} - \tp{\+L_0 \*A^{-1}}\tp{\*x} \*A\tp{\*x} = \nabla^2 V\tp{\*x} + \diag \tp{\frac{\partial_i V\tp{\*x}}{x_i}}_{i \in \stp{d}} = \*K_{\*x}.
    \]
    Consequently, \cite[Theorem~3.1]{ABJ18}, together with a cutoff approximation on the positive orthant, yields the same Brascamp--Lieb inequality.
\end{remark}

\begin{proof}[Proof of \Cref{thm:laguerre-functional-inequalities}]
    Suppose first that \(\mu\) is \(\kappa\)-Laguerre-ULC, and fix \(0 < \kappa' < \kappa\). By the curvature limit above, there are \(\delta_N > 0\) such that \(N \delta_N \to \kappa'\) and \(\mu_N\) is \(\delta_N\)-ULC. Applying \Cref{thm:ulc-poincare-inequality} to \(f_N \defeq f \circ \iota_N\) and multiplying the resulting inequality by \(N\) gives
    \[
    N \delta_N \, \*{Var}_{\mu_N}\stp{f_N} \le N \+E_N\tp{f_N, f_N}.
    \]
    The Riemann-sum convergence of \(\mu_N\), together with the Dirichlet-form limit \eqref{eq:dirichlet-form-convergence}, yields
    \[
    \kappa' \, \*{Var}_{\mu}\stp{f} \le \+E\tp{f, f} = \*E_{\mu}\stp{\sum_{i = 1}^d x_i \tp{\partial_i f\tp{\*x}}^2}.
    \]
    Letting \(\kappa' \to \kappa\) proves the weighted \Poincare inequality.

    Now suppose that \(\*K_{\*x} \succ \*O\) for every \(\*x \in \bb{R}_{> 0}^d\). On compact truncations, the curvature expansion implies that \(\*K_{\*n}^{\tp{N}} \succ \*O\) for all sufficiently large \(N\). Hence \Cref{thm:discrete-brascamp-lieb} gives
    \[
    \*{Var}_{\mu_N}\stp{f_N} \le \*E_{\mu_N}\stp{\tp{\nabla^+ f_N}^{\top} \tp{\*K^{\tp{N}}}^{-1} \nabla^+ f_N}.
    \]
    Uniformly on the support of \(f\),
    \[
    \nabla^+ f_N\tp{\*n} = \frac{1}{N} \nabla f\tp{\*x} + O\tp{N^{-2}}, \qquad \tp{\*K_{\*n}^{\tp{N}}}^{-1} = N^2 \*K_{\*x}^{-1} + O\tp{N}.
    \]
    Therefore, the integrand on the right-hand side converges to \(\tp{\nabla f}^{\top} \*K^{-1} \nabla f\). Passing to the limit in the discrete Brascamp--Lieb inequality and then removing the compact truncation proves the claim.
\end{proof}

\begin{example}[Product Gamma Measures]
    Fix \(\alpha_1, \dots, \alpha_d, \kappa > 0\), and let
    \[
    \mu \defeq \bigotimes_{i = 1}^d \-{Gamma}\tp{\alpha_i, \kappa}.
    \]
    A direct calculation shows that \(\mu\) is exactly \(\kappa\)-Laguerre-ULC; that is, equality holds in \eqref{eq:laguerre-ulc-condition}. Moreover, the weighted \Poincare inequality in \Cref{thm:laguerre-functional-inequalities} has the sharp constant \(\kappa\): for each \(i\), the function \(f_i\tp{\*x} \defeq x_i - \alpha_i / \kappa\) is an eigenfunction of \(-\+L\) with eigenvalue \(\kappa\).

    This example also admits a discrete approximation that preserves the curvature constant. For \(N > \kappa\), let \(X_{N, 1}, \dots, X_{N, d}\) be independent random variables with \(X_{N, i} \sim \-{NB}\tp{\alpha_i, \delta_N}\), where \(\delta_N \defeq \kappa / N\). Then
    \[
    \-{Law}\tp{\frac{1}{N} \tp{X_{N, 1}, \dots, X_{N, d}}} \Rightarrow \mu,
    \]
    while the product negative binomial law is exactly \(\delta_N\)-ULC. Its birth-death generator has spectral gap \(\delta_N\), so the generator accelerated by \(N\) has spectral gap \(N \delta_N = \kappa\), consistent with the Laguerre limit.
\end{example}

\paragraph{Unit-birth chain and Langevin diffusion.}

The relationship between the linear-death chain in \eqref{eq:birth-death-generator} and the Laguerre diffusion reflects a broader discrete-to-continuous correspondence. An analogous limit connects the unit-birth chain with the Langevin diffusion. In \Cref{subsec:discrete-brascamp-lieb}, we introduced the unit-birth chain \eqref{eq:unit-birth-generator} on \(\bb{Z}^d\), the \(c\)-log-concavity condition \eqref{eq:c-log-concave}, and the corresponding discrete Brascamp--Lieb inequality in \Cref{thm:discrete-brascamp-lieb-unit-birth}. Let \(\mu\tp{\-d \*x} \propto e^{-V\tp{\*x}} \dd \*x\) be a probability measure on \(\bb{R}^d\) with a smooth potential \(V\), and discretize \(\mu\) on the grid \(N^{-1} \bb{Z}^d\). Accelerating the corresponding unit-birth chain by \(N^2\) yields the following limits, whose proofs we omit:
\begin{itemize}
    \item The generator and Dirichlet form converge to those of the Langevin diffusion. More generally, if the discrete \Poincare inequalities have constants \(\gamma_N\) such that \(N^2 \gamma_N \to \gamma\), then their continuous-space limit is the standard \Poincare inequality
    \[
    \gamma \, \*{Var}_{\mu}\stp{f} \le \*E_{\mu}\stp{\norm{\nabla f}_2^2}, \quad \forall f \in \+C_{\-c}^{\infty}\tp{\bb{R}^d}.
    \]
    \item If \(\mu\) is \(\alpha\)-strongly log-concave, i.e., \(\nabla^2 V \succ \alpha \*I\), then the lattice approximations are \(c_N\)-log-concave with \(N^2 c_N \to \alpha\). In particular, the discrete \Poincare inequality for \(c\)-log-concave measures in \cite[Theorem~1.5 and Proposition~9.1]{Joh17} yields, through this limit, a discrete approximation proof of the standard \Poincare inequality for strongly log-concave measures.
    \item If \(\mu\) is strictly log-concave, i.e., \(\nabla^2 V \succ \*O\), then the discrete Brascamp--Lieb inequality in \Cref{thm:discrete-brascamp-lieb-unit-birth} converges to the standard Brascamp--Lieb inequality
    \[
    \*{Var}_{\mu}\stp{f} \le \*E_{\mu}\stp{\tp{\nabla f}^{\top} \tp{\nabla^2 V}^{-1} \nabla f}, \quad \forall f \in \+C_{\-c}^{\infty}\tp{\bb{R}^d}.
    \]
\end{itemize}

\section*{AI Disclosure}

AI tools were used to assist in searching the literature, streamlining the proofs, polishing the exposition, and typesetting the paper. The proof of \Cref{thm:poisson-laplace-transform-domination} was substantially inspired by an approach suggested by Gemini Pro. The proof of \Cref{thm:poisson-maximum-entropy} was generated by GPT-5.6 Pro in response to a statement of the desired result. The technical arguments in \Cref{subsec:ulc-completely-log-concavity,sec:ulc-closure-properties} were completed with the assistance of GPT-5.6 Sol, primarily for routine technical details. All remaining proofs were developed by the authors. The authors independently verified the correctness, originality, and references of all content in the paper.

\bibliographystyle{alpha}
\bibliography{refs}

@article{GJMPPS26,
  author        = {G{\"o}bel, Andreas and Jenssen, Matthew and Michelen, Marcus and Pappik, Marcus and Perkins, Will and Schiller, Leon},
  title         = {A simple proof of rapid mixing on random regular graphs beyond uniqueness},
  journal       = {arXiv preprint arXiv:2606.27545},
  year          = {2026},
  eprint        = {2606.27545},
  archiveprefix = {arXiv}
}

@article{KKO13,
  author        = {Kondratiev, Yuri and Kuna, Tobias and Ohlerich, Nataliya},
  title         = {Spectral gap for {Glauber} type dynamics for a special class of potentials},
  journal       = {Electronic Journal of Probability},
  volume        = {18},
  number        = {42},
  pages         = {1--18},
  year          = {2013},
  doi           = {10.1214/EJP.v18-2260},
  eprint        = {1103.5079},
  archiveprefix = {arXiv}
}

@inproceedings{CCCYZ25,
  author        = {Chen, Xiaoyu and Chen, Zejia and Chen, Zongchen and Yin, Yitong and Zhang, Xinyuan},
  title         = {Rapid mixing on random regular graphs beyond uniqueness},
  booktitle     = {2025 IEEE 66th Annual Symposium on Foundations of Computer Science (FOCS)},
  pages         = {2170--2193},
  year          = {2025},
  doi           = {10.1109/FOCS63196.2025.00115},
  eprint        = {2504.03406},
  archiveprefix = {arXiv}
}

@article{Joh17,
  author        = {Johnson, Oliver},
  title         = {A discrete log-{Sobolev} inequality under a {Bakry-\'Emery} type condition},
  journal       = {Annales de l'Institut Henri Poincar{\'e}, Probabilit{\'e}s et Statistiques},
  volume        = {53},
  number        = {4},
  pages         = {1952--1970},
  year          = {2017},
  doi           = {10.1214/16-AIHP778},
  eprint        = {1507.06268},
  archiveprefix = {arXiv}
}

@article{BCDPP06,
  title         = {Spectral gap estimates for interacting particle systems via a {B}ochner-type identity},
  author        = {Boudou, Anne-Severine and Caputo, Pietro and Dai Pra, Paolo and Posta, Gustavo},
  journal       = {Journal of Functional Analysis},
  volume        = {232},
  number        = {1},
  pages         = {222--258},
  year          = {2006},
  doi           = {10.1016/j.jfa.2005.07.012},
  eprint        = {math/0505533},
  archiveprefix = {arXiv}
}

@article{CE25,
  author        = {Chen, Yuansi and Eldan, Ronen},
  title         = {Localization schemes: A framework for proving mixing bounds for {Markov} chains},
  journal       = {Duke Mathematical Journal},
  volume        = {174},
  number        = {8},
  pages         = {1431--1510},
  year          = {2025},
  doi           = {10.1215/00127094-2024-0063},
  eprint        = {2203.04163},
  archiveprefix = {arXiv}
}

@article{STZ25,
  author        = {Shi, Bobby and Tian, Kevin and Zhang, Matthew S.},
  title         = {Perspectives on stochastic localization},
  journal       = {arXiv preprint arXiv:2510.04460},
  year          = {2025},
  eprint        = {2510.04460},
  archiveprefix = {arXiv}
}

@article{EAM22,
  title         = {An information-theoretic view of stochastic localization},
  author        = {El Alaoui, Ahmed and Montanari, Andrea},
  journal       = {IEEE Transactions on Information Theory},
  volume        = {68},
  number        = {11},
  pages         = {7423--7426},
  year          = {2022},
  doi           = {10.1109/TIT.2022.3180298},
  eprint        = {2109.00709},
  archiveprefix = {arXiv}
}

@article{AMM21,
  title         = {Concentration inequalities for ultra log-concave distributions},
  author        = {Aravinda, Heshan and Marsiglietti, Arnaud and Melbourne, James},
  journal       = {Studia Mathematica},
  volume        = {265},
  number        = {1},
  pages         = {111--120},
  year          = {2022},
  doi           = {10.4064/sm210605-2-10},
  eprint        = {2104.05054},
  archiveprefix = {arXiv}
}

@article{DJ13,
  title         = {{Bounds on the Poincar{\'e} constant under negative dependence}},
  author        = {Daly, Fraser and Johnson, Oliver},
  journal       = {Statistics \& Probability Letters},
  volume        = {83},
  number        = {2},
  pages         = {511--518},
  year          = {2013},
  doi           = {10.1016/j.spl.2012.11.001},
  eprint        = {0801.2112},
  archiveprefix = {arXiv}
}

@article{Wu00,
  title   = {{A new modified logarithmic Sobolev inequality for Poisson point processes and several applications}},
  author  = {Wu, Liming},
  journal = {Probability Theory and Related Fields},
  volume  = {118},
  number  = {3},
  pages   = {427--438},
  year    = {2000},
  doi     = {10.1007/PL00008749}
}

@article{LRS25,
  title         = {{The Poisson transport map}},
  author        = {{L{\'o}pez-Rivera}, Pablo and Shenfeld, Yair},
  journal       = {Journal of Functional Analysis},
  volume        = {288},
  number        = {10},
  pages         = {110864},
  year          = {2025},
  doi           = {10.1016/j.jfa.2025.110864},
  eprint        = {2407.02359},
  archiveprefix = {arXiv}
}

@article{WJ08,
  title   = {Graphical models, exponential families, and variational inference},
  author  = {Wainwright, Martin J. and Jordan, Michael I.},
  journal = {Foundations and Trends in Machine Learning},
  volume  = {1},
  number  = {1--2},
  pages   = {1--305},
  year    = {2008},
  doi     = {10.1561/2200000001}
}

@inproceedings{AKV24,
  title         = {Trickle-down in localization schemes and applications},
  author        = {Anari, Nima and Koehler, Frederic and Vuong, Thuy-Duong},
  booktitle     = {Proceedings of the 56th Annual ACM Symposium on Theory of Computing},
  pages         = {1094--1105},
  year          = {2024},
  doi           = {10.1145/3618260.3649622},
  eprint        = {2407.16104},
  archiveprefix = {arXiv}
}

@article{ABJ18,
  title         = {{Intertwinings and generalized Brascamp--Lieb inequalities}},
  author        = {Arnaudon, Marc and Bonnefont, Michel and Joulin, Ald{\'e}ric},
  journal       = {Revista Matem{\'a}tica Iberoamericana},
  volume        = {34},
  number        = {3},
  pages         = {1021--1054},
  year          = {2018},
  doi           = {10.4171/RMI/1014},
  eprint        = {1602.03836},
  archiveprefix = {arXiv}
}

@article{Kah01,
  author  = {Kahn, Jeff},
  title   = {An entropy approach to the hard-core model on bipartite graphs},
  journal = {Combinatorics, Probability and Computing},
  volume  = {10},
  number  = {3},
  pages   = {219--237},
  year    = {2001},
  doi     = {10.1017/S0963548301004631}
}

@incollection{GT04,
  author    = {Galvin, David and Tetali, Prasad},
  title     = {On weighted graph homomorphisms},
  booktitle = {Graphs, Morphisms and Statistical Physics},
  editor    = {Ne\v{s}et\v{r}il, Jaroslav and Winkler, Peter},
  series    = {DIMACS Series in Discrete Mathematics and Theoretical Computer Science},
  volume    = {63},
  pages     = {97--104},
  year      = {2004},
  publisher = {American Mathematical Society},
  doi       = {10.1090/DIMACS/063/07}
}

@article{Zha10,
  author  = {Zhao, Yufei},
  title   = {The number of independent sets in a regular graph},
  journal = {Combinatorics, Probability and Computing},
  volume  = {19},
  number  = {2},
  pages   = {315--320},
  year    = {2010},
  doi     = {10.1017/S0963548309990538}
}

@article{CR11,
  author  = {Cutler, Jonathan and Radcliffe, A. J.},
  title   = {Extremal problems for independent set enumeration},
  journal = {The Electronic Journal of Combinatorics},
  volume  = {18},
  number  = {1},
  pages   = {P169},
  year    = {2011},
  doi     = {10.37236/656}
}

@article{DJPR17,
  author        = {Davies, Ewan and Jenssen, Matthew and Perkins, Will and Roberts, Barnaby},
  title         = {Independent sets, matchings, and occupancy fractions},
  journal       = {Journal of the London Mathematical Society},
  volume        = {96},
  number        = {1},
  pages         = {47--66},
  year          = {2017},
  doi           = {10.1112/jlms.12056},
  eprint        = {1508.04675},
  archiveprefix = {arXiv}
}

@article{SSSZ19,
  author  = {Sah, Ashwin and Sawhney, Mehtaab and Stoner, David and Zhao, Yufei},
  title   = {The number of independent sets in an irregular graph},
  journal = {Journal of Combinatorial Theory, Series B},
  volume  = {138},
  pages   = {172--195},
  year    = {2019},
  doi     = {10.1016/j.jctb.2019.01.007}
}

@article{DK25,
  author        = {Davies, Ewan and Kang, Ross J.},
  title         = {The hard-core model in graph theory},
  journal       = {arXiv preprint arXiv:2501.03379},
  year          = {2025},
  eprint        = {2501.03379},
  archiveprefix = {arXiv}
}

@article{DST25,
  author        = {Davies, Ewan and Sandhu, Juspreet Singh and Tan, Brian},
  title         = {On expectations and variances in the hard-core model on bounded degree graphs},
  journal       = {arXiv preprint arXiv:2505.13396},
  year          = {2025},
  eprint        = {2505.13396},
  archiveprefix = {arXiv}
}

@article{ZX26,
  author        = {Zhang, Weiyuan and Xu, Kexiang},
  title         = {On the variance fraction of the hard-core model on graphs with bounded maximum degree},
  journal       = {arXiv preprint arXiv:2604.01717},
  year          = {2026},
  eprint        = {2604.01717},
  archiveprefix = {arXiv}
}

@article{Joh07,
  title         = {Log-concavity and the maximum entropy property of the {Poisson} distribution},
  author        = {Johnson, Oliver},
  journal       = {Stochastic Processes and their Applications},
  volume        = {117},
  number        = {6},
  pages         = {791--802},
  year          = {2007},
  doi           = {10.1016/j.spa.2006.10.006},
  eprint        = {math/0603647},
  archiveprefix = {arXiv}
}

@article{JKM13,
  title         = {Log-concavity, ultra-log-concavity, and a maximum entropy property of discrete compound {Poisson} measures},
  author        = {Johnson, Oliver and Kontoyiannis, Ioannis and Madiman, Mokshay},
  journal       = {Discrete Applied Mathematics},
  volume        = {161},
  number        = {9},
  pages         = {1232--1250},
  year          = {2013},
  doi           = {10.1016/j.dam.2011.08.025},
  eprint        = {0912.0581},
  archiveprefix = {arXiv}
}

@article{Yu10,
  author  = {Yu, Yaming},
  title   = {Relative log-concavity and a pair of triangle inequalities},
  journal = {Bernoulli},
  volume  = {16},
  number  = {2},
  pages   = {459--470},
  year    = {2010},
  doi     = {10.3150/09-BEJ216}
}

@book{LPW17,
  author    = {Levin, David A. and Peres, Yuval and Wilmer, Elizabeth L.},
  title     = {{Markov} chains and mixing times},
  edition   = {2nd},
  year      = {2017},
  publisher = {American Mathematical Society},
  address   = {Providence, RI},
  isbn      = {978-1-4704-2962-1}
}

@inproceedings{BT03,
  author    = {Bobkov, Sergey G. and Tetali, Prasad},
  title     = {Modified log-{Sobolev} inequalities, mixing and hypercontractivity},
  booktitle = {Proceedings of the Thirty-Fifth Annual ACM Symposium on Theory of Computing},
  pages     = {287--296},
  year      = {2003},
  doi       = {10.1145/780542.780586}
}

@book{BGL14,
  author    = {Bakry, Dominique and Gentil, Ivan and Ledoux, Michel},
  title     = {Analysis and geometry of {Markov} diffusion operators},
  series    = {Grundlehren der mathematischen Wissenschaften},
  volume    = {348},
  year      = {2014},
  publisher = {Springer},
  doi       = {10.1007/978-3-319-00227-9}
}

@incollection{BE85,
  author    = {Bakry, Dominique and {\'E}mery, Michel},
  title     = {Diffusions hypercontractives},
  booktitle = {S{\'e}minaire de Probabilit{\'e}s XIX, 1983/84},
  editor    = {Az{\'e}ma, Jacques and Yor, Marc},
  series    = {Lecture Notes in Mathematics},
  volume    = {1123},
  pages     = {177--206},
  year      = {1985},
  publisher = {Springer},
  address   = {Berlin},
  doi       = {10.1007/BFb0075847}
}

@article{BH20,
  author  = {Br{\"a}nd{\'e}n, Petter and Huh, June},
  title   = {Lorentzian polynomials},
  journal = {Annals of Mathematics},
  volume  = {192},
  number  = {3},
  pages   = {821--891},
  year    = {2020},
  doi     = {10.4007/annals.2020.192.3.4}
}

@book{Fri08,
  author        = {Friedman, Joel},
  title         = {A proof of {Alon}'s second eigenvalue conjecture and related problems},
  series        = {Memoirs of the American Mathematical Society},
  volume        = {195},
  number        = {910},
  year          = {2008},
  publisher     = {American Mathematical Society},
  doi           = {10.1090/memo/0910},
  eprint        = {cs/0405020},
  archiveprefix = {arXiv}
}

@article{Bor20,
  author        = {Bordenave, Charles},
  title         = {A new proof of {Friedman}'s second eigenvalue theorem and its extension to random lifts},
  journal       = {Annales scientifiques de l'{\'E}cole normale sup{\'e}rieure},
  volume        = {53},
  number        = {6},
  pages         = {1393--1439},
  year          = {2020},
  doi           = {10.24033/asens.2450},
  eprint        = {1502.04482},
  archiveprefix = {arXiv}
}

@article{GGY18,
  title         = {Uniqueness for the 3-state antiferromagnetic {Potts} model on the tree},
  author        = {Galanis, Andreas and Goldberg, Leslie Ann and Yang, Kuan},
  journal       = {Electronic Journal of Probability},
  volume        = {23},
  number        = {82},
  pages         = {1--43},
  year          = {2018},
  doi           = {10.1214/18-EJP211},
  eprint        = {1804.03514},
  archiveprefix = {arXiv}
}

@article{BBR23,
  title         = {Uniqueness of the {Gibbs} measure for the 4-state anti-ferromagnetic {Potts} model on the regular tree},
  author        = {de Boer, David and Buys, Pjotr and Regts, Guus},
  journal       = {Combinatorics, Probability and Computing},
  volume        = {32},
  number        = {1},
  pages         = {158--182},
  year          = {2023},
  doi           = {10.1017/S0963548322000207},
  eprint        = {2011.05638},
  archiveprefix = {arXiv}
}

@article{BBBR23,
  title         = {Uniqueness of the {Gibbs} measure for the anti-ferromagnetic {Potts} model on the infinite {$\Delta$}-regular tree for large {$\Delta$}},
  author        = {Bencs, Ferenc and de Boer, David and Buys, Pjotr and Regts, Guus},
  journal       = {Journal of Statistical Physics},
  volume        = {190},
  number        = {8},
  pages         = {140},
  year          = {2023},
  doi           = {10.1007/s10955-023-03145-z},
  eprint        = {2203.15457},
  archiveprefix = {arXiv}
}

@article{KL18,
  author        = {Klartag, Bo'az and Lehec, Joseph},
  title         = {Poisson processes and a log-concave {Bernstein} theorem},
  journal       = {Studia Mathematica},
  volume        = {247},
  number        = {1},
  pages         = {85--107},
  year          = {2019},
  doi           = {10.4064/sm180212-30-7},
  eprint        = {1802.04176},
  archiveprefix = {arXiv}
}

@article{SBP26,
  author        = {Shenfeld, Yair and Baptista, Ricardo and Peluchetti, Stefano},
  title         = {Binomial flows: Denoising and flow matching for discrete ordinal data},
  journal       = {arXiv preprint arXiv:2605.00360},
  year          = {2026},
  eprint        = {2605.00360},
  archiveprefix = {arXiv}
}

@article{Walk76,
  author  = {Walkup, David W.},
  title   = {P{\'o}lya sequences, binomial convolution and the union of random sets},
  journal = {Journal of Applied Probability},
  volume  = {13},
  number  = {1},
  pages   = {76--85},
  year    = {1976},
  doi     = {10.2307/3212667}
}

@article{Lig97,
  author  = {Liggett, Thomas M.},
  title   = {Ultra logconcave sequences and negative dependence},
  journal = {Journal of Combinatorial Theory, Series A},
  volume  = {79},
  number  = {2},
  pages   = {315--325},
  year    = {1997},
  doi     = {10.1006/jcta.1997.2790}
}

@article{Pem00,
  author  = {Pemantle, Robin},
  title   = {Towards a theory of negative dependence},
  journal = {Journal of Mathematical Physics},
  volume  = {41},
  number  = {3},
  pages   = {1371--1390},
  year    = {2000},
  doi     = {10.1063/1.533200}
}

@article{SW14,
  author  = {Saumard, Adrien and Wellner, Jon A.},
  title   = {Log-concavity and strong log-concavity: a review},
  journal = {Statistics Surveys},
  volume  = {8},
  pages   = {45--114},
  year    = {2014},
  doi     = {10.1214/14-SS107}
}

@article{BL76,
  author  = {Brascamp, Herm Jan and Lieb, Elliott H.},
  title   = {On extensions of the {Brunn--Minkowski} and {Pr{\'e}kopa--Leindler} theorems, including inequalities for log concave functions, and with an application to the diffusion equation},
  journal = {Journal of Functional Analysis},
  volume  = {22},
  number  = {4},
  pages   = {366--389},
  year    = {1976},
  doi     = {10.1016/0022-1236(76)90004-5}
}

@article{ALOVii,
  author        = {Anari, Nima and Liu, Kuikui and {Oveis Gharan}, Shayan and Vinzant, Cynthia},
  title         = {Log-concave polynomials {II}: High-dimensional walks and an {FPRAS} for counting bases of a matroid},
  journal       = {Annals of Mathematics},
  volume        = {199},
  number        = {1},
  pages         = {259--299},
  year          = {2024},
  doi           = {10.4007/annals.2024.199.1.4},
  eprint        = {1811.01816},
  archiveprefix = {arXiv}
}

@article{CGM21,
  author        = {Cryan, Mary and Guo, Heng and Mousa, Giorgos},
  title         = {Modified log-{Sobolev} inequalities for strongly log-concave distributions},
  journal       = {The Annals of Probability},
  volume        = {49},
  number        = {1},
  pages         = {506--525},
  year          = {2021},
  doi           = {10.1214/20-AOP1453},
  eprint        = {1903.06081},
  archiveprefix = {arXiv}
}

@article{ALGV24,
  author        = {Anari, Nima and Liu, Kuikui and {Oveis Gharan}, Shayan and Vinzant, Cynthia},
  title         = {Log-concave polynomials {III}: {Mason}'s ultra-log-concavity conjecture for independent sets of matroids},
  journal       = {Proceedings of the American Mathematical Society},
  volume        = {152},
  number        = {5},
  pages         = {1969--1981},
  year          = {2024},
  doi           = {10.1090/proc/16724},
  eprint        = {1811.01600},
  archiveprefix = {arXiv}
}

@article{CDPP09,
  author        = {Caputo, Pietro and Dai Pra, Paolo and Posta, Gustavo},
  title         = {Convex entropy decay via the {Bochner--Bakry--Emery} approach},
  journal       = {Annales de l'Institut Henri Poincar{\'e}, Probabilit{\'e}s et Statistiques},
  volume        = {45},
  number        = {3},
  pages         = {734--753},
  year          = {2009},
  doi           = {10.1214/08-AIHP183},
  eprint        = {0712.2578},
  archiveprefix = {arXiv}
}

@article{HH02,
  author  = {Herzog, J{\"u}rgen and Hibi, Takayuki},
  title   = {Discrete polymatroids},
  journal = {Journal of Algebraic Combinatorics},
  volume  = {16},
  number  = {3},
  pages   = {239--268},
  year    = {2002},
  doi     = {10.1023/A:1021852421716}
}

@article{Eld13,
  author        = {Eldan, Ronen},
  title         = {Thin shell implies spectral gap up to polylog via a stochastic localization scheme},
  journal       = {Geometric and Functional Analysis},
  volume        = {23},
  number        = {2},
  pages         = {532--569},
  year          = {2013},
  doi           = {10.1007/s00039-013-0214-y},
  eprint        = {1203.0893},
  archiveprefix = {arXiv}
}

@inproceedings{RSJ19,
  author        = {Robinson, Joshua and Sra, Suvrit and Jegelka, Stefanie},
  title         = {Flexible modeling of diversity with strongly log-concave distributions},
  booktitle     = {Advances in Neural Information Processing Systems 32},
  pages         = {15199--15209},
  year          = {2019},
  eprint        = {1906.05413},
  archiveprefix = {arXiv},
  url           = {https://papers.nips.cc/paper_files/paper/2019/hash/d55eaf8506f9046a88b8730781830194-Abstract.html}
}

@inproceedings{CGZZ24,
  author        = {Chen, Xiaoyu and Guo, Heng and Zhang, Xinyuan and Zou, Zongrui},
  title         = {Near-linear time samplers for matroid independent sets with applications},
  booktitle     = {Approximation, Randomization, and Combinatorial Optimization. Algorithms and Techniques ({APPROX/RANDOM} 2024)},
  series        = {Leibniz International Proceedings in Informatics ({LIPIcs})},
  volume        = {317},
  pages         = {32:1--32:12},
  year          = {2024},
  publisher     = {Schloss Dagstuhl -- Leibniz-Zentrum f{\"u}r Informatik},
  doi           = {10.4230/LIPIcs.APPROX/RANDOM.2024.32},
  url           = {https://drops.dagstuhl.de/entities/document/10.4230/LIPIcs.APPROX/RANDOM.2024.32},
  eprint        = {2308.09683},
  archiveprefix = {arXiv}
}

@misc{Che26,
  author = {Chewi, Sinho},
  title  = {Log-concave sampling},
  year   = {2026},
  note   = {\url{https://chewisinho.github.io/main.pdf}. Version: 2026}
}

@article{Opp18,
  author        = {Oppenheim, Izhar},
  title         = {Local spectral expansion approach to high dimensional expanders part {I}: Descent of spectral gaps},
  journal       = {Discrete \& Computational Geometry},
  volume        = {59},
  number        = {2},
  pages         = {293--330},
  year          = {2018},
  doi           = {10.1007/s00454-017-9948-x},
  eprint        = {1709.04431},
  archiveprefix = {arXiv},
  primaryclass  = {math.CO}
}

@article{KO20,
  author        = {Kaufman, Tali and Oppenheim, Izhar},
  title         = {High order random walks: Beyond spectral gap},
  journal       = {Combinatorica},
  volume        = {40},
  number        = {2},
  pages         = {245--281},
  year          = {2020},
  doi           = {10.1007/s00493-019-3847-0},
  eprint        = {1707.02799},
  archiveprefix = {arXiv},
  primaryclass  = {math.CO}
}

@inproceedings{AL20,
  author        = {Alev, Vedat Levi and Lau, Lap Chi},
  title         = {Improved analysis of higher order random walks and applications},
  booktitle     = {Proceedings of the 52nd Annual ACM SIGACT Symposium on Theory of Computing},
  pages         = {1198--1211},
  year          = {2020},
  publisher     = {Association for Computing Machinery},
  doi           = {10.1145/3357713.3384317},
  eprint        = {2001.02827},
  archiveprefix = {arXiv},
  primaryclass  = {cs.DS}
}

@article{ALS25,
  title         = {Improvement of {Wu}'s logarithmic {Sobolev} inequality via the {Poisson-F{\"o}llmer} process},
  author        = {Aryan, Shrey and {L{\'o}pez-Rivera}, Pablo and Shenfeld, Yair},
  journal       = {Electronic Communications in Probability},
  volume        = {30},
  number        = {91},
  pages         = {1--14},
  year          = {2025},
  doi           = {10.1214/25-ECP740},
  eprint        = {2410.06117},
  archiveprefix = {arXiv}
}

@article{Sta89,
  author  = {Stanley, Richard P.},
  title   = {Log-concave and unimodal sequences in algebra, combinatorics, and geometry},
  journal = {Annals of the New York Academy of Sciences},
  volume  = {576},
  number  = {1},
  pages   = {500--535},
  year    = {1989},
  doi     = {10.1111/j.1749-6632.1989.tb16434.x}
}

@book{Bre89,
  author    = {Brenti, Francesco},
  title     = {Unimodal, log-concave, and {P}{\'o}lya frequency sequences in combinatorics},
  series    = {Memoirs of the American Mathematical Society},
  volume    = {81},
  number    = {413},
  publisher = {American Mathematical Society},
  year      = {1989},
  doi       = {10.1090/memo/0413}
}

@article{BBL09,
  author  = {Borcea, Julius and Br{\"a}nd{\'e}n, Petter and Liggett, Thomas M.},
  title   = {Negative dependence and the geometry of polynomials},
  journal = {Journal of the American Mathematical Society},
  volume  = {22},
  number  = {2},
  pages   = {521--567},
  year    = {2009},
  doi     = {10.1090/S0894-0347-08-00618-8}
}

@article{HL72,
  author  = {Heilmann, Ole J. and Lieb, Elliott H.},
  title   = {Theory of monomer-dimer systems},
  journal = {Communications in Mathematical Physics},
  volume  = {25},
  number  = {3},
  pages   = {190--232},
  year    = {1972},
  doi     = {10.1007/BF01877590}
}

@book{Asm03,
  author    = {Asmussen, S{\o}ren},
  title     = {Applied probability and queues},
  edition   = {2nd},
  series    = {Stochastic Modelling and Applied Probability},
  volume    = {51},
  year      = {2003},
  publisher = {Springer},
  address   = {New York, NY},
  doi       = {10.1007/b97236}
}

@article{BHKKL25,
  author        = {Baker, Matthew and Huh, June and Kim, Donggyu and Kummer, Mario and Lorscheid, Oliver},
  title         = {Representation theory for polymatroids},
  journal       = {arXiv preprint arXiv:2507.14718},
  year          = {2025},
  eprint        = {2507.14718},
  archiveprefix = {arXiv}
}

@article{CFYZ24,
  title         = {Rapid mixing of {Glauber} dynamics via spectral independence for all degrees},
  author        = {Chen, Xiaoyu and Feng, Weiming and Yin, Yitong and Zhang, Xinyuan},
  journal       = {SIAM Journal on Computing},
  pages         = {FOCS21{\char45}224--FOCS21{\char45}298},
  year          = {2024},
  doi           = {10.1137/22M1474734},
  eprint        = {2105.15005},
  archiveprefix = {arXiv}
}

@inproceedings{CCYZ25,
  title         = {Rapid mixing at the uniqueness threshold},
  author        = {Chen, Xiaoyu and Chen, Zongchen and Yin, Yitong and Zhang, Xinyuan},
  booktitle     = {Proceedings of the 57th Annual ACM Symposium on Theory of Computing},
  pages         = {879--890},
  year          = {2025},
  doi           = {10.1145/3717823.3718260},
  eprint        = {2411.03413},
  archiveprefix = {arXiv}
}

@inproceedings{AKJVP22,
  author    = {Anari, Nima and Jain, Vishesh and Koehler, Frederic and Pham, Huy Tuan and Vuong, Thuy-Duong},
  title     = {Entropic independence: optimal mixing of down-up random walks},
  booktitle = {Proceedings of the 54th Annual ACM SIGACT Symposium on Theory of Computing},
  pages     = {1418--1430},
  year      = {2022},
  doi       = {10.1145/3519935.3520048}
}

@inproceedings{ALOVV21,
  author        = {Anari, Nima and Liu, Kuikui and {Oveis Gharan}, Shayan and Vinzant, Cynthia and Vuong, Thuy-Duong},
  title         = {Log-concave polynomials {IV}: Approximate exchange, tight mixing times, and near-optimal sampling of forests},
  booktitle     = {Proceedings of the 53rd Annual ACM SIGACT Symposium on Theory of Computing},
  pages         = {408--420},
  year          = {2021},
  doi           = {10.1145/3406325.3451091},
  eprint        = {2004.07220},
  archiveprefix = {arXiv}
}

@article{Wan26,
  author        = {Wang, Sihan},
  title         = {Optimal mixing of {Glauber} dynamics for the {Sherrington--Kirkpatrick} model at {$\beta<1/2$}},
  journal       = {arXiv preprint arXiv:2608.22159},
  year          = {2026},
  eprint        = {2608.22159},
  archiveprefix = {arXiv}
}

@article{GZ26,
  author        = {Guo, Heng and Zhang, Xinyuan},
  title         = {Approximating spin systems on planar graphs},
  journal       = {arXiv preprint arXiv:2608.06172},
  year          = {2026},
  eprint        = {2608.06172},
  archiveprefix = {arXiv}
}

@article{CL26,
  author        = {Chen, Xiaoyu and Liu, Kuikui},
  title         = {A spectral local-to-global principle for spin systems on graphs with girth at least five},
  journal       = {arXiv preprint arXiv:2608.25491},
  year          = {2026},
  eprint        = {2608.25491},
  archiveprefix = {arXiv}
}

@article{LOG25,
  author        = {Leake, Jonathan and {Oveis Gharan}, Shayan},
  title         = {Trickle-down theorems via {$\mathcal{C}$-Lorentzian} polynomials {II}: Pairwise spectral influence and improved {Dobrushin}'s condition},
  journal       = {arXiv preprint arXiv:2510.06549},
  year          = {2025},
  eprint        = {2510.06549},
  archiveprefix = {arXiv}
}

@inproceedings{Gur09a,
  title        = {On multivariate {Newton}-like inequalities},
  author       = {Gurvits, Leonid},
  booktitle    = {Advances in Combinatorial Mathematics: Proceedings of the Waterloo Workshop in Computer Algebra 2008},
  pages        = {61--78},
  year         = {2009},
  organization = {Springer}
}

@article{Pre73,
  title   = {On logarithmic concave measures and functions},
  author  = {Pr{\'e}kopa, Andr{\'a}s},
  journal = {Acta Sci. Math.},
  volume  = {34},
  pages   = {335--343},
  year    = {1973}
}

@article{Gur09b,
  author  = {Gurvits, Leonid},
  title   = {A Short Proof, Based on Mixed Volumes, of {Liggett}'s Theorem on the Convolution of Ultra-Logconcave Sequences},
  journal = {The Electronic Journal of Combinatorics},
  volume  = {16},
  number  = {1},
  pages   = {N5},
  year    = {2009},
  doi     = {10.37236/243}
}

@article{KN11,
  title     = {A strong log-concavity property for measures on {Boolean} algebras},
  author    = {Kahn, Jeff and Neiman, Michael},
  journal   = {Journal of Combinatorial Theory, Series A},
  volume    = {118},
  number    = {6},
  pages     = {1749--1760},
  year      = {2011},
  publisher = {Elsevier}
}

@article{WY07,
  title     = {Log-concavity and {LC}-positivity},
  author    = {Wang, Yi and Yeh, Yeong-Nan},
  journal   = {Journal of Combinatorial Theory, Series A},
  volume    = {114},
  number    = {2},
  pages     = {195--210},
  year      = {2007},
  publisher = {Elsevier}
}

@article{Yu09,
  title   = {Monotonic Convergence in an Information-Theoretic Law of Small Numbers},
  author  = {Yu, Yaming},
  journal = {IEEE Transactions on Information Theory},
  volume  = {55},
  number  = {12},
  pages   = {5412--5422},
  year    = {2009},
  doi     = {10.1109/TIT.2009.2032727}
}

@inproceedings{YJ09,
  title     = {Concavity of Entropy under Thinning},
  author    = {Yu, Yaming and Johnson, Oliver},
  booktitle = {2009 IEEE International Symposium on Information Theory},
  pages     = {144--148},
  year      = {2009},
  doi       = {10.1109/ISIT.2009.5205880},
  publisher = {IEEE}
}

@article{AOGV21,
  title   = {Log-Concave Polynomials {I}: Entropy and a Deterministic Approximation Algorithm for Counting Bases of Matroids},
  author  = {Anari, Nima and {Oveis Gharan}, Shayan and Vinzant, Cynthia},
  journal = {Duke Mathematical Journal},
  volume  = {170},
  number  = {16},
  pages   = {3459--3504},
  year    = {2021},
  doi     = {10.1215/00127094-2020-0091}
}

\appendix

\section{Deferred Proofs}

\subsection{\Ito Calculus for Poisson Stochastic Localization}
\label{subsec:proof-ito-calculus}

\begin{proof}[Proof of \Cref{thm:poisson-linear-tilt}]
    Write
    \[
    \*m_t \defeq \mean{\nu_t}, \qquad \*a_t \defeq \*m_t - \*X_t.
    \]
    The conditional-expectation formula for the birth rates (\Cref{cor:denoising-birth-rates-conditional-expectation}) gives
    \[
    q_{t, i}\tp{\*X_{t-}} = \frac{a_{t-, i}}{1 - t}.
    \]
    Fix the current state \(\*X_{t-} = \*x\). As long as \(\*X_t\) does not jump, differentiating the explicit posterior formula gives
    \[
    \partial_t \nu_{t, \*x}\tp{\*n} = -\frac{\nu_{t, \*x}\tp{\*n}}{1 - t} \sum_{i = 1}^d \tp{n_i - m_{t, i}} = -\nu_{t, \*x}\tp{\*n} \sum_{i = 1}^d \frac{n_i - m_{t, i}}{a_{t-, i}} q_{t, i}\tp{\*X_{t-}}.
    \]
    If \(\*X_t\) jumps from \(\*x\) to \(\*x + \*e_i\), then Bayes' rule gives
    \[
    \nu_{t, \*x + \*e_i}\tp{\*n} = \nu_{t-, \*x}\tp{\*n} \frac{n_i - x_i}{m_{t-, i} - x_i},
    \]
    and therefore
    \[
    \nu_{t, \*x + \*e_i}\tp{\*n} - \nu_{t-, \*x}\tp{\*n} = \nu_{t-, \*x}\tp{\*n} \frac{n_i - m_{t-, i}}{a_{t-, i}}.
    \]
    Combining the continuous evolution and the jump updates yields
    \[
    \dd \nu_t\tp{\*n} = \nu_{t-}\tp{\*n} \sum_{i = 1}^d \frac{n_i - m_{t-, i}}{a_{t-, i}} \tp{\-d X_{t, i} - q_{t, i}\tp{\*X_{t-}} \dd t}.
    \]
    Here and below, a summand with \(a_{t-, i} = 0\) is set equal to zero. Indeed, \(a_{t-, i} = 0\) implies that \(\nu_{t-}\) is supported on \(\set{\*n \in \bb N^d \cmid n_i = X_{t-, i}}\), so the \(i\)-th birth rate and the corresponding numerator both vanish. This proves the asserted linear-tilt equation.

    Finally, for \(T < 1\),
    \[
    \int_0^T \frac{\*1_{\set{a_{t-, i} > 0}}}{a_{t-, i}} q_{t, i}\tp{\*X_{t-}} \dd t = \int_0^T \frac{\*1_{\set{a_{t-, i} > 0}}}{1 - t} \dd t \le -\log\tp{1 - T}.
    \]
    Thus the stochastic integral defining \(Z_{t, i}\) is integrable and is a martingale on \(\stp{0, T}\).
\end{proof}

\begin{proof}[Proof of \Cref{thm:phi-entropy-evolution}]
    Write
    \[
    \*m_t \defeq \mean{\nu_t}, \qquad \*a_t \defeq \*m_t - \*X_t, \qquad F_t \defeq \*E_{\nu_t}\stp{f}, \qquad G_t \defeq \*E_{\nu_t}\stp{\Phi\tp{f}},
    \]
    and define
    \[
    c_{t, i} \defeq
    \begin{cases}
        \displaystyle \frac{\*{Cov}_{\*n \sim \nu_t}\stp{f\tp{\*n}, n_i}}{a_{t, i}}, & a_{t, i} > 0, \\
        0, & a_{t, i} = 0.
    \end{cases}
    \]
    Both \(F_t\) and \(G_t\) are martingales, and \Cref{thm:poisson-linear-tilt} gives
    \[
    \dd F_t = \sum_{i = 1}^d c_{t-, i} \tp{\-d X_{t, i} - q_{t, i}\tp{\*X_{t-}} \dd t}.
    \]
    When \(\*X_t\) jumps in coordinate \(i\), the corresponding jump of \(F_t\) is \(c_{t-, i}\). The jump-process \Ito formula therefore yields
    \[
    \dd \Phi\tp{F_t} = \Phi'\tp{F_{t-}} \dd F_t + \sum_{i = 1}^d \Psi\tp{F_{t-}, c_{t-, i}} \dd X_{t, i}.
    \]
    Taking expectations and using the compensators of the coordinate birth processes gives
    \[
    \dd \*E\stp{\Phi\tp{F_t}} = \*E\stp{\sum_{i = 1}^d q_{t, i}\tp{\*X_t} \Psi\tp{F_t, c_{t, i}}} \dd t = \frac{1}{1 - t} \*E\stp{\sum_{i = 1}^d a_{t, i} \Psi\tp{F_t, c_{t, i}}} \dd t,
    \]
    where the second equality uses the conditional-expectation formula \(q_{t, i}\tp{\*X_t} = a_{t, i} / \tp{1 - t}\) for the birth rates (\Cref{cor:denoising-birth-rates-conditional-expectation}). Since \(\*{Ent}_{\nu_t}^{\Phi}\stp{f} = G_t - \Phi\tp{F_t}\) and \(\*E\stp{G_t}\) is constant, this proves the general \(\Phi\)-entropy identity.

    For \(\Phi\tp{x} = x^2\), we have \(\Psi\tp{u, v} = v^2\); substituting it gives the variance identity. For \(\Phi\tp{x} = x \log x\), the homogeneity identity
    \[
    a \, \Psi\tp{u, \frac{v}{a}} = u \, \Psi\tp{a, \frac{v}{u}}
    \]
    holds for \(a, u > 0\). Applying it with \(a \gets a_{t, i}\), \(u \gets F_t\), and \(v \gets \*{Cov}_{\*n \sim \nu_t}\stp{f\tp{\*n}, n_i}\) gives the entropy identity.
\end{proof}

\subsection{Spectral Independence, Variance, and Entropy}
\label{subsec:proof-sufficient-condition-approximate-conservation}

\begin{lemma}
    For \(t \in \left[0, 1\right)\) and \(\*x \in \supp\tp{\mu_t}\), define \(\rho_t^{\*x} \defeq \-{Law}\tp{\*X_1 - \*X_t \mid \*X_t = \*x}\). Then
    \[
    \rho_t^{\*x} = \tp{1 - t} * \mu^{\*x}.
    \]
    \label{lem:tilted-pinning-conditional-law}
\end{lemma}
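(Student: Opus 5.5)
The plan is to compute the law of \(\*X_1 - \*X_t\) given \(\*X_t = \*x\) directly from the explicit posterior formula \eqref{eq:posterior-formula}. The result should then match \Cref{def:weighted-pinning} composed with an exponential tilt by \(1-t\).

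First, fix \(t \in \left[0, 1\right)\) and \(\*x \in \supp\tp{\mu_t}\). When \(t = 0\), the only point in \(\supp\tp{\mu_0}\) is \(\*x = \*0\), since \(\*X_t = \*0\) for small \(t\) almost surely. The claim then reads \(\rho_0^{\*0} = \mu = 1 * \mu^{\*0}\), which is immediate because \(\mu^{\*0} = \mu\). For \(t \in \tp{0, 1}\), \(\nu_{t, \*x} = \-{Law}\tp{\*X_1 \mid \*X_t = \*x}\), and \eqref{eq:posterior-formula} gives, for \(\*k \in \bb N^d\),
\[
\rho_t^{\*x}\tp{\*k} = \nu_{t, \*x}\tp{\*x + \*k} \propto \mu\tp{\*x + \*k} \prod_{i = 1}^d \binom{x_i + k_i}{x_i} t^{x_i} \tp{1 - t}^{k_i}.
\]
The factor \(\prod_i t^{x_i}\), together with \(\mu_t\tp{\*x}\), depends only on \(\*x\) and is absorbed into the normalization. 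This leaves
\[
\rho_t^{\*x}\tp{\*k} \propto \tp{1 - t}^{\norm{\*k}_1} \mu\tp{\*x + \*k} \prod_{i} \binom{x_i + k_i}{x_i}.
\]

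Next, compare with the definition. The weighted pinning \(\mu^{\*x}\) is proportional to \(\mu\tp{\*x + \*k} \prod_i \binom{x_i + k_i}{x_i}\) on \(\set{\*k \cmid \*x + \*k \in \supp\tp{\mu}}\). The exponential tilt by \(\theta = 1 - t\), that is by \(\tp{1-t}\*1\), multiplies this by \(\tp{1 - t}^{\norm{\*k}_1}\) and renormalizes. So the two unnormalized weights coincide, including their supports.

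It remains to check that the normalizing constant of the tilt is finite and positive, so that \(\tp{1 - t} * \mu^{\*x}\) is well defined. Positivity holds because \(\*x \in \supp\tp{\mu_t}\), so some \(\*x + \*k\) lies in \(\supp\tp{\mu}\). Finiteness follows since the normalizing sum equals \(\mu_t\tp{\*x} / t^{\norm{\*x}_1} < \infty\). I expect no real obstacle: both measures are probability measures with proportional weights, so they are equal. The only care needed is the \(t = 0\) edge case and the identification of supports, with \(\*x\) ranging over \(\supp\tp{\mu_t} \subseteq \Omega\).
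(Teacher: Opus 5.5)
Your proposal is correct and follows the paper's proof essentially line by line. You read off $\rho_t^{\*x}(\*k) = \nu_{t,\*x}(\*x+\*k)$ from the posterior formula \eqref{eq:posterior-formula}, absorb the $\*k$-independent factor $t^{\norm{\*x}_1}/\mu_t(\*x)$ into the normalization, match the result with \Cref{def:weighted-pinning} tilted by $1-t$, and handle $t=0$ through $\*X_0=\*0$.

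Your extra check that the tilted weight has normalizing constant $\mu_t(\*x)/t^{\norm{\*x}_1}\in(0,\infty)$ is a worthwhile addition that the paper omits. It shows that $(1-t)*\mu^{\*x}$ is well defined as the normalization of the tilted weight, even when $\mu^{\*x}$ alone is not normalizable. That can happen because the normalizer of $\mu^{\*x}$ is a mixed factorial moment of order $\*x$ (up to the factor $1/\*x!$), which need not be finite when only first moments are assumed.
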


\begin{proof}[Proof of \Cref{lem:tilted-pinning-conditional-law}]
    For \(t \in \tp{0, 1}\), by the posterior formula \eqref{eq:posterior-formula}, for every \(\*k \in \bb N^d\) such that \(\*x + \*k \in \supp\tp{\mu}\),
    \[
    \rho_t^{\*x}\tp{\*k} = \nu_{t, \*x}\tp{\*x + \*k} = \frac{t^{\norm{\*x}_1}}{\mu_t\tp{\*x}} \tp{1 - t}^{\norm{\*k}_1} \mu\tp{\*x + \*k} \prod_{i = 1}^d \binom{x_i + k_i}{x_i}.
    \]
    The factor \(t^{\norm{\*x}_1} / \mu_t\tp{\*x}\) does not depend on \(\*k\). Comparing the remaining expression with \Cref{def:weighted-pinning} therefore gives \(\rho_t^{\*x} = \tp{1 - t} * \mu^{\*x}\). The case \(t = 0\) follows directly from \(\*X_0 = \*0\).
\end{proof}

We first prove approximate conservation of variance. The only additional ingredient beyond the variance evolution identity in \Cref{thm:phi-entropy-evolution} is a covariance form of the Cauchy--Schwarz inequality.

\begin{proof}[Proof of \Cref{prop:sufficient-condition-approximate-conservation} (Variance)]
    Fix \(t \in \tp{0, 1}\) and condition on \(\*X_t = \*x\). Write \(\rho \defeq \rho_t^{\*x}\) and \(g\tp{\*k} \defeq f\tp{\*x + \*k}\), and set
    \[
    \*m \defeq \mean{\rho}, \qquad \*c \defeq \*{Cov}_{\*k \sim \rho}\stp{g\tp{\*k}, \*k}, \qquad \*C \defeq \cov{\rho}.
    \]
    If \(m_i = 0\), then the coordinate map \(\*k \mapsto k_i\) vanishes \(\rho\)-almost surely and hence \(c_i = 0\). We may therefore restrict to the coordinates on which \(m_i > 0\) and set \(\*z \defeq \diag\tp{\*m}^{-1} \*c\). The Cauchy--Schwarz inequality and spectral independence yield
    \[
    \tp{\*z^{\top} \*c}^2 = \*{Cov}_{\*k \sim \rho}\stp{g\tp{\*k}, \*z^{\top} \*k}^2 \le \*{Var}_{\rho}\stp{g} \*z^{\top} \*C \*z.
    \]
    By \Cref{lem:tilted-pinning-conditional-law}, \(\rho = \tp{1 - t} * \mu^{\*x}\), so the spectral independence hypothesis gives \(\*C \preceq \eta_t \diag\tp{\*m}\). Hence,
    \[
    \tp{\*z^{\top} \*c}^2 \le \eta_t \, \*{Var}_{\rho}\stp{g} \*z^{\top} \diag\tp{\*m} \*z = \eta_t \, \*{Var}_{\rho}\stp{g} \*z^{\top} \*c.
    \]
    Dividing by \(\*z^{\top} \*c\) when it is positive, with the zero case being immediate, gives
    \[
    \sum_{i = 1}^d \frac{\*{Cov}_{\*n \sim \nu_{t, \*x}}\stp{f\tp{\*n}, n_i}^2}{\mean{\nu_{t, \*x}}_i - x_i} = \sum_{i = 1}^d \frac{c_i^2}{m_i} = \*z^{\top} \*c \le \eta_t \, \*{Var}_{\nu_{t, \*x}}\stp{f}.
    \]
    Applying the variance identity in \Cref{thm:phi-entropy-evolution} and averaging over \(\*x \sim \mu_t\) now gives
    \[
    \partial_t \*E\stp{\*{Var}_{\nu_t}\stp{f}} \ge -\frac{\eta_t}{1 - t} \*E\stp{\*{Var}_{\nu_t}\stp{f}}.
    \]
    The same argument at \(t = 0\), where \(\*X_0 = \*0\) and \(\rho_0^{\*0} = \mu\), gives the asserted right-derivative inequality at the endpoint.
\end{proof}

We next prove approximate conservation of entropy. We use two standard facts about exponential families, followed by the notion of entropic stability.

\begin{theorem}[Information Projection Principle; see, e.g., \cite{WJ08}]
    Let \(\rho\) be a probability measure on \(\Omega \subseteq \bb R^d\), let \(\boldsymbol{\varphi}: \Omega \to \bb R^d\) be measurable, and define
    \[
    \chi_{\rho, \boldsymbol{\varphi}}\tp{\boldsymbol{\theta}} \defeq \log \*E_{\rho}\stp{e^{\inner{\boldsymbol{\theta}}{\boldsymbol{\varphi}}}}.
    \]
    For \(\boldsymbol{\theta}\) in the interior of the effective domain of \(\chi_{\rho, \boldsymbol{\varphi}}\), let \(\rho_{\boldsymbol{\theta}}\) be the exponential tilt
    \[
    \frac{\dd \rho_{\boldsymbol{\theta}}}{\dd \rho}\tp{\*x} = \exp\tp{\inner{\boldsymbol{\theta}}{\boldsymbol{\varphi}\tp{\*x}} - \chi_{\rho, \boldsymbol{\varphi}}\tp{\boldsymbol{\theta}}}.
    \]
    If \(\*m = \*E_{\rho_{\boldsymbol{\theta}}}\stp{\boldsymbol{\varphi}}\), then every \(\wt{\rho} \ll \rho\) satisfying \(\*E_{\wt{\rho}}\stp{\boldsymbol{\varphi}} = \*m\) obeys
    \[
    \-{KL}\tp{\wt{\rho} \| \rho} = \-{KL}\tp{\wt{\rho} \| \rho_{\boldsymbol{\theta}}} + \-{KL}\tp{\rho_{\boldsymbol{\theta}} \| \rho} \ge \-{KL}\tp{\rho_{\boldsymbol{\theta}} \| \rho}.
    \]
    Thus \(\rho_{\boldsymbol{\theta}}\) is the unique minimizer of relative entropy among probability measures with mean statistic \(\*m\). Means on the relative boundary of the mean domain are obtained by limits of exponential tilts.
    \label{thm:maximum-entropy-principle}
\end{theorem}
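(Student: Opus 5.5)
The plan is to derive the Pythagorean identity by direct computation with the log-density of the tilt, and then to read off the inequality and the uniqueness from it.

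\emph{Reduction to finite quantities.} Fix \(\wt{\rho} \ll \rho\) with \(\*E_{\wt{\rho}}\stp{\boldsymbol{\varphi}} = \*m\). If \(\-{KL}\tp{\wt{\rho} \| \rho} = +\infty\), both sides of the identity should be infinite. To see this, note that \(\rho\) and \(\rho_{\boldsymbol{\theta}}\) are mutually absolutely continuous, because the density \(e^{\inner{\boldsymbol{\theta}}{\boldsymbol{\varphi}} - \chi}\) is strictly positive. Moreover, \(\log\tp{\dd\rho_{\boldsymbol{\theta}}/\dd\rho}\) is \(\wt{\rho}\)-integrable, since \(\boldsymbol{\varphi}\) has finite mean under \(\wt{\rho}\); this integrability should be part of the hypothesis that the mean \(\*m\) exists. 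So I will work in the case where all terms are finite.

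\emph{Key computation.} By the chain rule for Radon--Nikodym derivatives,
\[
\log\frac{\dd\wt{\rho}}{\dd\rho} = \log\frac{\dd\wt{\rho}}{\dd\rho_{\boldsymbol{\theta}}} + \inner{\boldsymbol{\theta}}{\boldsymbol{\varphi}} - \chi_{\rho,\boldsymbol{\varphi}}\tp{\boldsymbol{\theta}}
\]
holds \(\wt{\rho}\)-almost everywhere. Integrating this against \(\wt{\rho}\), the linear term becomes \(\inner{\boldsymbol{\theta}}{\*m} - \chi\). This depends on \(\wt{\rho}\) only through its mean \(\*m\), so it equals the same integral against \(\rho_{\boldsymbol{\theta}}\), which is exactly \(\-{KL}\tp{\rho_{\boldsymbol{\theta}} \| \rho}\). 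The result is
\[
\-{KL}\tp{\wt{\rho} \| \rho} = \-{KL}\tp{\wt{\rho} \| \rho_{\boldsymbol{\theta}}} + \-{KL}\tp{\rho_{\boldsymbol{\theta}} \| \rho}.
\]
Here \(\-{KL}\tp{\rho_{\boldsymbol{\theta}}\|\rho}\) is finite because \(\boldsymbol{\theta}\) lies in the interior of the effective domain. There, \(\chi\) is smooth, and \(\boldsymbol{\varphi}\) has all moments under \(\rho_{\boldsymbol{\theta}}\).

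\emph{Conclusions.} The inequality follows from \(\-{KL} \ge 0\) (Gibbs). For uniqueness, equality forces \(\-{KL}\tp{\wt{\rho}\|\rho_{\boldsymbol{\theta}}} = 0\), hence \(\wt{\rho} = \rho_{\boldsymbol{\theta}}\). For the boundary remark, I would take \(\boldsymbol{\theta}_k\) along a ray such that the means \(\nabla\chi\tp{\boldsymbol{\theta}_k}\) converge to \(\*m\). Lower semicontinuity of relative entropy then transfers the inequality to the limit.

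\emph{Main obstacle.} The main obstacle is the integrability bookkeeping in the infinite-entropy or unbounded-\(\boldsymbol{\varphi}\) case. My approach there is to split \(\log\tp{\dd\wt{\rho}/\dd\rho_{\boldsymbol{\theta}}}\) into positive and negative parts, and to use that the negative part is controlled by the bound \(x\log x \ge x - 1\). This makes the identity valid in \(\stp{0,\infty}\).
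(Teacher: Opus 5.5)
The paper gives no proof of this statement; it quotes it as a standard fact from \cite{WJ08}, so there is no in-paper argument to compare with. Your argument is the standard Csisz\'ar-type computation, and it is correct for the displayed identity, the inequality, and uniqueness. Since $\rho_{\theta}$ and $\rho$ are mutually absolutely continuous, $\log\frac{d\tilde\rho}{d\rho}=\log\frac{d\tilde\rho}{d\rho_{\theta}}+\langle\theta,\varphi\rangle-\chi(\theta)$ holds $\tilde\rho$-a.e. The linear term is $\tilde\rho$-integrable and integrates to $\langle\theta,m\rangle-\chi(\theta)=\mathrm{KL}(\rho_{\theta}\,\|\,\rho)<\infty$. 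Your control of the negative part via $x\log x\ge x-1$ makes the identity valid in $[0,\infty]$. Gibbs' inequality and $\mathrm{KL}(\tilde\rho\,\|\,\rho_{\theta})=0\Rightarrow\tilde\rho=\rho_{\theta}$ then finish.

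The step that does not work as written is the final one, on boundary means. Lower semicontinuity only gives $\mathrm{KL}(\rho^{\ast}\,\|\,\rho)\le\liminf_{k}\mathrm{KL}(\rho_{\theta_k}\,\|\,\rho)$ for a limit $\rho^{\ast}$ of the tilts. The comparison available at step $k$, however, is only against competitors of mean $m_k=\nabla\chi(\theta_k)\neq m$. To reach a given $\tilde\rho$ of mean $m$, you would also need competitors $\sigma_k$ of mean $m_k$ with $\mathrm{KL}(\sigma_k\,\|\,\rho)\to\mathrm{KL}(\tilde\rho\,\|\,\rho)$, which semicontinuity does not supply. The existence of a ray with $\nabla\chi(\theta_k)\to m$ also needs an argument.

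A cleaner route is to reduce to a face. If $\{\langle v,\cdot\rangle=c\}$ supports the convex support of $\varphi_{\#}\rho$ at $m$, then $\langle v,\varphi\rangle\le c$ holds $\rho$-a.s., so every $\tilde\rho\ll\rho$ with mean $m$ satisfies $\langle v,\varphi\rangle=c$ $\tilde\rho$-a.s. Running your interior argument for $\rho$ conditioned on this hyperplane (iterating if necessary) identifies the minimizer as a tilt of the conditioned measure. For finitely supported $\rho$, this conditioned tilt is the limit of $\rho_{\theta_0+kv}$ as $k\to\infty$ for a suitable $\theta_0$ and exposing direction $v$. The paper itself never relies on that sentence: in the proof of \Cref{lem:entropic-stability-tilted}, boundary means are handled by passing to $(1-\varepsilon)\tilde\rho+\varepsilon\rho$, whose mean is interior, and using convexity of relative entropy.
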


\begin{theorem}[Cumulants and Entropy; see, e.g., \cite{WJ08}]
    In the setting of \Cref{thm:maximum-entropy-principle}, the log-Laplace transform is smooth and convex on the interior of its effective domain and satisfies
    \[
    \nabla \chi_{\rho, \boldsymbol{\varphi}}\tp{\boldsymbol{\theta}} = \*E_{\rho_{\boldsymbol{\theta}}}\stp{\boldsymbol{\varphi}}, \qquad \nabla^2 \chi_{\rho, \boldsymbol{\varphi}}\tp{\boldsymbol{\theta}} = \*{Cov}_{\rho_{\boldsymbol{\theta}}}\stp{\boldsymbol{\varphi}}.
    \]
    If the exponential family is minimal, then \(\chi_{\rho, \boldsymbol{\varphi}}\) is strictly convex. For every mean \(\*m\) in the interior of the mean domain, let \(\boldsymbol{\theta}\tp{\*m}\) be the unique parameter satisfying \(\nabla \chi_{\rho, \boldsymbol{\varphi}}\tp{\boldsymbol{\theta}\tp{\*m}} = \*m\). Let \(\chi_{\rho, \boldsymbol{\varphi}}^*\) denote the convex conjugate of \(\chi_{\rho, \boldsymbol{\varphi}}\). Then
    \[
    \chi_{\rho, \boldsymbol{\varphi}}^*\tp{\*m} = \-{KL}\tp{\rho_{\boldsymbol{\theta}\tp{\*m}} \| \rho}, \qquad \nabla \chi_{\rho, \boldsymbol{\varphi}}^*\tp{\*m} = \boldsymbol{\theta}\tp{\*m},
    \]
    and, whenever the covariance is nonsingular,
    \[
    \nabla^2 \chi_{\rho, \boldsymbol{\varphi}}^*\tp{\*m} = \*{Cov}_{\rho_{\boldsymbol{\theta}\tp{\*m}}}\stp{\boldsymbol{\varphi}}^{-1}.
    \]
    In the nonminimal case, the same identities hold after restriction to the affine hull of \(\boldsymbol{\varphi}\tp{\supp\tp{\rho}}\).
    \label{thm:log-Laplace-transform}
\end{theorem}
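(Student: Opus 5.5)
The plan is to treat $\chi \defeq \chi_{\rho,\boldsymbol{\varphi}}$ as a cumulant generating function and prove the identities by differentiating under the expectation. Once that is done, all the statements about $\chi^*$ follow from the Legendre-transform structure via the inverse function theorem.

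\emph{Smoothness and the first two derivatives.} Fix $\boldsymbol{\theta}$ in the interior of the effective domain and a ball $B(\boldsymbol{\theta}, 2r)$ inside it. For $\boldsymbol{\theta}' \in B(\boldsymbol{\theta}, r)$ I will bound every polynomial moment by an exponential one:
\[
\norm{\boldsymbol{\varphi}}_2^k e^{\inner{\boldsymbol{\theta}'}{\boldsymbol{\varphi}}} \le C_{k,r} \sum_{\pm,\, i} e^{\inner{\boldsymbol{\theta}' \pm r \*e_i / \sqrt d}{\boldsymbol{\varphi}}}.
\]
Each term on the right is $\rho$-integrable, so dominated convergence lets me differentiate $Z(\boldsymbol{\theta}) \defeq \*E_{\rho}\stp{e^{\inner{\boldsymbol{\theta}}{\boldsymbol{\varphi}}}}$ to all orders under the expectation. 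Then
\[
\nabla \log Z = \frac{\nabla Z}{Z} = \*E_{\rho_{\boldsymbol{\theta}}}\stp{\boldsymbol{\varphi}},
\qquad
\nabla^2 \log Z = \frac{\nabla^2 Z}{Z} - \frac{\nabla Z\, \nabla Z^{\top}}{Z^2} = \*{Cov}_{\rho_{\boldsymbol{\theta}}}\stp{\boldsymbol{\varphi}}.
\]
Convexity follows because the Hessian is a covariance matrix; alternatively, H\"older's inequality gives it on the whole domain.

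\emph{Strict convexity under minimality.} Minimality means that no nonzero $\*v$ makes $\inner{\*v}{\boldsymbol{\varphi}}$ $\rho$-a.s.\ constant. Since $\rho_{\boldsymbol{\theta}} \sim \rho$, the quadratic form $\*v^{\top} \*{Cov}_{\rho_{\boldsymbol{\theta}}}\stp{\boldsymbol{\varphi}} \*v = \*{Var}_{\rho_{\boldsymbol{\theta}}}\stp{\inner{\*v}{\boldsymbol{\varphi}}}$ is positive for every $\*v \ne \*0$. Hence $\nabla^2 \chi \succ \*O$, and $\nabla \chi$ is injective. By the inverse function theorem, $\nabla\chi$ is a smooth diffeomorphism from the interior of the domain onto an open set of means. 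I will take from the cited background (or from steepness of the family) that this image is the interior of the mean domain, so each interior mean $\*m$ has a unique preimage $\boldsymbol{\theta}(\*m)$.

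\emph{Conjugate, its gradient and Hessian.} By definition $\chi^*(\*m) = \sup_{\boldsymbol{\theta}} \tp{\inner{\boldsymbol{\theta}}{\*m} - \chi(\boldsymbol{\theta})}$. The objective is concave, and its gradient vanishes exactly at $\boldsymbol{\theta}(\*m)$, so the supremum is attained there. Evaluating, and using $\*E_{\rho_{\boldsymbol{\theta}}}\stp{\boldsymbol{\varphi}} = \*m$ together with the density formula for $\dd\rho_{\boldsymbol{\theta}}/\dd\rho$, gives
\[
\chi^*(\*m) = \inner{\boldsymbol{\theta}(\*m)}{\*m} - \chi(\boldsymbol{\theta}(\*m)) = \*E_{\rho_{\boldsymbol{\theta}(\*m)}}\stp{\log \frac{\dd \rho_{\boldsymbol{\theta}(\*m)}}{\dd\rho}} = \-{KL}\tp{\rho_{\boldsymbol{\theta}(\*m)} \| \rho}.
\]
For the gradient, the map $\*m \mapsto \boldsymbol{\theta}(\*m) = (\nabla\chi)^{-1}(\*m)$ is smooth. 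Differentiating the identity above by the chain rule, the terms involving $\nabla\boldsymbol{\theta}(\*m)$ cancel because $\*m = \nabla\chi(\boldsymbol{\theta}(\*m))$; this is the envelope theorem, and it leaves $\nabla\chi^*(\*m) = \boldsymbol{\theta}(\*m)$. Differentiating once more, $\nabla^2 \chi^*(\*m)$ is the Jacobian of $(\nabla\chi)^{-1}$ at $\*m$. By the inverse function theorem this equals $\tp{\nabla^2\chi(\boldsymbol{\theta}(\*m))}^{-1} = \*{Cov}_{\rho_{\boldsymbol{\theta}(\*m)}}\stp{\boldsymbol{\varphi}}^{-1}$, which is valid whenever this covariance is nonsingular.

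\emph{Nonminimal case, and the main obstacle.} If the family is not minimal, write $\boldsymbol{\varphi} = \*c + \*U \boldsymbol{\psi}$ $\rho$-a.s., where the columns of $\*U$ form an orthonormal basis of the direction space of the affine hull of $\boldsymbol{\varphi}(\supp(\rho))$. Then $\chi(\boldsymbol{\theta}) = \inner{\boldsymbol{\theta}}{\*c} + \chi_{\rho,\boldsymbol{\psi}}(\*U^{\top}\boldsymbol{\theta})$, so $\chi$ is affine along $\ker \*U^{\top}$. Applying the minimal case to $\boldsymbol{\psi}$ and translating back gives the identities on the affine hull, with inverses read as inverses on that subspace. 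The step I expect to require the most care is identifying the image of $\nabla\chi$ with the interior of the mean domain, which is a surjectivity/steepness property. My first approach is to invoke the standard result from \cite{WJ08} rather than reprove it. Everything else is routine bookkeeping with dominated convergence.
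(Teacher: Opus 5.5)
The paper gives no proof of this statement---it only cites \cite{WJ08}---and your argument is exactly the standard derivation behind that citation. It differentiates under the expectation, inverts $\nabla\chi$ via strict convexity and the inverse function theorem, applies the envelope theorem to $\chi^*$, and reduces the nonminimal case to a minimal family on the affine hull. It is correct up to routine details. One such detail: your displayed bound depends on $\boldsymbol{\theta}'$, so dominated convergence needs a dominating function uniform in $\boldsymbol{\theta}'$. You can get one by bounding $e^{\langle \boldsymbol{\theta}', \boldsymbol{\varphi} \rangle}$ by the sum over the vertices of a small cube around $\boldsymbol{\theta}$.

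You are also right to isolate surjectivity of $\nabla\chi$ onto the interior of the mean domain, because it genuinely requires steepness and fails in general. For example, take $\rho\tp{n} \propto \tp{n+1}^{-3}$ on $\bb N$ with $\varphi\tp{n} = n$. The effective domain is $\tp{-\infty, 0}$ together with the endpoint $0$, and no tilt attains a mean exceeding $\*E_{\rho}\stp{\varphi}$. So this surjectivity must be read as part of the hypothesis, which is effectively what the statement's phrasing ``let $\boldsymbol{\theta}\tp{\*m}$ be the unique parameter'' does.
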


\begin{remark}
    When \(\boldsymbol{\varphi}\tp{\*x} = \*x\), 
    \[
    \dom \boldsymbol{\theta} = \operatorname{relint}\tp{\dom \chi_{\rho, \boldsymbol{\varphi}}^*} \subseteq \operatorname{relint}\tp{\conv\tp{\Omega}}.
    \]
    Boundary means are handled by approximation with interior means.
\end{remark}

We now introduce the form of entropic stability needed for the entropy evolution argument.

\begin{definition}[{Entropic Stability \cite[Definition~29]{CE25}}]
    Let \(\rho\) be a probability measure on \(\Omega\), let \(\psi: \conv \Omega \times \conv \Omega \to \bb R_{\ge 0}\), and let \(\eta > 0\). We say that \(\rho\) is \(\eta\)-entropically stable with respect to \(\psi\) if
    \[
    \psi\tp{\mean{\boldsymbol{\lambda} * \rho}, \mean{\rho}} \le \eta \, \-{KL}\tp{\boldsymbol{\lambda} * \rho \| \rho}, \quad \forall \boldsymbol{\lambda} \in \bb R_{> 0}^d.
    \]
    Unless otherwise specified, we take
    \[
    \psi\tp{\*x, \*y} \defeq \sum_{i = 1}^d \Psi\tp{y_i, x_i - y_i},
    \]
    where \(\Psi\tp{u, v} \defeq \tp{u + v} \log\tp{u + v} - u \log u - v \tp{\log u + 1}\) and \(\Psi\tp{0, 0} \defeq 0\).
\end{definition}

\begin{lemma}
    Suppose that \(\rho\) is \(\eta\)-entropically stable with respect to \(\psi\). Then every probability measure \(\wt{\rho} \ll \rho\) satisfies
    \[
    \psi\tp{\mean{\wt{\rho}}, \mean{\rho}} \le \eta \, \-{KL}\tp{\wt{\rho} \| \rho}.
    \]
    \label{lem:entropic-stability-tilted}
\end{lemma}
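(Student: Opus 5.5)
The plan is to reduce the statement to the case of exponential tilts, which is what the definition of entropic stability already controls, by means of \Cref{thm:maximum-entropy-principle}. Fix \(\wt{\rho} \ll \rho\) and write \(\*m' \defeq \mean{\wt{\rho}}\). We may assume \(\-{KL}\tp{\wt{\rho} \| \rho} < \infty\); otherwise there is nothing to prove. Since \(\wt{\rho} \ll \rho\), the mean \(\*m'\) lies in \(\conv\tp{\supp \rho}\).

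\textbf{Interior means.} Suppose first that \(\*m'\) lies in the relative interior of the mean domain of the exponential family generated by \(\boldsymbol{\varphi}\tp{\*x} = \*x\). By \Cref{thm:log-Laplace-transform} there is a natural parameter \(\boldsymbol{\theta}\) whose tilt \(\rho_{\boldsymbol{\theta}}\) has mean \(\*m'\). In the notation of the definition, \(\rho_{\boldsymbol{\theta}} = \boldsymbol{\lambda} * \rho\) with \(\boldsymbol{\lambda} = e^{\boldsymbol{\theta}} \in \bb R_{>0}^d\).

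\textbf{The key inequality.} \Cref{thm:maximum-entropy-principle} then gives
\[
\-{KL}\tp{\wt{\rho} \| \rho} \ge \-{KL}\tp{\boldsymbol{\lambda} * \rho \| \rho}.
\]
Entropic stability gives
\[
\psi\tp{\*m', \mean{\rho}} = \psi\tp{\mean{\boldsymbol{\lambda} * \rho}, \mean{\rho}} \le \eta \, \-{KL}\tp{\boldsymbol{\lambda} * \rho \| \rho}.
\]
Chaining these two inequalities proves the claim for interior means.

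\textbf{Boundary means.} This is the main obstacle, since the left side involves \(\Psi\) evaluated possibly at zero coordinates. When \(\*m'\) lies on the relative boundary, I would approximate it by the means of the mixtures \(\wt{\rho}_s \defeq \tp{1 - s} \wt{\rho} + s \rho\) for \(s \in \tp{0, 1}\). For \(s > 0\), \(\mean{\wt{\rho}_s} = \tp{1 - s} \*m' + s \mean{\rho}\) lies in the relative interior, so the interior case applies, and \(\wt{\rho}_s \ll \rho\). Convexity of relative entropy gives
\[
\-{KL}\tp{\wt{\rho}_s \| \rho} \le \tp{1 - s} \-{KL}\tp{\wt{\rho} \| \rho}.
\]
Hence
\[
\psi\tp{\mean{\wt{\rho}_s}, \mean{\rho}} \le \eta \tp{1 - s} \-{KL}\tp{\wt{\rho} \| \rho}.
\]
Letting \(s \to 0^+\), the function \(\*x \mapsto \psi\tp{\*x, \mean{\rho}}\) is continuous on \(\bb R_{\ge 0}^d\). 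This uses the convention \(0 \log 0 = 0\) and that \(\Psi\tp{u, \cdot}\) is continuous at \(v = -u\) for \(u > 0\). Coordinates with \(\mean{\rho}_i = 0\) force \(\*m'_i = 0\) and contribute \(\Psi\tp{0, 0} = 0\).

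A remaining technical point is nonminimality when \(\supp \rho\) lies in a proper affine subspace. This is handled by restricting to its affine hull, as the remark after \Cref{thm:log-Laplace-transform} permits. Passing to the limit \(s \to 0^+\) then yields the stated inequality.
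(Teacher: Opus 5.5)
Your proposal is correct and follows essentially the same route as the paper: you handle interior means by combining the information projection principle with entropic stability of the exponential tilt that has mean $\mean{\wt{\rho}}$. You then reach boundary means through the mixtures $(1-s)\wt{\rho} + s\rho$, using convexity of relative entropy and letting $s \to 0^+$ (you invoke continuity of $\psi$, where the paper only needs lower semicontinuity).
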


\begin{proof}[Proof of \Cref{lem:entropic-stability-tilted}]
    Set \(\*m_0 \defeq \mean{\rho}\) and \(\*m \defeq \mean{\wt{\rho}}\). We may assume that \(\-{KL}\tp{\wt{\rho} \| \rho} < \infty\), since otherwise the claim is immediate. First suppose that \(\*m\) lies in the interior of the mean domain. Let \(\rho_{\boldsymbol{\theta}\tp{\*m}}\) be the exponential tilt of \(\rho\) with mean \(\*m\). By \Cref{thm:maximum-entropy-principle} and entropic stability,
    \[
    \psi\tp{\*m, \*m_0} \le \eta \, \-{KL}\tp{\rho_{\boldsymbol{\theta}\tp{\*m}} \| \rho} \le \eta \, \-{KL}\tp{\wt{\rho} \| \rho}.
    \]
    For a general mean, fix \(\eps \in \tp{0, 1}\) and define \(\wt{\rho}_{\eps} \defeq \tp{1 - \eps} \wt{\rho} + \eps \rho\). Its mean \(\*m_{\eps} = \tp{1 - \eps} \*m + \eps \*m_0\) lies in the relative interior of the mean domain. Applying the preceding argument and using convexity of relative entropy gives
    \[
    \psi\tp{\*m_{\eps}, \*m_0} \le \eta \, \-{KL}\tp{\wt{\rho}_{\eps} \| \rho} \le \eta \tp{1 - \eps} \-{KL}\tp{\wt{\rho} \| \rho}.
    \]
    Letting \(\eps \to 0^+\) and using lower semicontinuity of \(\psi\) proves the claim.
\end{proof}

Entropic stability controls exactly the nonlinear covariance term in the entropy evolution identity.

\begin{lemma}
    Fix \(t \in \tp{0, 1}\). Suppose that \(\rho_t^{\*x}\) is \(\eta\)-entropically stable with respect to \(\psi\) for every \(\*x \in \supp\tp{\mu_t}\). Then every nonnegative \(f \in \+C_{\-c}\tp{\supp\tp{\mu}}\) satisfies
    \[
    \partial_t \*E\stp{\*{Ent}_{\nu_t}\stp{f}} \ge -\frac{\eta}{1 - t} \*E\stp{\*{Ent}_{\nu_t}\stp{f}}.
    \]
    \label{lem:entropic-stability-implies-entropy-conservation}
\end{lemma}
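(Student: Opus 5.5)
Starting from the entropy identity in \Cref{thm:phi-entropy-evolution}, it suffices to show, pointwise for each $\*x \in \supp\tp{\mu_t}$,
\[
F \sum_{i=1}^d \Psi\tp{a_i, \frac{c_i}{F}} \le \eta\, \*{Ent}_{\nu_{t,\*x}}\stp{f},
\]
where $F \defeq \*E_{\nu_{t,\*x}}\stp{f}$, $a_i \defeq \mean{\nu_{t,\*x}}_i - x_i$ and $c_i \defeq \*{Cov}_{\*n\sim\nu_{t,\*x}}\stp{f\tp{\*n}, n_i}$. The conclusion then follows by averaging over $\*x \sim \mu_t$. We may assume $F > 0$; otherwise $f = 0$ $\nu_{t,\*x}$-a.s., so both sides vanish.

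The key step is a change of measure. Write $\rho \defeq \rho_t^{\*x}$ and $g\tp{\*k} \defeq f\tp{\*x + \*k}$, and define $\wt{\rho} \ll \rho$ by $\dd\wt{\rho}/\dd\rho \defeq g / F$. Then:
\begin{itemize}
\item $\mean{\rho} = \*a$;
\item $\mean{\wt{\rho}}_i = \*E_{\rho}\stp{g k_i}/F = a_i + c_i/F$;
\item $\-{KL}\tp{\wt{\rho} \| \rho} = \*E_\rho\stp{\tfrac{g}{F}\log\tfrac{g}{F}} = \*{Ent}_{\nu_{t,\*x}}\stp{f}/F$.
\end{itemize}
By definition, $\psi\tp{\mean{\wt{\rho}}, \mean{\rho}} = \sum_i \Psi\tp{a_i, c_i/F}$. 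Applying \Cref{lem:entropic-stability-tilted} to $\wt{\rho}$ gives $\sum_i \Psi\tp{a_i, c_i/F} \le \eta\, \*{Ent}_{\nu_{t,\*x}}\stp{f}/F$, and multiplying by $F$ yields exactly the required pointwise bound.

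It remains to handle degenerate coordinates. If $a_i = 0$, then $\rho$ is concentrated on $\set{k_i = 0}$, so $c_i = 0$ and $\mean{\wt{\rho}}_i = 0$. The corresponding term is $\Psi\tp{0,0} = 0$ on both sides, consistent with the zero-denominator convention in \Cref{thm:phi-entropy-evolution}.

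The main point requiring care is the passage from the pointwise bound to the derivative statement. This needs the differentiability, in $t$, of $\*E\stp{\*{Ent}_{\nu_t}\stp{f}}$ given by \Cref{thm:phi-entropy-evolution}, together with integrability. Both are harmless because $f$ is finitely supported, so only finitely many $\*x$ contribute. Beyond that, the argument is the algebraic identification above.
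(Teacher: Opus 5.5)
Your proposal is correct and follows the paper's proof essentially step for step. Both condition on \(\*X_t = \*x\), tilt \(\rho_t^{\*x}\) by \(g / \*E_{\rho}\stp{g}\), identify the mean shift with the normalized covariance and \(\*E_{\rho}\stp{g}\,\-{KL}\tp{\wt{\rho} \| \rho}\) with \(\*{Ent}_{\nu_{t, \*x}}\stp{f}\), apply \Cref{lem:entropic-stability-tilted}, and substitute into the entropy identity of \Cref{thm:phi-entropy-evolution}; your handling of degenerate coordinates and your remark that only finitely many \(\*x\) contribute are details the paper leaves implicit.
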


\begin{proof}[Proof of \Cref{lem:entropic-stability-implies-entropy-conservation}]
    Condition on \(\*X_t = \*x\), and write \(\rho \defeq \rho_t^{\*x}\), \(g\tp{\*k} \defeq f\tp{\*x + \*k}\). If \(\*E_{\rho}\stp{g} = 0\), then \(g = 0\) \(\rho\)-almost surely, so both the conditional entropy and the corresponding term in the entropy evolution identity vanish. Suppose that \(\*E_{\rho}\stp{g} > 0\), and define the \(g\)-tilt \(\wt{\rho}\) by
    \[
    \frac{\dd \wt{\rho}}{\dd \rho}\tp{\*k} \defeq \frac{g\tp{\*k}}{\*E_{\rho}\stp{g}}.
    \]
    Then
    \[
    \mean{\wt{\rho}} - \mean{\rho} = \frac{\*{Cov}_{\*k \sim \rho}\stp{g\tp{\*k}, \*k}}{\*E_{\rho}\stp{g}},
    \]
    while
    \[
    \*E_{\rho}\stp{g} \, \-{KL}\tp{\wt{\rho} \| \rho} = \*{Ent}_{\rho}\stp{g} = \*{Ent}_{\nu_{t, \*x}}\stp{f}.
    \]
    Therefore, \Cref{lem:entropic-stability-tilted} gives
    \[
    \begin{aligned}
        \*E_{\rho}\stp{g} \sum_{i = 1}^d \Psi\tp{\mean{\rho}_i, \frac{\*{Cov}_{\*k \sim \rho}\stp{g\tp{\*k}, k_i}}{\*E_{\rho}\stp{g}}} &= \*E_{\rho}\stp{g} \, \psi\tp{\mean{\wt{\rho}}, \mean{\rho}} \\
        &\le \eta \, \*E_{\rho}\stp{g} \, \-{KL}\tp{\wt{\rho} \| \rho} \\
        &= \eta \, \*{Ent}_{\nu_{t, \*x}}\stp{f}.
    \end{aligned}
    \]
    Since \(\mean{\rho}_i = \mean{\nu_{t, \*x}}_i - x_i\), substituting this estimate into the entropy identity in \Cref{thm:phi-entropy-evolution} and averaging over \(\*x \sim \mu_t\) yields
    \[
    \partial_t \*E\stp{\*{Ent}_{\nu_t}\stp{f}} \ge -\frac{\eta}{1 - t} \*E\stp{\*{Ent}_{\nu_t}\stp{f}}.
    \]
\end{proof}

Spectral independence of every exponential tilt provides a convenient criterion for entropic stability.

\begin{lemma}
    Suppose that \(\boldsymbol{\lambda} * \rho\) is \(\eta\)-spectrally independent for every \(\boldsymbol{\lambda} \in \bb R_{> 0}^d\). Then \(\rho\) is \(\eta\)-entropically stable with respect to \(\psi\).
    \label{lem:spectral-independence-implies-entropic-stability}
\end{lemma}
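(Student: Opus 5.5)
We must show that if every tilt \(\boldsymbol{\lambda} * \rho\) satisfies \(\cov{\boldsymbol{\lambda} * \rho} \preceq \eta \diag\tp{\mean{\boldsymbol{\lambda} * \rho}}\), then
\[
\psi\tp{\mean{\boldsymbol{\lambda} * \rho}, \mean{\rho}} \le \eta \, \-{KL}\tp{\boldsymbol{\lambda} * \rho \| \rho}, \quad \forall \boldsymbol{\lambda} \in \bb R_{> 0}^d .
\]
The plan is to compare two convex functions of the mean along a path in mean space.

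\textbf{Notation and the two functions.} Work in natural parameters \(\boldsymbol{\theta} = \log \boldsymbol{\lambda}\) and write \(\chi \defeq \chi_{\rho, \mathrm{id}}\). By \Cref{thm:log-Laplace-transform}, \(\nabla\chi(\boldsymbol{\theta}) = \mean{e^{\boldsymbol{\theta}}*\rho}\) and \(\nabla^2\chi(\boldsymbol{\theta}) = \cov{e^{\boldsymbol{\theta}}*\rho}\). Moreover,
\[
\-{KL}\tp{e^{\boldsymbol{\theta}}*\rho \| \rho} = \chi^*(\*m), \qquad \*m \defeq \nabla\chi(\boldsymbol{\theta}) .
\]
Set \(\*m_0 \defeq \mean{\rho}\). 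Then \(F(\*m) \defeq \eta\,\chi^*(\*m)\) satisfies
\[
F(\*m_0) = 0, \qquad \nabla F(\*m_0) = \*0, \qquad \nabla^2 F(\*m) = \eta \, \cov{\cdot}^{-1} ,
\]
where the covariance is evaluated at the tilt with mean \(\*m\); in the degenerate case we restrict to the affine hull. The function \(G(\*m) \defeq \psi(\*m, \*m_0) = \sum_i \Psi(m_{0,i}, m_i - m_{0,i})\) also satisfies
\[
G(\*m_0) = 0, \qquad \nabla G(\*m_0) = \*0, \qquad \nabla^2 G(\*m) = \diag(1/m_i) .
\]
This uses \(\Psi(u,v) = (u+v)\log(u+v) - u\log u - v(\log u + 1)\), whose second \(v\)-derivative is \(1/(u+v)\).

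\textbf{Hessian comparison.} Spectral independence of every tilt gives \(\cov{\cdot} \preceq \eta \diag(\*m)\) at every interior mean \(\*m\). Inverting, this yields \(\nabla^2 F(\*m) \succeq \diag(1/m_i) = \nabla^2 G(\*m)\). Restrict to coordinates with \(m_{0,i} > 0\); coordinates with zero mean vanish almost surely and contribute nothing. When the covariance is singular, work on the affine hull of the support, where it is invertible, and use that \(\diag(\*m)\) dominates there too; the inversion step \(A \preceq B \Rightarrow B^{-1} \preceq A^{-1}\) must be done on that subspace.

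\textbf{Integration along the segment.} Given a target tilt with mean \(\*m\), which is an interior mean, the segment \(\*m_s = \*m_0 + s(\*m - \*m_0)\), \(s \in [0,1]\), lies in the relative interior of the mean domain by convexity. Taylor's formula with integral remainder, using the matching zeroth and first orders at \(\*m_0\), gives
\[
F(\*m) - G(\*m) = \int_0^1 (1-s)\,(\*m - \*m_0)^{\top}\bigl(\nabla^2F - \nabla^2G\bigr)(\*m_s)\,(\*m - \*m_0)\,\dd s \;\ge\; 0 ,
\]
which is exactly the entropic stability inequality.

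\textbf{Main obstacle.} The expected difficulty is regularity at the boundary: \(\chi^*\) is only known to be smooth on the relative interior, and \(\rho\) may have infinite support, so \(\chi\) may not be finite everywhere. Since every tilt is assumed well defined with finite mean, \(\chi\) is finite on all of \(\bb R^d\), and the mean map is a diffeomorphism onto the relative interior of the mean domain. Tilts with \(\boldsymbol{\lambda} \in \bb R^d_{>0}\) therefore have interior means, and the whole segment stays interior, so no limiting argument is needed for the statement itself.
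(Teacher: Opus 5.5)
Your proposal is correct and is essentially the paper's proof. The paper also compares $H(\*m) \defeq \-{KL}(\rho_{\boldsymbol{\theta}(\*m)} \| \rho)$ (your $\chi^*$) with $G(\*m) \defeq \psi(\*m, \*m_0)$, obtains $\eta \nabla^2 H \succeq \nabla^2 G$ from spectral independence of the tilts, and concludes from $H = G = 0$ and $\nabla H = \nabla G = \*0$ at $\*m_0$; it uses convexity of $\eta H - G$ where you use the integral-remainder Taylor formula.

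One caution about your recipe for the degenerate case. Let $V$ be the tangent space of the affine hull and $\*C$ the covariance of the tilt with mean $\*m$. If you compress $\diag(\*m)$ to $V$ and then invert, you only get $\eta \*C^{\dagger} \succeq (P_V \diag(\*m) P_V)^{\dagger}$. This is weaker than the needed $\eta \*C^{\dagger} \succeq P_V \diag(\*m)^{-1} P_V$ on $V$. Instead, use the hypothesis on all of $\bb R^d$: for $\*w \in V$, $\*w^{\top} \*C^{\dagger} \*w = \sup_{\*x \in \bb R^d} (2 \*w^{\top} \*x - \*x^{\top} \*C \*x) \ge \eta^{-1} \*w^{\top} \diag(\*m)^{-1} \*w$.
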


\begin{proof}[Proof of \Cref{lem:spectral-independence-implies-entropic-stability}]
    We may discard coordinates that vanish \(\rho\)-almost surely and work on the affine hull of \(\supp\tp{\rho}\). For \(\boldsymbol{\theta} \in \bb R^d\), define
    \[
    \chi\tp{\boldsymbol{\theta}} \defeq \log \*E_{\*k \sim \rho}\stp{e^{\inner{\boldsymbol{\theta}}{\*k}}}, \qquad \rho_{\boldsymbol{\theta}} \defeq e^{\boldsymbol{\theta}} * \rho,
    \]
    and write \(\*m_0 \defeq \mean{\rho}\). For \(\*m\) in the interior of the mean domain, let \(\boldsymbol{\theta}\tp{\*m}\) be the parameter satisfying \(\mean{\rho_{\boldsymbol{\theta}\tp{\*m}}} = \*m\), and set
    \[
    H\tp{\*m} \defeq \-{KL}\tp{\rho_{\boldsymbol{\theta}\tp{\*m}} \| \rho}, \qquad G\tp{\*m} \defeq \psi\tp{\*m, \*m_0}.
    \]
    By \Cref{thm:log-Laplace-transform},
    \[
    \nabla^2 H\tp{\*m} = \cov{\rho_{\boldsymbol{\theta}\tp{\*m}}}^{-1}.
    \]
    On the other hand, 
    \[
    G\tp{\*m} = \sum_{i = 1}^d \stp{m_i \log\tp{\frac{m_i}{m_{0, i}}} - m_i + m_{0, i}}, \qquad \nabla^2 G\tp{\*m} = \diag\tp{\*m}^{-1}.
    \]
    Spectral independence of \(\rho_{\boldsymbol{\theta}\tp{\*m}}\) gives
    \[
    \cov{\rho_{\boldsymbol{\theta}\tp{\*m}}} \preceq \eta \diag\tp{\mean{\rho_{\boldsymbol{\theta}\tp{\*m}}}} = \eta \diag\tp{\*m}.
    \]
    Hence,
    \[
    \eta \nabla^2 H\tp{\*m} - \nabla^2 G\tp{\*m} \succeq \*O.
    \]
    Here and above, inverse matrices are interpreted on the affine hull; equivalently, the Hessian comparison follows from the corresponding generalized-inverse inequality on its tangent space. Thus \(\eta H - G\) is convex. Note that \(\mean{\rho_{\boldsymbol{\theta}\tp{\*m_0}}} = \*m_0 = \mean{\rho}\), so \(\boldsymbol{\theta}\tp{\*m_0} = 0\). Together with \Cref{thm:log-Laplace-transform}, we have
    \[
    H\tp{\*m_0} = \-{KL}\tp{\rho_{\boldsymbol{\theta}\tp{\*m_0}} \| \rho} = 0 = G\tp{\*m_0},
    \]
    \[
    \nabla H\tp{\*m_0} = \boldsymbol{\theta}\tp{\*m_0} = 0 = \nabla G\tp{\*m_0}.
    \]
    It follows that \(G\tp{\*m} \le \eta H\tp{\*m}\) throughout the interior of the mean domain. In particular, this holds for \(\*m = \mean{\boldsymbol{\lambda} * \rho}\) for every \(\boldsymbol{\lambda} \in \bb R_{> 0}^d\), which is precisely \(\eta\)-entropic stability.
\end{proof}

We can now combine the preceding lemmas to obtain the entropy assertion of \Cref{prop:sufficient-condition-approximate-conservation}.

\begin{proof}[Proof of \Cref{prop:sufficient-condition-approximate-conservation} (Entropy)]
    Fix \(t \in \tp{0, 1}\) and \(\*x \in \supp\tp{\mu_t}\). By \Cref{lem:tilted-pinning-conditional-law},
    \[
    \rho_t^{\*x} = \tp{1 - t} * \mu^{\*x}.
    \]
    Consequently, for every \(\boldsymbol{\lambda} \in \bb R_{> 0}^d\),
    \[
    \boldsymbol{\lambda} * \rho_t^{\*x} = \tp{\tp{1 - t} \boldsymbol{\lambda}} * \mu^{\*x}.
    \]
    Note that
    \[
    \*x \in \supp\tp{\mu_t} = \set{\*x \in \bb N^d \cmid \exists \*x' \in \supp\tp{\mu}, \, \*x \le \*x'} = \Omega.
    \]
    The hypothesis of the proposition therefore implies that every exponential tilt of \(\rho_t^{\*x}\) is \(\eta\)-spectrally independent. By \Cref{lem:spectral-independence-implies-entropic-stability}, \(\rho_t^{\*x}\) is \(\eta\)-entropically stable. Applying \Cref{lem:entropic-stability-implies-entropy-conservation} gives
    \[
    \partial_t \*E\stp{\*{Ent}_{\nu_t}\stp{f}} \ge -\frac{\eta}{1 - t} \*E\stp{\*{Ent}_{\nu_t}\stp{f}}, \quad \forall t \in \tp{0, 1}.
    \]
    The same argument gives the asserted right-derivative inequality at \(t = 0\).
\end{proof}

\subsection{Homogenization for ULC Measures and Lorentzian Polynomials}
\label{subsec:ulc-completely-log-concavity}

In this subsection, we introduce a homogenization procedure that connects ULC measures with Lorentzian polynomials. We then use this construction to establish the implication \(1 \Rightarrow 5\) in \Cref{thm:ulc-generating-function-characterization}.

\begin{definition}
    Let \(\mu\) be a finitely supported probability measure on \(\bb N^d\), and let \(N \in \bb N\) satisfy
    \[
    N \ge \max_{\*n \in \supp\tp{\mu}} \norm{\*n}_1.
    \]
    The homogenized generating polynomial of \(\mu\) is defined by
    \[
    \+H_N g_{\mu}\tp{\theta_0, \boldsymbol{\theta}} \defeq \sum_{\*n \in \supp\tp{\mu}} \frac{\mu\tp{\*n}}{\tp{N - \norm{\*n}_1}!} \theta_0^{N - \norm{\*n}_1} \boldsymbol{\theta}^{\*n}, \quad \tp{\theta_0, \boldsymbol{\theta}} \in \bb R^{d + 1}.
    \]
    \label{def:homogenized-generating-polynomial}
\end{definition}

\begin{lemma}
    Let \(\mu\) be supported on a finite downward closed subset of \(\bb N^d\). Then \(\mu\) is ULC if and only if, for every \(N \in \bb N\) satisfying \(N \ge \max_{\*n \in \supp\tp{\mu}} \norm{\*n}_1\), the homogenized generating polynomial \(\+H_N g_{\mu}\) is Lorentzian.
    \label{lem:ulc-homogenization-lorentzian}
\end{lemma}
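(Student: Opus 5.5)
The plan is to prove \Cref{lem:ulc-homogenization-lorentzian} with the Brändén--Huh characterization of Lorentzian polynomials \cite{BH20}. A homogeneous polynomial \(f\) of degree \(N\) with nonnegative coefficients is Lorentzian if and only if two conditions hold:
\begin{enumerate}
    \item its support is M-convex;
    \item for every multi-index \(\beta\) with \(\abs{\beta} = N - 2\), the quadratic form \(\partial^{\beta} f\) has at most one positive eigenvalue.
\end{enumerate}
I would also use the equivalence, proved there, between Lorentzian and strongly log-concave (in Gurvits' sense) for homogeneous polynomials. Write \(a\tp{\*n} \defeq \*n! \mu\tp{\*n}\) and \(S \defeq \supp\tp{\mu}\). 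Then
\[
\+H_N g_{\mu}\tp{\theta_0, \boldsymbol{\theta}} = \sum_{\*n \in S} \frac{a\tp{\*n}}{\*n! \, \tp{N - \norm{\*n}_1}!} \theta_0^{N - \norm{\*n}_1} \boldsymbol{\theta}^{\*n}.
\]

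\emph{Step 1: the quadratic derivatives.} Fix \(\beta = \tp{\alpha_0, \boldsymbol{\alpha}}\) with \(\alpha_0 + \norm{\boldsymbol{\alpha}}_1 = N - 2\). Differentiating termwise leaves only monomials with \(\*n \in \set{\boldsymbol{\alpha}, \boldsymbol{\alpha} + \*e_i, \boldsymbol{\alpha} + \*e_i + \*e_j}\). The factorials cancel exactly, and the Hessian of \(\partial^{\beta} \+H_N g_{\mu}\) in the variables \(\tp{\theta_0, \boldsymbol{\theta}}\) is
\[
\begin{pmatrix} a\tp{\boldsymbol{\alpha}} & \tp{a\tp{\boldsymbol{\alpha} + \*e_i}}_i \\ \tp{a\tp{\boldsymbol{\alpha} + \*e_i}}_i & \tp{a\tp{\boldsymbol{\alpha} + \*e_i + \*e_j}}_{i, j} \end{pmatrix}.
\]
This matrix is independent of \(\alpha_0\). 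If \(\boldsymbol{\alpha} \notin S\), downward closure makes it zero. If \(\boldsymbol{\alpha} \in S\), then \(a\tp{\boldsymbol{\alpha}} > 0\), and by Sylvester's law of inertia the matrix has exactly one positive eigenvalue if and only if its Schur complement is negative semidefinite:
\[
\tp{a\tp{\boldsymbol{\alpha} + \*e_i + \*e_j} - a\tp{\boldsymbol{\alpha} + \*e_i} a\tp{\boldsymbol{\alpha} + \*e_j} / a\tp{\boldsymbol{\alpha}}}_{i, j} \preceq \*O.
\]
Rows and columns outside \(\+I_{\boldsymbol{\alpha}}\) vanish. On \(\+I_{\boldsymbol{\alpha}}\), the congruence by \(\diag\tp{a\tp{\boldsymbol{\alpha}} / a\tp{\boldsymbol{\alpha} + \*e_i}}\) turns this into \(-\*C_{\boldsymbol{\alpha}} \preceq \*O\), which is exactly \eqref{eq:ulc-condition} with \(\delta = 1\) at \(\boldsymbol{\alpha}\).

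Every \(\boldsymbol{\alpha} \in S\) arises for some admissible \(\beta\), except possibly those with \(\norm{\boldsymbol{\alpha}}_1 > N - 2\). For such \(\boldsymbol{\alpha}\), downward closure together with \(N \ge \max_{S} \norm{\cdot}_1\) forces \(\+I_{\boldsymbol{\alpha}}\) to be empty or the second-order entries to degenerate, so the ULC condition there is automatic or follows from a \(1 \times 1\) check. Hence Step 1 shows that ULC is equivalent to condition 2 for every admissible \(N\).

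\emph{Step 2: the easy direction (Lorentzian \(\Rightarrow\) ULC).} This follows immediately from Step 1, since a Lorentzian polynomial satisfies condition 2.

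\emph{Step 3: M-convexity (ULC \(\Rightarrow\) Lorentzian).} It remains to show that the support \(\set{\tp{N - \norm{\*n}_1, \*n} \cmid \*n \in S}\) is M-convex, which is equivalent to \(S\) being M\(^\natural\)-convex. I expect this to be the main obstacle. I would first invoke Murota's local exchange theorem, which reduces M-convexity of the homogenized support to exchanges between points at \(\ell_1\)-distance \(4\). Because \(S\) is downward closed, each such local configuration reduces to a statement about a base point \(\*n \in S\) and coordinates \(i, j, k\) with several of \(\*n + \*e_i\), \(\*n + \*e_i + \*e_j\), and so on lying in \(S\).

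The ULC matrix supplies the needed information. Its entry is \(1\) exactly when \(\*n + \*e_i + \*e_j \notin S\) (the ratio vanishes), and it is strictly less than \(1\) otherwise. I would apply PSD-ness of \(2 \times 2\) and \(3 \times 3\) principal minors of \(\*C_{\*n}\). Whenever the exchange fails, the corresponding principal minor should be an indefinite matrix of the pattern \(\begin{pmatrix} 1 & 1 \\ 1 & c \end{pmatrix}\) with \(c < 1\), or a \(3 \times 3\) analogue; such a minor is not positive semidefinite, which gives a contradiction. If this direct route becomes messy, the fallback is the footnoted fact that strong log-concavity forces M\(^\natural\)-convex support. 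To use it, I would combine \Cref{thm:ulc-generating-function-characterization} (\(1 \Rightarrow 4\)) with the Brändén--Huh argument deriving M-convexity from log-concavity of the derivatives \(\partial^{\*x} g_{\mu}\).
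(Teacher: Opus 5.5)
Your Steps~1 and~2 match the paper's argument. You use the same quadratic derivative $\partial_0^{N-\norm{\boldsymbol\alpha}_1-2}\partial^{\boldsymbol\alpha}\+H_N g_\mu$ and the same Schur-complement and congruence reduction to the ULC matrix at $\boldsymbol\alpha$. You also make the same boundary observation: for $\norm{\boldsymbol\alpha}_1\ge N-1$, either $\+I_{\boldsymbol\alpha}=\varnothing$ or the ULC matrix is the all-ones matrix on $\+I_{\boldsymbol\alpha}$. The latter is what your ``$1\times1$ check'' should say.

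The gap is the primary route in Step~3. Murota's local exchange theorem is a statement about functions whose effective domain is \emph{already} an M-convex set. It does not reduce M-convexity of a set to exchanges at $\ell_1$-distance~$4$. For example, $\set{(3,0,0),(0,3,0)}$ satisfies every distance-$4$ exchange vacuously, yet it is not M-convex. Downward closedness of $\supp(\mu)$ excludes that particular example. But exploiting it would need a separate local-to-global induction, for instance via the augmentation property of discrete polymatroids after pinning at $\min(\*u,\*v)$. The proposal does not supply that induction, and the claim that the failing minor ``should be'' indefinite is not yet an argument.

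Your fallback is exactly the paper's route, and it closes the gap with a short computation you should write out. By \Cref{thm:ulc-generating-function-characterization} ($1\Rightarrow4$), every nonzero $h=\partial^{\*x}g_\mu$ is log-concave on $\bb R_{>0}^d$. There is no circularity: the proof of items~1--4 does not use this lemma. Log-concavity means $\*A\defeq\nabla h\nabla h^{\top}-h\nabla^2h\succeq\*O$.
\begin{itemize}
    \item The diagonal gives $h\,\partial_{ii}h\le(\partial_ih)^2$.
    \item Since $h,\partial_ih,\partial_{ii}h\ge0$, you get $\sqrt{A_{ii}}\le\partial_ih$.
    \item Hence $\abs{A_{ij}}\le\sqrt{A_{ii}A_{jj}}\le\partial_ih\,\partial_jh$, and therefore $h\,\partial_{ij}h\le2\,\partial_ih\,\partial_jh$.
\end{itemize}
So $g_\mu$ is $2$-Rayleigh. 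Downward closure gives $g_\mu(\*0)=\mu(\*0)>0$, so \cite[Lemma~2.22]{BH20} yields M$^\natural$-convexity of $\supp(\mu)$. Equivalently, $\set{(N-\norm{\*n}_1,\*n)\cmid\*n\in\supp(\mu)}$ is M-convex. Combined with your Step~1, \cite[Theorem~2.25]{BH20} then gives that $\+H_Ng_\mu$ is Lorentzian.
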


\begin{proof}[Proof of \Cref{lem:ulc-homogenization-lorentzian}]
    Write \(a\tp{\*n} \defeq \*n! \mu\tp{\*n}\). Suppose first that \(\mu\) is ULC. By \Cref{thm:ulc-generating-function-characterization}, \(g_{\mu}\) is strongly log-concave. Fix a nonzero derivative \(h = \partial^{\*x} g_{\mu}\). By log-concavity, pointwise on \(\bb R_{> 0}^d\),
    \[
    \*A \defeq \nabla h \nabla h^{\top} - h \nabla^2 h \succeq \*O.
    \]
    In particular,
    \[
    A_{ii} = \tp{\partial_i h}^2 - h \partial_{ii} h \ge 0, \quad \forall i \in \stp{d}.
    \]
    Since \(h\) has nonnegative coefficients, \(h, \partial_i h,\partial_{ii} h \ge 0\), and hence \(0 \le \sqrt{A_{ii}} \le \partial_i h\). Moreover, positive semidefiniteness of \(\*A\) gives
    \[
    \abs{A_{ij}} \le \sqrt{A_{ii} A_{jj}} \le \partial_i h \, \partial_j h, \quad \forall i, j \in \stp{d}, \, i \ne j.
    \]
    Thus, for \(i \ne j\),
    \[
    \partial_i h\,\partial_j h \ge \abs{A_{ij}} \ge -A_{ij} = h \partial_{ij} h - \partial_i h \,\partial_j h,
    \]
    and therefore \(h \partial_{ij} h \le 2 \partial_i h \, \partial_j h\). For \(i = j\), the stronger inequality
    \[
    h \partial_{ii} h \le \tp{\partial_i h}^2
    \]
    follows directly from \(A_{ii} \ge 0\). Hence, \(g_{\mu}\) is \(2\)-Rayleigh \cite[Definition~2.18]{BH20}. Since \(\supp\tp{\mu}\) is downward closed, \(g_{\mu}\tp{\*0} = \mu\tp{\*0} > 0\). Hence, \cite[Lemma~2.22]{BH20} implies that \(\supp\tp{\mu}\) is \(M^{\natural}\)-convex. Consequently,
    \[
    \+J_N \defeq \set{\tp{N - \norm{\*n}_1, \*n} \cmid \*n \in \supp\tp{\mu}}
    \]
    is \(M\)-convex.

    For \(\*n \in \supp\tp{\mu}\) with \(\norm{\*n}_1 \le N - 2\), the corresponding quadratic derivative is
    \[
    \partial_0^{N - \norm{\*n}_1 - 2} \partial^{\*n} \+H_N g_{\mu}\tp{\theta_0, \boldsymbol{\theta}} = \frac{a\tp{\*n}}{2} \theta_0^2 + \sum_{i = 1}^d a\tp{\*n + \*e_i} \theta_0 \theta_i + \frac{1}{2} \sum_{i, j = 1}^d a\tp{\*n + \*e_i + \*e_j} \theta_i \theta_j.
    \]
    Its Hessian is
    \[
    \begin{pmatrix}
        a\tp{\*n} & \tp{a\tp{\*n + \*e_i}}_{i \in \stp{d}}^{\top} \\
        \tp{a\tp{\*n + \*e_i}}_{i \in \stp{d}} & \tp{a\tp{\*n + \*e_i + \*e_j}}_{i, j \in \stp{d}}
    \end{pmatrix}.
    \]
    Since \(a\tp{\*n} > 0\), the Schur complement shows that this Hessian has at most one positive eigenvalue if and only if
    \[
    \tp{\frac{a\tp{\*n + \*e_i} a\tp{\*n + \*e_j}}{a\tp{\*n}} - a\tp{\*n + \*e_i + \*e_j}}_{i, j \in \+I_{\*n}} \succeq \*O,
    \]
    where \(\+I_{\*n} \defeq \set{i \in \stp{d} \cmid \*n + \*e_i \in \supp\tp{\mu}}\). This follows directly from the ULC matrix condition in \eqref{eq:ulc-condition} by a congruence transformation. Hence every quadratic derivative has at most one positive eigenvalue. Together with the \(M\)-convexity of \(\+J_N\), \cite[Theorem~2.25]{BH20} implies that \(\+H_N g_{\mu}\) is Lorentzian.

    Conversely, suppose that \(\+H_N g_{\mu}\) is Lorentzian. Its quadratic derivatives have at most one positive eigenvalue, so the same Schur-complement calculation gives the ULC matrix condition at every \(\*n\) with \(\norm{\*n}_1 \le N - 2\). The remaining boundary cases are automatic because no second increment lies in the support. Thus \(\mu\) is ULC.
\end{proof}

\begin{proof}[Proof of \Cref{thm:ulc-generating-function-characterization} (\(1 \Rightarrow 5\))]
    First suppose that \(\mu\) has finite support. For every sufficiently large \(N\), define
    \[
    F_{\mu, N}\tp{\boldsymbol{\theta}} \defeq N! \, \+H_N g_{\mu}\tp{1, \boldsymbol{\theta} / N} = \sum_{\*n \in \supp\tp{\mu}} \mu\tp{\*n} \frac{N!}{\tp{N - \norm{\*n}_1}! N^{\norm{\*n}_1}} \boldsymbol{\theta}^{\*n}.
    \]
    By the equivalence between Lorentzian and completely log-concave polynomials \cite[Theorem~2.30]{BH20}, \(\+H_N g_{\mu}\) is completely log-concave. Since complete log-concavity is preserved under restriction to \(\theta_0 = 1\), positive diagonal rescaling of the remaining variables, and multiplication by a positive constant, it follows that \(F_{\mu, N}\) is completely log-concave. As \(N \to \infty\), \(F_{\mu, N}\) and all their directional derivatives converge pointwise to those of \(g_{\mu}\). Log-concavity of every derivative passes to the limit, so \(g_{\mu}\) is completely log-concave.

    For general \(\mu\), let \(\mu_N\) denote its normalized restriction to \(\set{\*n \in \bb N^d \cmid \norm{\*n}_1 \le N}\). It is straightforward to check that \(\mu_N\) is ULC. By the preceding argument, \(g_{\mu_N}\) is completely log-concave. Note that \(g_{\mu_N}\) and all its directional derivatives converge pointwise to those of \(g_{\mu}\). Log-concavity of every derivative passes to the limit, so \(g_{\mu}\) is completely log-concave.
\end{proof}

\subsection{Rapid Mixing for the Antiferromagnetic \texorpdfstring{\(q\)}{q}-Potts Model}
\label{subsec:proofs-antiferromagnetic-potts}

\begin{proof}[Proof of \Cref{cor:q-potts-rapid-mixing}]
    For \(v \in V\) and \(c \in \stp{q}\), define the local heat-bath probabilities by
    \[
    p_c^{\sigma}\tp{v} \defeq \frac{B^{d_c^{\sigma}\tp{v}}}{\sum_{a = 1}^q B^{d_a^{\sigma}\tp{v}}}, \quad v \in V, \, c \in \stp{q}.
    \]
    The Glauber Dirichlet form is
    \[
    \+E_{P_{\-{GD}}}\tp{f, f} = \frac{1}{2 n} \*E_{\sigma \sim \mu_{G, B}}\stp{\sum_{v \in V} \sum_{c = 1}^q p_c^{\sigma}\tp{v} \tp{f\tp{\sigma^{v \gets c}} - f\tp{\sigma}}^2}.
    \]
    By detailed balance, the two orientations of every update between color \(q\) and a color \(c \in \stp{q - 1}\) contribute equally. Pairing these orientations and discarding updates between two nonreference colors gives
    \[
    \+E_{P_{\-{GD}}}\tp{f, f} \ge \frac{1}{n} \*E_{\sigma \sim \mu_{G, B}}\stp{\sum_{\substack{v \in V \\ \sigma_v = q}} \sum_{c = 1}^{q - 1} p_c^{\sigma}\tp{v} \tp{f\tp{\sigma^{v \gets c}} - f\tp{\sigma}}^2}.
    \]
    Under the downward closed embedding, \eqref{eq:birth-death-generator} gives
    \[
    \+E\tp{f, f} = \*E_{\sigma \sim \mu_{G, B}}\stp{\sum_{\substack{v \in V \\ \sigma_v = q}} \sum_{c = 1}^{q - 1} B^{d_c^{\sigma}\tp{v} - d_q^{\sigma}\tp{v}} \tp{f\tp{\sigma^{v \gets c}} - f\tp{\sigma}}^2},
    \]
    where \(\+E\) is the Dirichlet form of the embedded linear-death generator. Note that
    \[
    \frac{B^{d_c^{\sigma}\tp{v} - d_q^{\sigma}\tp{v}}}{p_c^{\sigma}\tp{v}} = \sum_{a = 1}^q B^{d_a^{\sigma}\tp{v} - d_q^{\sigma}\tp{v}} \le q B^{-\Delta},
    \]
    since \(d_a^{\sigma}\tp{v} - d_q^{\sigma}\tp{v} \ge -\Delta\) for every \(a \in \stp{q}\). Hence,
    \[
    \+E_{P_{\-{GD}}}\tp{f, f} \ge \frac{B^{\Delta}}{q n} \+E\tp{f, f}.
    \]
    Combining this comparison with \Cref{thm:ulc-poincare-inequality} proves the spectral-gap bound.
\end{proof}

\begin{proof}[Proof of \Cref{cor:q-potts-random-regular-mixing}]
    Set \(\alpha \defeq 1 / 2 - \zeta\). Since \(1 - B \le \alpha \log \Delta / \Delta = o_{\Delta}\tp{1}\), for all sufficiently large \(\Delta\),
    \[
    \log \frac{1}{B} \le \frac{1 - B}{B} \le \tp{\frac{1}{2} - \frac{\zeta}{2}} \frac{\log \Delta}{\Delta}.
    \]
    Consequently,
    \[
    B^{1 - \Delta} \le B^{-\Delta} \le \Delta^{1 / 2 - \zeta / 2}.
    \]
    On the event in the eigenvalue estimate preceding the corollary, \(\lambda^{\star} \le 3 \sqrt{\Delta}\) for all sufficiently large \(n\). Hence,
    \[
    \lambda^{\star} \tp{1 - B} B^{1 - \Delta} \le 3 \sqrt{\Delta} \frac{\alpha \log \Delta}{\Delta} \Delta^{1 / 2 - \zeta / 2} = 3 \alpha \Delta^{-\zeta / 2} \log \Delta = o_{\Delta}\tp{1}.
    \]
    Thus, for all sufficiently large \(\Delta\), this quantity is at most \(1 / 2\) with probability \(1 - o_n\tp{1}\). By \Cref{thm:antiferromagnetic-potts-ulc}, the embedded Potts measure is then \(\delta\)-ULC with \(\delta \ge 1 / 2\). Therefore, \Cref{cor:q-potts-rapid-mixing} implies
    \[
    \gamma_{\-{GD}}^{-1} \le \frac{q n}{\delta} B^{-\Delta} \le 2 q n \Delta^{1 / 2 - \zeta / 2}.
    \]

    Let \(\mu_{\min} \defeq \min_{\sigma \in \stp{q}^V} \mu_{G, B}\tp{\sigma}\). Since \(B^{\abs{E}} \le B^{m_G\tp{\sigma}} \le 1\), the normalizing constant of \(\mu_{G, B}\) is at most \(q^n\), and hence
    \[
    \mu_{\min} \ge \frac{B^{\abs{E}}}{q^n}.
    \]
    Using \(\abs{E} = n \Delta / 2\) and the bound on \(-\log B\), we obtain
    \[
    \log \frac{1}{\mu_{\min}} \le n \log q + \frac{n \Delta}{2} \log \frac{1}{B} \le n \log q + \frac{n \Delta}{2} \tp{\frac{1}{2} - \frac{\zeta}{2}} \frac{\log \Delta}{\Delta} = O_q\tp{n \log \Delta}.
    \]
    The kernel \(P_{\-{GD}}\) is ergodic and reversible. Moreover, it is the average of the single-site conditional-expectation operators, each of which is an orthogonal projection on \(L^2\tp{\mu_{G, B}}\); hence \(P_{\-{GD}}\) is positive semidefinite. Applying \Cref{thm:functional-inequality-mixing} now gives
    \[
    \begin{aligned}
        T_{\-{mix}}\tp{\eps; P_{\-{GD}}} &\le \ceil{\frac{1}{\gamma_{\-{GD}}} \tp{\log \frac{1}{2 \eps} + \frac{1}{2} \log \frac{1}{\mu_{\min}}}} \\
        &= O_q\tp{n \Delta^{1 / 2 - \zeta / 2} \tp{\log \frac{1}{\eps} + n \log \Delta}} \\
        &= O_q\tp{\sqrt{\Delta} n^2 \log \frac{1}{\eps}}.
    \end{aligned}
    \]
\end{proof}

\section{Point Processes and Probability Measures on \texorpdfstring{\(\bb N^d\)}{N d}}
\label{sec:point-processes}

In this section, we explain the relation between the birth-death chains studied in \Cref{sec:bakry-emery-birth-death} and Glauber dynamics for point processes. This gives a point-process interpretation of the factorial weight \(a\tp{\*n} = \*n! \mu\tp{\*n}\) and relates the \(\delta\)-ULC condition to the curvature criterion of \cite[Theorem~3.4]{KKO13}.

Let \(X\) be a locally compact Polish space equipped with a \(\sigma\)-finite Radon measure \(\nu\). The configuration space \(\+N\tp{X}\) consists of locally finite counting measures
\[
\eta = \sum_{k \in I} \delta_{x_k}.
\]
A point process is a random configuration \(\Xi \sim \bb P\), where \(\bb P \in \+P\tp{\+N\tp{X}}\). For a function \(F: \+N\tp{X} \to \bb R\), define the add-one and remove-one differences by
\[
\nabla_x^+ F\tp{\eta} \defeq F\tp{\eta + \delta_x} - F\tp{\eta}, \qquad \nabla_x^- F\tp{\eta} \defeq F\tp{\eta - \delta_x} - F\tp{\eta},
\]
where the second expression is used only when \(x \in \eta\).

\begin{definition}[Papangelou Intensity]
    A measurable function \(r: X \times \+N\tp{X} \to \bb R_{\ge 0}\) is a Papangelou intensity of \(\bb P\) with respect to \(\nu\) if the Georgii--Nguyen--Zessin identity
    \begin{equation}
        \*E_{\bb P}\stp{\int_X H\tp{x, \eta - \delta_x} \eta\tp{\-d x}} = \*E_{\bb P}\stp{\int_X H\tp{x, \eta} r_x\tp{\eta} \nu\tp{\-d x}}
        \label{eq:papangelou-intensity}
    \end{equation}
    holds for every nonnegative measurable \(H\), where \(r_x\tp{\eta} \defeq r\tp{x, \eta}\).
\end{definition}

The Glauber generator associated with \(r\) is
\begin{equation}
    \+G F\tp{\eta} \defeq \int_X r_x\tp{\eta} \nabla_x^+ F\tp{\eta} \nu\tp{\-d x} + \int_X \nabla_x^- F\tp{\eta} \eta\tp{\-d x}.
    \label{eq:point-process-glauber-generator}
\end{equation}
The identity \eqref{eq:papangelou-intensity} implies that \(\+G\) is reversible with respect to \(\bb P\), with Dirichlet form
\[
\+E_{\+G}\tp{F, F} = \*E_{\bb P}\stp{\int_X r_x\tp{\eta} \tp{\nabla_x^+ F\tp{\eta}}^2 \nu\tp{\-d x}} = \*E_{\bb P}\stp{\int_X \tp{\nabla_x^- F\tp{\eta}}^2 \eta\tp{\-d x}}.
\]

The point-process counterpart of the \(\delta\)-ULC matrix is the following quadratic form. For a test function \(h: X \to \bb R\), set
\begin{equation}
    \begin{aligned}
        \+K_{\eta}^{\tp{\delta}}\tp{h} &\defeq \int_X \int_X h\tp{x} h\tp{y} r_x\tp{\eta} \tp{r_y\tp{\eta} - r_y\tp{\eta + \delta_x}} \nu\tp{\-d y} \nu\tp{\-d x} \\
        &\quad + \tp{1 - \delta} \int_X h\tp{x}^2 r_x\tp{\eta} \nu\tp{\-d x}.
    \end{aligned}
    \label{eq:point-process-curvature-form}
\end{equation}
The Papangelou cocycle identity makes the first kernel symmetric almost everywhere. The coercivity inequality in \cite[Theorem~3.4]{KKO13} gives the following criterion in the self-adjoint ergodic setting.

\begin{theorem}[Point-Process Curvature Criterion]
    Suppose that the cylinder generator in \eqref{eq:point-process-glauber-generator} is essentially self-adjoint on \(L^2\tp{\bb P}\), its closure has only constants in its kernel, and \(\+K_{\eta}^{\tp{\delta}}\tp{h} \ge 0\) for \(\bb P\)-almost every \(\eta\) and every compactly supported test function \(h\). Then \(\+G\) satisfies the \Poincare inequality with constant \(\delta\):
    \[
    \delta \, \*{Var}_{\bb P}\stp{F} \le \+E_{\+G}\tp{F, F}, \quad \forall F \in \+D\tp{\+E_{\+G}}.
    \]
    \label{thm:point-process-curvature-criterion}
\end{theorem}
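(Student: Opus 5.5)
This is a transcription of the discrete argument of \Cref{thm:ulc-poincare-inequality} into the point-process setting. We derive an integrated Bochner identity for $\+G$, throw away its nonnegative second-order part, read the remaining first-order part as the curvature form \eqref{eq:point-process-curvature-form}, and conclude through the spectral argument behind \Cref{thm:integrated-bakry-emery-criterion}.

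\textbf{Step 1: the Bochner identity.} On a core of cylinder functions $F$ we compute $\*E_{\bb P}\stp{(\+G F)^2}$. Write $u_x(\eta) \defeq \nabla_x^+ F(\eta)$, so that
\[
\+G F(\eta) = \int r_x(\eta) u_x(\eta) \nu(\-d x) - \int u_x(\eta - \delta_x) \eta(\-d x).
\]
Expanding the square produces four kinds of terms: birth--birth, death--death and two birth--death cross terms. Each term containing an $\eta(\-d x)$ integral is converted using the GNZ identity \eqref{eq:papangelou-intensity}; the death--death term needs it twice. The diagonal part $x = y$ of the resulting double integral drops out because $\nu$ is diffuse, or is handled as an atom in the same way as $n_i^2 = n_i(n_i - 1) + n_i$ was handled before. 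The cocycle relation
\[
r_x(\eta)\, r_y(\eta + \delta_x) = r_y(\eta)\, r_x(\eta + \delta_y)
\]
plays the role of $r_i r_j(\cdot + \*e_i) = r_{ij}$ from the proof of \Cref{thm:integrated-gamma-2-form}. Repeating that argument line by line, with $\nabla_x^+ \nabla_y^+ F$ as the second difference, should give
\[
\*E_{\bb P}\stp{(\+G F)^2} = \*E\stp{\iint r_x(\eta)\, r_y(\eta + \delta_x) \tp{\nabla_x^+ \nabla_y^+ F}^2 \nu(\-d x)\nu(\-d y)} + \*E\stp{\iint u_x u_y\, r_x(\eta)\tp{r_y(\eta) - r_y(\eta + \delta_x)} \nu \nu} + \*E\stp{\int r_x u_x^2 \nu}.
\]
The last term equals $\+E_{\+G}(F,F)$.

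\textbf{Step 2: the curvature bound.} Drop the first term, which is nonnegative. What remains is $\*E\stp{\+K_\eta^{(\delta)}(u_\cdot(\eta))} + \delta\, \+E_{\+G}(F,F)$, with the local form evaluated at $h = u_\cdot(\eta)$. For a cylinder $F$ depending on a compact set $\Lambda$, the function $x \mapsto u_x(\eta)$ is supported in $\Lambda$. It is not continuous, however, so it has to be approximated by compactly supported test functions $h$; we expect the hypothesis to extend to bounded measurable $h$ supported in $\Lambda$ by dominated convergence. The hypothesis then gives
\[
\*E_{\bb P}\stp{(\+G F)^2} \ge \delta\, \+E_{\+G}(F,F)
\]
on the core.

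\textbf{Step 3: from the integrated bound to the Poincar\'e inequality.} Essential self-adjointness lets us extend the bound to the domain of the closure $\bar{\+G}$. Since the spectrum is no longer finite, the eigenbasis argument is replaced by the spectral theorem: the bound says $\lambda^2 \ge \delta\lambda$ on the spectral measure of $-\bar{\+G}$ restricted to the complement of its kernel. Hence that spectral measure has no mass in $(0,\delta)$. The kernel consists only of constants, so for mean-zero $F$ the spectral measure is carried by $[\delta,\infty)$, which gives $\delta\, \*{Var}_{\bb P}(F) \le \+E_{\+G}(F,F)$ on $\+D(\+E_{\+G})$. This is exactly \cite[Theorem~3.4]{KKO13}, and one could also simply cite it.

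The main obstacle is the rigorous justification of Step 1 in infinite volume. Every GNZ exchange has to be integrable for cylinder functions, which requires local integrability of $r_x$ and of $r_x(\eta)\, r_y(\eta + \delta_x)$. The diagonal and atom contributions also need careful bookkeeping. I would handle both by working with bounded cylinder functions on a compact $\Lambda$, where every integral is finite as long as the Papangelou intensity has finite local moments.
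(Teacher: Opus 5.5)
Your proposal is correct and follows essentially the paper's route. The paper gives no independent argument; it invokes the coercivity inequality of \cite[Theorem~3.4]{KKO13}, which is exactly the integrated bound $\mathbf{E}_{\mathbb{P}}[(\mathcal{G}F)^2] \ge \delta\,\mathcal{E}_{\mathcal{G}}(F,F)$ on cylinder functions that you obtain by transporting the proof of \Cref{thm:integrated-gamma-2-form} to configurations (GNZ replacing detailed balance, the cocycle identity replacing $r_i r_j(\cdot+\mathbf{e}_i)=r_{ij}$), followed by the same spectral-theorem conclusion under essential self-adjointness and a kernel consisting only of constants; your approximation step in Step~2 is even unnecessary for the usual smooth cylinder functions $\phi(\langle f_1,\eta\rangle,\dots,\langle f_k,\eta\rangle)$ with continuous compactly supported $f_i$, since then $x\mapsto\nabla_x^+F(\eta)$ is already continuous and compactly supported.
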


\paragraph{Simple point processes.} 

When \(X\) is finite, \(\nu\) is the counting measure, and \(\bb P\) has downward closed support consisting of simple configurations, we may identify \(\eta\) with its indicator in \(\set{0, 1}^X\). If \(p\tp{\eta} \defeq \bb P\tp{\eta}\), then
\[
r_x\tp{\eta} = \frac{p\tp{\eta + \delta_x}}{p\tp{\eta}}
\]
on the active set \(\+I_{\eta} \defeq \set{x \in X \cmid \eta + \delta_x \in \supp\tp{\bb P}}\). A diagonal congruence transformation shows that nonnegativity of \eqref{eq:point-process-curvature-form} is equivalent to
\[
\tp{1 - \frac{p\tp{\eta} p\tp{\eta + \delta_x + \delta_y}}{p\tp{\eta + \delta_x} p\tp{\eta + \delta_y}}}_{x, y \in \+I_{\eta}} + \tp{1 - \delta} \diag\tp{\frac{p\tp{\eta}}{p\tp{\eta + \delta_x}}}_{x \in \+I_{\eta}} \succeq \*O.
\]
Thus \eqref{eq:point-process-curvature-form} restricts to the \(\delta\)-ULC condition on a finite-dimensional hypercube and provides its point-process counterpart on a general configuration space.

\paragraph{Marked point-process lift.} 

Let \(\mu\) be a probability measure on \(\bb N^d\) with downward closed support, and set \(a\tp{\*n} \defeq \*n! \mu\tp{\*n}\). Fix a diffuse probability measure \(\nu\) on \(X\), let \(\bar X \defeq X \times \stp{d}\), and equip \(\bar X\) with the measure
\[
\bar\nu\tp{A \times \set{i}} \defeq \frac{1}{d} \nu\tp{A}, \quad A \subseteq X, \quad i \in \stp{d}.
\]
Construct a point process as follows: sample \(\*N \sim \mu\), then, conditionally on \(\*N = \*n\), sample \(\xi_{i, 1}, \dots, \xi_{i, n_i}\) independently from \(\nu\) for every \(i \in \stp{d}\), and set
\begin{equation}
    \Xi \defeq \sum_{i = 1}^d \sum_{j = 1}^{N_i} \delta_{\tp{\xi_{i, j}, i}}.
    \label{eq:marked-point-process-lift}
\end{equation}
Let \(\bb P_{\mu}\) denote the law of \(\Xi\), and let \(\Pi_{\bar\nu}\) denote the Poisson point process with intensity \(\bar\nu\). For \(\eta \in \+N\tp{\bar X}\), write
\[
N_i\tp{\eta} \defeq \eta\tp{X \times \set{i}}, \qquad \*N\tp{\eta} \defeq \tp{N_1\tp{\eta}, \dots, N_d\tp{\eta}}.
\]

\begin{proposition}
    The lifted law \(\bb P_{\mu}\) has density
    \begin{equation}
        \frac{\dd \bb P_{\mu}}{\dd \Pi_{\bar\nu}}\tp{\eta} = e \, d^{\norm{\*N\tp{\eta}}_1} a\tp{\*N\tp{\eta}},
        \label{eq:point-process-lift-density}
    \end{equation}
    and Papangelou intensity
    \begin{equation}
        r_{\tp{x, i}}\tp{\eta} = d \frac{a\tp{\*n + \*e_i}}{a\tp{\*n}}, \qquad \*n = \*N\tp{\eta}.
        \label{eq:point-process-lift-papangelou}
    \end{equation}
    Moreover, if \(F = f \circ \*N\), then the point-process generator \(\+G\) in \eqref{eq:point-process-glauber-generator} satisfies
    \[
    \+G F\tp{\eta} = \sum_{i = 1}^d \frac{a\tp{\*n + \*e_i}}{a\tp{\*n}} \nabla_i^+ f\tp{\*n} + \sum_{i = 1}^d n_i \nabla_i^- f\tp{\*n} = \+L f\tp{\*n},
    \]
    where \(\+L\) is the linear-death generator in \eqref{eq:birth-death-generator}.
    \label{prop:point-process-lift}
\end{proposition}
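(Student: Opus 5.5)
The plan is to prove the three assertions in order, working with the explicit law of \(\Xi\). Each conclusion follows by direct computation with the density, after which the Papangelou identity and the generator formula are mechanical.

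\emph{Density.} For this step I compute \(\bb P_{\mu}\) on the event \(\set{\*N\tp{\eta} = \*n}\).
\begin{itemize}
\item By construction, conditionally on \(\*N = \*n\), the unordered configuration in block \(i\) consists of \(n_i\) i.i.d.\ \(\nu\)-points. As a measure on configurations with \(n_i\) points in \(X \times \set{i}\), this has density \(\prod_i \nu^{\otimes n_i}\) with respect to ordered tuples, divided by \(n_i!\) after symmetrization, times \(n_i!\) orderings. In other words, the Janossy density of \(\bb P_\mu\) on this event is \(\mu\tp{\*n} \prod_i \tp{\nu^{\otimes n_i}}\) on ordered points.
\item The Poisson process \(\Pi_{\bar\nu}\) has total mass \(\bar\nu\tp{\bar X} = 1\), so its Janossy density with respect to ordered points is \(e^{-1} \prod_i \tp{\nu/d}^{\otimes n_i} / n_i!\).
\item Since \(\nu\) is diffuse, both laws live on simple configurations. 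Taking the ratio gives \(e \, d^{\norm{\*n}_1} \*n! \, \mu\tp{\*n} = e \, d^{\norm{\*n}_1} a\tp{\*n}\).
\end{itemize}
I will state this via the standard formula \(\*E_{\Pi}\stp{F} = e^{-\bar\nu\tp{\bar X}} \sum_k \frac{1}{k!} \int F\tp{\sum_j \delta_{y_j}} \bar\nu^{\otimes k}\tp{\dd \*y}\) and compare term by term. This combinatorial bookkeeping of the factorials is the main place where an error could slip in: it must produce \(\*n!\) rather than \(1/\*n!\), because the mark-block decomposition of the \(1/k!\) multinomial gives \(1/\prod_i n_i!\) for Poisson, while \(\mu\) carries no such factor.

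\emph{Papangelou intensity.} For a process with density \(\varphi\) with respect to \(\Pi_{\bar\nu}\), the Mecke formula for the Poisson process yields the GNZ identity \eqref{eq:papangelou-intensity}. The intensity is \(r_y\tp{\eta} = \varphi\tp{\eta + \delta_y}/\varphi\tp{\eta}\), with the convention that it is zero when \(\varphi\tp{\eta + \delta_y} = 0\), and downward closedness ensures \(\varphi\tp{\eta} > 0\) wherever it is needed. The argument runs as follows.
\begin{itemize}
\item Apply Mecke: \(\*E_{\Pi}\stp{\int H\tp{y, \eta - \delta_y} \varphi\tp{\eta} \eta\tp{\dd y}} = \*E_{\Pi}\stp{\int H\tp{y, \eta} \varphi\tp{\eta + \delta_y} \bar\nu\tp{\dd y}}\).
\item Rewrite the right-hand side as \(\*E_{\bb P_\mu}\stp{\int H\tp{y, \eta} r_y\tp{\eta} \bar\nu\tp{\dd y}}\).
\end{itemize}
Plugging in \eqref{eq:point-process-lift-density} with \(y = \tp{x, i}\) gives \(r = d \, a\tp{\*n + \*e_i}/a\tp{\*n}\).

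\emph{Generator.} For \(F = f \circ \*N\), adding a point \(\tp{x, i}\) increments \(N_i\), so \(\nabla^+_{\tp{x, i}} F = \nabla_i^+ f\tp{\*n}\). The birth term then integrates to \(\sum_i \bar\nu\tp{X \times \set{i}} \cdot d \, a\tp{\*n + \*e_i}/a\tp{\*n} \cdot \nabla_i^+ f\tp{\*n}\), where \(\bar\nu\tp{X \times \set{i}} = 1/d\) cancels the factor \(d\). The death term integrates \(\nabla_i^- f\tp{\*n}\) against \(\eta\), contributing \(n_i\) per block. Together these give exactly \(\+L f\tp{\*n}\) as in \eqref{eq:birth-death-generator}.
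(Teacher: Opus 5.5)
Your proposal is correct and takes essentially the paper's route. Your Janossy-density bookkeeping amounts to the paper's observation that \(\bb P_{\mu}\) and \(\Pi_{\bar\nu}\) share the same conditional law of positions given the count vector (i.i.d.\ \(\nu\) in each fiber), so the density is the ratio \(\mu\tp{\*n} / \Pi_{\bar\nu}\tp{\*N = \*n}\) with \(\Pi_{\bar\nu}\tp{\*N = \*n} = e^{-1} d^{-\norm{\*n}_1} / \*n!\); your Mecke-formula derivation (with downward closedness ensuring \(\varphi\tp{\eta} = 0\) forces \(\varphi\tp{\eta + \delta_y} = 0\)) just spells out the ``add-one ratio'' step the paper asserts, and the generator computation is identical.
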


\begin{proof}[Proof of \Cref{prop:point-process-lift}]
    Under \(\Pi_{\bar\nu}\), the marked counts are independent with law \(\-{Pois}\tp{1 / d}\), and, conditional on these counts, the point locations in each fiber are independent with law \(\nu\). Hence
    \[
    \Pi_{\bar\nu}\tp{\*N = \*n} = e^{-1} \frac{d^{-\norm{\*n}_1}}{\*n!}.
    \]
    Comparing this mass with \(\mu\tp{\*n}\) proves \eqref{eq:point-process-lift-density}. The Papangelou intensity is the add-one ratio of this density, which gives \eqref{eq:point-process-lift-papangelou}.

    If \(F = f \circ \*N\), then \(\nabla_{\tp{x, i}}^+ F\tp{\eta} = \nabla_i^+ f\tp{\*n}\), independently of \(x\). Similarly, removing any point in the \(i\)-th fiber produces \(\nabla_i^- f\tp{\*n}\). Since \(\bar\nu\tp{X \times \set{i}} = 1 / d\), substituting \eqref{eq:point-process-lift-papangelou} into \eqref{eq:point-process-glauber-generator} yields the claimed generator identity.
\end{proof}

The factor \(\*n!\) in \eqref{eq:point-process-lift-density} is the symmetry factor for unlabeled points: conditional on \(\*N = \*n\), permuting the \(n_i\) points in any fiber leaves the configuration unchanged. Thus \(a\tp{\*n}\) is the natural weight relative to the Poisson reference law \(\Pi_{\bar{\nu}}\).

Assume now that \(\mu\) is \(\delta\)-ULC for some \(0 < \delta \le 1\). Fix a configuration \(\eta\) with count vector \(\*n\), and define
\[
b_i\tp{\*n} \defeq \frac{a\tp{\*n + \*e_i}}{a\tp{\*n}}, \qquad b_{ij}\tp{\*n} \defeq \frac{a\tp{\*n + \*e_i + \*e_j}}{a\tp{\*n}},
\]
and
\[
\*B_{\*n} \defeq \tp{b_i\tp{\*n} b_j\tp{\*n} - b_{ij}\tp{\*n}}_{i, j \in \+I_{\*n}}, \qquad \*A_{\*n} \defeq \*B_{\*n} + \tp{1 - \delta} \diag\tp{b_i\tp{\*n}}_{i \in \+I_{\*n}}.
\]
Congruence by \(\diag\tp{b_i\tp{\*n}^{-1}}_{i \in \+I_{\*n}}\) transforms \(\*A_{\*n}\) into the matrix in \eqref{eq:ulc-condition}. Hence \(\*A_{\*n} \succeq \*O\), and every fiber-constant test function \(h\tp{x, i} = v_i\) satisfies
\[
\+K_{\eta}^{\tp{\delta}}\tp{h} = \*v^{\top} \*A_{\*n} \*v \ge 0.
\]

For an arbitrary test function \(h: \bar X \to \bb R\), set
\[
\bar{h}_i \defeq \int_X h\tp{x, i} \nu\tp{\-d x}, \qquad \bar{\*h} \defeq \tp{\bar{h}_i}_{i \in \+I_{\*n}}.
\]
Substituting \eqref{eq:point-process-lift-papangelou} and the definition of \(\bar\nu\) into \eqref{eq:point-process-curvature-form} gives
\[
\+K_{\eta}^{\tp{\delta}}\tp{h} = \bar{\*h}^{\top} \*B_{\*n} \bar{\*h} + \tp{1 - \delta} \sum_{i \in \+I_{\*n}} b_i\tp{\*n} \int_X h\tp{x, i}^2 \nu\tp{\-d x}, \qquad \*n = \*N\tp{\eta}.
\]
Since \(\nu\) is a probability measure, Jensen's inequality gives
\[
\int_X h\tp{x, i}^2 \nu\tp{\-d x} \ge \tp{\int_X h\tp{x, i} \nu\tp{\-d x}}^2 = \bar{h}_i^2.
\]
Because \(1 - \delta \ge 0\), it follows that
\[
\+K_{\eta}^{\tp{\delta}}\tp{h} \ge \bar{\*h}^{\top} \*B_{\*n} \bar{\*h} + \tp{1 - \delta} \sum_{i \in \+I_{\*n}} b_i\tp{\*n} \bar{h}_i^2 = \bar{\*h}^{\top} \*A_{\*n} \bar{\*h} \ge 0.
\]
Thus \(\bb P_{\mu}\) satisfies the curvature hypothesis in \Cref{thm:point-process-curvature-criterion}. Under the analytic assumptions of that theorem, its Glauber generator satisfies the \Poincare inequality with constant \(\delta\).

Finally, restrict this point-process \Poincare inequality to \(F = f \circ \*N\). Since \(\*N\tp{\Xi} \sim \mu\),
\[
\*{Var}_{\bb P_{\mu}}\stp{F} = \*{Var}_{\mu}\stp{f},
\]
while \Cref{prop:point-process-lift} and the point-process Dirichlet form give
\[
\+E_{\+G}\tp{F, F} = -\inner{F}{\+G F}_{L^2\tp{\bb P_{\mu}}} = -\inner{f}{\+L f}_{L^2\tp{\mu}} = \+E_{\+L}\tp{f, f}.
\]
Consequently,
\[
\delta \, \*{Var}_{\mu}\stp{f} \le \+E_{\+L}\tp{f, f} = \*E_{\*n \sim \mu}\stp{\sum_{i = 1}^d \frac{\tp{n_i + 1} \mu\tp{\*n + \*e_i}}{\mu\tp{\*n}} \tp{\nabla_i^+ f\tp{\*n}}^2}, \quad \forall f \in \+D\tp{\+E},
\]
which is the weighted \Poincare inequality in \Cref{thm:ulc-poincare-inequality}.

\section{Poisson Stochastic Localization and Poisson Bridge}
\label{sec:poisson-bridge}

Poisson stochastic localization also admits a \Schrodinger-bridge formulation. This parallels the bridge interpretation of standard stochastic localization in \cite[Section~6]{STZ25}, with Wiener measure replaced by a Poisson path measure. Such a formulation is also discussed in \cite[Section~2.4]{SBP26}; for completeness, we give the explicit computation in this section.

Let \(\Omega\) be a countable state space, let \(R\) be a Markov law on the c\`adl\`ag path space \(\+D\tp{\stp{0, 1}, \Omega}\), and write \(R_t\) for its time-\(t\) marginal. Given endpoint laws \(\alpha\) and \(\beta\), the static \Schrodinger bridge problem is
\begin{equation}
    \inf_{\substack{P \in \+P\tp{\+D\tp{\stp{0, 1}, \Omega}} \\ P_0 = \alpha, \, P_1 = \beta}} \quad \-{KL}\tp{P \| R}.
    \label{eq:static-schrodinger-bridge}
\end{equation}
We use the following Markovian description of its solution; see \cite[Lemmas~13 to~15]{STZ25}.

\begin{theorem}
    Suppose that the infimum in \eqref{eq:static-schrodinger-bridge} is finite and attained. Then there are nonnegative functions \(f_0, g_1: \Omega \to \bb R_{\ge 0}\) such that
    \[
    \frac{\dd P^{\star}}{\dd R}\tp{\*p_{\stp{0, 1}}} = f_0\tp{\*p_0} g_1\tp{\*p_1}.
    \]
    Define
    \[
    f_t\tp{\*z} \defeq \*E_{\*p_{\stp{0, 1}} \sim R}\stp{f_0\tp{\*p_0} \mid \*p_t = \*z}, \qquad g_t\tp{\*z} \defeq \*E_{\*p_{\stp{0, 1}} \sim R}\stp{g_1\tp{\*p_1} \mid \*p_t = \*z}.
    \]
    Then \(P^{\star}\) is Markov, its time-\(t\) marginal satisfies
    \[
    \frac{\dd P_t^{\star}}{\dd R_t}\tp{\*p_t} = f_t\tp{\*p_t} g_t\tp{\*p_t},
    \]
    and, for \(0 \le s \le t \le 1\), its forward and backward transition kernels are
    \[
    P_{s \to t}^{\star}\tp{\*p_s, \*p_t} = \frac{g_t\tp{\*p_t}}{g_s\tp{\*p_s}} R_{s \to t}\tp{\*p_s, \*p_t}, \qquad P_{t \to s}^{\star}\tp{\*p_t, \*p_s} = \frac{f_s\tp{\*p_s}}{f_t\tp{\*p_t}} R_{t \to s}\tp{\*p_t, \*p_s}.
    \]
    If \(\+L_t^R\) and \(\Gamma_t^R\) are the generator and carr\'e du champ of the reference process, then
    \begin{equation}
        \+L_t^{P^{\star}} u = \+L_t^R u + \frac{2 \Gamma_t^R\tp{g_t, u}}{g_t}.
        \label{eq:schrodinger-bridge-doob-transform}
    \end{equation}
    \label{thm:markovian-schrodinger-bridge}
\end{theorem}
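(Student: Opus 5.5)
This is the standard Markovian description of the static Schrödinger bridge, following the structure of \cite[Lemmas~13 to~15]{STZ25}. The plan has four steps.

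\textbf{Step 1: product form of the density.} The first step is to establish that the optimal density factorizes. The natural route is disintegration along the endpoints. For any admissible \(P\),
\[
\-{KL}\tp{P \| R} = \-{KL}\tp{P_{01} \| R_{01}} + \*E_{P_{01}}\stp{\-{KL}\tp{P^{xy} \| R^{xy}}},
\]
where \(P_{01}\) is the law of \(\tp{\*p_0,\*p_1}\) and \(P^{xy}, R^{xy}\) are the bridges conditioned on the endpoints. The optimizer therefore shares the bridges of \(R\), and its endpoint coupling \(P_{01}^{\star}\) solves the static entropic problem on the countable set \(\Omega\times\Omega\).

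For that static problem, the first-order optimality condition (Csiszár's I-projection onto the intersection of the two marginal constraints) gives \(\dd P_{01}^\star/\dd R_{01} = f_0(x)g_1(y)\). I would derive this by a perturbation argument: perturb \(P_{01}^\star\) along directions that preserve both marginals, and conclude that \(\log\) of the density lies in the closed span of functions of \(x\) plus functions of \(y\). On the support, this becomes \(\varphi(x)+\psi(y)\).

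The main obstacle is here. Support issues and countable \(\Omega\) require care: one has to show that the density vanishes exactly off a product set. I would handle this by working on the support of \(P_{01}^\star\) and setting \(f_0, g_1\) to zero outside the projections of that support, as in Csiszár's theorem, which is what the cited lemmas assume.

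\textbf{Step 2: marginals and Markov property.} Given \(\dd P^\star/\dd R = f_0(\*p_0)g_1(\*p_1)\), I would condition on \(\*p_t\) and use the Markov property of \(R\), under which past and future are conditionally independent given the present. This gives
\[
\*E_R\stp{f_0(\*p_0)g_1(\*p_1)\mid \*p_t} = f_t(\*p_t)\,g_t(\*p_t),
\]
which is the formula for \(\dd P_t^{\star}/\dd R_t\).

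The Markov property of \(P^\star\) follows for the same reason. The density factors as a function of \(\*p_{[0,t]}\) times a function of \(\*p_{[t,1]}\), and the reference measure is Markov, so \(P^\star\) is Markov as well.

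\textbf{Step 3: transition kernels.} The forward kernel comes from
\[
P^\star\tp{\*p_t\mid \*p_s} = \frac{\*E_R\stp{f_0 g_1;\, \*p_t \mid \*p_s}}{f_s g_s(\*p_s)}.
\]
The numerator factors as
\[
f_s(\*p_s)\,R_{s\to t}\,g_t(\*p_t),
\]
using the Markov property of \(R\) again. This gives \(g_t/g_s\cdot R_{s\to t}\). The backward kernel follows symmetrically, with the roles of \(f\) and \(g\) exchanged.

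\textbf{Step 4: the generator.} The Doob \(h\)-transform formula comes from differentiating the forward kernel at \(h\to0\). The key identity is that \(g_t\) is space–time harmonic, \(\partial_t g_t + \+L_t^R g_t = 0\). This gives
\[
\+L^{P^\star}_t u = \frac{\+L_t^R(g_t u) - u\,\+L_t^R g_t}{g_t},
\]
and the definition of the carré du champ turns this into
\[
\+L_t^R u + \frac{2\Gamma_t^R(g_t,u)}{g_t}.
\]
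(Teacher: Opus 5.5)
Your Steps 2--4 are the standard $h$-transform computations and are fine. The paper gives no argument of its own for this statement; it only points to \cite{STZ25}. The gap is in Step 1, exactly where you flag it, and your proposed fix does not close it. The first-order (Csisz\'ar) argument only gives $\log(\mathrm{d}P^\star_{01}/\mathrm{d}R_{01})=\varphi(x)+\psi(y)$ on $S:=\operatorname{supp}P^\star_{01}$. Zeroing $f_0,g_1$ off the projections of $S$ then yields a product form only if $S=(A\times B)\cap\operatorname{supp}R_{01}$ for some $A,B\subseteq\Omega$, and nothing forces this.

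In fact, for an arbitrary Markov $R$ the factorization is false even when the infimum is finite and attained. Take $\Omega=\{1,2\}$, and let $R$ jump $1\to 2$ at rate $\log 2$, with $2$ absorbing and $R_0=(2/3,1/3)$. Then $R_{01}=\frac13\begin{pmatrix}1&1\\0&1\end{pmatrix}$. Let $\alpha=\beta=(1/2,1/2)$. The only coupling with these marginals that is absolutely continuous with respect to $R_{01}$ is $\operatorname{diag}(1/2,1/2)$. So the minimizer exists, with value $\log(3/2)$, and $\mathrm{d}P^\star/\mathrm{d}R=\frac32\,\mathbf 1\{p_0=p_1\}$. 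A product $f_0(p_0)g_1(p_1)$ would need $f_0(1)g_1(1)>0$ and $f_0(2)g_1(2)>0$, hence $f_0(1)g_1(2)>0$. But the density vanishes on $\{p_0=1,\,p_1=2\}$, which has $R$-probability $1/3$. This is the total-support obstruction familiar from Sinkhorn scaling.

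So Step 1 needs an extra hypothesis on the reference coupling; one convenient choice is $\operatorname{supp}\alpha\times\operatorname{supp}\beta\subseteq\operatorname{supp}R_{01}$. Under it, fix $(x,y)$ in that rectangle and pick $(x,y'),(x',y)\in S$; if $x'=x$ or $y'=y$ there is nothing to prove. The marginal-preserving perturbation $P^\star_{01}+\epsilon(\delta_{(x,y)}+\delta_{(x',y')}-\delta_{(x,y')}-\delta_{(x',y)})$ then has relative-entropy derivative $-\infty$ at $\epsilon=0^+$ unless $(x,y)\in S$. Hence $S$ is the full rectangle. The same perturbation taken two-sidedly inside $S$ gives $L(x,y)+L(x',y')=L(x,y')+L(x',y)$ for $L:=\log(\mathrm{d}P^\star_{01}/\mathrm{d}R_{01})$. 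This is exactly $L=\varphi(x)+\psi(y)$ on the rectangle.

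In the only case the paper actually uses, $\alpha=R_0=\delta_{\mathbf 0}$, none of this is needed. Your disintegration already gives $P^\star=\int R(\,\cdot\mid p_1=y)\,\beta(\mathrm{d}y)$, i.e.\ $f_0\equiv 1$ and $g_1=\mathrm{d}\beta/\mathrm{d}R_1$. After that, your Steps 2--4 go through verbatim, with the generator identity read on $\{g_t>0\}$.
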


We now take \(R\) to be the law of \(d\) independent rate-one Poisson processes \(\tp{\*N_t}_{t \in \stp{0, 1}}\), all initialized at \(\*0\), and set the endpoint laws to \(\alpha = \delta_{\*0}\) and \(\beta = \mu\). Then
\[
R_0 = \delta_{\*0}, \qquad R_t = \pi_t \defeq \bigotimes_{i = 1}^d \-{Pois}\tp{t}, \qquad \+L^R u = \sum_{i = 1}^d \nabla_i^+ u, \qquad \Gamma^R\tp{u, v} = \frac{1}{2} \sum_{i = 1}^d \nabla_i^+ u \, \nabla_i^+ v.
\]
Write \(\mu_t \defeq P_t^{\star}\). By \Cref{thm:markovian-schrodinger-bridge}, we may choose, for every \(t \in \stp{0, 1}\),
\[
f_t \equiv 1, \qquad g_t = \frac{\dd \mu_t}{\dd \pi_t}.
\]
Then
\[
g_t\tp{\*n} = \frac{\mu_t\tp{\*n}}{\prod_{i = 1}^d \tp{e^{-t} \frac{t^{n_i}}{n_i!}}} = e^{d t} \frac{\*n! \mu_t\tp{\*n}}{t^{\norm{\*n}_1}}.
\]
Hence, by \eqref{eq:schrodinger-bridge-doob-transform},
\[
\begin{aligned}
    \+L_t^{P^{\star}} u\tp{\*n} &= \sum_{i = 1}^d \nabla_i^+ u\tp{\*n} + g_t^{-1}\tp{\*n} \sum_{i = 1}^d \nabla_i^+ u\tp{\*n} \nabla_i^+ g_t\tp{\*n} \\
    &= \sum_{i = 1}^d \frac{g_t\tp{\*n + \*e_i}}{g_t\tp{\*n}} \nabla_i^+ u\tp{\*n} \\
    &= \sum_{i = 1}^d \frac{\tp{n_i + 1} \mu_t\tp{\*n + \*e_i}}{t \mu_t\tp{\*n}} \nabla_i^+ u\tp{\*n}.
\end{aligned}
\]
This matches the denoising generator \eqref{eq:denoising-generator} of Poisson stochastic localization.

Replacing the Poisson reference law by Wiener measure gives standard stochastic localization after the usual time change \cite[Section~6]{STZ25}. In that setting, \eqref{eq:schrodinger-bridge-doob-transform} transforms \(\+L^R = \Delta / 2\) into
\[
\+L_t^{P^{\star}} = \frac{1}{2} \Delta + \inner{\nabla \log g_t}{\nabla},
\]
the generator of the \Follmer process. The Poisson construction replaces the Brownian score \(\nabla \log g_t\) by the multiplicative discrete increment \(g_t\tp{\*n + \*e_i} / g_t\tp{\*n}\), and the solution \(P^{\star}\) becomes the Poisson--\Follmer process.

\section{Weighted \Poincare Inequality via Trickle-Down Theorem}
\label{sec:trickle-down}

In this section, we give an alternative proof of \Cref{thm:ulc-poincare-inequality} using the Poisson stochastic localization developed in \Cref{sec:poisson-stochastic-localization}. The argument remains entirely within the localization framework: the trickle-down equation of \cite{AKV24} propagates the local \(\delta\)-ULC condition from the terminal regime backward along the localization process. In particular, the proof is independent of the Bakry--\Emery theory developed in \Cref{sec:bakry-emery-birth-death} and extends the argument of \cite[Section~4.1]{CCCYZ25} to measures on \(\bb N^d\).

\begin{theorem}[Trickle-Down Equation (Informal) \cite{AKV24}]
    Let \(\tp{\nu_t}\) denote the linear-tilt localization scheme defined by
    \[
    \dd \nu_t\tp{\*n} = \nu_t\tp{\*n} \inner{\*n - \mean{\nu_t}}{\dd \*Z_t},
    \]
    where \(\tp{\*Z_t}\) is a martingale satisfying
    \[
    \*E\stp{\dd \*Z_t \mid \+F_t} = 0.
    \]
    Then the SDE for \(\tp{\cov{\nu_t}}\) is
    \[
    \dd \, \cov{\nu_t} = -\cov{\nu_t} \dd \ang{\*Z}_t  \cov{\nu_t} + \dd \, \-{Martingale}_t.
    \]
    \label{thm:trickle-down-equation}
\end{theorem}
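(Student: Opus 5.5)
The plan is a direct Itô computation. Write $\*m_t \defeq \mean{\nu_t}$, $\*S_t \defeq \sum_{\*n} \nu_t\tp{\*n}\, \*n \*n^{\top}$ and $\*C_t \defeq \cov{\nu_t} = \*S_t - \*m_t \*m_t^{\top}$. The key observation is that the linear-tilt equation is \emph{linear} in $\nu_t$. Any statistic of the form $\sum_{\*n} \nu_t\tp{\*n} \phi\tp{\*n}$ therefore evolves by an exact stochastic integral against $\*Z$, with no second-order correction, and this holds both on the continuous stretches and at the jumps. For Poisson stochastic localization, \Cref{thm:poisson-linear-tilt} shows that the jump update of $\nu_t$ is exactly $\nu_{t-}\tp{\*n}\inner{\*n - \*m_{t-}}{\Delta \*Z_t}$.

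\textbf{Step 1: first and second moments.} Apply the linear-tilt equation with $\phi\tp{\*n} = \*n$:
\[
\dd \*m_t = \sum_{\*n} \nu_{t-}\tp{\*n}\, \*n \inner{\*n - \*m_{t-}}{\dd \*Z_t} = \*C_{t-}\, \dd \*Z_t .
\]
Here we subtract $\*m_{t-}\sum_{\*n}\nu_{t-}\tp{\*n}\inner{\*n-\*m_{t-}}{\dd\*Z_t}=0$, which uses that $\nu_t$ stays a probability measure. Similarly, with $\phi\tp{\*n} = \*n\*n^{\top}$, $\dd \*S_t$ is a stochastic integral against $\dd \*Z_t$, hence a (local) martingale increment.

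\textbf{Step 2: product rule for $\*m_t\*m_t^{\top}$.} Use the Itô product rule for semimartingales with jumps:
\[
\dd\tp{\*m_t \*m_t^{\top}} = \dd \*m_t\, \*m_{t-}^{\top} + \*m_{t-}\, \dd \*m_t^{\top} + \dd [\*m,\*m]_t .
\]
The first two terms are martingale increments. By Step 1, the bracket term equals $\dd [\*m,\*m]_t = \*C_{t-}\, \dd [\*Z]_t\, \*C_{t-}$. In the Poisson case this is verified jump by jump: $\Delta \*m_t = \*C_{t-}\Delta \*Z_t$ with $\Delta Z_{t,i} = 1/a_{t-,i}$. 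On the continuous stretches the bracket contributes nothing, because $\*Z$ has finite variation between jumps. Subtracting from Step 1 gives
\[
\dd \*C_t = -\*C_{t-}\, \dd [\*Z]_t\, \*C_{t-} + \dd\,\text{martingale}.
\]
Finally, replace $[\*Z]$ by its predictable compensator $\ang{\*Z}$. The difference $\*C_{t-}\,\dd\tp{[\*Z]_t - \ang{\*Z}_t}\,\*C_{t-}$ has a predictable integrand and is therefore again a martingale increment, which yields the stated form. For Poisson localization, $\dd \ang{Z_i}_t = q_{t,i}/a_{t-,i}^2\,\dd t = \dd t/\tp{(1-t)a_{t-,i}}$, which is the explicit drift to be fed into the trickle-down argument.

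\textbf{Main obstacle.} The expected difficulty is justifying that the ``martingale'' terms are true martingales when $\supp\tp{\mu}$ is infinite. This requires finite second, and in places fourth, moments of $\nu_t$, together with integrability of $\int \*C\,\dd\ang{\*Z}\,\*C$. My first attempt would be to work on $\stp{0,T}$ with $T<1$, where $\int_0^T \dd\ang{Z_i}$ is controlled by the same bound used in the proof of \Cref{thm:poisson-linear-tilt}, namely at most $-\log\tp{1-T}$ after the $1/a$ factor is absorbed. I would localize with stopping times $\tau_K \defeq \inf\set{t \cmid \norm{\*C_t} \ge K}$, or first truncate $\mu$ to boxes $S_N$ as in the proof of \Cref{thm:ulc-poincare-inequality} and pass to the limit. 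Since the theorem is stated informally, it suffices to obtain the identity as one between local martingales.
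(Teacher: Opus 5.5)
Your It\^o computation is correct, and it is the standard derivation behind this result. The paper does not reprove the statement but imports it from \cite{AKV24}, and your bracket \(\dd \ang{Z_i}_t = \dd t / \tp{\tp{1 - t} a_{t-, i}}\) is exactly what the paper computes in \Cref{cor:trickle-down-tilted-pinning}.

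One small correction to your integrability remark. The bound \(-\log\tp{1 - T}\) from the proof of \Cref{thm:poisson-linear-tilt} controls \(\int q_{t, i} / a_{t-, i} \dd t\), not \(\ang{Z_i}_T\), which carries an extra factor \(1 / a_{t-, i}\). This does not matter for the case the paper actually needs, namely finitely supported \(\mu\) on \(\stp{0, T}\) with \(T < 1\). There the means \(a_{t-, i}\) on active coordinates are bounded below, so all integrands are bounded and your martingale terms are genuine square-integrable martingales without any truncation.
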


\begin{corollary}
    Let \(\mu\) be a probability measure on \(\bb N^d\) with downward closed support. Let \(\tp{\nu_t}_{t \in \stp{0, 1}}\) denote the corresponding localization process given by Poisson stochastic localization. Define \(\rho_t^{\*x}\) as in \Cref{lem:tilted-pinning-conditional-law}:
    \[
    \rho_t^{\*x} \defeq \-{Law}\tp{\*X_1 - \*X_t \mid \*X_t = \*x} = \tp{1 - t} * \mu^{\*x}, \quad t \in \tp{0, 1}, \, \*x \in \supp\tp{\mu}.
    \]
    Then the SDE for \(\tp{\rho_t^{\*X_t}}_{t \in \tp{0, 1}}\) is
    \[
    \dd \, \cov{\rho_t^{\*X_t}} = -\frac{1}{1 - t} \cov{\rho_t^{\*X_t}} \diag\tp{\mean{\rho_t^{\*X_t}}}^{\dagger} \cov{\rho_t^{\*X_t}} \dd t + \dd \, \-{Martingale}_t, \quad t \in \tp{0, 1}.
    \]
    \label{cor:trickle-down-tilted-pinning}
\end{corollary}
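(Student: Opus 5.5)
We would derive the corollary by applying the trickle-down equation (\Cref{thm:trickle-down-equation}) to the linear-tilt representation of Poisson stochastic localization from \Cref{thm:poisson-linear-tilt}. The first step is to observe that $\rho_t^{\*X_t}$ is the translate of $\nu_t$ by $-\*X_t$; this follows from \Cref{lem:tilted-pinning-conditional-law} and the posterior formula \eqref{eq:posterior-formula}. Translation leaves covariance unchanged, so
\[
\cov{\rho_t^{\*X_t}} = \cov{\nu_t}, \qquad \mean{\rho_t^{\*X_t}} = \mean{\nu_t} - \*X_t = \*a_t,
\]
in the notation $\*a_t = \*m_t - \*X_t$ of the proof of \Cref{thm:poisson-linear-tilt}. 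Since $\*X_t$ is a pure-jump process, it is enough to compute the drift of $\cov{\nu_t}$.

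Next, \Cref{thm:poisson-linear-tilt} gives $\dd \nu_t(\*n) = \nu_{t-}(\*n)\inner{\*n - \mean{\nu_{t-}}}{\dd \*Z_t}$, with $\dd Z_{t,i} = \dd M_{t,i}/a_{t-,i}$ on $\set{a_{t-,i}>0}$ and zero otherwise. To invoke \Cref{thm:trickle-down-equation}, we need the predictable quadratic covariation of $\*Z$. The coordinates $X_{t,i}$ are counting processes with no simultaneous jumps and intensities $q_{t,i}(\*X_{t-})$. Hence
\[
\dd\ang{Z_i,Z_j}_t = \*1_{\set{i=j}}\,\frac{q_{t,i}(\*X_{t-})}{a_{t-,i}^2}\,\dd t.
\]
By \Cref{cor:denoising-birth-rates-conditional-expectation}, $q_{t,i} = a_{t,i}/(1-t)$, so this equals $\frac{1}{1-t}\diag(\*a_{t-})^{\dagger}\dd t$. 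The pseudoinverse appears exactly because of the convention that the coordinates with $a_{t,i}=0$ contribute nothing. Substituting into the trickle-down equation gives the claimed drift $-\frac{1}{1-t}\cov{\rho}\diag(\mean{\rho})^{\dagger}\cov{\rho}$.

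The main obstacle is that \Cref{thm:trickle-down-equation} is stated informally, so we would verify it directly for this jump setting rather than cite it. We would write $\cov{\nu_t} = \*E_{\nu_t}[\*n\*n^\top] - \*m_t\*m_t^\top$. The second-moment term is a martingale by the tower property. For the mean, the linear-tilt equation gives $\dd \*m_t = \cov{\nu_{t-}}\,\dd \*Z_t$. Applying the product rule for semimartingales with jumps,
\[
\dd(\*m_t\*m_t^\top) = \dd\*m_t\,\*m_{t-}^\top + \*m_{t-}\,\dd\*m_t^\top + \dd[\*m,\*m]_t.
\]
The first two terms are martingale increments. The compensator of $[\*m,\*m]$ is $\cov{\nu_{t-}}\,\dd\ang{\*Z}_t\,\cov{\nu_{t-}}$. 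Subtracting therefore yields the claimed SDE.

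Three points need care. First, the jump of $\*m$ at a birth in coordinate $i$ is $\cov{\nu_{t-}}\*e_i/a_{t-,i}$, which one checks from the Bayes update in the proof of \Cref{thm:poisson-linear-tilt}. Second, coordinates with $a_{t,i}=0$ have a vanishing column of $\cov{\nu_t}$, which justifies the pseudoinverse. Third, the martingale terms need integrability on $[0,T]$ for $T<1$. For this we would use the bound $\int_0^T \frac{1}{1-t}\dd t<\infty$ from the proof of \Cref{thm:poisson-linear-tilt}, together with the finite moments implicit in the localization (restricting to compactly supported truncations if necessary).
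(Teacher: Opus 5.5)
Your proposal is correct and follows the paper's proof. Covariance is invariant under translation by $\mathbf{X}_t$, the bracket $\mathrm{d}\langle \mathbf{Z}\rangle_t = \frac{1}{1-t}\operatorname{diag}(\mathbf{a}_{t-})^{\dagger}\,\mathrm{d}t$ follows from the absence of simultaneous jumps together with $\mathbf{q}_t(\mathbf{X}_{t-}) = \mathbf{a}_{t-}/(1-t)$, and this bracket is substituted into the trickle-down equation. Your additional direct check of the trickle-down drift, via $\mathrm{d}\mathbf{m}_t = \operatorname{cov}(\nu_{t-})\,\mathrm{d}\mathbf{Z}_t$ and the compensator of $[\mathbf{m},\mathbf{m}]$, is a sound supplement to the paper's appeal to the informal \Cref{thm:trickle-down-equation}. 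Note that the martingale property of the second-moment term needs finite second moments; these hold in the finitely supported case, which is where the corollary is actually used.
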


\begin{proof}[Proof of \Cref{cor:trickle-down-tilted-pinning}]
    Translation by \(\*X_t\) does not change covariance, so
    \[
    \cov{\rho_t^{\*X_t}} = \cov{\nu_t}.
    \]
    Write \(\*a_t \defeq \mean{\nu_t} - \*X_t = \mean{\rho_t^{\*X_t}}\). By \Cref{thm:poisson-linear-tilt}, the driving martingale satisfies \(\dd Z_{t, i} = \dd M_{t, i} / a_{t-, i}\) on the coordinates for which \(a_{t-, i} > 0\). The coordinate counting processes have no simultaneous jumps, and hence
    \[
    \dd \ang{\*M}_t = \diag\tp{\*q_t\tp{\*X_{t-}}} \dd t.
    \]
    By \Cref{cor:denoising-birth-rates-conditional-expectation}, \(\*q_t\tp{\*X_{t-}} = \*a_{t-} / \tp{1 - t}\). Therefore,
    \[
    \dd \ang{\*Z}_t = \frac{1}{1 - t} \diag\tp{\*a_{t-}}^{-1} \dd t,
    \]
    where the inverse is taken on the coordinates on which \(a_{t-, i} > 0\), and both sides vanish on the remaining coordinates. Substituting this bracket into \Cref{thm:trickle-down-equation} gives the claimed SDE.
\end{proof}

We next control the optimal spectral-independence constant of each tilted pinning. For \(\*x \in \supp\tp{\mu}\), define
\begin{equation}
    \eta_{\*x}\tp{t} \defeq \inf \set{\eta \ge 0 \cmid \cov{\rho_t^{\*x}} \preceq \eta \diag\tp{\mean{\rho_t^{\*x}}}}, \qquad t \in \tp{0, 1}.
    \label{eq:eta-x-definition}
\end{equation}
We then set
\begin{equation}
    \eta\tp{t} \defeq \sup_{\*x \in \supp\tp{\mu}} \eta_{\*x}\tp{t}.
    \label{eq:eta-definition}
\end{equation}
Thus \(\rho_t^{\*x}\) is \(\eta_{\*x}\tp{t}\)-spectrally independent, and an upper bound on \(\eta\) verifies the hypothesis of \Cref{prop:sufficient-condition-approximate-conservation}. If \(\mean{\rho_t^{\*x}} = \*0\), then \(\rho_t^{\*x} = \delta_{\*0}\), and the convention in \eqref{eq:eta-x-definition} gives \(\eta_{\*x}\tp{t} = 0\).

\begin{lemma}
    For every \(\*x \in \supp\tp{\mu}\), the function \(\eta_{\*x}\) is locally Lipschitz on \(\tp{0, 1}\). If \(\mu\) is finitely supported, then \(\eta\) is also locally Lipschitz on \(\tp{0, 1}\).
    \label{lem:regularity-of-eta}
\end{lemma}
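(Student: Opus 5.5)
We will write \(\eta_{\*x}\tp{t}\) as a generalized eigenvalue. Set \(\theta \defeq 1 - t\) and restrict to the coordinate set \(S_{\*x} \defeq \set{i \cmid \mean{\rho_t^{\*x}}_i > 0}\). This set does not depend on \(t \in \tp{0, 1}\), because \(\rho_t^{\*x} = \theta * \mu^{\*x}\) has the same support as \(\mu^{\*x}\). The coordinates outside \(S_{\*x}\) vanish almost surely, so the corresponding rows and columns of the covariance are zero. With \(\*m\tp{\theta} \defeq \mean{\theta * \mu^{\*x}}\) and \(\*C\tp{\theta} \defeq \cov{\theta * \mu^{\*x}}\), we then have
\[
\eta_{\*x}\tp{t} = \max\set{0, \lambda_{\max}\tp{\diag\tp{\*m\tp{\theta}}^{-1/2} \*C\tp{\theta} \diag\tp{\*m\tp{\theta}}^{-1/2}}}.
\]
The empty case gives \(0\).

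\textbf{Smoothness of the entries.} Write \(h\tp{\theta} \defeq \sum_{\*k} \mu^{\*x}\tp{\*k} \theta^{\norm{\*k}_1}\). Since \(\mu^{\*x}\tp{\*k} \propto \mu\tp{\*x + \*k} \binom{\*x + \*k}{\*x}\), where \(\binom{\*x + \*k}{\*x}\) denotes \(\prod_i \binom{x_i + k_i}{x_i}\), the normalization of \(\mu^{\*x}\) requires \(\sum_{\*k} \mu\tp{\*x + \*k} \binom{\*x + \*k}{\*x} < \infty\). Because the binomial factor grows polynomially and \(\theta^{\norm{\*k}_1}\) decays geometrically for \(\theta < 1\), the series for \(h\) and for its moment series \(\sum_{\*k} k_i k_j \mu^{\*x}\tp{\*k} \theta^{\norm{\*k}_1}\) converge locally uniformly on \(\theta \in \tp{0, 1}\). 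The same holds for their termwise derivatives. Hence \(\*m\tp{\theta}\) and \(\*C\tp{\theta}\) are \(\+C^1\) on \(\tp{0, 1}\). Each coordinate \(m_i\tp{\theta}\) with \(i \in S_{\*x}\) is strictly positive there, so \(\diag\tp{\*m}^{-1/2}\) is \(\+C^1\) too. The symmetric matrix \(\*B\tp{\theta} \defeq \diag\tp{\*m}^{-1/2} \*C \diag\tp{\*m}^{-1/2}\) is therefore locally Lipschitz in operator norm. By Weyl's inequality, \(\abs{\lambda_{\max}\tp{\*B\tp{\theta}} - \lambda_{\max}\tp{\*B\tp{\theta'}}} \le \norm{\*B\tp{\theta} - \*B\tp{\theta'}}_{\-{op}}\). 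Since \(u \mapsto \max\set{0, u}\) is \(1\)-Lipschitz, \(\eta_{\*x}\) is locally Lipschitz in \(t\).

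\textbf{Finitely supported \(\mu\).} The supremum defining \(\eta\tp{t}\) runs over the finite set \(\supp\tp{\mu}\). A maximum of finitely many locally Lipschitz functions is locally Lipschitz, with the Lipschitz constant on a compact subinterval equal to the maximum of the individual ones.

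\textbf{Main obstacle.} The only delicate point is the justification of differentiating the power series when \(\supp\tp{\mu}\) is infinite. The key is finiteness of the weighted sum, which follows from normalizability of \(\mu^{\*x}\). If needed, one can avoid this entirely in the second claim, since there the support is finite and every quantity is a rational function of \(\theta\) with positive denominator.
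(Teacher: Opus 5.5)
Your proof is correct and follows the paper's argument. Both restrict to the active (positive-mean) coordinates, note that the moments of the tilted pinning depend smoothly on \(t\) (the paper says analytically), normalize the covariance by the diagonal of means, apply the eigenvalue perturbation bound (your Weyl inequality), and finish with a finite maximum when \(\mu\) has finite support.

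One small correction to your ``main obstacle'' paragraph: you do not need \(\mu^{\*x}\) itself to be normalizable. For heavy-tailed infinitely supported \(\mu\) it need not be, although every tilt \(\tp{1 - t} * \mu^{\*x}\) exists as a posterior law. The unnormalized weights satisfy \(\mu\tp{\*x + \*k} \prod_i \binom{x_i + k_i}{x_i} \le \prod_i \binom{x_i + k_i}{x_i}\), so they grow only polynomially in \(\*k\). Hence all the relevant series converge locally uniformly for \(\theta < 1\), and the normalizing constant cancels from every mean and covariance.
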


\begin{proof}[Proof of \Cref{lem:regularity-of-eta}]
    Fix \(\*x \in \supp\tp{\mu}\), and let \(\+I_{\*x} \defeq \set{i \in \stp{d} \cmid \*x + \*e_i \in \supp\tp{\mu}}\). Downward closure implies that the coordinates outside \(\+I_{\*x}\) vanish under \(\rho_t^{\*x}\). On \(\+I_{\*x}\), set
    \[
    \*C_{\*x}\tp{t} \defeq \cov{\rho_t^{\*x}}, \qquad \*D_{\*x}\tp{t} \defeq \diag\tp{\mean{\rho_t^{\*x}}}, \qquad \*A_{\*x}\tp{t} \defeq \*D_{\*x}\tp{t}^{-1/2} \*C_{\*x}\tp{t} \*D_{\*x}\tp{t}^{-1/2}.
    \]
    Every diagonal entry of \(\*D_{\*x}\tp{t}\) is positive on \(\+I_{\*x}\). Writing \(a\tp{\*n} \defeq \*n! \mu\tp{\*n}\), the law \(\rho_t^{\*x}\) has mass function proportional to
    \[
    \frac{a\tp{\*x + \*k}}{\*k!} \tp{1 - t}^{\norm{\*k}_1}.
    \]
    Its moments are therefore analytic in \(t\) on \(\tp{0, 1}\). It follows that \(\*A_{\*x}\) is analytic there. Since
    \[
    \eta_{\*x}\tp{t} = \lambda_{\max}\tp{\*A_{\*x}\tp{t}},
    \]
    the standard eigenvalue perturbation bound shows that \(\eta_{\*x}\) is locally Lipschitz. If \(\+I_{\*x} = \varnothing\), then \(\eta_{\*x} \equiv 0\), so the same conclusion holds. When \(\mu\) has finite support, \(\eta\) is the maximum of finitely many locally Lipschitz functions and is therefore locally Lipschitz.
\end{proof}

\begin{lemma}
    For every \(\*x \in \supp\tp{\mu}\) with \(\+I_{\*x} \ne \varnothing\),
    \[
    \eta_{\*x}\tp{1^-} = 1.
    \]
    If \(\+I_{\*x} = \varnothing\), then \(\eta_{\*x} \equiv 0\). Moreover, if the \(\delta\)-ULC condition \eqref{eq:ulc-condition} holds at \(\*n = \*x\), then
    \[
    \eta_{\*x}\tp{1 - \eps} \le 1 + \tp{1 - \delta} \eps + O\tp{\eps^2} \quad \text{as } \eps \to 0^+.
    \]
    In particular, if \(\mu\) is finitely supported and \(\delta\)-ULC, then
    \[
    \eta\tp{1 - \eps} \le 1 + \tp{1 - \delta} \eps + O\tp{\eps^2} \quad \text{as } \eps \to 0^+.
    \]
    If, in addition, \(\mu \ne \delta_{\*0}\), then \(\eta\tp{1^-} = 1\); for \(\mu = \delta_{\*0}\), one has \(\eta \equiv 0\).
    \label{lem:eta-t-approaches-1}
\end{lemma}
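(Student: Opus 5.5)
The plan is a small-\(\eps\) expansion of the tilted pinning \(\rho_{1-\eps}^{\*x}\), whose mass function is proportional to \(a(\*x+\*k)\eps^{\norm{\*k}_1}/\*k!\). I would compute its mean and covariance to second order in \(\eps\), restricted to the active coordinates \(\+I_{\*x}\). If \(\+I_{\*x}=\varnothing\), downward closure forces \(\rho_t^{\*x}=\delta_{\*0}\), so \(\eta_{\*x}\equiv 0\) by the convention after \eqref{eq:eta-x-definition}. Otherwise, write \(b_i\defeq a(\*x+\*e_i)/a(\*x)\) and \(b_{ij}\defeq a(\*x+\*e_i+\*e_j)/a(\*x)\). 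The normalizer is \(1+\eps\sum_i b_i+O(\eps^2)\), with the tail controlled as in the proof of \Cref{thm:associated-limiting-process}.

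Expanding \(\chi(\boldsymbol\theta)=\log h_{\*x}(\eps e^{\boldsymbol\theta})\) at \(\boldsymbol\theta=\*0\) is cleanest, since the mean is \(\nabla\chi\) and the covariance is \(\nabla^2\chi\). Using \(\log h_{\*x}(\boldsymbol\phi)=\sum_i b_i\phi_i+\tfrac12\sum_{ij}(b_{ij}-b_ib_j)\phi_i\phi_j+O(\abs{\boldsymbol\phi}^3)\) with \(\phi_i=\eps e^{\theta_i}\), I get
\[
\*D(1-\eps)=\eps\diag(b_i)+\eps^2\diag\big(\textstyle\sum_j (b_{ij}-b_ib_j)\big)+O(\eps^3),
\]
\[
\*C(1-\eps)=\eps\diag(b_i)+\eps^2\big[(b_{ij}-b_ib_j)_{ij}+\diag\big(\textstyle\sum_j(b_{ij}-b_ib_j)\big)\big]+O(\eps^3).
\]
Setting \(\*S\defeq(b_{ij}-b_ib_j)_{i,j\in\+I_{\*x}}\), the diagonal second-order terms cancel in the difference, leaving
\[
\*C-\*D=\eps^2\*S+O(\eps^3).
\]
Then \(\*A_{\*x}=\*D^{-1/2}\*C\*D^{-1/2}=\*I+\eps\,\diag(b)^{-1/2}\*S\diag(b)^{-1/2}+O(\eps^2)\). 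This immediately gives \(\eta_{\*x}(1^-)=\lambda_{\max}(\*A_{\*x})\to1\).

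For the quantitative bound, rewrite \eqref{eq:ulc-condition} at \(\*x\) via the congruence by \(\diag(b_i)\) as \(-\*S+(1-\delta)\diag(b_i)\succeq\*O\). Equivalently, \(\diag(b)^{-1/2}\*S\diag(b)^{-1/2}\preceq(1-\delta)\*I\). Weyl's inequality then gives \(\lambda_{\max}(\*A_{\*x}(1-\eps))\le1+(1-\delta)\eps+O(\eps^2)\).

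For finitely supported \(\mu\), the supremum defining \(\eta\) is a maximum over finitely many \(\*x\), so the \(O(\eps^2)\) constants are uniform. Every \(\*x\) has either \(\eta_{\*x}\equiv0\) or \(\eta_{\*x}\) obeying the bound, which yields the bound for \(\eta\). If \(\mu\ne\delta_{\*0}\), then \(\+I_{\*0}\ne\varnothing\), so \(\eta(1^-)\ge\eta_{\*0}(1^-)=1\), and combined with the upper bound this gives \(\eta(1^-)=1\). If \(\mu=\delta_{\*0}\), all pinnings are trivial and \(\eta\equiv0\).

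The main obstacle is the bookkeeping in the second-order covariance expansion, in particular verifying the exact cancellation of the diagonal \(\eps^2\) terms between \(\*C\) and \(\*D\). Working through the log-partition function as above is what makes this cancellation transparent, so I would do that step first and carefully.
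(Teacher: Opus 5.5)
Your proposal is correct and follows the paper's proof essentially step for step. Like the paper, you expand the normalized covariance as \(\*A_{\*x}(1-\eps) = \*I + \eps\,\diag(b)^{-1/2}\*S\,\diag(b)^{-1/2} + O(\eps^2)\), rewrite the \(\delta\)-ULC condition by a diagonal congruence as \(\diag(b)^{-1/2}\*S\,\diag(b)^{-1/2} \preceq (1-\delta)\*I\), and then take the largest eigenvalue.

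The only difference is how the diagonal \(\eps^2\) cancellation is handled, which you flag as the main obstacle. The paper avoids that bookkeeping by applying the exact identity \eqref{eq:tilted-pinning-log-hessian} at \(\boldsymbol{\theta} = \eps\*1\). This gives \(\cov{\rho_{1-\eps}^{\*x}} - \diag(\mean{\rho_{1-\eps}^{\*x}}) = \eps^2 \nabla^2 \log h_{\*x}(\eps\*1)\) with no error term.
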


\begin{proof}[Proof of \Cref{lem:eta-t-approaches-1}]
    Fix \(\*x\) with \(\+I_{\*x} \ne \varnothing\), write \(a\tp{\*n} \defeq \*n! \mu\tp{\*n}\), and set
    \[
    r_i \defeq \frac{a\tp{\*x + \*e_i}}{a\tp{\*x}}, \qquad \*R_{\*x} \defeq \diag\tp{r_i}_{i \in \+I_{\*x}}, \qquad \*K_{\*x} \defeq \tp{1 - \frac{a\tp{\*x} a\tp{\*x + \*e_i + \*e_j}}{a\tp{\*x + \*e_i} a\tp{\*x + \*e_j}}}_{i, j \in \+I_{\*x}}.
    \]
    Let
    \[
    h_{\*x}\tp{\boldsymbol{\theta}} \defeq \sum_{\*k \in \bb N^d} \frac{a\tp{\*x + \*k}}{\*k!} \boldsymbol{\theta}^{\*k}.
    \]
    As in \eqref{eq:tilted-pinning-log-hessian}, with \(\boldsymbol{\theta} = \eps \*1\),
    \[
    \cov{\rho_{1 - \eps}^{\*x}} = \diag\tp{\mean{\rho_{1 - \eps}^{\*x}}} + \eps^2 \nabla^2 \log h_{\*x}\tp{\eps \*1}.
    \]
    Moreover,
    \[
    \mean{\rho_{1 - \eps}^{\*x}} = \eps \tp{r_i}_{i \in \+I_{\*x}} + O\tp{\eps^2}, \qquad \nabla^2 \log h_{\*x}\tp{\eps \*1} = \tp{\frac{a\tp{\*x + \*e_i + \*e_j}}{a\tp{\*x}} - r_i r_j}_{i, j \in \+I_{\*x}} + O\tp{\eps}.
    \]
    Consequently, the normalized covariance from the proof of \Cref{lem:regularity-of-eta} satisfies
    \[
    \*A_{\*x}\tp{1 - \eps} = \*I - \eps \*R_{\*x}^{1 / 2} \*K_{\*x} \*R_{\*x}^{1 / 2} + O\tp{\eps^2}.
    \]
    This already gives \(\eta_{\*x}\tp{1^-} = 1\). If the \(\delta\)-ULC condition holds at \(\*x\), then
    \[
    \*K_{\*x} + \tp{1 - \delta} \*R_{\*x}^{-1} \succeq \*O,
    \]
    and hence
    \[
    -\*R_{\*x}^{1 / 2} \*K_{\*x} \*R_{\*x}^{1 / 2} \preceq \tp{1 - \delta} \*I.
    \]
    Taking the largest eigenvalue in the preceding expansion proves
    \[
    \eta_{\*x}\tp{1 - \eps} \le 1 + \tp{1 - \delta} \eps + O\tp{\eps^2}.
    \]
    If \(\mu\) is finitely supported, the support contains only finitely many \(\*x\), so the error term is uniform in \(\*x\). Taking the maximum proves the final assertions.
\end{proof}

\begin{lemma}
    Suppose that \(\mu\) is finitely supported. Then, for every \(t \in \tp{0, 1}\),
    \[
    \eta\tp{t} \le \eta\tp{t + \eps} + \frac{\eta\tp{t}^2 - \eta\tp{t}}{1 - t} \eps + O\tp{\eps^2} \quad \text{as } \eps \to 0^+.
    \]
    \label{lem:eta-t-0-to-1}
\end{lemma}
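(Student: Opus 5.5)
Combine the trickle-down SDE of \Cref{cor:trickle-down-tilted-pinning} with the martingale property of the localization to compare the covariance of \(\rho_t^{\*x}\) with an average of covariances of \(\rho_{t+\eps}^{\*X_{t+\eps}}\), and then use spectral independence at time \(t+\eps\).

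Fix \(t\in(0,1)\) and \(\*x\in\supp\tp{\mu}\), and condition on \(\*X_t=\*x\). Write \(\*C_s \defeq \cov{\rho_s^{\*X_s}}\) and \(\*a_s \defeq \mean{\rho_s^{\*X_s}}\). Integrate the SDE of \Cref{cor:trickle-down-tilted-pinning} from \(t\) to \(t+\eps\) and take conditional expectations; the martingale term drops. Since \(\mu\) is finitely supported, all moments are bounded and continuous in \(s\) between jumps, and a jump occurs with probability \(O(\eps)\). This gives
\[
\*E\stp{\*C_{t+\eps}\mid \*X_t=\*x} = \*C_t - \frac{\eps}{1-t}\,\*C_t \diag\tp{\*a_t}^{\dagger}\*C_t + O\tp{\eps^2}.
\]
By definition of \(\eta\), \(\*C_{t+\eps}\preceq \eta(t+\eps)\diag\tp{\*a_{t+\eps}}\) pointwise. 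The vector \(\*a_s=\mean{\nu_s}-\*X_s\) has drift \(-\*q_s=-\*a_s/(1-s)\), since \(\mean{\nu_s}\) is a martingale and \(\*X_s\) has compensator \(\*q_s\). Hence \(\*E\stp{\*a_{t+\eps}\mid\*X_t=\*x}=\*a_t(1-\frac{\eps}{1-t})+O(\eps^2)\), and therefore
\[
\*C_t - \frac{\eps}{1-t}\*C_t\*D^{\dagger}\*C_t \preceq \eta(t+\eps)\tp{1-\tfrac{\eps}{1-t}}\*D + O(\eps^2),\qquad \*D\defeq\diag\tp{\*a_t}.
\]

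Conjugate by \(\*D^{-1/2}\) on the active coordinates \(\+I_{\*x}\), where \(\*D\succ\*O\). Put \(\*A\defeq\*D^{-1/2}\*C_t\*D^{-1/2}\), so that \(\lambda_{\max}(\*A)=\eta_{\*x}(t)\). The inequality becomes \(\*A-\frac{\eps}{1-t}\*A^2\preceq \eta(t+\eps)(1-\frac{\eps}{1-t})\*I+O(\eps^2)\). Test this against a top eigenvector of \(\*A\) to get
\[
\eta_{\*x}(t)\le \eta(t+\eps)+\frac{\eta_{\*x}(t)^2-\eta(t+\eps)}{1-t}\eps+O(\eps^2).
\]
Using \(\eta(t+\eps)=\eta(t)+O(\eps)\) from \Cref{lem:regularity-of-eta}, the right side becomes \(\eta(t+\eps)+\frac{\eta_{\*x}(t)^2-\eta(t)}{1-t}\eps+O(\eps^2)\). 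Now take \(\*x\) attaining the maximum \(\eta(t)=\eta_{\*x}(t)\); it exists because the support is finite, and \(\*x\) lies in \(\supp\tp{\mu_t}\). This yields the claim.

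The main obstacle is justifying the first-order expansion of the conditional expectation uniformly. The jump contributions of the covariance process must be exactly accounted for by the compensator, which is what \Cref{thm:trickle-down-equation} encodes. The \(O(\eps^2)\) remainders must also be uniform, which finiteness of the support supplies since only finitely many states \(\*x\) and posteriors are involved. A secondary issue is the degenerate coordinates with \(a_{t,i}=0\). These are handled by restricting to \(\+I_{\*x}\), because downward closure makes the inactive coordinates vanish identically under every later \(\rho_s^{\*X_s}\).
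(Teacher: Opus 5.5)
Your proof is correct and follows essentially the same route as the paper: integrate the SDE of \Cref{cor:trickle-down-tilted-pinning} over $[t,t+\eps]$, bound $\*C_{t+\eps}$ by $\eta(t+\eps)$ times the conditionally expected mean (which shrinks exactly by the factor $1-\frac{\eps}{1-t}$), and finish with local Lipschitz continuity of $\eta$. The only difference is in the drift term. The paper bounds it pathwise by $\*C_s\*D_s^{\dagger}\*C_s\preceq\eta(s)^2\*D_s$, which yields the exact inequality $\eta(t)\le\bigl(1-\frac{\eps}{1-t}\bigr)\eta(t+\eps)+\frac{1}{1-t}\int_t^{t+\eps}\eta(s)^2\,\mathrm{d}s$ before Lipschitz is invoked, so it needs neither your first-order expansion of the conditional drift nor the choice of a maximizing $\*x$ and top eigenvector.
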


\begin{proof}[Proof of \Cref{lem:eta-t-0-to-1}]
    Along the localization process, write
    \[
    \*C_s \defeq \cov{\rho_s^{\*X_s}}, \qquad \*D_s \defeq \diag\tp{\mean{\rho_s^{\*X_s}}}.
    \]
    All inverses below are generalized inverses, or equivalently inverses restricted to the coordinates on which the corresponding mean is positive. Integrating \Cref{cor:trickle-down-tilted-pinning} from \(t\) to \(t + \eps\) and conditioning on \(\+F_t\) gives
    \[
    \*C_t = \*E\stp{\*C_{t + \eps} \mid \+F_t} + \*E\stp{\int_t^{t + \eps} \frac{1}{1 - s} \*C_s \*D_s^{\dagger} \*C_s \dd s \mathrel{\big|} \+F_t}.
    \]
    By the definition of \(\eta\),
    \[
    \*C_s \preceq \eta\tp{s} \*D_s, \qquad \*C_s \*D_s^{\dagger} \*C_s \preceq \eta\tp{s}^2 \*D_s.
    \]
    Hence,
    \[
    \*C_t \preceq \eta\tp{t + \eps} \*E\stp{\*D_{t + \eps} \mid \+F_t} + \*E\stp{\int_t^{t + \eps} \frac{\eta\tp{s}^2}{1 - s} \*D_s \dd s \mathrel{\big|} \+F_t}.
    \]
    Set \(\*a_s \defeq \mean{\rho_s^{\*X_s}}\). The tower property and the martingale property of the denoising birth rates yield, for \(s \ge t\),
    \[
    \*E\stp{\*a_s \mid \+F_t} = \*a_t - \*E\stp{\*X_s - \*X_t \mid \+F_t} = \*a_t - \int_t^s \*E\stp{\*q_u\tp{\*X_u} \mid \+F_t} \dd u = \tp{1 - \frac{s - t}{1 - t}} \*a_t.
    \]
    Therefore,
    \[
    \*E\stp{\*D_s \mid \+F_t} = \frac{1 - s}{1 - t} \*D_t.
    \]
    Substituting this estimate into the preceding inequality gives
    \[
    \*C_t \preceq \tp{\tp{1 - \frac{\eps}{1 - t}} \eta\tp{t + \eps} + \frac{1}{1 - t} \int_t^{t + \eps} \eta\tp{s}^2 \dd s} \*D_t.
    \]
    Taking the maximum over the finitely many possible values of \(\*X_t\) yields
    \[
    \eta\tp{t} \le \tp{1 - \frac{\eps}{1 - t}} \eta\tp{t + \eps} + \frac{1}{1 - t} \int_t^{t + \eps} \eta\tp{s}^2 \dd s.
    \]
    By \Cref{lem:regularity-of-eta}, \(\eta\) is locally Lipschitz, so
    \[
    \int_t^{t + \eps} \eta\tp{s}^2 \dd s = \eta\tp{t}^2 \eps + O\tp{\eps^2}, \qquad \eta\tp{t + \eps} = \eta\tp{t} + O\tp{\eps}.
    \]
    It follows that
    \[
    \eta\tp{t} \le \eta\tp{t + \eps} + \frac{\eta\tp{t}^2 - \eta\tp{t}}{1 - t} \eps + O\tp{\eps^2} \quad \text{as } \eps \to 0^+,
    \]
    as claimed.
\end{proof}

The terminal expansion in \Cref{lem:eta-t-approaches-1} supplies the boundary condition for the backward comparison, while \Cref{lem:eta-t-0-to-1} supplies the differential inequality in the interior. Together they yield the sharp spectral-independence profile and, through approximate conservation of variance, the desired weighted \Poincare inequality.

\begin{theorem}
    Let \(\mu\) be a \(\delta\)-ULC probability measure on \(\bb N^d\). Then, for \(t \in \tp{0, 1}\), \(\tp{1 - t} * \mu^{\*x}\) is \(1 / \tp{1 - \tp{1 - \delta} \tp{1 - t}}\)-spectrally independent for every \(\*x \in \supp\tp{\mu}\). Moreover, \(\mu\) satisfies the weighted \Poincare inequality with constant \(\delta\):
    \[
    \delta \, \*{Var}_{\mu}\stp{f} \le \*E_{\mu}\stp{\sum_{i = 1}^d \frac{\tp{n_i + 1} \mu\tp{\*n + \*e_i}}{\mu\tp{\*n}} \tp{\nabla_i^+ f\tp{\*n}}^2}, \quad \forall f \in \+D\tp{\+E}.
    \]
    Equivalently, the birth-death generator \(\+L\) defined in \eqref{eq:birth-death-generator} has spectral gap at least \(\delta\).
    \label{thm:ulc-poincare-trickle-down}
\end{theorem}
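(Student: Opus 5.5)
We first reduce to finitely supported \(\mu\) by the coordinate-box restriction \(\mu_N\) from the proof of \Cref{thm:ulc-poincare-inequality}; \(\mu_N\) is again \(\delta\)-ULC, and both the spectral-independence statement for a fixed \(\*x\) and the weighted \Poincare inequality pass to the limit \(N \to \infty\). The weighted pinnings \((1-t)*\mu_N^{\*x}\) converge to \((1-t)*\mu^{\*x}\) with convergent moments, since \((1-t)^{\norm{\*k}_1}\) decays geometrically. For finitely supported \(\mu\), \Cref{lem:regularity-of-eta} shows that \(\eta\) is locally Lipschitz on \(\tp{0,1}\). The two remaining inputs are the terminal expansion \(\eta(1-\eps) \le 1 + (1-\delta)\eps + O(\eps^2)\) from \Cref{lem:eta-t-approaches-1} and the one-sided differential inequality from \Cref{lem:eta-t-0-to-1}. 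Wherever \(\eta\) is differentiable, the latter reads
\[
-\eta'(t) \le \frac{\eta(t)^2 - \eta(t)}{1-t}.
\]

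The comparison function is \(\bar\eta(t) \defeq 1/\tp{1 - (1-\delta)(1-t)}\). It solves the corresponding Riccati equation with equality, \(-\bar\eta' = (\bar\eta^2 - \bar\eta)/(1-t)\), and \(\bar\eta(1-\eps) = 1 + (1-\delta)\eps + O(\eps^2)\). To compare \(\eta\) with \(\bar\eta\), substitute \(u \defeq 1/\eta\) on the interval where \(\eta > 0\); if \(\eta = 0\) the bound is trivial. The inequality becomes \(u'(t) \le (1-u)/(1-t)\). Writing \(w \defeq (1-u)/(1-t)\), this gives \(w' \ge 0\), so \(w\) is nondecreasing wherever it is a.e. differentiable. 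Since \(w\) is locally Lipschitz, it is nondecreasing on \(\tp{0,1}\). Therefore \(w(t) \le \liminf_{s\to1^-} w(s)\). The terminal expansion gives \(u(1-\eps) \ge 1 - (1-\delta)\eps + O(\eps^2)\), hence \(w(1-\eps) \le 1-\delta + O(\eps)\). Consequently \(1 - u(t) \le (1-\delta)(1-t)\), which is exactly \(\eta(t) \le \bar\eta(t)\). This proves the first assertion for every \(\*x\), since \(\eta_{\*x} \le \eta\) and \(\rho_t^{\*x} = (1-t)*\mu^{\*x}\) by \Cref{lem:tilted-pinning-conditional-law}. At \(t=0\) the claim follows by continuity in \(t\).

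\textbf{Main obstacle.} The delicate step is the comparison itself. \Cref{lem:eta-t-0-to-1} only gives a one-sided Dini-type inequality with an \(O(\eps^2)\) error whose uniformity in \(t\) is only local. We first handle the nondifferentiability by working with the Lipschitz function \(w\) and the a.e. derivative. If necessary, we instead run a direct argument: suppose \(\eta(t_0) > \bar\eta(t_0)\) and let \(t_1\) be the supremum of times where \(\eta \ge \bar\eta + c(1-t)\) for a small perturbation \(c\); the terminal expansion forces \(t_1 < 1\), and the one-sided inequality at \(t_1\) yields a contradiction. A second subtlety is the possibility \(\eta = 0\) on subintervals, which is harmless because the bound is then trivial.

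Finally, we apply the variance part of \Cref{prop:sufficient-condition-approximate-conservation} with \(\eta_t = \bar\eta(t)\):
\[
\partial_t \log \*E\stp{\*{Var}_{\nu_t}\stp{f}} \ge -\frac{1}{(1-t)\tp{1-(1-\delta)(1-t)}}.
\]
Integrating from \(0\) to \(1-\eps\), a partial-fraction computation gives
\[
\gamma_{1-\eps} \ge \exp\tp{-\int_0^{1-\eps}\frac{\dd s}{(1-s)(\delta + (1-\delta)s)}} = \frac{\delta\,\eps}{1-(1-\delta)\eps}.
\]
Hence \(\liminf_{\eps\to0^+}\gamma_{1-\eps}/\eps \ge \delta\), and \Cref{thm:limiting-process-functional-inequalities} yields the weighted \Poincare inequality with constant \(\delta\). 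It extends to \(\+D\tp{\+E}\) and to infinite support by the truncation described at the start.
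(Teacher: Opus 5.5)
Your proposal is correct and follows the paper's proof. Both reduce to finite support via the box truncations, combine the three lemmas on $\eta$ to get $\eta(t)\le 1/(1-(1-\delta)(1-t))$, then use the variance part of the approximate-conservation proposition to obtain $\gamma_{1-\eps}\ge \delta\eps/(1-(1-\delta)\eps)$, and finish with the limiting-process theorem. Your explicit comparison through $u=1/\eta$ and the locally Lipschitz, nondecreasing quantity $w=(1-u)/(1-t)$ is valid, using $\eta>0$ on $(0,1)$ when $\mu\neq\delta_{\mathbf 0}$, and it fills in a step the paper only asserts by citing the lemmas.
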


\begin{remark}
    The \(\delta\)-ULC matrix appears here as the terminal boundary condition for the normalized covariance flow. Thus the trickle-down argument does more than recover the weighted \Poincare inequality: it identifies \(\delta\)-ULC as precisely the local second-order condition that initiates the trickle-down analysis along Poisson stochastic localization.
\end{remark}

\begin{proof}[Proof of \Cref{thm:ulc-poincare-trickle-down}]
    We first assume that \(\mu\) has finite support. If \(\mu = \delta_{\*0}\), both conclusions are immediate, so suppose otherwise. By \Cref{lem:regularity-of-eta}, \Cref{lem:eta-t-approaches-1}, and \Cref{lem:eta-t-0-to-1}, we have
    \begin{equation}
        \eta\tp{t} \le \frac{1}{1 - \tp{1 - \delta} \tp{1 - t}}.
        \label{eq:eta-trickle-down-bound}
    \end{equation}
    Hence, for \(t \in \tp{0, 1}\), \(\tp{1 - t} * \mu^{\*x}\) is \(1 / \tp{1 - \tp{1 - \delta} \tp{1 - t}}\)-spectrally independent for every \(\*x \in \supp\tp{\mu}\). By \Cref{prop:sufficient-condition-approximate-conservation},
    \[
    \partial_t \*E\stp{\*{Var}_{\nu_t}\stp{f}} \ge -\frac{\*E\stp{\*{Var}_{\nu_t}\stp{f}}}{\tp{1 - t} \tp{1 - \tp{1 - \delta} \tp{1 - t}}}, \quad \forall t \in \tp{0, 1}.
    \]
    It follows that
    \[
    \gamma_t = \inf_{\substack{f \in \+C_{\-c}\tp{\supp\tp{\mu}} \\ \*{Var}_{\mu}\stp{f} > 0}} \frac{\*E\stp{\*{Var}_{\nu_t}\stp{f}}}{\*{Var}_{\mu}\stp{f}} \ge \exp\tp{-\int_0^t \frac{1}{\tp{1 - s} \tp{1 - \tp{1 - \delta} \tp{1 - s}}} \dd s} = \frac{\delta \tp{1 - t}}{t + \delta \tp{1 - t}}.
    \]
    Consequently,
    \[
    \liminf_{\eps \to 0^+} \frac{\gamma_{1 - \eps}}{\eps} \ge \delta,
    \]
    and \Cref{thm:limiting-process-functional-inequalities} proves the weighted \Poincare inequality with constant \(\delta\) for finitely supported \(\delta\)-ULC \(\mu\).

    We now remove the finite-support assumption. For \(N \in \bb N\), apply the coordinate-box restrictions \(S_N\) and conditional measures \(\mu_N\) introduced in the proof of \Cref{thm:ulc-poincare-inequality}. For every fixed \(t \in \tp{0, 1}\) and \(\*x \in \supp\tp{\mu}\), the tilted pinnings \(\tp{1 - t} * \mu_N^{\*x}\) converge to \(\tp{1 - t} * \mu^{\*x}\) together with their first two moments. Thus, the finite-support covariance bounds pass to the limit and prove the first conclusion. The finite-support result applied to \(\mu_N\) also gives
    \[
    \delta \, \*{Var}_{\mu_N}\stp{f} \le \+E_N\tp{f, f}.
    \]
    For every \(f \in \+D\tp{\+E}\), one has \(\*{Var}_{\mu_N}\stp{f} \to \*{Var}_{\mu}\stp{f}\) and \(\+E_N\tp{f, f} \to \+E\tp{f, f}\) as \(N \to \infty\). Passing to the limit proves the claimed weighted \Poincare inequality on \(\+D\tp{\+E}\).
\end{proof}

\section{Closure Properties of ULC Measures}
\label{sec:ulc-closure-properties}

In this section, we establish closure properties of high-dimensional ULC measures.

We first relate ULC to log-concavity relative to a product Poisson measure. We say that a function \(f: \bb N^d \to \bb R_{\ge 0}\) is log-concave if \(\supp\tp{f}\) is downward closed and
\begin{equation}
    \tp{1 - \frac{f\tp{\*n} f\tp{\*n + \*e_i + \*e_j}}{f\tp{\*n + \*e_i} f\tp{\*n + \*e_j}}}_{i, j \in \+I_{\*n}} \succeq \*O, \quad \forall \*n \in \supp\tp{f},
    \label{eq:discrete-function-log-concavity}
\end{equation}
where \(\+I_{\*n} \defeq \set{i \in \stp{d} \cmid \*n + \*e_i \in \supp\tp{f}}\).

\begin{proposition}
    Let \(\mu\) be a probability measure supported on \(\bb N^d\) with downward closed support. For every \(\boldsymbol{\lambda} \in \bb R_{> 0}^d\), the measure \(\mu\) is ULC if and only if its relative density with respect to \(\pi_{\boldsymbol{\lambda}} \defeq \bigotimes_{i = 1}^d \-{Pois}\tp{\lambda_i}\) is log-concave in the sense of \eqref{eq:discrete-function-log-concavity}.
    \label{prop:ulc-poisson-relative-log-concavity}
\end{proposition}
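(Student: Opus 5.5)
The plan is to compute the relative density directly and observe that the matrix in \eqref{eq:discrete-function-log-concavity} is literally the ULC matrix. Set \(f \defeq \dd\mu/\dd\pi_{\boldsymbol{\lambda}}\). Since \(\pi_{\boldsymbol{\lambda}}\tp{\*n} = e^{-\sum_i \lambda_i} \boldsymbol{\lambda}^{\*n} / \*n!\) is positive everywhere on \(\bb N^d\), we have
\[
f\tp{\*n} = e^{\sum_i \lambda_i} \, \boldsymbol{\lambda}^{-\*n} \, \*n! \, \mu\tp{\*n} = c_{\boldsymbol{\lambda}} \, \boldsymbol{\lambda}^{-\*n} a\tp{\*n},
\]
where \(a\tp{\*n} = \*n!\mu\tp{\*n}\) and \(c_{\boldsymbol{\lambda}} > 0\) is constant. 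In particular, \(\supp\tp{f} = \supp\tp{\mu}\). So \(\supp\tp{f}\) is downward closed exactly when \(\supp\tp{\mu}\) is, which holds by hypothesis. The active sets \(\+I_{\*n}\) in \eqref{eq:discrete-function-log-concavity} and in \eqref{eq:ulc-condition} therefore coincide for every \(\*n \in \supp\tp{\mu}\).

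The key step is the cancellation of the geometric factor in the cross-ratio, exactly as in the proof of \Cref{lem:ulc-closure-pinning-exponential-tilt}. For \(i, j \in \+I_{\*n}\), the powers of \(\boldsymbol{\lambda}\) and the constant \(c_{\boldsymbol{\lambda}}\) cancel:
\[
\frac{f\tp{\*n} f\tp{\*n + \*e_i + \*e_j}}{f\tp{\*n + \*e_i} f\tp{\*n + \*e_j}} = \frac{\boldsymbol{\lambda}^{-2\*n - \*e_i - \*e_j}}{\boldsymbol{\lambda}^{-2\*n - \*e_i - \*e_j}} \cdot \frac{a\tp{\*n} a\tp{\*n + \*e_i + \*e_j}}{a\tp{\*n + \*e_i} a\tp{\*n + \*e_j}} = \frac{a\tp{\*n} a\tp{\*n + \*e_i + \*e_j}}{a\tp{\*n + \*e_i} a\tp{\*n + \*e_j}}.
\]
This also covers the case where \(\*n + \*e_i + \*e_j \notin \supp\tp{\mu}\): both sides are then zero, with the common convention that masses outside the support vanish. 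Consequently the matrix in \eqref{eq:discrete-function-log-concavity} for \(f\) equals the matrix in \eqref{eq:ulc-condition} with \(\delta = 1\), at every \(\*n \in \supp\tp{\mu}\). One is positive semidefinite if and only if the other is, which proves the equivalence for every \(\boldsymbol{\lambda} \in \bb R_{> 0}^d\).

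There is no real obstacle here. The only point needing care is bookkeeping: checking that the support and active-set conventions of the two definitions match, including entries where a second step leaves the support, so that the identification of the two matrices is exact and not merely up to a congruence.
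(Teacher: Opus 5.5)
Your proposal is correct and is essentially the paper's proof: write \(\dd\mu/\dd\pi_{\boldsymbol{\lambda}}(\*n) = e^{\norm{\boldsymbol{\lambda}}_1} a(\*n)/\boldsymbol{\lambda}^{\*n}\), note that the constant and geometric factors cancel from every cross-ratio, and conclude that the two local matrices coincide entrywise. Your extra bookkeeping on supports, active sets, and second steps leaving the support just makes explicit what the paper leaves implicit.
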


\begin{proof}
    Write \(a\tp{\*n} \defeq \*n! \mu\tp{\*n}\). Since
    \[
    \frac{\dd \mu}{\dd \pi_{\boldsymbol{\lambda}}}\tp{\*n} = e^{\norm{\boldsymbol{\lambda}}_1} \frac{a\tp{\*n}}{\boldsymbol{\lambda}^{\*n}},
    \]
    the constant and geometric factors cancel from every ratio in \eqref{eq:discrete-function-log-concavity}. The resulting matrix is exactly the ULC matrix of \(\mu\).
\end{proof}

\subsection{Basic Closure Properties}

The following closure properties follow directly from the local-matrix condition in \eqref{eq:ulc-condition}.

\begin{proposition}
    The class of ULC measures is closed under the following operations:
    \begin{itemize}
        \item \textbf{Exponential tilt:} If \(\mu\) is ULC, then for every \(\boldsymbol{\theta} \in \bb R_{> 0}^d\), the tilted measure \(\boldsymbol{\theta} * \mu\) is ULC.
        \item \textbf{Weighted pinning:} If \(\mu\) is ULC, then for every \(\*x \in \supp\tp{\mu}\), the measure \(\mu^{\*x}\), as defined in \Cref{def:weighted-pinning}, is ULC.
        \item \textbf{Product:} If \(\mu^{\tp{i}}\) is ULC for \(i = 1, 2, \dots, k\), then the product measure \(\bigotimes_{i = 1}^k \mu^{\tp{i}}\) is ULC.
        \item \textbf{Box truncation:} If \(\mu\) is ULC on \(\bb N^d\), then for every \(\*N \in \bb N^d\), the measure \(\mu\) restricted to the box \(\set{\*n \in \bb N^d \cmid \*n \le \*N}\) is ULC.
        \item \textbf{Coordinate conditioning:} If \(\mu\) is ULC on \(\bb N^d\) and \(\*{Pr}_{\*X \sim \mu}\stp{\*X_S = \*x} > 0\) for some \(S \subseteq \stp{d}\) and \(\*x \in \bb N^S\), then \(\mu_{\stp{d} \setminus S}^{S \gets \*x}\) is ULC.
    \end{itemize}
    \label{prop:ulc-basic-closure-properties}
\end{proposition}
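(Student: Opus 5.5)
Each of the five closure properties is a direct check on the local matrix in \eqref{eq:ulc-condition} with \(\delta = 1\), that is, on
\[
\*M_{\*n} \defeq \tp{1 - \frac{a\tp{\*n} a\tp{\*n + \*e_i + \*e_j}}{a\tp{\*n + \*e_i} a\tp{\*n + \*e_j}}}_{i,j \in \+I_{\*n}} \succeq \*O,
\]
where \(a\tp{\*n} = \*n!\mu\tp{\*n}\). For each operation I will do three things. First, write the factorial-weighted mass \(\wt a\) of the new measure in terms of \(a\). Second, check that the new support is downward closed and identify its active sets. Third, show that each new local matrix is a principal submatrix of an old \(\*M_{\*n}\), or a block-diagonal combination of such submatrices, plus a nonnegative diagonal matrix.

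\textbf{Tilt and pinning.} These two items are exactly \Cref{lem:ulc-closure-pinning-exponential-tilt}, so I will simply cite it.

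\textbf{Product.} Take \(\*n = \tp{\*n^{\tp{1}}, \dots, \*n^{\tp{k}}}\). Then \(a\tp{\*n} = \prod_l a^{\tp{l}}\tp{\*n^{\tp{l}}}\), the support is a product of downward closed sets, and \(\+I_{\*n} = \bigsqcup_l \+I^{\tp{l}}_{\*n^{\tp{l}}}\). If \(i\) and \(j\) lie in different blocks, the ratio factorizes to \(1\), so the off-block entries vanish. On the diagonal blocks we recover \(\*M^{\tp{l}}_{\*n^{\tp{l}}}\). The matrix is therefore block diagonal with PSD blocks, hence PSD.

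\textbf{Box truncation.} The new support \(S' = \supp\tp{\mu}\cap\set{\*n\le\*N}\) is downward closed. On \(S'\), \(\wt a\) is proportional to \(a\), and the active set shrinks to \(\+I'_{\*n} \subseteq \+I_{\*n}\). Fix \(i,j \in \+I'_{\*n}\). If \(\*n+\*e_i+\*e_j \in S'\), the entry coincides with the old one. Otherwise \(\*n+\*e_i+\*e_j\) leaves the box, and since \(\*n+\*e_i\) and \(\*n+\*e_j\) both lie in the box, this forces \(i=j\) and \(n_i+1 = N_i\). The new entry is then \(1\), and the old entry equals \(1 - \frac{a(\*n)a(\*n+2\*e_i)}{a(\*n+\*e_i)^2} \le 1\). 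So the new matrix equals a principal submatrix of \(\*M_{\*n}\) plus a nonnegative diagonal matrix, and it is PSD. This is the same observation already made in the proof of \Cref{thm:ulc-poincare-inequality}.

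\textbf{Coordinate conditioning.} Set \(T = \stp{d}\setminus S\). The conditional law \(\nu\tp{\*y} \propto \mu\tp{\*x,\*y}\) has \(\*y!\,\nu\tp{\*y} \propto a\tp{\*x,\*y}/\*x!\), and \(\*x!\) is a constant. Its support is the section \(\set{\*y : \tp{\*x,\*y}\in\supp\tp{\mu}}\). This section is downward closed, and it contains \(\*0\) because the section is nonempty. For \(i,j \in T\), the active set of \(\nu\) at \(\*y\) is \(\+I_{\tp{\*x,\*y}}\cap T\), and the ratios coincide with those of \(\mu\) at \(\tp{\*x,\*y}\). The new matrix is therefore the principal submatrix of \(\*M_{\tp{\*x,\*y}}\) indexed by \(\+I_{\tp{\*x,\*y}}\cap T\), which is PSD.

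The only place needing care is the bookkeeping of supports. The main point is that conditioning yields a downward closed section even though the pinned coordinates \(\*x\) are not zero. The remaining cases are routine.
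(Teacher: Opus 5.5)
Your proposal is correct and takes the same route as the paper. The paper also treats all five items as direct consequences of the local matrix in \eqref{eq:ulc-condition}: tilt and pinning come from \Cref{lem:ulc-closure-pinning-exponential-tilt}, and box truncation uses the same ``principal submatrix plus nonnegative diagonal'' observation as the proof of \Cref{thm:ulc-poincare-inequality}. One small bookkeeping point in the truncation case: \(\*n+\*e_i+\*e_j \notin S'\) can also happen because that point lies inside the box but outside \(\supp\tp{\mu}\), not only because it leaves the box; then the old and new entries both equal \(1\), so your conclusion is unaffected.
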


\begin{remark}
    Weighted pinning, product, box truncation, and coordinate conditioning preserve not only ULC, but also \(\delta\)-ULC for the same \(\delta\).
\end{remark}

\subsection{Marginalization and Coordinatewise Binomial Thinning}

Closure under marginalization and coordinatewise binomial thinning follows directly from the strongly log-concave probability-generating function characterization of ULC measures in \Cref{thm:ulc-generating-function-characterization}.

\begin{proposition}
    If \(\mu\) is ULC on \(\bb N^d\), then for every \(S \subseteq \stp{d}\), the marginal measure \(\mu_S\) is ULC.
    \label{prop:ulc-marginalization-closure}
\end{proposition}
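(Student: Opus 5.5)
We will use the equivalence between Items~1 and~4 of \Cref{thm:ulc-generating-function-characterization}: a measure with downward closed support is ULC if and only if its probability-generating function is strongly log-concave. The plan has three steps. First, check that $\mu_S$ has downward closed support. Second, write $g_{\mu_S}$ as a restriction of $g_\mu$. Third, show that strong log-concavity survives this restriction.

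For the support, let $\*y\in\supp(\mu_S)$ and $\*y'\le\*y$. Then there is $\*n\in\supp(\mu)$ with $\*n_S=\*y$. Downward closure of $\supp(\mu)$ gives $(\*y',\*n_{[d]\setminus S})\in\supp(\mu)$, hence $\*y'\in\supp(\mu_S)$.

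For the generating function, we have
\[
g_{\mu_S}(\boldsymbol{\theta}_S)=g_\mu(\boldsymbol{\theta}_S,\*1_{[d]\setminus S}),
\]
and for every $\*x\in\bb N^S$ derivatives commute with this restriction:
\[
\partial^{\*x}g_{\mu_S}(\boldsymbol{\theta}_S)=\bigl(\partial^{(\*x,\*0)}g_\mu\bigr)(\boldsymbol{\theta}_S,\*1).
\]
Fix $\*x$ and set $h\defeq\partial^{(\*x,\*0)}g_\mu$.
\begin{itemize}
\item If $h\equiv0$, then all coefficients $a(\*x+\*k)$ with $\*k_{[d]\setminus S}$ arbitrary vanish, so $\partial^{\*x}g_{\mu_S}\equiv0$.
\item Otherwise, by strong log-concavity of $g_\mu$, the function $h$ is finite, positive and log-concave on $\bb R^d_{>0}$.
\end{itemize}
The remaining coordinates are fixed to $1$, which is not a point of $\bb R_{>0}^d$ but lies on a slice of it where $\theta_i=1>0$. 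So $\boldsymbol{\theta}_S\mapsto h(\boldsymbol{\theta}_S,\*1)$ is the restriction of a concave function ($\log h$) to an affine slice of the open orthant. It is therefore finite, positive and log-concave on $\bb R^S_{>0}$. Hence $g_{\mu_S}$ is strongly log-concave, and by \Cref{thm:ulc-generating-function-characterization} $\mu_S$ is ULC.

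The main obstacle is finiteness. Strong log-concavity in the paper's definition requires $\partial^{\*x}g_\mu$ to be finite on all of $\bb R^d_{>0}$. This is essentially guaranteed by the characterization itself, since Item~1 implies Item~4. If one prefers to avoid relying on that subtlety, a cleaner route is available:
\begin{enumerate}
\item Replace $\mu$ by its box truncations $\mu_N$, which are ULC by \Cref{prop:ulc-basic-closure-properties}.
\item Apply the argument above to the polynomial $g_{\mu_N}$.
\item Pass to the limit $N\to\infty$ in the ULC matrix entries of $(\mu_N)_S$, which converge to those of $\mu_S$ by monotone convergence of the marginal masses.
\item Positive semidefiniteness is preserved in the limit.
\end{enumerate}
For the active set $\+I_{\*y}$ of $\mu_S$, all the relevant masses are positive for large $N$, so the limiting matrix is exactly the ULC matrix of $\mu_S$.
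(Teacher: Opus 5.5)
Your proposal is correct and follows the paper's proof essentially verbatim: you check downward closure of $\supp(\mu_S)$, write $g_{\mu_S}(\boldsymbol{\theta}_S)=g_\mu(\boldsymbol{\theta}_S,\mathbf{1})$, note that each derivative of $g_{\mu_S}$ is the restriction of a log-concave derivative of $g_\mu$ to an affine slice of the orthant, and conclude with \Cref{thm:ulc-generating-function-characterization}. Your additional box-truncation route is a sound way to make the finiteness of the derivatives explicit for infinite support; the paper leaves this point implicit in the characterization theorem.
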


\begin{proof}[Proof of \Cref{prop:ulc-marginalization-closure}]
    The support of \(\mu_S\) is downward closed, and
    \[
    g_{\mu_S}\tp{\boldsymbol{\theta}_S} = g_{\mu}\tp{\boldsymbol{\theta}_S, \*1_{\stp{d} \setminus S}}, \quad \boldsymbol{\theta}_S \in \bb R_{> 0}^S.
    \]
    Every partial derivative of \(g_{\mu_S}\) is therefore the restriction of the corresponding partial derivative of \(g_{\mu}\) to this affine subspace. Since restrictions of log-concave functions to affine subspaces are log-concave, \(g_{\mu_S}\) is strongly log-concave. The result follows from \Cref{thm:ulc-generating-function-characterization}.
\end{proof}

\begin{definition}
    Let \(\mu\) be a probability measure on \(\bb N^d\). For \(\*p \in \left(0, 1\right]^d\), define the coordinatewise binomial thinning operator \(\+T_{\*p}\) by
    \begin{equation}
        \+T_{\*p} \mu \defeq \-{Law}\tp{\*X'}, \qquad \*{Pr}\stp{\*X' = \*n' \mid \*X = \*n} = \prod_{i = 1}^d \-{Bin}\tp{n_i, p_i; n_i'}, \qquad \*X \sim \mu.
        \label{eq:binomial-thinning-operator}
    \end{equation}
    In particular, for \(p \in \left(0, 1\right]\), we abbreviate \(\+T_{p \*1}\) as \(\+T_p\).
    \label{def:binomial-thinning-operator}
\end{definition}

\begin{proposition}
    If \(\mu\) is ULC on \(\bb N^d\), then for every \(\*p \in \left(0, 1\right]^d\), the thinned measure \(\+T_{\*p} \mu\) is ULC.
    \label{prop:ulc-thinning-closure}
\end{proposition}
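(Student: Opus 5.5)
We will prove closure under thinning through the generating-function characterization in \Cref{thm:ulc-generating-function-characterization}, in the same way as the marginalization argument of \Cref{prop:ulc-marginalization-closure}. The basic identity is
\[
g_{\+T_{\*p}\mu}\tp{\boldsymbol{\theta}} = \*E_{\*X\sim\mu}\stp{\prod_{i=1}^d \tp{1 - p_i + p_i\theta_i}^{X_i}} = g_{\mu}\tp{\*1 - \*p + \*p\odot\boldsymbol{\theta}},
\]
where \(\odot\) is the coordinatewise product. The map \(A\colon \boldsymbol{\theta}\mapsto \*1-\*p+\*p\odot\boldsymbol{\theta}\) is affine with positive diagonal linear part. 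It sends \(\bb R_{>0}^d\) into \(\prod_i \tp{1-p_i,\infty}\subseteq\bb R_{>0}^d\), since \(p_i\le 1\).

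The support of \(\+T_{\*p}\mu\) is downward closed. It equals \(\supp\tp{\mu}\) itself, because \(\-{Bin}\tp{n_i,p_i}\) with \(p_i>0\) charges all of \(\set{0,\dots,n_i}\) and \(\supp\tp{\mu}\) is downward closed. The key steps are then as follows.

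\begin{enumerate}
\item For each multi-index \(\*x\), the chain rule gives
\[
\partial^{\*x} g_{\+T_{\*p}\mu}\tp{\boldsymbol{\theta}} = \*p^{\*x}\,\tp{\partial^{\*x}g_\mu}\tp{A\boldsymbol{\theta}}.
\]
\item By item 4 of \Cref{thm:ulc-generating-function-characterization}, \(\partial^{\*x}g_\mu\) is either identically zero or finite, positive and log-concave on \(\bb R_{>0}^d\).
\item A log-concave function composed with an affine map is log-concave, and multiplying by the positive constant \(\*p^{\*x}\) preserves this. Hence \(\partial^{\*x}g_{\+T_{\*p}\mu}\) is either identically zero or finite, positive and log-concave on \(\bb R_{>0}^d\).
\item The zero cases match: \(\partial^{\*x} g_{\+T_{\*p}\mu}\equiv 0\) exactly when \(\*x\notin\supp\tp{\mu}\), which is also exactly when \(\partial^{\*x}g_\mu\equiv 0\).
\item Therefore \(g_{\+T_{\*p}\mu}\) is strongly log-concave, and the equivalence \(4\Rightarrow 1\) of \Cref{thm:ulc-generating-function-characterization} shows that \(\+T_{\*p}\mu\) is ULC.
\end{enumerate}

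The one point needing care, and the main obstacle, is finiteness. Strong log-concavity as defined requires each nonzero derivative to be finite on the whole open orthant. Step 3 only gives this if \(\partial^{\*x}g_\mu\) is finite at every point of \(A\tp{\bb R_{>0}^d}\), which contains points with arbitrarily large coordinates. Item 4 for \(\mu\) already supplies finiteness of \(\partial^{\*x}g_\mu\) on all of \(\bb R_{>0}^d\), so this is handled.

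If one prefers an argument that does not lean on this, an alternative route is available. Apply the box truncation of \Cref{prop:ulc-basic-closure-properties} to obtain finitely supported ULC measures \(\mu_N\) converging to \(\mu\). Run the argument above for the polynomials \(g_{\mu_N}\), where finiteness is automatic. Then pass to the limit in the local matrix condition \eqref{eq:ulc-condition}: each entry of that matrix for \(\+T_{\*p}\mu_N\) involves only finitely many masses, and these converge to the corresponding masses of \(\+T_{\*p}\mu\).
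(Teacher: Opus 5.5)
Your proof is correct and follows the paper's own argument essentially step for step. Both use the identity \(g_{\mathcal{T}_{\mathbf{p}}\mu}(\boldsymbol{\theta}) = g_\mu(\mathbf{1}-\mathbf{p}+\mathbf{p}\odot\boldsymbol{\theta})\), the chain rule producing the factor \(\mathbf{p}^{\mathbf{x}}\), preservation of log-concavity under the affine change of variables, downward closedness of the support, and the equivalence \(1\Leftrightarrow 4\) of the generating-function characterization. One small slip: when \(p_i=1\), \(\mathrm{Bin}(n_i,p_i)\) does not charge all of \(\{0,\dots,n_i\}\); the claim \(\supp(\mathcal{T}_{\mathbf{p}}\mu)=\supp(\mu)\) still holds because each \(\mathbf{n}\in\supp(\mu)\) is charged with probability at least \(\mathbf{p}^{\mathbf{n}}>0\) and thinning only moves mass downward.
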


\begin{proof}[Proof of \Cref{prop:ulc-thinning-closure}]
    The probability-generating function of \(\+T_{\*p} \mu\) is
    \[
    g_{\+T_{\*p} \mu}\tp{\boldsymbol{\theta}} = g_{\mu}\tp{\tp{1 - p_i + p_i \theta_i}_{i \in \stp{d}}}.
    \]
    Hence, for every \(\*x \in \bb N^d\),
    \[
    \partial^{\*x} g_{\+T_{\*p} \mu}\tp{\boldsymbol{\theta}} = \*p^{\*x} \tp{\partial^{\*x} g_{\mu}}\tp{\tp{1 - p_i + p_i \theta_i}_{i \in \stp{d}}}.
    \]
    By \Cref{thm:ulc-generating-function-characterization}, each nonzero \(\partial^{\*x} g_{\mu}\) is log-concave on \(\bb R_{> 0}^d\). Its composition with the displayed affine map is log-concave, so \(g_{\+T_{\*p} \mu}\) is strongly log-concave. The support of \(\+T_{\*p} \mu\) is downward closed, and applying \Cref{thm:ulc-generating-function-characterization} completes the proof.
\end{proof}

\Cref{prop:ulc-thinning-closure} admits an equivalent test-function formulation: the Poisson semigroup preserves log-concavity. The one-dimensional special case is well established (e.g., \cite[Proposition~2.2]{LRS25}) and is the discrete analogue of preservation of log-concavity by the heat semigroup \cite{Pre73}.

\begin{corollary}
    Let \(\tp{P_t}_{t \ge 0}\) denote the semigroup of the pure birth process on \(\bb N^d\) with birth rates \(\*b = \tp{b_1, \dots, b_d}\). If \(f: \bb N^d \to \bb R_{\ge 0}\) is log-concave, then \(P_t f\) is log-concave whenever it is finite. Here log-concavity is understood in the sense of \eqref{eq:discrete-function-log-concavity}.
    \label{cor:poisson-semigroup-preserves-log-concavity}
\end{corollary}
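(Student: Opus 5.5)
The plan is to reduce the statement to closure of ULC under exponential tilting and coordinatewise binomial thinning (\Cref{lem:ulc-closure-pinning-exponential-tilt} and \Cref{prop:ulc-thinning-closure}), using \Cref{prop:ulc-poisson-relative-log-concavity} as the dictionary between log-concave functions and ULC measures.

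\textbf{Explicit formula for the semigroup.} The pure birth semigroup adds independent Poisson increments, so with \(\boldsymbol{\theta} \defeq t\*b\),
\[
P_t f\tp{\*x} = e^{-\norm{\boldsymbol{\theta}}_1} \sum_{\*k \in \bb N^d} f\tp{\*x + \*k} \frac{\boldsymbol{\theta}^{\*k}}{\*k!}.
\]
I would first treat the case where \(f\) is finitely supported. There all sums are finite and no normalization issue arises; I may also assume \(f \not\equiv 0\). Coordinates with \(b_i = 0\) are frozen, so I take \(\theta_i > 0\).

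\textbf{Finite-support case.} Define the measure \(\mu'\tp{\*n} \propto f\tp{\*n} \tp{\*1 + \boldsymbol{\theta}}^{\*n} / \*n!\). Its support \(\supp\tp{f}\) is downward closed. The relative density of \(\mu\tp{\*n} \propto f\tp{\*n}/\*n!\) with respect to a product Poisson law is a constant multiple of \(f\), which is log-concave. Hence \(\mu\) is ULC by \Cref{prop:ulc-poisson-relative-log-concavity}, and \(\mu'\), being an exponential tilt of \(\mu\), is ULC by \Cref{lem:ulc-closure-pinning-exponential-tilt}.

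Now thin \(\mu'\) with \(p_i \defeq 1/\tp{1 + \theta_i}\). Using \(\binom{n_i}{x_i} p_i^{x_i} \tp{1 - p_i}^{n_i - x_i} \tp{1 + \theta_i}^{n_i} = \binom{n_i}{x_i} \theta_i^{n_i - x_i}\), a direct computation gives
\[
\*x! \, \+T_{\*p} \mu'\tp{\*x} \propto \sum_{\*n \ge \*x} f\tp{\*n} \frac{\boldsymbol{\theta}^{\*n - \*x}}{\tp{\*n - \*x}!} \propto P_t f\tp{\*x}.
\]
By \Cref{prop:ulc-thinning-closure}, \(\+T_{\*p} \mu'\) is ULC. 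So its factorial weight, which is proportional to \(P_t f\), satisfies the matrix condition \eqref{eq:discrete-function-log-concavity} on a downward closed support. This is exactly the statement that \(P_t f\) is log-concave.

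\textbf{General case by truncation.} For general \(f\), let \(f_N \defeq f \*1_{\set{\*n \le N\*1}}\). Each \(f_N\) is log-concave: its ULC matrix is a principal restriction of that of \(f\) plus a nonnegative diagonal, as in the box-truncation item of \Cref{prop:ulc-basic-closure-properties}. Each \(f_N\) is finitely supported, so \(P_t f_N\) is log-concave by the previous step. As \(N \to \infty\), \(P_t f_N \uparrow P_t f\) pointwise by monotone convergence, assuming \(P_t f\) is finite.

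\textbf{Main obstacle: passing to the limit.} The support of \(P_t f\) equals \(\set{\*x \cmid \exists\, \*n \ge \*x,\ f\tp{\*n} > 0}\), which is downward closed. Moreover \(P_t f_N\) and \(P_t f\) have the same support on any fixed finite set once \(N\) is large. Therefore, at each fixed \(\*x\), the active set \(\+I_{\*x}\) eventually stabilizes, and the entries \(1 - \frac{g\tp{\*x} g\tp{\*x + \*e_i + \*e_j}}{g\tp{\*x + \*e_i} g\tp{\*x + \*e_j}}\) converge with positive denominators. Positive semidefiniteness is a closed condition, so it survives the limit.

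This bookkeeping of the active sets is the only delicate point. It is also where the finiteness hypothesis on \(P_t f\) is used: it guarantees that the monotone limits are finite and, on the support, positive.
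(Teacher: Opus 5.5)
Your argument is correct and is essentially the paper's proof. The paper likewise truncates to the box and forms \(\mu_N \propto f_N \pi_{\boldsymbol{\lambda}}\) with \(\boldsymbol{\lambda} = \mathbf{q} + t\mathbf{b}\). It thins with \(p_i = q_i/\lambda_i\) to get \(\mathcal{T}_{\mathbf{p}}\mu_N \propto \pi_{\mathbf{q}} \, P_t f_N\), reads off log-concavity of \(P_t f_N\) via \Cref{prop:ulc-poisson-relative-log-concavity}, and passes to the limit by monotone convergence and closedness of the positive semidefinite cone. Your construction is the case \(\mathbf{q} = \mathbf{1}\), with the tilt lemma standing in for a direct application of the Poisson-relative characterization to \(\pi_{\mathbf{1}+\boldsymbol{\theta}}\). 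The reduction to \(\theta_i > 0\) is unnecessary: since \(p_i = 1\) is allowed in \Cref{prop:ulc-thinning-closure}, your formulas work verbatim when some \(b_i = 0\).
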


\begin{remark}
    The equivalence between \Cref{prop:ulc-thinning-closure} and \Cref{cor:poisson-semigroup-preserves-log-concavity} can also be established more intrinsically, using the fact that binomial thinning is the time reversal of the Poisson semigroup.
\end{remark}

\begin{proof}[Proof of \Cref{cor:poisson-semigroup-preserves-log-concavity}]
    The result is immediate if \(f\) is identically zero, so assume otherwise. For \(N \in \bb N\), let
    \[
    f_N\tp{\*n} \defeq f\tp{\*n} \*1_{\set{\*n \in \bb N^d \cmid \norm{\*n}_{\infty} \le N}}.
    \]
    Directly from \eqref{eq:discrete-function-log-concavity}, each \(f_N\) is log-concave. Fix an arbitrary \(\*q \in \bb R_{> 0}^d\), set \(\boldsymbol{\lambda} \defeq \*q + t \*b\), and let \(\*p \defeq \tp{q_i / \lambda_i}_{i \in \stp{d}}\). Define
    \[
    \pi_{\boldsymbol{\lambda}} \defeq \bigotimes_{i = 1}^d \-{Pois}\tp{\lambda_i}, \qquad Z_N \defeq \*E_{\pi_{\boldsymbol{\lambda}}}\stp{f_N}, \qquad \mu_N\tp{\*n} \defeq \frac{f_N\tp{\*n} \pi_{\boldsymbol{\lambda}}\tp{\*n}}{Z_N}.
    \]
    By \Cref{prop:ulc-poisson-relative-log-concavity}, \(\mu_N\) is ULC. Since \(\*p \in \left(0, 1\right]^d\), \Cref{prop:ulc-thinning-closure} shows that \(\+T_{\*p} \mu_N\) is ULC. A direct calculation using \(\lambda_i p_i = q_i\) and \(\lambda_i \tp{1 - p_i} = b_i t\) gives
    \[
    \tp{\+T_{\*p} \mu_N}\tp{\*n} = \frac{\pi_{\*q}\tp{\*n}}{Z_N} P_t f_N\tp{\*n}, \qquad \pi_{\*q} \defeq \bigotimes_{i = 1}^d \-{Pois}\tp{q_i}.
    \]
    Thus \Cref{prop:ulc-poisson-relative-log-concavity} implies that \(P_t f_N\) is log-concave. As \(N \to \infty\), monotone convergence gives \(P_t f_N \to P_t f\) pointwise. The cone of positive semidefinite matrices is closed, so the inequalities in \eqref{eq:discrete-function-log-concavity} pass to the limit whenever \(P_t f\) is finite. Moreover, \(\supp\tp{P_t f} = \supp\tp{f}\), which is downward closed. Hence \(P_t f\) is log-concave.
\end{proof}

\subsection{Convolution}

Closure of one-dimensional ULC measures under convolution is a classical result and admits several proofs \cite{Walk76,Lig97,Joh07,WY07,Gur09b,KN11,BH20}. We prove the multivariate analogue via the Lorentzian-polynomial framework of Br\"and\'en and Huh \cite{BH20}. The key tool is the homogenization operator introduced earlier in \Cref{subsec:ulc-completely-log-concavity}.

\begin{theorem}
    The class of ULC measures is closed under convolution. That is, if \(\mu\) and \(\nu\) are ULC measures on \(\bb N^d\), then so is their convolution \(\mu * \nu\).
    \label{thm:ulc-convolution-closure}
\end{theorem}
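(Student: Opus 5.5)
The plan is to reduce to finitely supported measures and use homogenization together with the Brändén--Huh theory of Lorentzian polynomials. First I will truncate. Let \(\mu_N\) and \(\nu_N\) be the normalized restrictions of \(\mu\) and \(\nu\) to \(\set{\*n \cmid \norm{\*n}_1 \le N}\). These are ULC, as observed in the proof of the \(1 \Rightarrow 5\) direction of \Cref{thm:ulc-generating-function-characterization}. Their convolution \(\mu_N * \nu_N\) converges pointwise to \(\mu * \nu\), and the support of \(\mu * \nu\) is the Minkowski sum of two downward closed sets, hence downward closed. Each ULC matrix entry at a fixed \(\*n\) involves only finitely many masses, and the positive semidefinite cone is closed, so the ULC condition passes to the limit. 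The active sets \(\+I_{\*n}\) eventually stabilize because the supports increase to the limit support. It therefore suffices to treat finitely supported \(\mu\) and \(\nu\).

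For finite supports, pick \(N_1 \ge \max_{\supp \mu} \norm{\*n}_1\) and \(N_2 \ge \max_{\supp \nu}\norm{\*n}_1\). By \Cref{lem:ulc-homogenization-lorentzian}, \(\+H_{N_1} g_\mu\) and \(\+H_{N_2} g_\nu\) are Lorentzian in the variables \((\theta_0, \boldsymbol{\theta})\). The key algebraic identity I will verify is
\[
\+H_{N_1+N_2} g_{\mu * \nu}(\theta_0,\boldsymbol{\theta}) = \frac{1}{(N_1+N_2)!}\, \+D\tp{\+H_{N_1} g_\mu \cdot \+H_{N_2} g_\nu}\quad\text{or a close variant}.
\]
The reason this fails as a naive product is that \(\theta_0^{N_1-\norm{\*k}_1}/(N_1-\norm{\*k}_1)!\) times \(\theta_0^{N_2-\norm{\*l}_1}/(N_2-\norm{\*l}_1)!\) does not yield \(\theta_0^{N-\norm{\*k+\*l}_1}/(N-\norm{\*k+\*l}_1)!\). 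The natural fix is the symbol/normalization operator \(N\) of Brändén--Huh, which sends \(\theta^{\*\alpha} \mapsto \theta^{\*\alpha}/\*\alpha!\). Writing \(\+H_N g_\mu = N(\tilde g_\mu)\) with \(\tilde g_\mu \defeq \sum a(\*n)\, \theta_0^{N_1-\norm{\*n}_1}\boldsymbol{\theta}^{\*n}\), the convolution corresponds to the product \(\tilde g_\mu \tilde g_\nu\) up to the binomial factors \(\prod_i \binom{k_i+l_i}{k_i}\), since \((\mu*\nu)(\*n)\,\*n! = \sum_{\*k+\*l=\*n} a_\mu(\*k)a_\nu(\*l)\binom{\*n}{\*k}\). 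Hence
\[
\+H_{N_1+N_2} g_{\mu*\nu} = \frac{N_1!\,N_2!}{(N_1+N_2)!}\,\cdot\, \+H_{N_1}g_\mu \,\star\, \+H_{N_2}g_\nu,
\]
where \(\star\) is the coefficientwise "binomial" product \(N(N^{-1}f\cdot N^{-1}h)\), equivalently the polarized product. I will instead realize this via the Brändén--Huh result that the product of Lorentzian polynomials is Lorentzian, applied in \emph{doubled} variables. Set \(F(\theta_0,\boldsymbol{\theta},\phi_0,\boldsymbol{\phi}) \defeq \+H_{N_1}g_\mu(\theta_0,\boldsymbol{\theta})\,\+H_{N_2}g_\nu(\phi_0,\boldsymbol{\phi})\), which is Lorentzian. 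Then apply the diagonal substitution \(\phi_i=\theta_i\), \(\phi_0=\theta_0\). Brändén--Huh show that Lorentzian polynomials are preserved under nonnegative linear changes of variables, so the result \(\+H_{N_1}g_\mu\cdot \+H_{N_2}g_\nu\) is Lorentzian. Its coefficient of \(\theta_0^{N-\norm{\*n}_1}\boldsymbol{\theta}^{\*n}\) is \(\sum_{\*k+\*l=\*n}\mu(\*k)\nu(\*l)/\tp{(N_1-\norm{\*k}_1)!\,(N_2-\norm{\*l}_1)!}\). This is not directly \((\mu*\nu)(\*n)/(N-\norm{\*n}_1)!\), so it is the wrong homogenization.

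The main obstacle is therefore matching coefficients. My first attempt will be a limiting trick. The ULC condition at \(\*n\) is local, and by the scaling argument in the proof of \(1\Rightarrow5\), \(N!\,\+H_N g(1,\boldsymbol{\theta}/N) \to g(\boldsymbol{\theta})\). I will apply the product-plus-diagonalization to \(\+H_{M}g_\mu\) and \(\+H_{M}g_\nu\) with a large common \(M\), dehomogenize at \(\theta_0=1\), and rescale \(\boldsymbol{\theta}\mapsto\boldsymbol{\theta}/M\). The ratio \(M!/(M-\norm{\*k}_1)!\,M^{-\norm{\*k}_1}\to1\) uniformly on the finite supports, so the product polynomial's coefficients converge to those of \(g_\mu g_\nu = g_{\mu*\nu}\). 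The Lorentzian product is completely log-concave by \cite[Theorem~2.30]{BH20}, and so is its dehomogenized rescaling. Complete log-concavity is closed under pointwise limits of polynomials of bounded degree (all derivatives converge), so \(g_{\mu*\nu}\) is completely log-concave, hence strongly log-concave. \Cref{thm:ulc-generating-function-characterization} (\(4\Rightarrow1\), via the support being downward closed) then gives that \(\mu*\nu\) is ULC. One point to check is that a limit of log-concave positive functions stays positive and log-concave on \(\bb R_{>0}^d\), which holds since derivatives of \(g_{\mu*\nu}\) have nonnegative coefficients and are nonzero exactly on downward closed indices.
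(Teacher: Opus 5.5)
Your final argument (the ``limiting trick'') is correct and is essentially the paper's proof: for finite supports, you take the product of the Lorentzian homogenizations $\+H_M g_\mu \, \+H_M g_\nu$ from \Cref{lem:ulc-homogenization-lorentzian}, dehomogenize at $\theta_0 = 1$, and rescale $\boldsymbol{\theta} \mapsto \boldsymbol{\theta}/M$; as $M \to \infty$ this converges to $g_\mu g_\nu = g_{\mu * \nu}$, so \Cref{thm:ulc-generating-function-characterization} applies, and truncation with eventual agreement of the local matrices removes the finiteness assumption. The remaining differences (truncating to $\ell_1$-balls rather than coordinate boxes, and passing through complete rather than strong log-concavity) are immaterial, and you should delete the abandoned exploratory paragraph about an exact homogenization identity for $\mu * \nu$.
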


\begin{proof}[Proof of \Cref{thm:ulc-convolution-closure}]
    First suppose that \(\mu\) and \(\nu\) have finite support. Let \(\+H_N\) denote the homogenization operator introduced in \Cref{def:homogenized-generating-polynomial}. For every sufficiently large \(N\), define
    \[
    F_{\mu, N}\tp{\boldsymbol{\theta}} \defeq N! \, \+H_N g_{\mu}\tp{1, \boldsymbol{\theta} / N} = \sum_{\*n \in \supp\tp{\mu}} \mu\tp{\*n} \frac{N!}{\tp{N - \norm{\*n}_1}! N^{\norm{\*n}_1}} \boldsymbol{\theta}^{\*n},
    \]
    and define \(F_{\nu, N}\) analogously. By \Cref{lem:ulc-homogenization-lorentzian}, \(\+H_N g_{\mu}\) and \(\+H_N g_{\nu}\) are Lorentzian. Their product is Lorentzian by \cite[Corollary~2.32]{BH20}. By the equivalence between Lorentzian and homogeneous strongly log-concave polynomials \cite[Theorem~2.30]{BH20}, the product \(\+H_N g_{\mu} \+H_N g_{\nu}\) is strongly log-concave. Since strong log-concavity is preserved under restriction to \(\theta_0 = 1\), positive diagonal rescaling of the remaining variables, and multiplication by a positive constant, it follows that \(F_{\mu, N} F_{\nu, N}\) is strongly log-concave.

    As \(N \to \infty\), these polynomials and all their partial derivatives converge pointwise to those of \(g_{\mu} g_{\nu}\). Log-concavity of every derivative passes to the limit, so \(g_{\mu} g_{\nu}\) is strongly log-concave. Since \(g_{\mu * \nu} = g_{\mu} g_{\nu}\) and \(\supp\tp{\mu * \nu}\) is downward closed, \Cref{thm:ulc-generating-function-characterization} proves the claim for finite support.

    For general \(\mu\) and \(\nu\), let \(\mu_N\) and \(\nu_N\) be their normalized restrictions to \(\set{\*n \in \bb N^d \cmid \norm{\*n}_{\infty} \le N}\). These measures are ULC by \Cref{prop:ulc-basic-closure-properties}, so \(\rho_N \defeq \mu_N * \nu_N\) is ULC by the finite-support case. Let \(\Omega_N^{\mu}\) and \(\Omega_N^{\nu}\) denote the two restricted supports, and write \(\rho \defeq \mu * \nu\). For each fixed \(\*n\), every \(\*m\) entering its ULC matrix satisfies, for sufficiently large \(N\),
    \[
    \rho_N\tp{\*m} = \frac{\rho\tp{\*m}}{\mu\tp{\Omega_N^{\mu}} \nu\tp{\Omega_N^{\nu}}}, \quad \forall \*m \in \set{\*n} \cup \set{\*n + \*e_i \cmid i \in \stp{d}} \cup \set{\*n + \*e_i + \*e_j \cmid i, j \in \stp{d}}.
    \]
    The common factor cancels from the local ratios, so the ULC matrices of \(\rho_N\) and \(\rho\) at \(\*n\) agree for all sufficiently large \(N\). Hence \(\rho\) is ULC.
\end{proof}

The convolution theorem has the following test-function form. Its one-dimensional special case is a classical result due to Walkup \cite{Walk76}.

\begin{definition}
    For \(f, g: \bb N^d \to \bb R_{\ge 0}\), their binomial convolution \(h\) is defined by
    \[
    h\tp{\*n} \defeq \sum_{\*0 \le \*k \le \*n} f\tp{\*k} g\tp{\*n - \*k} \prod_{i = 1}^d \binom{n_i}{k_i}, \quad \*n \in \bb N^d.
    \]
    \label{def:binomial-convolution}
\end{definition}

\begin{corollary}
    Log-concavity is preserved under binomial convolution. That is, if \(f\) and \(g\) are log-concave in the sense of \eqref{eq:discrete-function-log-concavity}, then so is their binomial convolution \(h\).
    \label{cor:binomial-convolution-preserves-log-concavity}
\end{corollary}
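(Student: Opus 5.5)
The plan is to reduce the corollary to \Cref{thm:ulc-convolution-closure} through the Poisson-relative description of log-concavity in \Cref{prop:ulc-poisson-relative-log-concavity}, in the same way that \Cref{cor:poisson-semigroup-preserves-log-concavity} was deduced from \Cref{prop:ulc-thinning-closure}. If \(f\) or \(g\) vanishes identically there is nothing to prove, so assume otherwise. Since \(\supp\tp{f}\) and \(\supp\tp{g}\) are downward closed, \(\supp\tp{h} = \supp\tp{f} + \supp\tp{g}\) is downward closed. Indeed, if \(\*m \le \*k + \*l\) with \(\*k \in \supp\tp{f}\) and \(\*l \in \supp\tp{g}\), one can split \(\*m = \*k' + \*l'\) coordinatewise with \(\*k' \le \*k\) and \(\*l' \le \*l\).

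The key identity is the following. Fix any \(\boldsymbol{\lambda} \in \bb R_{>0}^d\) and write \(\pi_{\boldsymbol{\lambda}}(\*n) = e^{-\norm{\boldsymbol{\lambda}}_1}\boldsymbol{\lambda}^{\*n}/\*n!\). Then
\[
\sum_{\*0\le\*k\le\*n} f(\*k)\pi_{\boldsymbol{\lambda}}(\*k)\, g(\*n-\*k)\pi_{\boldsymbol{\lambda}}(\*n-\*k) = e^{-2\norm{\boldsymbol{\lambda}}_1}\frac{\boldsymbol{\lambda}^{\*n}}{\*n!} h(\*n) = e^{-\norm{\boldsymbol{\lambda}}_1}\pi_{\boldsymbol{\lambda}}(\*n)\, h(\*n),
\]
because \(1/(\*k!(\*n-\*k)!) = \prod_i\binom{n_i}{k_i}/\*n!\). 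So if \(\mu \propto f\pi_{\boldsymbol{\lambda}}\) and \(\nu\propto g\pi_{\boldsymbol{\lambda}}\) are normalizable, then \(\mu*\nu\) has density proportional to \(h\) with respect to \(\pi_{\boldsymbol{\lambda}}\). In that case \Cref{prop:ulc-poisson-relative-log-concavity} makes \(\mu,\nu\) ULC, \Cref{thm:ulc-convolution-closure} makes \(\mu*\nu\) ULC, and \Cref{prop:ulc-poisson-relative-log-concavity} again gives log-concavity of \(h\).

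The main obstacle is integrability, since \(f\pi_{\boldsymbol{\lambda}}\) need not be summable. I would handle it by truncation as in the proof of \Cref{cor:poisson-semigroup-preserves-log-concavity}. Set \(f_N \defeq f\*1_{\set{\norm{\*n}_\infty\le N}}\) and define \(g_N\) likewise. Truncation to a box keeps the supports downward closed. It keeps each matrix in \eqref{eq:discrete-function-log-concavity} equal to a principal submatrix of the original plus a nonnegative diagonal, coming from entries \(\*n+2\*e_i\) that leave the box, so it preserves log-concavity. Then \(f_N\pi_{\boldsymbol{\lambda}}\) and \(g_N\pi_{\boldsymbol{\lambda}}\) are finitely supported, nonzero for large \(N\), and the argument above shows that the binomial convolution \(h_N\) of \(f_N\) and \(g_N\) is log-concave.

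It remains to pass to the limit. For fixed \(\*n\), the sum defining \(h(\*m)\) is finite for each \(\*m\) in \(\set{\*n,\*n+\*e_i,\*n+\*e_i+\*e_j}\), and \(h_N(\*m) = h(\*m)\) once \(N \ge \norm{\*n}_\infty + 2\). For such \(N\) the active set of \(h_N\) at \(\*n\) coincides with \(\+I_{\*n}\) for \(h\), and the local matrix of \(h_N\) at \(\*n\) equals that of \(h\). Hence \(h\) satisfies \eqref{eq:discrete-function-log-concavity} at every \(\*n\in\supp\tp{h}\). Together with downward closure of \(\supp\tp{h}\), this proves the corollary.
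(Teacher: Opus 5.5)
Your proof is correct and follows essentially the same route as the paper. You truncate \(f,g\) to boxes, realize the binomial convolution \(h_N\) as the Poisson-relative density of the convolution of the associated ULC measures, and apply \Cref{thm:ulc-convolution-closure}. You then pass to the limit using that the local matrices of \(h_N\) and \(h\) agree at each fixed \(\*n\) once \(N \ge \norm{\*n}_\infty + 2\).

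The only differences are cosmetic. The paper normalizes \(f_N(\*n)/\*n!\) directly, which is your construction with \(\boldsymbol{\lambda} = \*1\). You also spell out two points the paper only asserts: that box truncation preserves log-concavity, and that \(\supp\tp{f} + \supp\tp{g}\) is downward closed.
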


\begin{proof}[Proof of \Cref{cor:binomial-convolution-preserves-log-concavity}]
    The claim is immediate if \(f\) or \(g\) is identically zero, so assume otherwise. Let \(f_N\) and \(g_N\) be their restrictions to \(\set{\*n \in \bb N^d \cmid \norm{\*n}_{\infty} \le N}\), and let \(h_N\) be their binomial convolution. Define
    \[
    A_N \defeq \sum_{\*n \in \bb N^d} \frac{f_N\tp{\*n}}{\*n!}, \qquad B_N \defeq \sum_{\*n \in \bb N^d} \frac{g_N\tp{\*n}}{\*n!}, \qquad \mu_N\tp{\*n} \defeq \frac{f_N\tp{\*n}}{A_N \*n!}, \qquad \nu_N\tp{\*n} \defeq \frac{g_N\tp{\*n}}{B_N \*n!}.
    \]
    The truncations \(f_N\) and \(g_N\) are log-concave, so \(\mu_N\) and \(\nu_N\) are ULC. By \Cref{thm:ulc-convolution-closure}, \(\rho_N \defeq \mu_N * \nu_N\) is ULC, and
    \[
    \*n! \rho_N\tp{\*n} = \frac{h_N\tp{\*n}}{A_N B_N}.
    \]
    Thus \(h_N\) is log-concave. For every fixed \(\*n\), one has \(h_N\tp{\*m} = h\tp{\*m}\) for all sufficiently large \(N\) and every \(\*m\) entering the local matrix at \(\*n\). Hence the local matrices of \(h_N\) and \(h\) eventually agree. Since \(\supp\tp{h} = \supp\tp{f} + \supp\tp{g}\) is downward closed, \(h\) is log-concave.
\end{proof}

\subsection{Particlewise Stochastic Projection}

The characterization of ULC measures via complete log-concavity of the probability-generating function in \Cref{thm:ulc-generating-function-characterization} gives closure under particlewise stochastic projection.

\begin{definition}
    Let \(\mu\) be a probability measure on \(\bb N^d\), and let \(\*P \in \bb R_{\ge 0}^{d \times m}\) satisfy \(\sum_{j = 1}^m P_{ij} \le 1\) for all \(i \in \stp{d}\). The particlewise stochastic projection channel associated with \(\*P\) is defined as follows. Let \(\*X \sim \mu\) be the input. Conditional on \(\*X = \*n\), regard \(\*n\) as a configuration of \(d\) types of particles, with \(n_i\) particles of type \(i\). There are \(m\) possible output types. Independently, each particle of input type \(i\) is assigned output type \(j\) with probability \(P_{ij}\), and is discarded with probability \(1 - \sum_{j = 1}^m P_{ij}\). The output \(\*X' \in \bb N^m\) is the vector of resulting particle counts of the \(m\) output types.
\end{definition}

The particlewise stochastic projection channel subsumes several natural operations. For example, if \(\*P = \diag\tp{\*p}\) for some \(\*p \in \left(0, 1\right]^d\), then the channel reduces to coordinatewise binomial thinning \(\+T_{\*p}\), as defined in \eqref{eq:binomial-thinning-operator}. If \(\*P \in \bb R_{\ge 0}^{\stp{d} \times S}\) is the zero-extension of the identity matrix \(\*I_S\) for some \(S \subseteq \stp{d}\), then the channel reduces to coordinate marginalization onto \(S\). If \(\*P = \*1_{d \times 1}\), then the channel maps \(\*X\) to the sum of its coordinates.

\begin{lemma}
    Particlewise stochastic projection preserves downward closed support. More precisely, let \(\mu\) be a probability measure on \(\bb N^d\) with downward closed support, and let \(\nu\) be the output law of the particlewise stochastic projection channel associated with any feasible \(\*P\). Then \(\supp\tp{\nu}\) is downward closed.
    \label{lem:stochastic-projection-downward-closed-support}
\end{lemma}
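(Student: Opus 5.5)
The plan is a direct realizability argument. Fix $\*n' \in \supp\tp{\nu}$ and $\*m' \le \*n'$; we must show $\nu\tp{\*m'} > 0$.

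Since $\nu\tp{\*n'} = \sum_{\*n} \mu\tp{\*n} \*{Pr}\stp{\*X' = \*n' \mid \*X = \*n}$, there exists $\*n \in \supp\tp{\mu}$ for which the channel output $\*n'$ has positive probability given input $\*n$. Unpacking the definition, this means there is an assignment $\sigma$ with the following properties. It sends each of the $n_i$ particles of type $i$ either to an output type $j$ with $P_{ij} > 0$, or to ``discard'' when $1 - \sum_j P_{ij} > 0$. Exactly $n'_j$ particles are sent to output type $j$. The probability of $\sigma$ is a product of positive factors.

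From $\sigma$, build a smaller input and output. For each output type $j$, select $n'_j - m'_j$ of the particles that $\sigma$ sends to $j$ and delete them from the input. Let $\*m \le \*n$ be the resulting input count vector, where $m_i$ counts the type-$i$ particles not deleted. By downward closure of $\supp\tp{\mu}$, we have $\mu\tp{\*m} > 0$.

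Now restrict $\sigma$ to the surviving particles. This is an assignment of the particles of $\*m$ in which every particle still goes to an allowed outcome, each with positive probability. Exactly $m'_j$ particles reach output type $j$. Its probability is again a product of positive factors, so
\[
\nu\tp{\*m'} \ge \mu\tp{\*m} \, \*{Pr}\stp{\*X' = \*m' \mid \*X = \*m} > 0.
\]

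The key design choice is to delete particles rather than reroute them to ``discard.'' Rerouting would fail when $\sum_j P_{ij} = 1$, because discarding is then impossible for type $i$. Deleting input particles sidesteps this entirely and uses only downward closure of $\supp\tp{\mu}$.

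The main step requiring care is this exact-output bookkeeping: checking that the restricted assignment produces precisely $\*m'$. It does, because we removed exactly $n'_j - m'_j$ particles from each output class $j$ and left the discarded particles unchanged. The remaining steps are routine.
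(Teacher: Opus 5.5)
Your proposal is correct and follows essentially the same route as the paper's proof. Both take a positive-probability projection plan sending some $\mathbf{n} \in \operatorname{supp}(\mu)$ to $\mathbf{n}'$, and delete the input particles whose outputs lie in $\mathbf{n}'$ but not in $\mathbf{m}'$. They then use downward closure of $\operatorname{supp}(\mu)$ and rerun the same projection choices on the smaller input to obtain $\mathbf{m}'$ with positive probability.
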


\begin{proof}[Proof of \Cref{lem:stochastic-projection-downward-closed-support}]
    Let \(\*n' \in \supp\tp{\nu}\). By definition, there exists \(\*n \in \supp\tp{\mu}\) together with a particlewise projection plan of positive probability that maps \(\*n\) to \(\*n'\). Fix any \(\*m' \le \*n'\). Starting from the same projection plan, remove precisely those input particles whose projected output particles are present in \(\*n'\) but not in \(\*m'\). Let \(\*m \le \*n\) be the resulting input configuration. Since \(\supp\tp{\mu}\) is downward closed, \(\*m \in \supp\tp{\mu}\). Applying the same projection choices to \(\*m\) produces exactly \(\*m'\), and this event still has positive probability. Therefore, \(\supp\tp{\nu}\) is downward closed.
\end{proof}

\begin{theorem}
    Let \(\mu\) be a ULC measure on \(\bb N^d\), and let \(\*P \in \bb R_{\ge 0}^{d \times m}\) satisfy \(\sum_{j = 1}^m P_{ij} \le 1\) for all \(i \in \stp{d}\). Suppose that \(\*X \sim \mu\), and let \(\*X'\) be the output of the particlewise stochastic projection channel associated with \(\*P\). Then \(\nu \defeq \-{Law}\tp{\*X'}\) is ULC on \(\bb N^m\).
    \label{thm:ulc-stochastic-projection-closure}
\end{theorem}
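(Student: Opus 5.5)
The plan is to work entirely with probability-generating functions and the complete log-concavity characterization in \Cref{thm:ulc-generating-function-characterization} (Item~5). Conditional on \(\*X = \*n\), each particle of type \(i\) independently contributes a factor \(1 - \sum_j P_{ij} + \sum_j P_{ij} \theta'_j\). Hence
\[
g_{\nu}\tp{\boldsymbol{\theta}'} = g_{\mu}\tp{\*A\tp{\boldsymbol{\theta}'}}, \qquad A_i\tp{\boldsymbol{\theta}'} \defeq 1 - \sum_{j = 1}^m P_{ij} + \sum_{j = 1}^m P_{ij} \theta'_j .
\]
This map \(\*A : \bb R^m \to \bb R^d\) is affine. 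Its linear part \(\*P\) has nonnegative entries, and it maps \(\bb R_{> 0}^m\) into \(\bb R_{> 0}^d\), since each \(A_i \ge 1 - \sum_j P_{ij} \ge 0\) with strict positivity once \(\boldsymbol{\theta}' > \*0\). By \Cref{lem:stochastic-projection-downward-closed-support}, \(\supp\tp{\nu}\) is downward closed. The theorem therefore reduces, via \(5 \Rightarrow 1\) of \Cref{thm:ulc-generating-function-characterization}, to showing that \(g_\nu\) is completely log-concave.

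For this, fix \(\*v_1, \dots, \*v_k \in \bb R_{\ge 0}^m\). The chain rule for an affine substitution gives
\[
\partial_{\*v_1} \cdots \partial_{\*v_k} g_{\nu}\tp{\boldsymbol{\theta}'} = \tp{\partial_{\*P \*v_1} \cdots \partial_{\*P \*v_k} g_{\mu}}\tp{\*A\tp{\boldsymbol{\theta}'}} .
\]
Each \(\*P \*v_\ell\) lies in \(\bb R_{\ge 0}^d\). Since \(\mu\) is ULC, \(1 \Rightarrow 5\) of \Cref{thm:ulc-generating-function-characterization} shows that \(\partial_{\*P \*v_1} \cdots \partial_{\*P \*v_k} g_{\mu}\) is either identically zero or finite, positive, and log-concave on \(\bb R_{> 0}^d\). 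A log-concave function composed with an affine map is log-concave. So on \(\bb R_{> 0}^m\) the derivative of \(g_\nu\) is either zero or finite, positive, and log-concave, which is exactly complete log-concavity of \(g_\nu\).

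The step I expect to need care is the boundary behaviour of \(\*A\). When some row sum \(\sum_j P_{ij} = 1\), or some row of \(\*P\) is zero, \(\*A\tp{\boldsymbol{\theta}'}\) can leave the open orthant. If \(\sum_j P_{ij} = 0\), then \(A_i \equiv 1\), which is interior, so that case is harmless. Otherwise \(A_i > 0\) on \(\bb R_{> 0}^m\), so \(\*A\tp{\bb R_{> 0}^m} \subseteq \bb R_{> 0}^d\) in all cases, and we never evaluate on the boundary.

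Two further points need checking. First, the dichotomy ``identically zero versus positive'' must transfer: a derivative of \(g_\mu\) that is positive on \(\bb R_{>0}^d\) stays positive on the image, and one that is identically zero pulls back to zero. Second, finiteness of \(g_\nu\) on \(\bb R_{>0}^m\) is inherited from finiteness of the derivatives of \(g_\mu\) on \(\bb R_{>0}^d\), which Item~5 supplies. If one prefers to avoid infinite support issues, one can first truncate \(\mu\) to boxes, which stay ULC by \Cref{prop:ulc-basic-closure-properties}. One then passes to the limit exactly as in the convolution proof, using that local ULC matrices of the projected truncations agree eventually at each fixed state.
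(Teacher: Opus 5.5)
Your argument is correct and is essentially the paper's proof. The paper likewise writes the output generating function as $g_\mu$ composed with an affine map with linear part $\*P$ (it allows a general nonnegative pair $(\*A,\*b)$), pulls back the directional derivatives of $g_\mu$ along the nonnegative columns $\*P\*e_j$ using complete log-concavity from \Cref{thm:ulc-generating-function-characterization}, and concludes via strong log-concavity (Item~4); you verify the formally stronger Item~5 and use $5 \Rightarrow 1$, which is the same computation. One small correction to your optional truncation aside: unlike convolution, the projected mass at a fixed state involves inputs of unbounded size (because particles can be discarded), so the local matrices of the projected truncations only converge to those of $\nu$ rather than eventually agreeing (closedness of the positive semidefinite cone still suffices), and in any case the truncation is unnecessary because Item~5 is already established for infinite support.
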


\begin{proof}[Proof of \Cref{thm:ulc-stochastic-projection-closure}]
    We prove a slightly more general statement. Let \(\*A \in \bb R_{\ge 0}^{d \times m}\) and \(\*b \in \bb R_{\ge 0}^d\) satisfy \(\sum_{j = 1}^m A_{ij} + b_i > 0\) for all \(i \in \stp{d}\). We claim that
    \[
    h\tp{\boldsymbol{\theta}} \defeq \frac{g_{\mu}\tp{\*A \boldsymbol{\theta} + \*b}}{g_{\mu}\tp{\*A \*1 + \*b}}, \quad \boldsymbol{\theta} \in \bb R^m
    \]
    is the probability-generating function of a ULC measure on \(\bb N^m\). The particlewise stochastic projection channel is the special case
    \[
    \*A = \*P, \qquad \*b = \tp{1 - \sum_{j = 1}^m P_{ij}}_{i \in \stp{d}}.
    \]

    Since \(g_{\mu}\) has nonnegative coefficients, so does \(h\), and \(h\tp{\*1} = 1\). Hence \(h\) is the probability-generating function of some probability measure \(\nu\) on \(\bb N^m\). Set
    \[
    \*r \defeq \tp{r_i}_{i \in \stp{d}}, \qquad r_i \defeq \sum_{j = 1}^m A_{ij} + b_i, \quad i \in \stp{d},
    \]
    and define
    \[
    \wt{\*P}_{ij} \defeq \frac{A_{ij}}{r_i}, \quad i \in \stp{d}, \, j \in \stp{m}.
    \]
    Since \(\sum_{j = 1}^m \wt{\*P}_{ij} = 1 - b_i / r_i\) for all \(i \in \stp{d}\), \(\wt{\*P}\) defines a particlewise stochastic projection channel. Let
    \[
    \wt{h}\tp{\boldsymbol{\theta}} \defeq g_{\mu}\tp{\wt{\*P} \boldsymbol{\theta} + \tp{1 - \sum_{j = 1}^m \wt{\*P}_{ij}}_{i \in \stp{d}}} = g_{\mu}\tp{\diag\tp{\*r}^{-1} \tp{\*A \boldsymbol{\theta} + \*b}}, \quad \boldsymbol{\theta} \in \bb R^m.
    \]
    Then \(\wt{h}\) is the probability-generating function of the output law \(\wt{\nu}\) of the particlewise stochastic projection channel associated with \(\wt{\*P}\) and input law \(\mu\). Since \(\*r \in \bb R_{> 0}^d\), \(h\) and \(\wt{h}\) have the same coefficient support, so \(\supp\tp{\nu} = \supp\tp{\wt{\nu}}\). By \Cref{lem:stochastic-projection-downward-closed-support}, \(\supp\tp{\wt{\nu}}\) is downward closed, and thus so is \(\supp\tp{\nu}\).

    It remains, by \Cref{thm:ulc-generating-function-characterization}, to show that \(h\) is strongly log-concave. Let \(\*a_j\) denote the \(j\)-th column of \(\*A\). For every \(\*x \in \bb N^m\), the chain rule gives
    \[
    \partial^{\*x} h\tp{\boldsymbol{\theta}} = \frac{1}{g_{\mu}\tp{\*A \*1 + \*b}} \tp{\partial_{\*a_1}^{x_1} \dots \partial_{\*a_m}^{x_m} g_{\mu}}\tp{\*A \boldsymbol{\theta} + \*b}.
    \]
    By the characterization of ULC measures via complete log-concavity in \Cref{thm:ulc-generating-function-characterization}, \(g_{\mu}\) is completely log-concave. Since \(\*a_1, \dots, \*a_m \in \bb R_{\ge 0}^d\), \(\partial_{\*a_1}^{x_1} \dots \partial_{\*a_m}^{x_m} g_{\mu}\) is either identically zero or finite, positive, and log-concave on \(\bb R_{> 0}^d\). Moreover, \(\*A \boldsymbol{\theta} + \*b \in \bb R_{> 0}^d\) for every \(\boldsymbol{\theta} \in \bb R_{> 0}^m\), and composition with an affine map preserves log-concavity. Hence \(\partial^{\*x} h\) is either identically zero or finite, positive, and log-concave on \(\bb R_{> 0}^m\) for every \(\*x \in \bb N^m\). Thus \(h\) is strongly log-concave. Applying \Cref{thm:ulc-generating-function-characterization} once more shows that \(\nu\) is ULC, completing the proof.
\end{proof}

\end{document}